\documentclass[11pt, reqno]{amsart}

\usepackage[utf8]{inputenc}
\usepackage[T1]{fontenc}
\usepackage[sc]{mathpazo}          
\usepackage[final,protrusion=true,expansion=true]{microtype}
\usepackage{amsmath}
\usepackage{amssymb}
\usepackage{amsfonts}
\usepackage{amsbsy}
\usepackage{mathrsfs}
\usepackage{mathtools}
\usepackage{exscale}
\usepackage{relsize}
\usepackage[dvipsnames]{xcolor}
\usepackage[margin=1in]{geometry}
\usepackage{setspace}
\usepackage[inline]{enumitem}
\usepackage{comment}
\usepackage{url}
\usepackage{cite}
\usepackage{booktabs}
\usepackage{array}
\usepackage[bottom,hang]{footmisc}
\usepackage{tikz}
\usepackage{pgfplots}
\pgfplotsset{compat=1.16}
\usetikzlibrary{arrows.meta,patterns,calc}
\usepgfplotslibrary{fillbetween}

\definecolor{venetianred}{rgb}{0.78, 0.03, 0.08}
\definecolor{lava}{rgb}{0.62, 0.05, 0.10}      
\definecolor{cobalt}{rgb}{0.0, 0.28, 0.67}     
\definecolor{attn}{RGB}{204,0,153}             
\definecolor{revcol}{RGB}{0,110,90}            

\usepackage[colorlinks=true,linkcolor=lava,citecolor=blue,urlcolor=cobalt,
  pdfauthor={D. G. Bhimani, M. Majdoub, R. Manna},
  pdftitle={Confinement effects for semilinear fractional Hermite heat
  equations with nonlinearities of arbitrarily rapid growth}]{hyperref}

\newif\ifrevmarks
\revmarkstrue

\numberwithin{equation}{section}

\newtheoremstyle{thmstyle}
  {0.6\baselineskip}  
  {0.6\baselineskip}  
  {\itshape}          
  {}                  
  {\bfseries}         
  {.}                 
  {0.5em}             
  {}                  
\newtheoremstyle{defstyle}
  {0.6\baselineskip}
  {0.6\baselineskip}
  {\normalfont}
  {}
  {\bfseries}
  {.}
  {0.5em}
  {}

\theoremstyle{thmstyle}
\newtheorem{theorem}{Theorem}[section]
\newtheorem{proposition}[theorem]{Proposition}
\newtheorem{Lemma}[theorem]{Lemma}
\newtheorem{Corollary}[theorem]{Corollary}
\newtheorem{mainthm}{Theorem}

\theoremstyle{defstyle}
\newtheorem{Definition}[theorem]{Definition}
\newtheorem{Remark}[theorem]{Remark}
\newtheorem{Example}[theorem]{Example}

\setlist[enumerate]{itemsep=2pt,topsep=4pt}

\newcommand{\R}{\mathbb R}

\newcommand{\N}{\mathbb N}
\newcommand{\Hr}{\mathcal{H}_{\varrho}}
\newcommand{\Kb}{\mathcal{K}}
\newcommand{\Y}{\mathcal{Y}}
\newcommand{\Nf}{\mathcal{N}}
\newcommand{\dd}{\,\mathrm{d}}
\newcommand{\E}{\mathbb{E}}
\DeclareMathOperator{\supp}{supp}
\DeclareMathOperator*{\esssup}{ess\,sup}

\makeatletter
\def\section{\@startsection{section}{1}{\z@}%
  {-2.5ex\@plus -1ex \@minus -.2ex}%
  {1.5ex \@plus .2ex}%
  {\normalfont\large\bfseries}}
\def\subsection{\@startsection{subsection}{2}{\z@}%
  {-1.8ex\@plus -.5ex \@minus -.2ex}%
  {0.8ex \@plus .2ex}%
  {\normalfont\bfseries}}
\makeatother

\begin{document}

\title[Confinement and arbitrarily rapid growth]{Confinement effects for semilinear fractional
Hermite heat equations with nonlinearities of arbitrarily rapid growth}

\author[D. G. Bhimani]{Divyang G. Bhimani}
\address[D. G. Bhimani]{Department of Mathematics, Indian Institute of Science
Education and Research, Dr. Homi Bhabha Road, Pune 411008, India}
\email{\tt divyang.bhimani@iiserpune.ac.in}

\author[M. Majdoub]{Mohamed Majdoub}
\address[M. Majdoub]{Department of Mathematics, College of Science, Imam
Abdulrahman Bin Faisal University, P. O. Box 1982, Dammam, Saudi Arabia \&
Basic and Applied Scientific Research Center, Imam Abdulrahman Bin Faisal
University, P.O. Box 1982, 31441, Dammam, Saudi Arabia.}
\email{\tt mmajdoub@iau.edu.sa}
\email{\tt med.majdoub@gmail.com}
\email{\tt mohamed.majdoub@fst.rnu.tn}
\author[R. Manna]{Ramesh Manna}
\address[R. Manna]{School of Mathematical Sciences, National Institute of Science Education and
	Research, Bhubaneswar, India.}
\address[R. Manna]{Homi Bhabha National Institute, Training School Complex, Anushakti Nagar, Mumbai 400094, India}

\email{\tt rameshmanna@niser.ac.in}

\subjclass[2020]{35K58, 35R11, 46E30, 35A01, 47D06, 35B33, 26A12}
\keywords{Fractional harmonic oscillator, Hermite operator, Mehler kernel,
subordination, Orlicz spaces, Morse--Transue spaces, Young's functions, critical
Young's function, nonlinearities of arbitrarily rapid growth, smoothing estimates,
local well-posedness, global existence, instantaneous nonexistence, regularly
varying functions.}
\begin{abstract}
We study the Cauchy problem for the semilinear heat equation driven by the
fractional harmonic oscillator
$$
\partial_t u + \bigl(-\Delta + \varrho|x|^2\bigr)^{\beta} u = f(u),
\qquad (x,t)\in \R^d\times(0,\infty), \qquad u(\cdot,0)=u_0,
$$
where $\varrho>0$ and $0<\beta\le 1$. No \emph{a priori} growth restriction
is imposed on the nonlinearity $f$ at infinity. We develop a local and global
well-posedness theory in Orlicz spaces $L_\Phi(\R^d)$ and in their
Morse--Transue subspaces $M_\Phi(\R^d)$. The theory rests on two ingredients.
The first is a smoothing estimate for the semigroup generated by
$\bigl(-\Delta+\varrho|x|^2\bigr)^{\beta}$ between two arbitrary Orlicz spaces;
its constant does not depend on the Orlicz spaces, and it captures the sharp
exponential rate $\lambda_\beta=(d\sqrt{\varrho}\,)^{\beta}$ for large times.
The second is an integral compatibility condition relating the Young function
$\Phi$ to the Lipschitz modulus of $f$. The semigroup estimates follow from a
pointwise domination of the subordinated Mehler kernel by the free fractional
heat kernel, combined with an elementary bound for the Orlicz norm of a kernel
in terms of its $L^1$ and $L^\infty$ norms. For nonlinearities growing at least
sub-exponentially, we show that $\Phi_c=f$ is a critical Young function: it
separates local well-posedness in $M_\Phi$ from the instantaneous nonexistence
of nonnegative solutions for suitable data in $L_\Phi$. The nonexistence part
is proved directly, with an explicit threshold, for the singular profile
$F^{-1}(\gamma|x|^{2\beta})$, where $F(u)=\int_u^\infty \dd s/f(s)$. Finally,
we exhibit a genuine confinement effect. Under a mere Dini condition on $f$ at
the origin, small data give global solutions decaying at the sharp rate
$e^{-\lambda_\beta t}$, with no Fujita-type restriction. {This is in sharp
contrast with the free equation $\partial_tu+(-\Delta)^{\beta}u=f(u)$
($\varrho=0$): for $f(u)=|u|^{m-1}u$ with $1<m\le1+2\beta/d$, every nonnegative
nontrivial solution of the free problem blows up in finite time, however small
the initial data.}
\end{abstract}

\maketitle

\medskip


\section{Introduction}
\label{sec:intro}

We consider the Cauchy problem
\begin{equation}\label{eq:main}
\begin{cases}
\partial_t u + \Hr^{\beta} u = f(u), & (x,t)\in \R^d\times(0,\infty),\\[2pt]
u(x,0)=u_0(x), & x\in \R^d,
\end{cases}
\end{equation}
where $d\ge 1$, $\varrho>0$, $0<\beta\le 1$, and
$$
\Hr:=-\Delta+\varrho|x|^2
$$
is the harmonic oscillator on $\R^d$, {also known as the quantum harmonic
oscillator or the Hermite operator.} The nonlinearity $f:\R\to\R$ is locally
Lipschitz continuous with $f(0)=0$. The initial data $u_0$ is a possibly
unbounded element of some Banach space $X$ of real-valued functions on $\R^d$.
The operator $\Hr$ is nonnegative and self-adjoint on $L^2(\R^d)$, and the
power $\Hr^{\beta}$ is defined by the spectral theorem. We denote by
$S_\beta(t):=e^{-t\Hr^{\beta}}$ the semigroup generated by $-\Hr^{\beta}$; it
is extended to the function spaces used below through its integral kernel
(Section~\ref{subsec:operator}).

The Hermite operator is one of the basic models of mathematical physics. It
governs the quantum harmonic oscillator, and its eigenfunctions, the Hermite
functions, form the orthonormal basis underlying the harmonic analysis of
Hermite expansions \cite{thangavelu1993lectures}. Its spectral theory is
remarkably rich; for instance, sharp $L^p$ bounds for its eigenfunctions were
obtained by Koch and Tataru \cite{Koch2005}. The
fractional powers $\Hr^{\beta}$ are nonlocal operators of order $2\beta$ which
combine the long-range interactions of the fractional Laplacian with the
confinement produced by the potential. They have been studied from several
points of view, including extension problems and Harnack inequalities
\cite{StingaTorrea2010} and fixed-time estimates for the associated heat
semigroup \cite{BMMfrac}; see also \cite{thangavelu2024}.

We address the classical triple of questions, namely existence, uniqueness
and continuous dependence. The initial data are taken in a space large enough
to contain genuinely singular functions, and the growth of the nonlinearity at
infinity is not restricted in advance. We are especially interested in
nonlinearities that grow faster than any exponential. A prototypical example
is
$$
f(u)=|u|^{m-1}u\,\exp\bigl(e^{|u|^{p}}\bigr),\qquad m\ge 1,\ p\ge 1 .
$$
For the free heat equation ($\beta=1$, $\varrho=0$), a well-posedness theory
for such nonlinearities in a linear Banach space other than $L^\infty(\R^d)$
has only recently become available, through the work of Fujishima, Hisa and
Laister \cite{Laister}. No such theory exists for the fractional harmonic
oscillator. The purpose of the present paper is to provide one, and to
identify the effects produced by the confining potential.

\subsection{Background and related work}\label{subsec:background}

\emph{Lebesgue space theory.} When $\varrho=0$ and $\beta=1$, problem
\eqref{eq:main} reduces to the classical semilinear heat equation
$u_t=\Delta u+f(u)$. Its study in Lebesgue spaces for polynomially dominated
nonlinearities goes back to Weissler \cite{Weissler1979,weissler1980local,
weissler1981}. He showed that, for $|f(u)|\lesssim |u|^p$, the problem is
locally well posed in $L^q(\R^n)$ for $q\ge q_c:=n(p-1)/2$, $q>1$, and that
nonnegative solutions may fail to exist for $q<q_c$. Uniqueness was first
obtained in a restricted class, a restriction later removed by Brezis and
Cazenave \cite{brezis1996nonlinear}; see also Giga \cite{Giga1986}, and Haraux
and Weissler \cite{haraux1982non} for non-uniqueness.{\footnote{The
exponent $q_c$ is the \emph{scaling-critical} Lebesgue exponent. For
$f(u)=|u|^{p-1}u$ the equation $u_t=\Delta u+f(u)$ is invariant under the
dilations $u_\mu(x,t):=\mu^{2/(p-1)}u(\mu x,\mu^2t)$, $\mu>0$, and
$\|u_\mu(\cdot,0)\|_{L^q}=\mu^{\frac{2}{p-1}-\frac nq}\|u(\cdot,0)\|_{L^q}$. The
$L^q$ norm of the initial data is therefore invariant precisely when $q=q_c$.}} A
complete characterisation of local existence in Lebesgue spaces was obtained
by Laister, Robinson, Sierż\k{e}ga and Vidal-L\'opez \cite{AIHP2016}, and
well-posedness in $L^1$ by Laister and Sierż\k{e}ga
\cite{LaisterSierzega2020}; see also
\cite{RobinsonSierzega2013,LaisterSierzega2021}. The large body of work on the
global behaviour of solutions was initiated by Fujita \cite{Fujita1966}. He
identified the critical exponent $1+2/n$, which separates global existence for
small data from blow-up of all nonnegative nontrivial solutions. The critical
case was settled by Hayakawa \cite{Hayakawa1973} for $n=1,2$, and by
Kobayashi, Sirao and Tanaka \cite{KST1977} in general. For the fractional
Laplacian $(-\Delta)^{\beta}$, the corresponding exponent is $1+2\beta/n$, as
shown by Sugitani \cite{Sugitani1975}. We refer to the monographs
\cite{QS,Hu,Pao,Henry,cazenave1990introduction} for comprehensive accounts.

\emph{Exponential nonlinearities and Orlicz spaces.} Once $f$ grows faster
than any power, Lebesgue spaces are no longer adapted to the problem, and
Orlicz spaces take their place. Exponential nonlinearities arise naturally in
the study of dispersive equations at the Trudinger--Moser threshold, where the
exponential Orlicz scale appears through the critical Sobolev embedding; see
Nakamura and Ozawa \cite{NakamuraOzawa1998}. For a series of results on
Schrödinger, Klein--Gordon and wave equations with exponential nonlinearities,
we refer to \cite{CPAM2006,AIHP2011,CIMM,Duke2009,APDE2011,IMMN}. Physical
motivation for exponential-type nonlinearities in nonlinear optics can be
found, for instance, in \cite{Lam77}. For the heat equation, initial data in
$\exp L^2(\R^2)$ were first considered by Ruf and Terraneo
\cite{RufTerraneo2002}. Ioku \cite{Ioku2011} proved global existence for small
data in $\exp L^2(\R^n)$. Ioku, Ruf and Terraneo \cite{ioku2015MPAG} then
obtained local existence and uniqueness in the closure $\exp L^2_0(\R^2)$ of
$C_0^\infty$, together with the nonexistence of nonnegative solutions for
suitable data in $\exp L^2(\R^2)$. Non-uniqueness for singular data was
studied in \cite{IRT2019,FIRT2026}. {Furioli, Kawakami, Ruf and Terraneo
\cite{furioli2017JDE} considered the nonlinearity $f(u)=|u|^{4/n}u\,e^{u^2}$ in
$\R^n$. For small data in $\exp L^2(\R^n)$ they proved the existence of global
solutions together with $L^q$ decay estimates, and they showed that,
for data in $\exp L^2\cap L^1$, the solution behaves for large times like a
multiple of the Gaussian heat kernel.} The case
$f(u)\approx|u|^{m-1}u\,e^{|u|^{p}}$, $p>1$, in arbitrary dimension was
studied by Majdoub and Tayachi \cite{majdoub2018picm,Mohamed2021}. They
established local well-posedness in $\exp L^p_0(\R^n)$, global existence for
small data and decay estimates. The biharmonic heat equation with exponential
nonlinearity was treated by Majdoub, Otsmane and Tayachi
\cite{majdoub2018ade}, and the fractional Laplacian by Fino and Kirane
\cite{fino2020cpaa}. Finally, Bhimani, Majdoub and Manna \cite{BMMexp}
considered the fractional harmonic oscillator $\Hr^{\beta}$, $0<\beta\le1$,
with nonlinearities of exponential type $e^{\lambda|u|^p}$, $p>1$. They
obtained local well-posedness in $\exp L^p_0$, global weak-mild solutions for
small data and, for $\beta=1$, nonexistence of nonnegative classical solutions
for suitable data in $\exp L^p$.

As pointed out in \cite[Section 1.2]{Laister}, all of these works share two
features. An exponential upper bound is imposed on the growth of $f$, and this
bound is used in an essential way: the smoothing estimates for the heat
semigroup between exponential Orlicz spaces and Lebesgue spaces are obtained
by expanding the exponential in a power series. This barrier has recently been
removed by Fujishima, Hisa and Laister \cite{Laister}. They established a
smoothing estimate for the Gaussian heat semigroup between two
\emph{arbitrary} Orlicz spaces, deduced from it a well-posedness theory for
nonlinearities of arbitrarily rapid growth, and identified $\Phi_c=f$ as a
critical Young function. Their sufficient condition for local well-posedness is
the convergence of an integral involving $f$ and $\Phi$ jointly. It thus acts
as a compatibility condition between the nonlinearity and the underlying space,
rather than as a growth restriction on either of them. Earlier, Fujishima and
Ioku \cite{Ioku2018} had developed a quasi-scaling method which yields
existence and nonexistence thresholds for nonlinearities of unrestricted
growth, for initial data in sets that are not linear spaces; see also \cite{FujishimaIoku2021,FujishimaIoku2022,
FujishimaIoku2023,Miyamoto2021}.

\emph{Fractional operators and singular data.} For the fractional heat
equation $\partial_tu+(-\Delta)^{\theta/2}u=f(u)$, local solvability for
singular initial data was studied by Hisa and Ishige \cite{HisaIshige2018} and
by Fujishima, Hisa, Ishige and Laister \cite{FHIL2023,Fujishima2024}. In
particular, it is shown in \cite{Fujishima2024} that, for supercritical $f$ (a
class containing all nonlinearities considered in
Section~\ref{sec:nonexistence}), the profile $F^{-1}(\gamma|x|^{\theta})$, with
$F(u)=\int_u^\infty\dd s/f(s)$, is a \emph{dilation-critical singularity}. In
other words, suitable dilations of it are, or are not, admissible initial data
according to the size of $\gamma$. See also
Miyamoto and Suzuki \cite{MiyamotoSuzuki2024} for critical and doubly critical
cases.

\emph{The harmonic oscillator.} The Hermite semigroup $e^{-t\Hr}$ is given
explicitly by Mehler's formula \cite{thangavelu1993lectures}. Fractional powers
of $\Hr$ have been studied, for example, by Stinga and Torrea
\cite{StingaTorrea2010}. Bhimani, Manna, Nicola, Thangavelu and Trapasso
\cite{bhimani2021Adv} developed phase space methods for the Hermite semigroup
and applied them to global well-posedness in modulation spaces. The same
authors obtained sharp fixed-time $L^p$--$L^q$ estimates for
$e^{-t\Hr^{\beta}}$, including the exponential decay for large times, and
applied them to nonlinear fractional heat equations \cite{BMMfrac}. See also Cordero \cite{cordero2021PsDO} for modulation space
estimates and Cardona, Chatzakou, Delgado, Kumar and Ruzhansky
\cite{CCDKR2025} for anharmonic oscillators $(-\Delta)^{\ell}+|x|^{2k}$. None of
these works allows growth faster than exponential.

Table~\ref{tab:literature} summarises the works most closely related to the
present paper.

\begin{table}[ht]
\centering
\small
\renewcommand{\arraystretch}{1.25}
\begin{tabular}{>{\raggedright}p{2.6cm} >{\raggedright}p{2.4cm}
>{\raggedright}p{3.0cm} >{\raggedright}p{2.2cm} >{\raggedright\arraybackslash}p{4.2cm}}
\toprule
Reference & Operator & Growth of $f$ & Space & Main results\\
\midrule
\cite{weissler1980local,brezis1996nonlinear} & $-\Delta$ & $|u|^{p}$ &
$L^q$ & LWP for $q\ge q_c$, $q>1$; nonexistence for $q<q_c$\\
\cite{AIHP2016,LaisterSierzega2020} & $-\Delta$ & polynomially dominated &
$L^q$, $L^1$ & characterisation of local existence; WP in $L^1$\\
\cite{RufTerraneo2002,Ioku2011,ioku2015MPAG} & $-\Delta$ & $e^{u^2}$ type &
$\exp L^2$ & LWP in $\exp L^2_0(\R^2)$, nonexistence; small-data GWP\\
\cite{furioli2017JDE} & $-\Delta$ & {$|u|^{4/n}u\,e^{u^2}$} &
$\exp L^2$ & {small-data GWP; $L^q$ decay; Gaussian asymptotics for data
in $\exp L^2\cap L^1$}\\
\cite{majdoub2018picm,Mohamed2021} & $-\Delta$ & $|u|^{m-1}u\,e^{|u|^p}$,
$p>1$ & $\exp L^p$ & LWP in $\exp L^p_0$; small-data GWP; decay\\
\cite{majdoub2018ade} & $\Delta^2$ & $e^{|u|^p}$ type & $\exp L^p$ & LWP;
small-data GWP\\
\cite{fino2020cpaa} & {$(-\Delta)^{\beta}$, $0<\beta\le1$} &
{$e^{|u|^p}$ type, $p>1$} & $\exp L^p$ & {LWP in $\exp L^p_0$;
small-data global weak-mild solutions; decay}\\
\cite{BMMexp} & $\Hr^{\beta}$, $0<\beta\le1$ & $e^{\lambda|u|^p}$, $p>1$ &
$\exp L^p$ & LWP in $\exp L^p_0$; small-data global weak-mild solutions;
nonexistence ($\beta=1$)\\
\cite{Ioku2018} & $-\Delta$ & unrestricted & sets of data & existence and
nonexistence thresholds\\
\cite{Fujishima2024} & $(-\Delta)^{\theta/2}$, {$0<\theta\le2$} & supercritical & singular data &
local solvability; dilation-critical singularity\\
\cite{Laister} & $-\Delta$ & unrestricted & $M_\Phi$, $L_\Phi$ & LWP,
small-data GWP, critical Young function\\
Present paper & $\Hr^{\beta}$, $0<\beta\le1$ & unrestricted & $M_\Phi$,
$L_\Phi$ & LWP, small-data GWP with exponential decay, critical Young
function, explicit nonexistence\\
\bottomrule
\end{tabular}
\caption{Works most closely related to the present paper (LWP: local
well-posedness; GWP: global well-posedness).}
\label{tab:literature}
\end{table}

\subsection{The main difficulties}

Two features distinguish \eqref{eq:main} from the free problem treated in
\cite{Laister}. On the one hand, the short-time regularisation of
$S_\beta(t)$ is governed by the fractional parabolic scaling
$t^{-d/(2\beta)}$. Hence every occurrence of the exponent $n/2$ in the
Gaussian theory is replaced by $d/(2\beta)$, and the exponent $1+2/n$ in the
integral conditions of \cite{Laister} is replaced by
$$
p_\ast:=1+\frac{2\beta}{d},
$$
the \textbf{Fujita exponent} of the \emph{free} fractional heat equation
\cite{Sugitani1975}. On the other hand, the confining potential
$\varrho|x|^2$ creates a spectral gap. The bottom of the spectrum of $\Hr$ is
$d\sqrt{\varrho}$, so that the bottom of the spectrum of $\Hr^\beta$ is
$\lambda_\beta:=(d\sqrt{\varrho}\,)^{\beta}$, and the linear evolution decays
exponentially rather than polynomially. As a consequence, and in contrast with
the free case, $p_\ast$ plays no role in the global existence of small
solutions (Theorem~\ref{thm:confined}).

The main technical difficulty is the absence of translation invariance. In
\cite{Laister}, the heat semigroup acts by convolution with the Gaussian
kernel. O'Neil's convolution theorem in Orlicz spaces \cite{ONeil} then
applies directly, and the smoothing estimate reduces to the computation of the
Orlicz norm of the Gaussian. This computation is carried out in
\cite[Proposition 4.1]{Laister} through radial coordinates and an induction on
the dimension. Neither step applies verbatim here. We resolve the first point
by a pointwise domination of the subordinated Mehler kernel by the kernel of
the free fractional heat semigroup, which restores the convolution structure
at the level of upper bounds. We resolve the second by an elementary lemma
(Lemma~\ref{lem:orlicznorm}), valid for every Young function and every
nonnegative kernel controlled in $L^1$ and in $L^\infty$. As a by-product, we
obtain a new and shorter proof of \cite[Proposition 4.1]{Laister}, with a
constant that does not depend on the Young function.

\subsection{Overview of the main results}

We write $\Y$ for the set of Young's functions and $\Nf$ for the set of
$N$-functions (Definitions \ref{def:orlicz} and \ref{def:MT} below).
{Roughly speaking, $N$-functions are the ``nice'' Young's functions: they
are finite, positive and convex, and they grow faster than linearly at
infinity and slower than linearly at the origin, that is, $\Phi(x)/x\to0$ as
$x\to0$ and $\Phi(x)/x\to\infty$ as $x\to\infty$. Typical examples are
$x^q$ with $q>1$ and $e^{x^p}-1$ with $p>1$, whereas $x$ and $e^{x}-1$ are
Young's functions which are not $N$-functions.} Further, $L_\Phi$ denotes the
Orlicz space built on $\Phi\in\Y$, $L_\Phi^0$ the closure of
$C_0^\infty(\R^d)$ in $L_\Phi$, and $M_\Phi$ the Morse--Transue subspace. For
nonnegative functions $g,h$, we write $g\lesssim h$ if $g(x)\le h(Kx)$ for all
large $x$ and some constant $K>0$; we write $g\gtrsim h$ if $h\lesssim g$, and
$g\approx h$ if both relations hold.

Following \cite[(3.1)]{Laister}, two Young's functions $\Phi,\Psi$ are called
\textbf{\emph{compatible}} if there exist a constant $b\in(0,1]$ and a Young's function
$\Theta$ such that
\begin{equation}\label{E3.1}
b\,x\,\Psi^{-1}(x)\ \le\ \Phi^{-1}(x)\,\Theta^{-1}(x)\ \le\ x\,\Psi^{-1}(x),
\qquad x\ge 0 .
\end{equation}
The constant $b$ in \eqref{E3.1} is \emph{not} the fractional exponent
$\beta$; in \cite{Laister} the same letter is used for both.

\begin{mainthm}[outline; Theorem~\ref{thm:OrliczSmoothing} and
Corollaries~\ref{cor:Linfty}--\ref{cor:decay}]
Let $0<\beta\le 1$ and $\varrho>0$. For every $\Phi\in\Y$ and $t\ge0$,
$$
\|S_\beta(t)\varphi\|_{L_\Phi}\le \min\bigl\{1,2^{d/2}e^{-\lambda_\beta t}\bigr\}
\|\varphi\|_{L_\Phi},
$$
$S_\beta$ leaves $M_\Phi$ invariant, and it is a $C_0$-semigroup on
$M_\Phi$ whenever $\Phi\in\Nf$. For compatible Young's functions $\Phi$ and
$\Psi$ there is $C=C(d,\beta,b)$, independent of $\Phi,\Psi,\Theta$ and
$\varrho$, such that
$$
\bigl\|S_\beta(t)\varphi\bigr\|_{L_\Psi}
\ \le\ C\,
\frac{\Phi^{-1}\bigl(t^{-\frac{d}{2\beta}}\bigr)}
     {\Psi^{-1}\bigl(t^{-\frac{d}{2\beta}}\bigr)}\,
\|\varphi\|_{L_\Phi},\qquad t>0 ,
$$
and in particular
$\|S_\beta(t)\varphi\|_{L^\infty}\le C\,\Phi^{-1}(t^{-d/(2\beta)})
\|\varphi\|_{L_\Phi}$. For every $\theta\in(0,1)$, both estimates may be
multiplied by $C_\theta e^{-(1-\theta)\lambda_\beta t}$, and the rate
$\lambda_\beta$ cannot be improved.
\end{mainthm}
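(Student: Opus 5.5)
The plan is to work entirely at the level of the integral kernel $K_\beta(t,x,y)$ of $S_\beta(t)$, which is obtained from the Mehler kernel by Bochner subordination.

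\textbf{Kernel bounds.} Writing the Mehler kernel of $e^{-s\Hr}$ explicitly, I would first check three facts.
\begin{itemize}
\item It is nonnegative and satisfies $\int K_1(s,x,y)\,dy\le 1$; symmetrically, $\int K_1(s,x,y)\,dx\le 1$.
\item Since $\sinh(2\sqrt\varrho s)\ge 2\sqrt\varrho s$, and after completing squares, one has the pointwise bound $K_1(s,x,y)\le (4\pi s)^{-d/2}e^{-|x-y|^2/(4s)}$.
\item For large $s$, $\int K_1(s,x,y)\,dy\le 2^{d/2}e^{-d\sqrt\varrho\, s}$; this comes from $\cosh^{-d/2}$ factors of the type $(\cosh 2\sqrt\varrho s)^{-d/2}\le 2^{d/2}e^{-d\sqrt\varrho s}$.
\end{itemize}
Subordinating with the nonnegative one-sided stable density $\eta_t^\beta(s)$ transfers all three. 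We get $0\le K_\beta(t,x,y)\le G_\beta(t,x-y)$, where $G_\beta$ is the free fractional heat kernel, with $\|G_\beta(t)\|_{L^1}=1$ and $\|G_\beta(t)\|_{L^\infty}=c_{d,\beta}t^{-d/(2\beta)}$ by scaling. For the decay, the eigenfunction expansion gives $\sup_x\int K_\beta(t,x,y)\,dy\le 2^{d/2}e^{-\lambda_\beta t}$. I would obtain this more safely by applying $S_\beta(t)$ to $1$ and dominating $1$ by a multiple of the Gaussian ground state in a weighted sense, or directly via the Laplace transform of $\eta_t^\beta$ at $d\sqrt\varrho$. I expect this last bound, with the exact constant $2^{d/2}$ and the exact rate, to be the main obstacle. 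If subordination only gives it along $\int e^{-d\sqrt\varrho s}\eta_t^\beta(s)\,ds=e^{-t(d\sqrt\varrho)^\beta}=e^{-\lambda_\beta t}$, that is exactly what is needed, since Bochner's identity is precisely this Laplace transform.

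\textbf{$L_\Phi$ contraction.} For a positive kernel with row and column sums at most $M\le 1$, Jensen's inequality applied to the probability-normalised kernel together with the convexity of $\Phi$ gives
\[
\int\Phi\bigl(|S\varphi|/(M\lambda)\bigr)\le\int\Phi\bigl(|\varphi|/\lambda\bigr).
\]
Hence $\|S_\beta(t)\|_{L_\Phi\to L_\Phi}\le \min\{1,2^{d/2}e^{-\lambda_\beta t}\}$. If $2^{d/2}e^{-\lambda_\beta t}>1$ the minimum is $1$; otherwise I would use both row and column bounds. The same Jensen argument shows $\int\Phi(|S\varphi|/\lambda)<\infty$ for all $\lambda$ whenever this holds for $\varphi$, so $M_\Phi$ is invariant.

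\textbf{$C_0$ property.} For $\Phi\in\Nf$, $C_0^\infty$ is dense in $M_\Phi$ because $M_\Phi=L^0_\Phi$. On $C_0^\infty$, strong continuity in $L_\Phi$ follows from $L^1\cap L^\infty$ convergence; since $\Phi$ is an $N$-function, $\|g\|_{L_\Phi}$ is controlled by $\|g\|_{L^1}$ and $\|g\|_{L^\infty}$. Uniform boundedness then extends it to all of $M_\Phi$.

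\textbf{Smoothing.} Using $K_\beta\le G_\beta(t,\cdot-\cdot)$, I get $|S_\beta(t)\varphi|\le G_\beta(t)*|\varphi|$. O'Neil's inequality for compatible $\Phi,\Psi,\Theta$ gives $\|g*h\|_{L_\Psi}\le C_b\|g\|_{L_\Theta}\|h\|_{L_\Phi}$. Lemma~\ref{lem:orlicznorm} gives $\|G\|_{L_\Theta}\le C\,\|G\|_{L^1}\,\Theta^{-1}\bigl(\|G\|_\infty/\|G\|_1\bigr)^{-1}$-type bounds, namely $\|G_\beta(t)\|_{L_\Theta}\lesssim 1/\Theta^{-1}(t^{-d/(2\beta)})$ with an absolute constant, using $\Theta^{-1}(cx)\ge\min(c,1)\Theta^{-1}(x)$ by concavity. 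Compatibility then turns $1/\Theta^{-1}(x)$ into $\Phi^{-1}(x)/(x\Psi^{-1}(x))\cdot x$ up to the factor $b$, which yields the stated ratio $\Phi^{-1}(t^{-d/(2\beta)})/\Psi^{-1}(t^{-d/(2\beta)})$. The $L^\infty$ case corresponds to $\Psi^{-1}\equiv 1$, i.e.\ $\Theta=\Phi^*$-type duality; alternatively, apply Hölder in Orlicz spaces with $\|G\|_{L_{\tilde\Phi}}$.

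\textbf{Exponential factor and sharpness.} I would write $S_\beta(t)=S_\beta(\theta t)S_\beta((1-\theta)t)$. The smoothing bound is applied at time $\theta t$, which costs only a $\theta$-dependent constant because $\Phi^{-1}/\Psi^{-1}$ is monotone, and the contraction bound at time $(1-\theta)t$. For sharpness, test on the ground state $h_0=e^{-\sqrt\varrho|x|^2/2}$, which lies in every $L_\Phi$ and satisfies $S_\beta(t)h_0=e^{-\lambda_\beta t}h_0$ exactly; any better rate would force $h_0=0$.
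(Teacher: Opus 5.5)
Your overall route is the paper's: domination $\Kb^\beta_t(x,y)\le p_\beta(x-y,t)$, the mass bound obtained by subordinating $(\cosh 2\omega s)^{-d/2}\le 2^{d/2}e^{-d\omega s}$ (the Laplace transform of $\eta^\beta_t$ at $d\omega$), Jensen for sub-Markovian kernels, density of $C_0^\infty$ in $M_\Phi=L^0_\Phi$, O'Neil combined with Lemma~\ref{lem:orlicznorm}, the splitting $t=\theta t+(1-\theta)t$, and the ground state $h_0$. However, two quantitative steps are wrong as written.

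\emph{The kernel norm.} Lemma~\ref{lem:orlicznorm} gives $\|k\|_{L_\Theta}\le M/\Theta^{-1}(M)$ when $\|k\|_{L^1}\le1$ and $\|k\|_{L^\infty}\le M$. In homogeneous form this reads $\|k\|_{L^\infty}/\Theta^{-1}\bigl(\|k\|_{L^\infty}/\|k\|_{L^1}\bigr)$. So, with $x=t^{-d/(2\beta)}$, you get $\|p_\beta(\cdot,t)\|_{L_\Theta}\le\kappa\,x/\Theta^{-1}(x)$, not $\lesssim 1/\Theta^{-1}(x)$. Your version is false: for $\Theta(s)=s$ it says $\|p_\beta(\cdot,t)\|_{L^1}\lesssim t^{d/(2\beta)}$, which contradicts unit mass as $t\to0$.

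You reach the right ratio in the $L_\Psi$ estimate only because the missing factor $x$ reappears, unexplained, in your compatibility step. In the $L^\infty$ case via Orlicz--H\"older with $\Theta=\Phi^*$, nothing compensates. Your bound would give $\|S_\beta(t)\varphi\|_{L^\infty}\lesssim t^{d/(2\beta)}\Phi^{-1}(t^{-d/(2\beta)})\|\varphi\|_{L_\Phi}$, which is false already for $\Phi(s)=s$.

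With the correct kernel bound, everything is direct. The left inequality in \eqref{E3.1}, read as $x/\Theta^{-1}(x)\le b^{-1}\Phi^{-1}(x)/\Psi^{-1}(x)$, gives the ratio with constant $2\kappa/b$. For the $L^\infty$ case, $x\le\Phi^{-1}(x)(\Phi^*)^{-1}(x)$ from \eqref{eq:24} gives $x/(\Phi^*)^{-1}(x)\le\Phi^{-1}(x)$, hence $C_\infty=2\kappa$.

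\emph{The exponential factor.} With $a=d/(2\beta)$, you need
$\Phi^{-1}((\theta t)^{-a})/\Psi^{-1}((\theta t)^{-a})\le C_\theta\,\Phi^{-1}(t^{-a})/\Psi^{-1}(t^{-a})$. Monotonicity of $\Phi^{-1}/\Psi^{-1}$ does not give this. Under compatibility the ratio is comparable to the increasing function $x/\Theta^{-1}(x)$ (Lemma~\ref{lem:concave}), and $(\theta t)^{-a}>t^{-a}$, so monotonicity points the wrong way. The correct ingredient is the dilation bound $\Phi^{-1}(cx)\le c\,\Phi^{-1}(x)$ for $c\ge1$, which follows from concavity \eqref{eq:dilate}, together with $\Psi^{-1}(cx)\ge\Psi^{-1}(x)$. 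Taking $c=\theta^{-a}$ yields $C_\theta=2^{d/2}\theta^{-a}$ times the smoothing constant.

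\emph{Two facts used without proof.}
\begin{itemize}
\item The convergence $S_\beta(t)\psi\to\psi$ in $L^1$ and in $L^\infty$ for $\psi\in C_0^\infty$, on which your $C_0$-argument rests. The paper proves this in Lemma~\ref{lem:strongcont}: it compares $e^{-t\Hr}$ with the Gaussian through the nonnegative defect $G(x-y,t)-\Kb_t(x,y)$ and the exact mass identity, and then subordinates.
\item The finiteness of $\kappa_{d,\beta}=p_\beta(0,1)$. Scaling alone only gives the $t$-dependence; finiteness rests on the negative moments of $\eta^\beta_1$ (Lemma~\ref{lem:stable}(i)).
\end{itemize}
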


Two points deserve emphasis. First, the constant does not depend on the
Orlicz spaces involved, whereas the constant in \cite[Theorem A]{Laister}
depends on $\Phi$ and $\Psi$. Second, the semigroup is a genuine contraction on
every Orlicz space, and not merely bounded with constant $2$ as in
\cite[Theorem A(a)(i)]{Laister}. This is classical for sub-Markovian
operators, and it doubles the admissible size of the integral in the global
existence condition (Remark~\ref{rem:sharper}).

Several further comments are in order. To the best of our knowledge, Theorem~A
is new even for $\beta=1$, that is, for the Hermite semigroup $e^{-t\Hr}$
itself. For $\varrho=0$ and $\beta=1$, our argument also yields the smoothing
estimate of \cite{Laister} with a constant independent of the Young's
functions (Remark~\ref{rem:reproves}).

Theorem~A combines two regimes. For short times, the singular factor, namely
$\Phi^{-1}(t^{-d/(2\beta)})$ divided by $\Psi^{-1}(t^{-d/(2\beta)})$, is
exactly that of the free fractional heat semigroup; the confining potential does not remove
the singularity at $t=0$, since locally it is a lower-order perturbation. For
instance, when $\beta=1$, Mehler's formula \eqref{mc} gives
$\Kb_t(0,0)=\bigl(\omega/(2\pi\sinh 2\omega t)\bigr)^{d/2}\sim(4\pi t)^{-d/2}$
as $t\to0$, so the $L^1\to L^\infty$ norm of $e^{-t\Hr}$, which equals
$\sup_{x,y}\Kb_t(x,y)$, blows up exactly like that of the Gaussian semigroup.
For large times, on the other hand, the potential produces exponential decay
at the rate $\lambda_\beta$, a phenomenon without analogue in the free case.
Indeed, the free semigroup preserves constants, so that its norm on
$L^\infty$, and more generally on $L^1+L^\infty$, does not decay at all,
whereas the norm of $S_\beta(t)$ on
every Orlicz space is at most $2^{d/2}e^{-\lambda_\beta t}$.

Furthermore, the confining potential enters the proof of Theorem~A only through
the mass bound \eqref{eq:massbeta} and the ground state of $\Hr$. All the
assertions of Theorem~A which do not involve $\lambda_\beta$ (the contraction
property, the invariance of $M_\Phi$, the $C_0$-property and the smoothing
estimate with the constant $2\kappa/b$) therefore hold, with the same proofs,
for the free fractional heat semigroup $e^{-t(-\Delta)^{\beta}}$, whose kernel
$p_\beta$ is nonnegative and has unit mass. A careful inspection of the proofs suggests that one should expect the same conclusions to remain valid for the local theory (Theorem~\ref{thm:local}), Corollary~\ref{thm:global}, and the nonexistence results in Section~\ref{sec:nonexistence}. Thus, it is reasonable to expect that the case $\varrho=0$ could be included throughout, except in the statements involving $\lambda_\beta$ and in Theorem~\ref{thm:confined}.

Finally, Theorem~A concerns only the linear semigroup. It may therefore be
useful in the study of other nonlinear evolution problems whose Duhamel
formulation involves $S_\beta(t)$, such as reaction--diffusion systems or heat
equations with nonlocal nonlinear terms, and we expect it to be of independent
interest.

\begin{Remark}\rm[Sharpness of the large-time rate]\label{rem:sharp-intro}

For $\Phi=\Psi$, the exponential rate in Theorem~A is optimal in a strong
sense. Let $\omega=\sqrt\varrho$ and let $h_0(x)=e^{-\omega|x|^2/2}$ be the
ground state of $\Hr$, so that $\Hr h_0=d\omega h_0$. By
Lemma~\ref{lem:kernel}(iv), $S_\beta(t)h_0=e^{-\lambda_\beta t}h_0$, and
therefore
$$
\|S_\beta(t)h_0\|_{L_\Phi}=e^{-\lambda_\beta t}\|h_0\|_{L_\Phi},
\qquad t\ge0,\ \Phi\in\Y ,
$$
where $0<\|h_0\|_{L_\Phi}<\infty$ (see the proof of
Theorem~\ref{thm:OrliczSmoothing}(a)(iv)). We deduce that
$$
e^{-\lambda_\beta t}\ \le\ \|S_\beta(t)\|_{L_\Phi\to L_\Phi}
\ \le\ \min\bigl\{1,2^{d/2}e^{-\lambda_\beta t}\bigr\},
\qquad t\ge0 ,
$$
so that the operator norm is determined up to the factor $2^{d/2}$. On
$L^2$, the spectral theorem gives $\|S_\beta(t)\|_{L^2\to L^2}=e^{-\lambda_\beta t}$
exactly. For short times, the profile $\Phi^{-1}(t^{-d/(2\beta)})$ is sharp in
the case $\beta=1$, $\Phi(x)=x$ (that is, from $L^1$ to $L^\infty$), by the
computation of $\Kb_t(0,0)$ above. We do not address the optimality of the
short-time factor for general pairs of Orlicz spaces.
\end{Remark}

Recall that $\ell$ denotes the Lipschitz modulus of $f$, see
\eqref{eq:lipschitzmodulus}.

\begin{mainthm}[outline; Theorem~\ref{thm:local},
Corollary~\ref{thm:global} and Theorem~\ref{thm:confined}]
Let $\Phi\in\Nf$ and let $f$ be locally Lipschitz with $f(0)=0$. Assume that
there exists $\lambda>0$ such that
\begin{equation}\label{eq:IC-infty}
\int_1^\infty x^{-p_\ast}\,\ell\bigl(\lambda\,\Phi^{-1}(x)\bigr)\dd x<\infty .
\end{equation}
Then \eqref{eq:main} is locally well posed in $M_\Phi$: solutions exist on a
common interval for data in compact subsets of $M_\Phi$, they are unique in
the class of bounded $M_\Phi$-mild solutions, they depend continuously on the
data, and they are ordered if $f$ is increasing. If
\begin{equation}\label{eq:IC-zero}
\int_0^\infty x^{-p_\ast}\,\ell\bigl(\lambda\,\Phi^{-1}(x)\bigr)\dd x
<\frac{d}{2\beta},
\end{equation}
then small data give global solutions. Finally, if \eqref{eq:IC-infty}
holds and $\int_0^1\ell(r)\,\dd r/r<\infty$, then small data give global
solutions which decay like $e^{-\lambda_\beta t}$ in $L^\infty$ and in
$L_\Phi$.
\end{mainthm}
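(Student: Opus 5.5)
We prove Theorem~B by a fixed-point argument for the Duhamel formulation
$$
u(t)=S_\beta(t)u_0+\int_0^t S_\beta(t-s)f(u(s))\dd s ,
$$
working in a space of functions that are bounded for positive times and carry an $L^\infty$ weight.

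\emph{Local theory.} Given $u_0$ in a compact set $\mathcal K\subset M_\Phi$, Theorem~A gives
$$
\|S_\beta(t)u_0\|_{L^\infty}\le C\,\Phi^{-1}\bigl(t^{-d/(2\beta)}\bigr)\|u_0\|_{L_\Phi}.
$$
This singular factor is too large to close a contraction by itself, so we use compactness. We approximate $u_0$ by a bounded function $v_0\in C_0^\infty$ and keep a remainder $w_0=u_0-v_0$ with $\|w_0\|_{L_\Phi}\le\varepsilon$. By total boundedness of $\mathcal K$, finitely many $v_0$ suffice. This yields the uniform bound
$$
\|S_\beta(t)u_0\|_{L^\infty}\le \|v_0\|_\infty+C\varepsilon\,\Phi^{-1}\bigl(t^{-d/(2\beta)}\bigr).
$$
We then iterate in the complete metric space
$$
X_T=\Bigl\{u:\ \sup_{0<t<T}\frac{\|u(t)\|_\infty}{M+2C\varepsilon\Phi^{-1}(t^{-d/(2\beta)})}\le1,\ \ \sup_{t<T}\|u(t)\|_{L_\Phi}<\infty\Bigr\}.
$$
We estimate $f(u)$ through the Lipschitz modulus, $|f(u)-f(v)|\le \ell(\max(|u|,|v|))|u-v|$, and use the contraction of $S_\beta$ on $L^\infty$ and on $L_\Phi$. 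The nonlinear term is then bounded by
$$
\int_0^t \ell\bigl(M+2C\varepsilon\Phi^{-1}(s^{-d/(2\beta)})\bigr)\dd s\;\cdot\;\sup\|u-v\| .
$$
The substitution $x=s^{-d/(2\beta)}$ turns this time integral into
$$
\frac{2\beta}{d}\int_{t^{-d/(2\beta)}}^\infty x^{-p_\ast}\,\ell(\cdot)\dd x .
$$
Choosing $2C\varepsilon\le\lambda$ and using $\ell(a+b)\le\ell(2a)+\ell(2b)$ (since $\ell$ is nondecreasing), condition \eqref{eq:IC-infty} makes this tail small for $T$ small, uniformly over $\mathcal K$. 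The contraction then closes, giving a common existence time. Membership in $M_\Phi$ and continuity in time follow from the $C_0$-property of $S_\beta$ on $M_\Phi$ (Theorem~A). Continuous dependence comes from the same Lipschitz estimate. Uniqueness among bounded $M_\Phi$-mild solutions is a Gronwall argument using $\ell$ on bounded sets. Order preservation for increasing $f$ follows because the Picard iterates preserve order, as $S_\beta$ is positive.

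\emph{Global small data.} Under \eqref{eq:IC-zero} we use the weight $\eta\,\Phi^{-1}(t^{-d/(2\beta)})$ on the whole half-line $(0,\infty)$. Here $\|u_0\|_{L_\Phi}\le\eta$ is small and there is no $v_0$ part. The same substitution now gives the full integral
$$
\frac{2\beta}{d}\int_0^\infty x^{-p_\ast}\ell\bigl(\lambda\Phi^{-1}(x)\bigr)\dd x,
$$
which is $<1$ by \eqref{eq:IC-zero}. The mapping then sends the ball into itself once the constants are tracked, the contraction constant $1$ of Theorem~A giving the needed room.

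\emph{Confinement.} For the decay statement we split time at $t=1$. On $(0,1]$ we use the local argument with small data and \eqref{eq:IC-infty}. For $t\ge1$ we work in the weighted norm
$$
\sup_{t\ge1} e^{\lambda_\beta t}\|u(t)\|_\infty .
$$
The Duhamel integral over $[1,t]$ is controlled by
$$
\int_1^t C_\theta e^{-(1-\theta)\lambda_\beta(t-s)}\,\ell\bigl(\delta e^{-\lambda_\beta s}\bigr)\,\delta e^{-\lambda_\beta s}\dd s .
$$
The Dini condition gives
$$
\int_1^\infty \ell\bigl(\delta e^{-\lambda_\beta s}\bigr)\dd s=\frac1{\lambda_\beta}\int_0^{\delta e^{-\lambda_\beta}}\frac{\ell(r)}{r}\dd r\to0 \quad\text{as }\delta\to0 .
$$
\textbf{This is the main obstacle}: obtaining the sharp rate $e^{-\lambda_\beta t}$ rather than $e^{-(1-\theta)\lambda_\beta t}$. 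My first attempt is a multiplicative Gronwall argument on $e^{\lambda_\beta t}u$, using the sharp bound $\|S_\beta(t)\|\le 2^{d/2}e^{-\lambda_\beta t}$ on both $L^\infty$ and $L_\Phi$. For the sharp bound on $L^\infty$, I expect to prove the corresponding kernel-mass bound $\int \Kb^\beta_t(x,y)\dd y\le 2^{d/2}e^{-\lambda_\beta t}$. The resulting estimate is
$$
e^{\lambda_\beta t}\|u(t)\|\le 2^{d/2}\|u(1)\|+2^{d/2}\int_1^t \ell\bigl(\|u(s)\|\bigr)\,e^{\lambda_\beta s}\|u(s)\|\dd s .
$$
Gronwall then bounds $e^{\lambda_\beta t}\|u(t)\|$ by $2^{d/2}\|u(1)\|\exp\bigl(2^{d/2}\int_1^\infty\ell(\cdot)\dd s\bigr)$, which is finite by the Dini condition under a bootstrap smallness assumption. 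This removes the $\theta$-loss.
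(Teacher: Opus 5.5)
Your proposal has a genuine gap in the uniqueness step; the existence parts take a different route from the paper that can be made to work with more care.

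\textbf{The gap: uniqueness.} A bounded $M_\Phi$-mild solution is only required to lie in $L^\infty_{\rm loc}((0,T),L^\infty)$. The competitor $v$ is therefore bounded on $\R^d\times(a,b)$ for each $a>0$, but nothing controls $\|v(t)\|_\infty$ as $t\downarrow0$, and no rate such as \eqref{eq:smallrate} is assumed for it. Consequently the Gronwall inequality
$\|u(t)-v(t)\|_\Phi\le\int_0^t\ell\bigl(\max\{\|u(s)\|_\infty,\|v(s)\|_\infty\}\bigr)\|u(s)-v(s)\|_\Phi\dd s$
cannot be started at $t=0$: the function $s\mapsto\ell(\|v(s)\|_\infty)$ need not be integrable near $0$. ``Gronwall using $\ell$ on bounded sets'' only propagates equality forward from a positive time at which the two solutions already agree. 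The missing idea is the Brezis--Cazenave restart argument used in the paper.
\begin{enumerate}
\item Fix $T_1<T$ and observe that $\mathcal K'=v([0,T_1])$ is compact in $M_\Phi$.
\item Restart at $s>0$ from $v(s)\in\mathcal K'\cap L^\infty$.
\item Identify $v(s+\cdot)$ with the constructed solution $u(\cdot;v(s))$ on an interval of length $T_{\mathcal K'}$, which does not depend on $s$. Both functions are bounded there, so $L^\infty$-Gronwall applies.
\item Let $s\downarrow0$, using the continuity of $v$ in $M_\Phi$ and continuous dependence uniform on $\mathcal K'$.
\end{enumerate}
Without this, neither the maximal solution nor the blow-up alternative is established. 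Your confinement bootstrap also relies on that blow-up alternative.

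\textbf{Comparison with the paper's construction.} Your weighted-$L^\infty$ scheme, which splits $u_0=v_0+w_0$ by compactness, differs from the paper's. The paper works in the set of functions dominated pointwise by the barrier $w(t)=2S_\beta(t)|u_0|$ (respectively $A\,S_\beta(t)|u_0|$), with the metric $\esssup_t\|u(t)-v(t)\|_{L_\Phi}$. The exact identity $S_\beta(t-s)w(s)=w(t)$ then gives $|\mathcal F(u)(t)|\le w(t)/2+w(t)I(t)$ with no loss of constants. Uniformity over $\mathcal K$ comes from Corollary~\ref{cor:Linftyzero}, and the threshold $I_0<1$ comes out exactly. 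In Theorem~\ref{thm:confined}, the sharp rate $e^{-\lambda_\beta t}$ is inherited for free from $|u(t)|\le2S_\beta(t)|u_0|$ and \eqref{eq:largetime-sharp}. So the ``main obstacle'' you identify never arises there.

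\textbf{What your route needs.} Your weight decreases in $t$. Hence the bound $\int_0^t\ell(\cdot)\,\mathrm{(weight\ at\ }s)\dd s$ coming from $L^\infty$-contractivity alone is not dominated by a multiple of the weight at $t$. You must split the Duhamel integral at $\theta t$ and use the $L_\Phi\to L^\infty$ smoothing on $(0,\theta t)$. To reach exactly $I_0<1$, you also need $\theta^{-d/(2\beta)}I_0<1$ and data much smaller than $\lambda$. The phrase ``the contraction constant $1$ gives the needed room'' does not by itself deliver this.

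\textbf{Confinement step.} Your Gronwall-and-bootstrap argument can be closed. However, the sharp mass bound \eqref{eq:massbeta} already gives the direct estimate $2^{d/2}\delta e^{-\lambda_\beta t}\int_1^t\ell(\delta e^{-\lambda_\beta s})\dd s$ for the Duhamel term, with no $\theta$-loss and no bootstrap.
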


\emph{On the assumption $f(0)=0$.} This normalisation is used in an essential
way. First, it makes $u\equiv0$ a solution, around which the global results
are perturbative statements. Second, it gives the bound $|f(u)|\le\ell(s)|u|$
for $|u|\le s$ in \eqref{eq:lipbound}, which shows that $f(u)$ belongs to
$M_\Phi$ whenever $u\in M_\Phi\cap L^\infty$. This bound is what makes the
barrier $w=2S_\beta(t)|u_0|$ invariant under the Duhamel map in the proof of
Theorem~\ref{thm:local}. If $f(0)\ne0$, then $u\equiv0$ is no longer a
solution, and the Duhamel term generated by the constant source $f(0)$ is not
controlled by the initial data. For instance, for the free semigroup ($\varrho=0$),
which preserves constants, a nonzero constant does not belong to any Orlicz
space $L_\Phi$ with $\Phi$ positive. A different functional framework would
then be needed, and we do not pursue this here.

The dichotomy between \eqref{eq:IC-infty} and \eqref{eq:IC-zero} reflects the
familiar fact that local existence is governed by $f$ at infinity, whereas
global existence also feels $f$ near zero. For $\beta=1$, condition
\eqref{eq:IC-zero} relaxes the condition
$\int_0^\infty x^{-p_\ast}\ell(\lambda\Phi^{-1}(x))\dd x<n/4$ of
\cite[Corollary B]{Laister} to $<n/2$. This improvement comes from the
contraction property, and it is valid also for $\varrho=0$. The last statement
of Theorem~B has no counterpart for the free equation; it is the announced
confinement effect. For $f(u)=|u|^{m-1}u$, it yields global solutions for small
data for every $m>1$, whereas for $\varrho=0$ all nonnegative nontrivial
solutions blow up when $1<m\le p_\ast$
\cite{Fujita1966,Hayakawa1973,Sugitani1975}.

Following \cite{Laister}, we impose on $f$ a convexity assumption
$(\mathrm{C})$, a Lipschitz-majorant assumption $(\Lambda)$ and a growth
assumption $(\mathrm{G})$, recalled in Section \ref{sec:results}. The last of
these requires $(\log f)'$ to be regularly varying of index $\rho-1$ with
$\rho\in(0,\infty]$; the value $\rho=\infty$ corresponds to rapid variation,
and thus to arbitrarily fast growth.

\begin{mainthm}[outline; Theorem~\ref{thm:critical}]
Let $f$ satisfy $(\mathrm{C})$, $(\Lambda)$ and $(\mathrm{G})$. If
$\Phi\in\Nf$ and $\Phi\gtrsim f$ at infinity, then \eqref{eq:main} is locally
well posed in $M_\Phi$. Conversely, if $\Phi\in\Y$ and $\Phi\lesssim f$ at
infinity, then there exists a nonnegative $u_0\in L_\Phi$ for which
\eqref{eq:main} admits no nonnegative integral solution on $\R^d\times(0,T)$
for any $T>0$.
\end{mainthm}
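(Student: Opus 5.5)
The statement has two halves, and I will treat them separately. The positive half reduces to Theorem~B, via the integral condition \eqref{eq:IC-infty}. The negative half is a direct construction using the singular profile $F^{-1}(\gamma|x|^{2\beta})$.

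\emph{Existence.} Suppose $\Phi\gtrsim f$ at infinity. Then there is $K>0$ with $\Phi(x)\ge f(Kx)$ for large $x$, hence $\Phi^{-1}(y)\le K f^{-1}(y)$ for large $y$. I will use $(\Lambda)$ to replace the Lipschitz modulus by a majorant comparable to $f'$. Assumption $(\mathrm{C})$ (convexity) then gives $\ell(s)\lesssim f'(cs)$ and $f'(s)\le f(2s)/s$. Choosing $\lambda$ small relative to $K$, the integrand of \eqref{eq:IC-infty} is controlled by $x^{-p_\ast}f'(\mu f^{-1}(x))$ for some $\mu<1$. Substituting $x=f(s)$ turns the integral into $\int^\infty f(s)^{-p_\ast}f'(\mu s)f'(s)\,\dd s$. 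Here I will use $(\mathrm{G})$: regular variation of $(\log f)'$ of index $\rho-1$, including the rapid case $\rho=\infty$. It should give $f'(\mu s)f'(s)\le f(s)^{1+\varepsilon}$ for some $\varepsilon<p_\ast-1=2\beta/d$, because $f(\mu s)/f(s)$ decays faster than any power of $f(s)$ when $\mu<1$. This makes the integral converge. I will adapt the corresponding lemma of \cite{Laister} with $1+2/n$ replaced by $p_\ast$, and the exponent bookkeeping is the only change. Theorem~\ref{thm:local} then yields local well-posedness in $M_\Phi$.

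\emph{Nonexistence.} Suppose $\Phi\lesssim f$, that is, $\Phi(x)\le f(Kx)$ for large $x$. I take $u_0=F^{-1}(\gamma|x|^{2\beta})$ near the origin, cut off outside the unit ball, with $F(u)=\int_u^\infty \dd s/f(s)$.

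\textbf{Step 1 (membership).} Show $u_0\in L_\Phi$ for a suitable $\gamma$. It suffices to check that $\int_{B_1}\Phi(u_0/k)\,\dd x<\infty$ for some $k>K$, and this reduces to $\int_{B_1} f(u_0/(kK))\,\dd x<\infty$. In polar coordinates set $v=F^{-1}(\gamma r^{2\beta})$. Using $(\mathrm{G})$ to compare $f(v/c)$ with $f(v)^{\delta}$ for some $\delta<1$, together with $F(v)\gtrsim v/f(v)$, the integral becomes $\int f(v/c)\,F(v)^{d/(2\beta)-1}\,\dd v/f(v)$, which converges.

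\textbf{Step 2 (nonexistence).} Show that no nonnegative integral solution exists. Let $u$ be such a solution on $(0,T)$. By Duhamel and positivity, $u\ge S_\beta(t)u_0$. The subordinated Mehler kernel is bounded below by $c\,t^{-d/(2\beta)}$ on $|x-y|\le t^{1/(2\beta)}$ for small $t$, since the potential is a lower-order perturbation locally. Hence $u(x,t)\ge c\,u_0(\,\cdot\,)$ averaged over a ball of radius $\sim t^{1/(2\beta)}$, which is at least $c'F^{-1}(C\gamma t)$. I then use the standard necessary condition for existence (as in \cite{Fujishima2024,Laister}), obtained from the supersolution inequality $u\ge S_\beta(t)u_0+\int_0^t S_\beta(t-s)f(u(s))\,\dd s$: one needs $\int_0^{t} f\bigl(S_\beta(s)u_0\bigr)\,\dd s$ to be controlled, which roughly forces $F\bigl(S_\beta(t)u_0(x)\bigr)\ge t$. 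At $x$ near the origin the lower bound gives $F(S_\beta(t)u_0)\le C\gamma t$. This contradicts the condition once $\gamma$ is below an explicit threshold $\gamma_\ast$, uniformly in small $t$ and for every $T$. The comparison step uses convexity $(\mathrm{C})$, via Jensen applied to $F$.

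The main obstacle is reconciling the two steps: $\gamma$ must be small enough for nonexistence, yet $u_0$ must still lie in $L_\Phi$ when $\Phi\lesssim f$. This tension is resolved only by the scale invariance of $L_\Phi$ membership, since $\Phi(u_0/k)$ can be made integrable for large $k$ independently of $\gamma$. That in turn relies on the growth condition $(\mathrm{G})$ giving $f(v/k)=o(f(v)^{\delta})$. I would verify this first, separately for $\rho<\infty$ and for $\rho=\infty$.
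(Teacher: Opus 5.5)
Your existence half essentially follows the paper's Lemma~\ref{lem:ICinf}: substitute $x=f(s)$ in \eqref{eq:IC-infty} and use $(\mathrm G)$. Step~2 of the nonexistence half, however, has a genuine gap.

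A pointwise lower bound $\Kb^\beta_t(0,y)\ge c\,t^{-d/(2\beta)}$ on a ball of radius $\sim t^{1/(2\beta)}$ captures only a fixed fraction $c'<1$ of the kernel's mass. So at best it gives $[S_\beta(t)u_0](0)\ge c'F^{-1}(C\gamma t)$. This does not imply $F([S_\beta(t)u_0](0))\le C\gamma t$. By Lemma~\ref{lem:Fprops}(i), $F$ is rapidly varying of index $-\infty$, so $F(c'w)/F(w)\to\infty$ as $w\to\infty$ for every fixed $c'<1$. Concretely, for $f(u)=e^u-1$ one has $F(u)=-\log(1-e^{-u})\sim e^{-u}$ and $F(c'F^{-1}(\sigma))\approx\sigma^{c'}\gg\sigma$. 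Your bound then only yields $F([S_\beta(t)u_0](0))\lesssim(C\gamma t)^{c'}$. That is compatible with the necessary condition $F([S_\beta(t)u_0](0))\ge t$ for every $\gamma$, so no threshold appears.

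The missing idea is that the prefactor must tend to $1$ at the rate $1-O(t)$. The paper gets this by applying Jensen's inequality to the \emph{whole} measure $\nu_t=\Kb^\beta_t(0,\cdot)\chi_{B_R}\,\dd y$, whose mass satisfies $m(t)\ge1-C_Rt$ by \eqref{eq:massdefect}.
\begin{itemize}
\item The data are written as $g_q(\gamma^{1/q}|x|^\alpha)\chi_{B_R}$, with $g_q(\sigma)=F^{-1}(\sigma^q)$, $q=2\beta/\alpha$ and $\alpha<2\beta$; $g_q$ is convex near $0$ by Lemma~\ref{lem:Fprops}(ii).
\item Jensen together with the finite $\alpha$-moment of $p_\beta$ gives $[S_\beta(t)u_0](0)\ge m(t)F^{-1}(c_1\gamma t)$.
\item The factor $m(t)$ is then absorbed, $m(t)F^{-1}(c_1\gamma t)\ge F^{-1}(c_2\gamma t)$ for any $c_2>c_1$. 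This works precisely because $1-m(t)=O(t)$ while $f(W)/W\to\infty$.
\end{itemize}
This also shows why Jensen applied to $F$ itself (that is, to $|y|^{2\beta}$) fails for $\beta<1$. The $2\beta$-moment of $p_\beta$ is infinite, and after truncation to $B_R$ the available bound is of order $t\log(1/t)$ rather than $t$. Hence the need for $\alpha<2\beta$.

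The remaining issues are repairable.
\begin{itemize}
\item In the existence half, $f(\mu s)/f(s)$ does not decay faster than every power of $f(s)$ when $\rho<\infty$: for $f=e^{s^\rho}$ one has $f(\mu s)=f(s)^{\mu^\rho}$. What holds is $\log f(\mu s)/\log f(s)\to\mu^\rho$, so $\mu$ must be taken small. You also need $f'\le f^{1+\varepsilon}$, which is not automatic but follows from $f'F\to1$. The paper handles both at once through the differential inequality for $h=[f^{p_\ast}F(y)F(\bar\lambda y)]^{-1}$.
\item In Step~1, the inequality $F(v)\gtrsim v/f(v)$ is backwards, since $F\sim1/f'\le v/f$ by \eqref{eq:limitone} and convexity. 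For $\nu=\frac{d}{2\beta}-1\ge0$ you only need $F\to0$. For $\nu<0$ the correct input is the lower bound $F\ge(1-\varepsilon)/f'$ from \eqref{eq:RV1}.
\end{itemize}
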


Thus $\Phi_c(u):=f(u)$ is a critical Young's function for \eqref{eq:main},
exactly as in the free case \cite[Theorem C]{Laister}. The nonexistence half
rests on two ingredients, which we prove in full. The first is a necessary
condition for local solvability: $F\bigl(S_\beta(t)u_0\bigr)\ge t$ whenever a
nonnegative integral solution exists on $(0,T)$. The second is a lower bound
for $S_\beta(t)u_0$ when $u_0$ is the singular profile
$$
u_0(x)=F^{-1}\bigl(\gamma|x|^{2\beta}\bigr)\chi_{B_R}(x),
$$
We deduce nonexistence for every $\gamma$ below an explicit threshold
$\gamma_0(d,\beta)$ (Lemma~\ref{L7.1}). In \cite{Laister}, the corresponding
step is imported from \cite[Corollary 1.1, Theorem 1.1]{Fujishima2024}. The
exponent $2\beta$ is dictated by the self-similarity of the free fractional
heat kernel, and our argument uses the fact that the $\beta$-stable
subordinator has finite moments exactly of order less than $\beta$.

Table~\ref{tab:comparison} compares the present results with those of
\cite{Laister}.

\begin{table}[ht]
\centering
\small
\renewcommand{\arraystretch}{1.3}
\begin{tabular}{>{\raggedright}p{4.3cm} >{\raggedright}p{4.9cm}
>{\raggedright\arraybackslash}p{5.3cm}}
\toprule
 & Free heat equation \cite{Laister} ($\beta=1$, $\varrho=0$, $\R^n$) &
 Present paper ($0<\beta\le1$, $\varrho>0$, $\R^d$)\\
\midrule
Kernel & Gaussian $G(x-y,t)$ (convolution) & subordinated Mehler kernel
$\Kb^\beta_t(x,y)\le p_\beta(x-y,t)$\\
Bound of $S(t)$ on $L_\Phi$ & $2$ & $\min\{1,2^{d/2}e^{-\lambda_\beta t}\}$\\
Constant in $L_\Phi\to L_\Psi$ smoothing & depends on $n,\Phi,\Psi$ &
depends on $d,\beta,b$ only\\
Short-time scale & $t^{-n/2}$ & $t^{-d/(2\beta)}$\\
Exponent in the integral conditions & $1+2/n$ & $p_\ast=1+2\beta/d$\\
Global condition \eqref{eq:IC-zero} & $\int_0^\infty<n/4$ & $\int_0^\infty
<d/(2\beta)$\\
Large-time decay of small solutions & $\lambda\Phi^{-1}(t^{-n/2})$ &
$e^{-\lambda_\beta t}$ (sharp rate)\\
Small-data GWP without conditions on $\Phi$ near $0$ & not available &
under a Dini condition on $f$ at $0$ (Theorem~\ref{thm:confined})\\
Critical Young function & $\Phi_c=f$ & $\Phi_c=f$\\
Nonexistence data & $F^{-1}(\gamma|x|^2)\chi_{B_R}$, via
\cite{Fujishima2024} & $F^{-1}(\gamma|x|^{2\beta})\chi_{B_R}$, proved here,
explicit $\gamma_0$\\
Uniqueness of mild solutions & if $\Phi\gtrsim f^r$ near $0$, $r>n/2$ & if
$\Phi\gtrsim f^r$ near $0$, $r>d/(2\beta)$\\
\bottomrule
\end{tabular}
\caption{Comparison with the free case treated in \cite{Laister}.}
\label{tab:comparison}
\end{table}

\subsection{Organisation of the paper}
Section~\ref{sec:prelim} collects the preliminary material: Orlicz and
Morse--Transue spaces, regular variation, the fractional harmonic oscillator,
its Mehler kernel and Bochner subordination, the relevant properties of the
stable subordinator, and the solution concepts. The main results are stated
in Section~\ref{sec:results}. Section~\ref{sec:kernel} establishes the
pointwise, mass and Orlicz-norm bounds for the kernel of $S_\beta(t)$. These
bounds are used in Section~\ref{sec:proofA} to prove the smoothing estimates,
and in Section~\ref{sec:proofB} to prove the well-posedness results.
Section~\ref{sec:nonexistence} develops the nonexistence theory and proves the
results on critical Young's functions. Section~\ref{sec:examples} illustrates
the theory on concrete nonlinearities, and Section~\ref{sec:open} lists some
open problems.

\section{Preliminaries}\label{sec:prelim}

Throughout, $B_r$ denotes the open ball of radius $r>0$ centred at the origin
in $\R^d$, $\chi_E$ is the characteristic function of a measurable set
$E\subseteq\R^d$, and $\chi_r:=\chi_{B_r}$. All function spaces are over
$\R^d$, and this is suppressed from the notation, so that $L^q:=L^q(\R^d)$,
$L_\Phi:=L_\Phi(\R^d)$, and so forth. We write $C$ for a generic positive
constant, whose value may change from line to line, and we indicate its
dependence on parameters where this matters. ``Increasing'' means
nondecreasing, and ``decreasing'' means nonincreasing. For nonnegative $g,h$
defined near infinity, we write $g\lesssim h$ if there exist $K,x_0>0$ with
$g(x)\le h(Kx)$ for $x\ge x_0$; $g\gtrsim h$ means $h\lesssim g$, and $g\approx
h$ means that both relations hold. We write ``$g\lesssim h$ near $0$'' if
$g(x)\le h(Kx)$ for $0\le x\le x_0$. We say that $g$ is \emph{asymptotic} to
$h$, and write $g\sim h$, if $g(x)/h(x)\to1$. For a function $u$ on
$Q_T:=\R^d\times(0,T)$, we write $u(t)=u(\cdot,t)$; identities between
functions on $Q_T$ are understood almost everywhere unless stated otherwise.

\subsection{Orlicz and Morse--Transue spaces}

We follow the conventions of \cite[Section 2.1]{Laister}, in which
extended-real-valued Young's functions are permitted. This is precisely what
allows $L^\infty$ to be realised as an Orlicz space. As a consequence, the
$L^\infty$-smoothing estimate becomes a particular case of the
Orlicz-to-Orlicz one.
General references for Orlicz spaces are
\cite{RR1,RR,HH2019,KR,BS,Adams,Maligranda1989}.

\begin{Definition}[Young's function and Orlicz space]\label{def:orlicz}
A function $\Phi:[0,\infty]\to[0,\infty]$ with $\Phi(0)=0$ is a
\emph{Young's function} if one of the following holds:
\begin{enumerate}[label=\textup{(\Roman*)}]
\item $\Phi$ is convex, increasing, finite-valued and not identically zero on
$[0,\infty)$;
\item there exists $x_\infty^\Phi>0$ such that $\Phi$ is convex, increasing
and finite-valued on $[0,x_\infty^\Phi]$ and $\Phi(x)=\infty$ for
$x>x_\infty^\Phi$;
\item there exists $x_\infty^\Phi>0$ such that $\Phi$ is convex, increasing
and finite-valued on $[0,x_\infty^\Phi)$, $\lim_{x\uparrow
x_\infty^\Phi}\Phi(x)=\infty$ and $\Phi(x)=\infty$ for $x\ge x_\infty^\Phi$.
\end{enumerate}
The set of Young's functions is denoted by $\Y$. We say that $\Phi\in\Y$ is
\emph{finite} if it satisfies \textup{(I)}, and \emph{positive} if $\Phi>0$ on
$(0,\infty)$.

The \emph{Orlicz space} $L_\Phi$ is the set of measurable $u:\R^d\to\R$ such
that $\int_{\R^d}\Phi(|u(x)|/k)\dd x<\infty$ for some $k>0$, normed by the
Luxemburg norm
$$
\|u\|_{L_\Phi}
:=\inf\Bigl\{k>0:\ \int_{\R^d}\Phi\Bigl(\frac{|u(x)|}{k}\Bigr)\dd x\le 1\Bigr\}.
$$
We write $L_\Phi^0$ for the closure of $C_0^\infty(\R^d)$ in $L_\Phi$.
\end{Definition}

\begin{Definition}[$N$-function and Morse--Transue space]\label{def:MT}
A Young's function $\Phi$ is an \emph{$N$-function} if it is finite and
positive and satisfies
$$
\lim_{x\to0}\frac{\Phi(x)}{x}=\lim_{x\to\infty}\frac{x}{\Phi(x)}=0 .
$$
The set of $N$-functions is denoted by $\Nf$. The \emph{Morse--Transue space}
\cite{MorseTransue1950} is
$$
M_\Phi:=\Bigl\{u\ \text{measurable}:\
\int_{\R^d}\Phi\Bigl(\frac{|u(x)|}{k}\Bigr)\dd x<\infty\ \text{for every }k>0
\Bigr\},
$$
equipped with the norm of $L_\Phi$.
\end{Definition}

\begin{Definition}[Generalised inverse]\label{def:inverse}
For $\Phi\in\Y$ the \emph{generalised inverse}
$\Phi^{-1}:[0,\infty]\to[0,\infty]$ is defined by
$$
\Phi^{-1}(y):=\inf\{x> 0:\ \Phi(x)>y\},\qquad y\ge 0,
$$
with $\inf\emptyset=\infty$.
\end{Definition}

We shall use the following elementary facts, for which we refer to
\cite[Remark 2.2]{Laister} and \cite[Property 1.3]{RR1}. The function
$\Phi^{-1}$ is finite-valued, increasing and right-continuous on $[0,\infty)$,
with $\Phi^{-1}(x)>0$ for $x>0$. It is unbounded if and only if $\Phi$ is
finite. If $\Phi$ is finite and positive, then $\Phi$ is a bijection of
$[0,\infty)$ and $\Phi^{-1}$ is its inverse. Moreover,
\begin{equation}\label{eq:22}
\Phi\bigl(\Phi^{-1}(x)\bigr)\le x,\qquad \Phi^{-1}\bigl(\Phi(x)\bigr)\ge x,
\qquad x\in[0,\infty).
\end{equation}
In particular, if $\Phi$ is positive, then $\Phi^{-1}(0)=0$ and, by right
continuity, $\Phi^{-1}(x)\to0$ as $x\downarrow0$. The Young complement of
$\Phi\in\Y$ is $\Phi^{*}(x):=\sup_{y\ge0}\bigl(xy-\Phi(y)\bigr)$. It belongs to
$\Y$ and satisfies \cite[Property 1.6]{RR1}, \cite[(2.4)]{Laister}
\begin{equation}\label{eq:24}
x\ \le\ \Phi^{-1}(x)\,(\Phi^{*})^{-1}(x)\ \le\ 2x,\qquad x\ge 0 .
\end{equation}
From Young's inequality $xy\le\Phi(x)+\Phi^*(y)$, we obtain the generalised
H\"older inequality
$$
\int_{\R^d}|uv|\dd x\le 2\|u\|_{L_\Phi}\|v\|_{L_{\Phi^*}} .
$$

\begin{Lemma}[{\cite[Lemma 4.1]{Laister}}]\label{lem:concave}
If $\Phi\in\Y$ then $\Phi^{-1}$ is concave on $[0,\infty)$ and $x/\Phi^{-1}(x)$
is increasing on $(0,\infty)$. In particular
\begin{equation}\label{eq:dilate}
\Phi^{-1}(x)\ \le\ \Phi^{-1}(c x)\ \le\ c\,\Phi^{-1}(x),
\qquad x\ge0,\ c\ge1 .
\end{equation}
\end{Lemma}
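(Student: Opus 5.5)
The plan is to work directly from Definition~\ref{def:inverse}, $\Phi^{-1}(y)=\inf\{x>0:\Phi(x)>y\}$, and to split the argument into three steps.

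\emph{Step 1: concavity.} I will first show that $\Phi^{-1}$ is concave on $[0,\infty)$, arguing through the level sets. Fix $y_1,y_2\ge0$, $\mu\in[0,1]$, and set $x_i:=\Phi^{-1}(y_i)$; these are finite, since $\Phi^{-1}$ is finite-valued. Let $x<\mu x_1+(1-\mu)x_2$. Write $x=\mu a_1+(1-\mu)a_2$ with $a_i<x_i$ whenever $x_i>0$. By the definition of the infimum, $\Phi(a_i)\le y_i$, where we note that $\Phi(z)\le y$ for every $z<\Phi^{-1}(y)$. Convexity of $\Phi$ on $[0,\infty]$ then gives
\[
\Phi(x)\le\mu\Phi(a_1)+(1-\mu)\Phi(a_2)\le \mu y_1+(1-\mu)y_2 .
\]
The case $\Phi=\infty$ does not arise, because the $a_i$ lie where $\Phi$ is finite. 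Hence $x\le\Phi^{-1}(\mu y_1+(1-\mu)y_2)$. Letting $x\uparrow\mu x_1+(1-\mu)x_2$ yields concavity.

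The degenerate endpoint cases need some care. If some $x_i=0$, take $a_i=0$; if $x=0$ the claim is trivial. One must also check that convexity of an extended-valued Young's function of types (II) and (III) is used legitimately, that is, only on the interval where $\Phi$ is finite. I expect this bookkeeping to be the only real obstacle.

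\emph{Step 2: monotonicity of the quotient.} Next I will show that $x/\Phi^{-1}(x)$ is increasing. Since $\Phi^{-1}$ is concave with $\Phi^{-1}(0)\ge0$, for $0<x<y$ we have
\[
\Phi^{-1}(x)=\Phi^{-1}\Bigl(\tfrac{x}{y}\,y+\bigl(1-\tfrac{x}{y}\bigr)0\Bigr)\ge \tfrac{x}{y}\Phi^{-1}(y)+\bigl(1-\tfrac{x}{y}\bigr)\Phi^{-1}(0)\ge \tfrac{x}{y}\Phi^{-1}(y).
\]
Because $\Phi^{-1}(x)>0$ for $x>0$, this is exactly $x/\Phi^{-1}(x)\le y/\Phi^{-1}(y)$.

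\emph{Step 3: the dilation bounds.} Finally, \eqref{eq:dilate} follows. The left inequality holds because $\Phi^{-1}$ is increasing. For the right inequality, apply Step~2 with $y=cx$, $c\ge1$, $x>0$, which gives $\Phi^{-1}(cx)\le c\,\Phi^{-1}(x)$. The case $x=0$ is trivial.
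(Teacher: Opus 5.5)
Your proof is correct, and for the part the paper actually proves it follows the same argument. The paper does not prove concavity of $\Phi^{-1}$ or the monotonicity of $x/\Phi^{-1}(x)$: it cites \cite[Lemma 4.1]{Laister} for both. It then derives only the right inequality in \eqref{eq:dilate}, by writing $x=c^{-1}(cx)+(1-c^{-1})\cdot0$ and using concavity together with $\Phi^{-1}(0)\ge0$. Your Steps 2 and 3 are exactly that computation (with $c=y/x$).

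What you add is Step 1, a self-contained level-set proof of concavity directly from Definition~\ref{def:inverse}. This removes the dependence on the external reference. Your Step 2 also makes explicit that the monotonicity of $x/\Phi^{-1}(x)$ and the dilation bound both follow from concavity plus $\Phi^{-1}(0)\ge0$; they are not separate inputs.

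Three small points would tighten Step 1:
\begin{enumerate}
\item The explicit choice $a_i:=x_i\,x/(\mu x_1+(1-\mu)x_2)$ handles all endpoint cases at once. It gives $a_i=0$ when $x_i=0$ and $0\le a_i<x_i$ otherwise.
\item The passage from $\Phi(x)\le\mu y_1+(1-\mu)y_2$ to $x\le\Phi^{-1}(\mu y_1+(1-\mu)y_2)$ uses that $\Phi$ is increasing (equivalently, \eqref{eq:22} together with the monotonicity of $\Phi^{-1}$). You should say so.
\item For Young's functions of types (II)--(III), convexity is applied legitimately because the set where $\Phi$ is finite is an interval containing $0$, $a_1$ and $a_2$, hence also $x$.
\end{enumerate}
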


\begin{proof}
Concavity and the monotonicity of $x/\Phi^{-1}(x)$ are
\cite[Lemma 4.1]{Laister}. In \eqref{eq:dilate} the left inequality is the
monotonicity of $\Phi^{-1}$. For the right one, write
$x=c^{-1}(c x)+(1-c^{-1})\cdot0$; concavity of $\Phi^{-1}$ and
$\Phi^{-1}(0)\ge0$ give
$\Phi^{-1}(x)\ge c^{-1}\Phi^{-1}(c x)+(1-c^{-1})\Phi^{-1}(0)
\ge c^{-1}\Phi^{-1}(c x)$.
\end{proof}

The next lemma collects the structural properties of the Luxemburg norm that
will be used. All of them are elementary, and we include the short proofs for
completeness.

\begin{Lemma}\label{lem:orliczbasic}
Let $\Phi\in\Y$.
\begin{enumerate}[label=\textup{(\roman*)}]
\item \emph{(Lattice and Fatou properties.)} If $|u|\le|v|$ a.e.\ then
$\|u\|_{L_\Phi}\le\|v\|_{L_\Phi}$, and $v\in M_\Phi$ implies $u\in M_\Phi$. If
$u_j\to u$ a.e.\ then $\|u\|_{L_\Phi}\le\liminf_{j}\|u_j\|_{L_\Phi}$.
\item There exist $c>0$ and $C_0\ge0$ such that $\Phi(z)\ge cz-C_0$ for all
$z\ge0$; in particular $\Phi(z)\to\infty$ as $z\to\infty$.
\item If $\Phi$ is finite, then
$\|h\|_{L_\Phi}\le\max\{1,\Phi(1)\}\bigl(\|h\|_{L^1}+\|h\|_{L^\infty}\bigr)$ for
every $h\in L^1\cap L^\infty$, and $L^1\cap L^\infty\subseteq M_\Phi$.
\item $L_\Phi$ is a Banach space and $M_\Phi$ and $L^0_\Phi$ are closed
subspaces of it. If $\Phi$ is finite then $L^0_\Phi\subseteq M_\Phi$; if
$\Phi\in\Nf$ then $L^0_\Phi=M_\Phi$; if $\Phi\in\Nf$ satisfies the
$\Delta_2$-condition $\Phi(2x)\le C\Phi(x)$, $x\ge0$, then
$L^0_\Phi=M_\Phi=L_\Phi$.
\end{enumerate}
\end{Lemma}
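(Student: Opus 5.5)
I will prove the four items in order, using only the definition of the Luxemburg norm, convexity of $\Phi$ and monotone/dominated convergence.

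For (i), the lattice property is immediate: if $|u|\le|v|$, then since $\Phi$ is increasing, $\int\Phi(|u|/k)\le\int\Phi(|v|/k)$ for every $k>0$. Hence every admissible $k$ for $v$ is admissible for $u$, and finiteness for all $k$ passes from $v$ to $u$. For Fatou, take $k>\liminf_j\|u_j\|_{L_\Phi}$ and a subsequence with $\|u_{j}\|_{L_\Phi}<k$. This gives $\int\Phi(|u_j|/k)\le1$; here I use that the infimum defining the norm is attained in the sense that $\int\Phi(|u|/\|u\|)\le1$, itself a consequence of monotone convergence and the left-continuity of $\Phi$ where it is finite. Lower semicontinuity of $\Phi$ as a $[0,\infty]$-valued function holds in all three cases (I)--(III), because convex increasing functions are continuous on the interior of their domain and the value at $x_\infty^\Phi$ is handled by the definition. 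Then the classical Fatou lemma gives $\int\Phi(|u|/k)\le\liminf\int\Phi(|u_j|/k)\le1$.

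For (ii), pick $z_0$ with $0<\Phi(z_0)<\infty$, which is possible since $\Phi\not\equiv0$. Convexity with $\Phi(0)=0$ gives $\Phi(z)\ge(\Phi(z_0)/z_0)(z-z_0)$ for $z\ge z_0$ (the supporting-line argument). Taking $c=\Phi(z_0)/z_0$ and $C_0=\Phi(z_0)$ covers all $z\ge0$, since the right side is nonpositive for $z\le z_0$. Where $\Phi=\infty$ the inequality is trivial.

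For (iii), set $k=\|h\|_{L^1}+\|h\|_{L^\infty}$ and $A=\max\{1,\Phi(1)\}$. Since $|h|/k\le1$ a.e., convexity gives $\Phi(s)\le\Phi(1)s$ on $[0,1]$. Hence
$$\int\Phi\Bigl(\frac{|h|}{Ak}\Bigr)\le\frac{\Phi(1)}{Ak}\|h\|_{L^1}\le1,$$
so $\|h\|_{L_\Phi}\le Ak$. The same computation with $k$ replaced by any $\varepsilon>0$ shows that $\int\Phi(|h|/\varepsilon)\le\Phi(\|h\|_\infty/\varepsilon)\,\varepsilon^{-1}\|h\|_\infty^{-1}\cdots$ is finite. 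More simply, $\Phi(s)\le\Phi(M)s/M$ for $s\le M=\|h\|_\infty/\varepsilon$, which is finite because $\Phi$ is finite. Therefore $h\in M_\Phi$.

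For (iv), I expect completeness to be the main obstacle, though it is standard. I will use the Riesz--Fischer criterion: if $\sum\|u_j\|<\infty$, then the partial sums of $\sum|u_j|$ have norms bounded by $\sum\|u_j\|$. By Fatou (i), $g=\sum|u_j|\in L_\Phi$; by (ii), $g<\infty$ a.e.; and again by Fatou, the tails $\bigl\|\sum_{j>N}u_j\bigr\|\to0$. For closedness of $M_\Phi$, let $u_j\to u$ in norm with $u_j\in M_\Phi$, and fix $k>0$. Choose $j$ with $\|u-u_j\|<k/2$ and use convexity:
$$\Phi\Bigl(\frac{|u|}{k}\Bigr)\le\tfrac12\Phi\Bigl(\frac{2|u_j|}{k}\Bigr)+\tfrac12\Phi\Bigl(\frac{2|u-u_j|}{k}\Bigr).$$
Both integrals are finite, the second being at most $1$. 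The space $L^0_\Phi$ is closed by definition. If $\Phi$ is finite, then $C_0^\infty\subseteq L^1\cap L^\infty\subseteq M_\Phi$ by (iii), and closedness gives $L^0_\Phi\subseteq M_\Phi$. For $\Phi\in\Nf$ and $u\in M_\Phi$, I will truncate, $u_n=u\chi_{B_n}\chi_{\{|u|\le n\}}$. The difference satisfies $\int\Phi(|u-u_n|/k)\to0$ for each $k$ by dominated convergence with dominant $\Phi(|u|/k)$, so $u_n\to u$ in norm. Each bounded, compactly supported $u_n$ is then approximated by mollification in $L^1$ with a uniform sup bound, and (iii) converts this to norm convergence. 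Finally, under $\Delta_2$, $\int\Phi(|u|/k)<\infty$ for one $k$ propagates to all $k$ by iterating $\Phi(2x)\le C\Phi(x)$, so $L_\Phi=M_\Phi$.
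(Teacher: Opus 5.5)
Your proofs of (i) and (iii) follow the paper's argument. For (iv) you give a self-contained proof where the paper only cites the literature. Your proof of (ii), however, has a genuine gap.

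You choose $z_0$ with $0<\Phi(z_0)<\infty$ ``since $\Phi\not\equiv0$'', but such a point need not exist. A type (II) Young's function may vanish on all of $[0,x_\infty^\Phi]$, and then it takes only the values $0$ and $\infty$. This is exactly $\Phi^\infty$ (and $\Phi^\infty_2$), the Young's function realising $L^\infty$, and (ii) is genuinely needed for it: the paper applies Lemma~\ref{lem:minkowski} and Lemma~\ref{L5.1} to $\|\cdot\|_{L^\infty}$, and both proofs invoke (ii). The paper covers this with a separate first case. If $\Phi=0$ on $[0,x_\infty^\Phi)$, then $\Phi(z)\ge z-x_\infty^\Phi$ for all $z\ge0$, trivially: the right-hand side is nonpositive on $[0,x_\infty^\Phi]$, and $\Phi=\infty$ beyond. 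Outside this case your $z_0$ does exist, and your convexity argument is correct. Your completeness argument in (iv) uses (ii) to get $\sum_j|u_j|<\infty$ a.e., so it inherits the same omission for $\Phi^\infty$.

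Two smaller points.

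In (i), the appeal to attainment of the infimum is unnecessary. Since $\|u_j\|_{L_\Phi}<k$, there is an admissible $k'<k$, and monotonicity gives $\int\Phi(|u_j|/k)\le1$. The attainment claim is also not valid for every $\Phi\in\Y$ as defined here. Convexity on the closed interval $[0,x_\infty^\Phi]$ allows an upward jump at $x_\infty^\Phi$, which destroys left continuity, and also the lower semicontinuity you assert, at that point. You can bypass lower semicontinuity entirely: for $k''>k$, a.e.\ convergence and monotonicity give $\Phi(|u|/k'')\le\liminf_j\Phi(|u_j|/k)$ a.e. Fatou's lemma then yields $\|u\|_{L_\Phi}\le k''$.

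In (iv), the \emph{inequality} of (iii) does not turn $L^1$-convergence with a uniform sup bound into norm convergence. Mollifications of a discontinuous $u_n$ do not converge in $L^\infty$, so the $\|h\|_{L^\infty}$ term does not go to zero. What you need is the modular bound from your proof of (iii): if $|h|\le M$, then $\int\Phi(|h|/k)\le\Phi(M/k)\|h\|_{L^1}/M$, which tends to $0$ for each fixed $k>0$.

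With these repairs, your proof of (iv) is a complete substitute for the paper's citations. It in fact shows $M_\Phi=L^0_\Phi$ for every finite $\Phi$, not only for $N$-functions.
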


\begin{proof}
(i) The first two statements are immediate from the monotonicity of $\Phi$.
If $u_j\to u$ a.e.\ and $k>\liminf_j\|u_j\|_{L_\Phi}$, then along a subsequence
$\int\Phi(|u_j|/k)\le1$, and since $\Phi$ is increasing and lower
semicontinuous, Fatou's lemma gives $\int\Phi(|u|/k)\le1$.

(ii) If $\Phi=0$ on $[0,x_\infty^\Phi)$ then $\Phi(z)\ge z-x_\infty^\Phi$.
Otherwise there is $z_0>0$ with $0<\Phi(z_0)<\infty$, and convexity together
with $\Phi(0)=0$ gives $\Phi(z)\ge (z/z_0)\Phi(z_0)$ for $z\ge z_0$; hence
$\Phi(z)\ge c(z-z_0)$ with $c=\Phi(z_0)/z_0$.

(iii) Let $k:=\max\{1,\Phi(1)\}(\|h\|_{L^1}+\|h\|_{L^\infty})>0$. Then
$|h|/k\le1$ a.e., so by convexity
$\Phi(|h|/k)\le(|h|/k)\Phi(1)$ and
$\int\Phi(|h|/k)\le\Phi(1)\|h\|_{L^1}/k\le1$. The same computation with $k$
replaced by any $k'>0$, applied to $h/k'$, shows $h\in M_\Phi$.

(iv) These facts are \cite[Remark 2.1(a), Lemma 4.2]{Laister}; the equality
$L^0_\Phi=M_\Phi$ for $N$-functions relies on
\cite[Theorem 8.21(d)]{Adams}.
\end{proof}

We shall also need Minkowski's integral inequality for the Luxemburg norm.

\begin{Lemma}\label{lem:minkowski}
Let $\Phi\in\Y$, let $I\subseteq\R$ be an interval and let
$g:\R^d\times I\to[0,\infty]$ be measurable. Then $s\mapsto\|g(\cdot,s)\|_{L_\Phi}$
is measurable and
$$
\Bigl\|\int_I g(\cdot,s)\dd s\Bigr\|_{L_\Phi}\ \le\ \int_I\|g(\cdot,s)\|_{L_\Phi}\dd s .
$$
\end{Lemma}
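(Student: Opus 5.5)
Measurability comes first. Since $g\ge0$ is jointly measurable, for each fixed $k>0$ the map $s\mapsto I_k(s):=\int_{\R^d}\Phi(g(x,s)/k)\dd x$ is measurable by Tonelli. Two facts about $\Phi$ make this work. First, $\Phi$ is increasing and lower semicontinuous on $[0,\infty]$, so $\Phi\circ(g/k)$ is a measurable function with values in $[0,\infty]$. Second, $k\mapsto I_k(s)$ is decreasing. These give the identity
$$\{s:\ \|g(\cdot,s)\|_{L_\Phi}<a\}=\bigcup_{k\in\mathbb{Q},\,0<k<a}\{s:\ I_k(s)\le1\}.$$
The right-hand side is measurable, so $s\mapsto\|g(\cdot,s)\|_{L_\Phi}$ is measurable.

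For the inequality, set $N(s):=\|g(\cdot,s)\|_{L_\Phi}$ and $A:=\int_I N(s)\dd s$. If $A=\infty$ there is nothing to prove. If $A=0$, then $N=0$ a.e.\ in $I$, so $g(\cdot,s)=0$ a.e.\ in $x$ for a.e.\ $s$, and by Tonelli the integral $G:=\int_I g(\cdot,s)\dd s$ vanishes a.e. Here we use that $\|h\|_{L_\Phi}=0$ forces $h=0$ a.e., which follows from Lemma~\ref{lem:orliczbasic}(ii).

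Now assume $0<A<\infty$, fix $\varepsilon>0$, and put $N_\varepsilon(s):=N(s)+\varepsilon w(s)$. Here $w>0$ is a fixed integrable weight with $\int_I w=1$, which allows $I$ to be unbounded. Let $A_\varepsilon:=\int_I N_\varepsilon=A+\varepsilon$. The plan is the standard convexity argument. Consider the probability measure $\dd\mu(s)=N_\varepsilon(s)\dd s/A_\varepsilon$ on $I$ and write
$$\frac{G(x)}{A_\varepsilon}=\int_I\frac{g(x,s)}{N_\varepsilon(s)}\dd\mu(s).$$
Jensen's inequality for the convex function $\Phi$ gives
$$\Phi\Bigl(\frac{G(x)}{A_\varepsilon}\Bigr)\le\int_I\Phi\Bigl(\frac{g(x,s)}{N_\varepsilon(s)}\Bigr)\dd\mu(s).$$
Integrating in $x$ and applying Tonelli then yields
$$\int_{\R^d}\Phi(G/A_\varepsilon)\dd x\le\int_I I_{N_\varepsilon(s)}(s)\dd\mu(s)\le1,$$
because $N_\varepsilon(s)>N(s)$ implies $I_{N_\varepsilon(s)}(s)\le1$ by the definition of the Luxemburg norm. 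Hence $\|G\|_{L_\Phi}\le A+\varepsilon$, and letting $\varepsilon\to0$ gives the claim.

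The main obstacle is justifying Jensen's inequality when $\Phi$ takes the value $\infty$ (types (II) and (III)), together with the joint measurability of $(x,s)\mapsto g(x,s)/N_\varepsilon(s)$. The measurability is immediate from the first step. For Jensen, I would extend $\Phi$ to a convex lower semicontinuous function $[0,\infty]\to[0,\infty]$. Such a function is the supremum of countably many affine minorants $a_jz+b_j$, and $\int(a_jh+b_j)\dd\mu\le\int\Phi(h)\dd\mu$ for each $j$. Taking the supremum over $j$ gives Jensen's inequality with values in $[0,\infty]$, which is exactly what is needed.
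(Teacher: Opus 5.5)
Your proposal is correct and follows the paper's proof essentially step by step. Measurability comes from the same countable-union identity over rational $k$, and the inequality from the same Jensen argument with the probability measure $(N+\varepsilon w)\,\dd s/(A+\varepsilon)$ followed by Tonelli. The differences are cosmetic: the separate case $A=0$ is unnecessary, since the weight $\varepsilon w$ already makes the measure well defined. Also, where you justify Jensen at infinite means through affine minorants, the paper simply invokes the linear lower bound of Lemma~\ref{lem:orliczbasic}(ii).
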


\begin{proof}
For rational $k>0$ the function $s\mapsto\int\Phi(g(x,s)/k)\dd x$ is measurable
by Tonelli's theorem, and
$\{s:\|g(\cdot,s)\|_{L_\Phi}<a\}=\bigcup_{k\in\mathbb Q\cap(0,a)}
\{s:\int\Phi(g(x,s)/k)\dd x\le1\}$, which proves measurability. Let
$N(s):=\|g(\cdot,s)\|_{L_\Phi}$ and assume $A:=\int_IN(s)\dd s<\infty$. Fix
$\varepsilon>0$ and a positive $e\in L^1(I)$ with $\int_Ie=\varepsilon$, and put
$k(s):=N(s)+e(s)>N(s)$ and $K:=\int_Ik=A+\varepsilon$. By the definition of
the Luxemburg norm, $\int\Phi(g(x,s)/k(s))\dd x\le1$ for every $s$. For fixed
$x$, Jensen's inequality for the probability measure $k(s)\dd s/K$ and the
lower semicontinuous convex function $\Phi$ gives
$$
\Phi\Bigl(\frac1K\int_Ig(x,s)\dd s\Bigr)
=\Phi\Bigl(\int_I\frac{g(x,s)}{k(s)}\,\frac{k(s)\dd s}{K}\Bigr)
\le\int_I\Phi\Bigl(\frac{g(x,s)}{k(s)}\Bigr)\frac{k(s)\dd s}{K};
$$
if the mean on the left is infinite, the right-hand side is infinite as well
by Lemma~\ref{lem:orliczbasic}(ii). Integrating in $x$ and using Tonelli's
theorem, $\int\Phi\bigl(K^{-1}\int_Ig\dd s\bigr)\dd x\le\int_Ik(s)\dd s/K=1$,
whence $\|\int_Ig\dd s\|_{L_\Phi}\le A+\varepsilon$.
\end{proof}

Finally, we record a completeness statement adapted to the fixed point
arguments of Section~\ref{sec:proofB}.

\begin{Lemma}\label{lem:complete}
Let $\Phi\in\Nf$, $T\in(0,\infty]$, and let $w:Q_T\to[0,\infty)$ be measurable.
Let $X$ be the set of (equivalence classes of) measurable $u:Q_T\to\R$ with
$|u|\le w$ a.e.\ in $Q_T$, and put
$d(u,v):=\esssup_{t\in(0,T)}\|u(t)-v(t)\|_{L_\Phi}$. Then every sequence in $X$
{which is Cauchy with respect to $d$ converges, with respect to $d$, to an
element of $X$.} In particular, $(X,d)$ is a complete metric space whenever $d$ is finite on
$X\times X$, for instance if $\esssup_t\|w(t)\|_{L_\Phi}<\infty$.
\end{Lemma}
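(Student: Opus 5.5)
The argument reduces the metric-space Cauchy property in $d$ to the completeness of $L_\Phi$ (Lemma~\ref{lem:orliczbasic}(iv)), applied time-slice by time-slice. The only subtlety is that $d$ is an essential supremum, so the relevant inequalities hold only for a.e.\ $t$, and the limit must be assembled as a jointly measurable function on $Q_T$.

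\emph{Step 1 (common good time set).} Let $(u_n)\subset X$ be Cauchy for $d$. Choose a subsequence $u_{n_k}$ with $d(u_{n_k},u_{n_{k+1}})<2^{-k}$; it suffices to prove convergence of this subsequence, since a Cauchy sequence with a convergent subsequence converges. By definition of $\esssup$, for each $k$ there is a null set $N_k\subset(0,T)$ off which
\[
\|u_{n_k}(t)-u_{n_{k+1}}(t)\|_{L_\Phi}<2^{-k}.
\]
By Fubini, we may also assume that for $t\notin N_0$ (a null set) each slice $u_{n_k}(t)$ is measurable and $|u_{n_k}(t)|\le w(t)$ a.e.\ in $\R^d$. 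Set $N=\bigcup_{k\ge0}N_k$, which is null.

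\emph{Step 2 (pointwise limit on $Q_T$).} Define $v:=\sum_k|u_{n_{k+1}}-u_{n_k}|$, a measurable $[0,\infty]$-valued function on $Q_T$. For $t\notin N$, the triangle inequality together with Fatou (Lemma~\ref{lem:orliczbasic}(i), applied to the partial sums) gives $\|v(t)\|_{L_\Phi}\le\sum_k 2^{-k}<\infty$. Since $\Phi(z)\to\infty$ by Lemma~\ref{lem:orliczbasic}(ii), this forces $v(t)<\infty$ a.e.\ in $\R^d$. By Tonelli, $v<\infty$ a.e.\ in $Q_T$, so $u_{n_k}$ converges a.e.\ in $Q_T$ to a measurable function $u$. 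Passing to the limit in $|u_{n_k}|\le w$ gives $|u|\le w$ a.e., so $u\in X$.

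\emph{Step 3 (convergence in $d$).} Fix $t\notin N$ such that $u_{n_j}(t)\to u(t)$ a.e.\ in $\R^d$; by Tonelli, a.e.\ $t$ qualifies. Fatou's property then yields
\[
\|u_{n_k}(t)-u(t)\|_{L_\Phi}\le\liminf_{j\to\infty}\|u_{n_k}(t)-u_{n_j}(t)\|_{L_\Phi}\le\sum_{i\ge k}2^{-i}=2^{1-k}.
\]
Taking the essential supremum in $t$ gives $d(u_{n_k},u)\le 2^{1-k}\to0$, and hence the whole sequence converges to $u$. The final claim is immediate: if $\esssup_t\|w(t)\|_{L_\Phi}<\infty$, then $d(u,v)\le 2\,\esssup_t\|w(t)\|_{L_\Phi}$ is finite by the lattice property, so $d$ is a metric and $(X,d)$ is complete.

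\emph{Main obstacle.} The delicate point is measurability. The limit cannot be defined slice-by-slice via completeness of $L_\Phi$, since that gives no joint measurability in $(x,t)$. This is why I plan to use the almost-everywhere summable-series construction above rather than invoking abstract completeness of $L_\Phi$ directly.
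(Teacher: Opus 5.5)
Your proof is correct, but you build the jointly measurable a.e.\ limit differently from the paper.

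\emph{The paper's route.} It keeps the whole sequence and sets $\varepsilon_j:=\sup_{k\ge j}d(u_j,u_k)$. It then uses the Chebyshev-type bound $|\{|h|>\delta\}|\le 1/\Phi(\delta/\varepsilon')$, valid for $\|h\|_{L_\Phi}\le\varepsilon<\varepsilon'$. Integrating in $t$ shows that $(u_j)$ is Cauchy in measure on each $B_R\times(0,T')$, and a diagonal argument over $R,T'$ gives an a.e.\ convergent subsequence. The same slice-wise Fatou step as yours then yields $d(u,u_k)\le\varepsilon_k$ for the full sequence.

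\emph{Your route.} You run the Riesz--Fischer argument: a fast subsequence, the summable majorant $v=\sum_k|u_{n_{k+1}}-u_{n_k}|$ with $\|v(t)\|_{L_\Phi}$ bounded off a null set of times, and Tonelli on $\{v=\infty\}$.

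\emph{Comparison.} Your version works on all of $Q_T$ at once. It needs no localisation to sets of finite measure and no diagonal extraction, and $T=\infty$ requires no separate care. The price is the final ``Cauchy plus convergent subsequence'' step, whereas the paper's route gives the rate $\varepsilon_k$ for the whole sequence directly. Both arguments use exactly the same two properties of the Luxemburg norm: the Fatou property and $\Phi(z)\to\infty$.

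\emph{One point to phrase more carefully.} In Step 2 the limit $v(t)$ may a priori take the value $+\infty$, so Lemma~\ref{lem:orliczbasic}(i) as stated does not literally apply. Monotone convergence fixes this directly. Let $v_N$ be the partial sums and take any $\mu>\sum_k2^{-k}$. Then $\int\Phi(v_N(t)/\mu)\dd x\le1$ for every $N$, and $\Phi(v_N(t)/\mu)\uparrow\Phi(v(t)/\mu)$ with $\Phi(\infty)=\infty$. Hence $\int\Phi(v(t)/\mu)\dd x\le1$, which gives $v(t)<\infty$ a.e., exactly what you need.
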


\begin{proof}
The measurability of $t\mapsto\|u(t)-v(t)\|_{L_\Phi}$ follows from
Lemma~\ref{lem:minkowski}. Let $(u_j)$ be $d$-Cauchy and
$\varepsilon_j:=\sup_{k\ge j}d(u_j,u_k)\to0$. Discarding a null set of times,
$\|u_j(t)-u_k(t)\|_{L_\Phi}\le\varepsilon_j$ for all $k\ge j$ and all remaining
$t$. If $\|h\|_{L_\Phi}\le\varepsilon$ then $\int\Phi(|h|/\varepsilon')\le1$
for every $\varepsilon'>\varepsilon$, so that
$|\{x:|h(x)|>\delta\}|\le1/\Phi(\delta/\varepsilon')$ for every $\delta>0$.
Integrating in $t$, for every $R,T'\in(0,\infty)$ with $T'\le T$ and every
$\delta>0$,
$$
\bigl|\{(x,t)\in B_R\times(0,T'):|u_j-u_k|>\delta\}\bigr|\le
\frac{T'}{\Phi(\delta/(2\varepsilon_j))}\longrightarrow0\qquad(j\to\infty),
$$
by Lemma~\ref{lem:orliczbasic}(ii). Thus $(u_j)$ is Cauchy in measure on each
$B_R\times(0,T')$, and a diagonal argument yields a subsequence $(u_{j_i})$
and a measurable $u$ with $u_{j_i}\to u$ a.e.\ in $Q_T$; clearly $|u|\le w$.
By Fubini's theorem, for a.e.\ $t$ we have $u_{j_i}(t)\to u(t)$ a.e.\ in
$\R^d$, and Lemma~\ref{lem:orliczbasic}(i) gives
$\|u(t)-u_k(t)\|_{L_\Phi}\le\liminf_i\|u_{j_i}(t)-u_k(t)\|_{L_\Phi}\le
\varepsilon_k$. Hence $d(u,u_k)\le\varepsilon_k\to0$.
\end{proof}

\subsection{Regularly varying functions}

A measurable $g:(a,\infty)\to(0,\infty)$ is \emph{regularly varying of index}
$\rho\in\R$ if $g(\lambda x)/g(x)\to\lambda^{\rho}$ as $x\to\infty$ for every
$\lambda>0$; when $\rho=0$ we say that $g$ is \emph{slowly varying}. If instead
\begin{equation}\label{eq:rapid}
\lim_{x\to\infty}\frac{g(\lambda x)}{g(x)}=
\begin{cases} 0, & 0<\lambda<1,\\ \infty,&\lambda>1,\end{cases}
\end{equation}
then $g$ is said to be \emph{rapidly varying of index} $\infty$; if the two
limits in \eqref{eq:rapid} are interchanged, $g$ is rapidly varying of index
$-\infty$. Following \cite{Laister}, we include rapidly varying functions
among the regularly varying ones, with $\rho=\pm\infty$. A function $g$ is
regularly (slowly) varying at the origin if $x\mapsto g(1/x)$ is regularly
(slowly) varying at infinity, the index changing sign. By Karamata's
characterisation theorem \cite{Karamata1930,S,BGT}, a regularly varying $g$ of
finite index $\rho$ can be written as $g(x)=x^{\rho}\sigma(x)$ with $\sigma$
slowly varying. If $\sigma$ is slowly varying at infinity, then $x^{k}\sigma(x)
\to\infty$ and $x^{-k}\sigma(x)\to0$ as $x\to\infty$ for every $k>0$, and
$\int_a^\infty x^{-k}\sigma(x)\dd x<\infty$ for every $k>1$
\cite[Propositions 1.3.6 and 1.5.10]{BGT}.

\subsection{The fractional harmonic oscillator}\label{subsec:operator}

Write $\omega:=\sqrt{\varrho}>0$. The Hermite operator
$\Hr=-\Delta+\omega^2|x|^2$ is essentially self-adjoint on $C_0^\infty(\R^d)$,
and its spectrum is the discrete set $\{\omega(2k+d):k\in\N\cup\{0\}\}$, the
bottom eigenvalue $d\omega$ being simple with eigenfunction
$h_0(x)=e^{-\omega|x|^2/2}$ \cite[Chapter 1]{thangavelu1993lectures}. In
particular
\begin{equation}\label{eq:gap}
\inf\operatorname{spec}\Hr=d\omega=d\sqrt{\varrho},
\qquad
\inf\operatorname{spec}\Hr^{\beta}=\lambda_\beta:=\bigl(d\sqrt{\varrho}\,\bigr)^{\beta}.
\end{equation}
The semigroup $e^{-t\Hr}$ is given by the Mehler kernel
\cite[Chapter 1]{thangavelu1993lectures}
\begin{equation}\label{mc}
\Kb_t(x,y)=\Bigl(\frac{\omega}{2\pi\sinh(2\omega t)}\Bigr)^{d/2}
\exp\Bigl(-\frac{\omega}{4}\Bigl[|x+y|^2\tanh(\omega t)
+|x-y|^2\coth(\omega t)\Bigr]\Bigr),
\end{equation}
which is the familiar expression
$(\omega/(2\pi\sinh 2\omega t))^{d/2}\exp\bigl(-\frac{\omega}{2}\bigl[(|x|^2+|y|^2)
\coth(2\omega t)-\frac{2x\cdot y}{\sinh(2\omega t)}\bigr]\bigr)$ rewritten by
means of the identities $\tanh z+\coth z=2\coth 2z$ and $\coth z-\tanh
z=2/\sinh 2z$. For heat kernel methods in general we refer to \cite{Davies,Ouhabaz}; the
Mehler kernel of the fractional oscillator is also discussed in
\cite{BMMfrac}.
The kernel is smooth, strictly positive and symmetric,
$\Kb_t(x,y)=\Kb_t(y,x)$, and satisfies the Chapman--Kolmogorov identity
$\int\Kb_t(x,z)\Kb_s(z,y)\dd z=\Kb_{t+s}(x,y)$.

For $0<\beta<1$, we use Bochner's subordination
\cite{Bochner1949}, \cite[Chapter IX.11]{Yosida}, \cite{SSV}. Let
$\eta^\beta_t$ denote the density of the $\beta$-stable subordinator at time
$t$, that is, the unique family of nonnegative functions on $(0,\infty)$ with
\begin{equation}\label{p1}
\int_0^{\infty}e^{-\sigma s}\,\eta^\beta_t(s)\dd s=e^{-t\sigma^{\beta}},
\qquad \sigma\ge 0,\ t>0 ;
\end{equation}
in particular $\int_0^\infty \eta^\beta_t(s)\dd s=1$, $\eta^\beta_t*\eta^\beta_\tau
=\eta^\beta_{t+\tau}$ (convolution on $(0,\infty)$), and
\begin{equation}\label{eq:scaling-eta}
\eta^\beta_t(s)=t^{-1/\beta}\,\eta^\beta_1\bigl(t^{-1/\beta}s\bigr),
\qquad s,t>0 .
\end{equation}
By the spectral theorem and \eqref{p1}, for $\varphi\in L^2$,
\begin{equation}\label{E7.4}
S_\beta(t)\varphi=e^{-t\Hr^{\beta}}\varphi
=\int_0^{\infty}e^{-s\Hr}\varphi\,\eta^\beta_t(s)\dd s ,
\end{equation}
whence $S_\beta(t)$ is an integral operator with the nonnegative symmetric
kernel
\begin{equation}\label{E7.5}
\Kb^\beta_t(x,y)=\int_0^{\infty}\Kb_s(x,y)\,\eta^\beta_t(s)\dd s .
\end{equation}
For $\beta=1$, we set $\Kb^1_t:=\Kb_t$, with the convention that $\eta^1_t$
is the Dirac mass at $s=t$; with this convention, \eqref{E7.4} and \eqref{E7.5}
remain valid.
The kernels $\Kb^\beta_t$ inherit the Chapman--Kolmogorov identity from
$\Kb_t$ and from $\eta^\beta_t*\eta^\beta_\tau=\eta^\beta_{t+\tau}$. For a
measurable $\varphi$ such that $\int\Kb^\beta_t(x,y)|\varphi(y)|\dd y<\infty$
for a.e.\ $x$, we \emph{define}
$[S_\beta(t)\varphi](x):=\int\Kb^\beta_t(x,y)\varphi(y)\dd y$. For $\varphi\ge0$,
this is meaningful with values in $[0,\infty]$. The semigroup property
$S_\beta(t)S_\beta(\tau)\varphi=S_\beta(t+\tau)\varphi$ holds for $\varphi\ge0$
by Tonelli's theorem, and for every $\varphi$ with
$S_\beta(t+\tau)|\varphi|<\infty$ a.e. We denote by
\begin{equation}\label{eq:freekernel}
G(z,t):=(4\pi t)^{-d/2}e^{-|z|^2/(4t)},
\qquad
p_\beta(z,t):=\int_0^\infty G(z,s)\,\eta^\beta_t(s)\dd s
\end{equation}
the Gaussian heat kernel and the kernel of the free fractional heat semigroup
$e^{-t(-\Delta)^{\beta}}$, so that $p_1=G$.

We shall use the following moment properties of the stable subordinator.
Since they are elementary consequences of \eqref{p1}, we include the proofs.
Probabilistically, $\eta^\beta_t$ is the density of the law of a nonnegative
random variable $\mathsf S_t$ with $\E e^{-\sigma\mathsf S_t}=e^{-t\sigma^\beta}$,
{where $\E$ denotes the expectation with respect to the underlying
probability measure. In other words, \eqref{p1} says that the Laplace
transform of $\mathsf S_t$ is $e^{-t\sigma^\beta}$.}

\begin{Lemma}\label{lem:stable}
Let $0<\beta\le1$ and $t>0$, {and let $\Gamma$ denote Euler's Gamma
function.}
\begin{enumerate}[label=\textup{(\roman*)}]
\item For every $q>0$,
$\int_0^\infty s^{-q}\eta^\beta_t(s)\dd s=t^{-q/\beta}\,
\Gamma(1+q/\beta)/\Gamma(1+q)$.
\item If $\beta<1$, then
$\int_0^\infty s^{r}\eta^\beta_t(s)\dd s=t^{r/\beta}\,\Gamma(1-r/\beta)/\Gamma(1-r)$
for $0<r<\beta$, while the integral diverges for $r\ge\beta$. If $\beta=1$, it
equals $t^r$ for every $r>0$.
\item $\int_0^\infty\min\{1,s\}\,\eta^\beta_t(s)\dd s\le(1-e^{-1})^{-1}(1-e^{-t})\le
2t$.
\end{enumerate}
\end{Lemma}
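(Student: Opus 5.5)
All three parts will be derived from the Laplace transform identity \eqref{p1}, using Gamma-function integral representations of the powers $s^{-q}$ and $s^r$ together with Tonelli's theorem. For $\beta=1$ we have $\eta^1_t=\delta_t$, so every claim is immediate: the integrals equal $t^{-q}$, $t^r$ and $\min\{1,t\}$, and one checks directly that $\min\{1,t\}\le(1-e^{-1})^{-1}(1-e^{-t})$ (see the argument for (iii) below). We therefore assume $0<\beta<1$.

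For (i), we start from $s^{-q}=\Gamma(q)^{-1}\int_0^\infty\sigma^{q-1}e^{-\sigma s}\dd\sigma$. Integrating against $\eta^\beta_t(s)$ and exchanging the order of integration by Tonelli, \eqref{p1} gives
$$
\int_0^\infty s^{-q}\eta^\beta_t(s)\dd s=\frac1{\Gamma(q)}\int_0^\infty\sigma^{q-1}e^{-t\sigma^\beta}\dd\sigma .
$$
The substitution $u=t\sigma^\beta$ turns the right-hand side into $t^{-q/\beta}\Gamma(q/\beta)/(\beta\Gamma(q))$. Finally, $\Gamma(q/\beta)/\beta=(q/\beta)\Gamma(q/\beta)/q=\Gamma(1+q/\beta)/q$ and $q\Gamma(q)=\Gamma(1+q)$, which yields the stated formula.

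For (ii), with $0<r<1$, we use $s^r=\frac{r}{\Gamma(1-r)}\int_0^\infty(1-e^{-\sigma s})\sigma^{-1-r}\dd\sigma$, which follows from the substitution $\sigma s=v$ and an integration by parts. Tonelli and \eqref{p1} then give
$$
\int_0^\infty s^r\eta^\beta_t(s)\dd s=\frac{r}{\Gamma(1-r)}\int_0^\infty(1-e^{-t\sigma^\beta})\sigma^{-1-r}\dd\sigma .
$$
Substituting $u=t\sigma^\beta$ reduces the integral to $t^{r/\beta}\beta^{-1}\int_0^\infty(1-e^{-u})u^{-1-r/\beta}\dd u$. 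Applying the same identity with exponent $r/\beta\in(0,1)$, this last integral equals $\Gamma(1-r/\beta)/(r/\beta)$, and the constants combine to the claim. For divergence when $r\ge\beta$ (for $r<1$, since $\beta<1$), the same representation still holds, but the $\sigma$-integrand behaves like $t\sigma^{\beta-1-r}$ near $0$, which is not integrable, so the moment is $+\infty$. For $r\ge1$ we use $s^r\ge s^{r'}$ for $s\ge1$ with $\beta\le r'<1$ and reduce to the previous case, noting that $\int_0^1 s^{r'}\eta\le1$ is finite. This divergence step is the only delicate point; Tonelli applies throughout because all integrands are nonnegative.

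For (iii), the map $s\mapsto 1-e^{-s}$ is concave, so on $[0,1]$ it lies above its chord: $1-e^{-s}\ge(1-e^{-1})s$. For $s\ge1$ we have $1-e^{-s}\ge1-e^{-1}$. Hence $\min\{1,s\}\le(1-e^{-1})^{-1}(1-e^{-s})$ for all $s\ge0$. Integrating against $\eta^\beta_t$ and using \eqref{p1} with $\sigma=1$ gives the bound $(1-e^{-1})^{-1}(1-e^{-t})$. The second inequality follows from $1-e^{-t}\le t$ and $(1-e^{-1})^{-1}\approx1.58\le2$.
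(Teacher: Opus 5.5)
Your proposal is correct and follows the paper's proof essentially step by step. It uses the same Gamma-function representations of $s^{-q}$ and $s^{r}$ combined with Tonelli and \eqref{p1}, and the same non-integrability of $t\sigma^{\beta-1-r}$ at $\sigma=0$ for the divergence, with the same reduction from $r\ge1$ to some $r'\in[\beta,1)$. Part (iii) rests on the same pointwise bound $1-e^{-s}\ge(1-e^{-1})\min\{1,s\}$, which the paper only asserts and which you additionally justify by concavity.
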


\begin{proof}
For $\beta=1$ all statements are immediate, so let $\beta<1$.

(i) Insert the identity
$s^{-q}=\Gamma(q)^{-1}\int_0^\infty\sigma^{q-1}e^{-\sigma s}\dd\sigma$ and use Tonelli's theorem and \eqref{p1}:
$$
\int_0^\infty s^{-q}\eta^\beta_t(s)\dd s=\frac{1}{\Gamma(q)}\int_0^\infty
\sigma^{q-1}e^{-t\sigma^\beta}\dd\sigma
=\frac{t^{-q/\beta}\,\Gamma(q/\beta)}{\beta\,\Gamma(q)}
=t^{-q/\beta}\,\frac{\Gamma(1+q/\beta)}{\Gamma(1+q)} .
$$
(ii) For $0<r<1$ one has
$s^{r}=\frac{r}{\Gamma(1-r)}\int_0^\infty(1-e^{-\sigma s})\sigma^{-1-r}\dd\sigma$,
so that, by Tonelli's theorem and \eqref{p1},
$$
\int_0^\infty s^r\eta^\beta_t(s)\dd s=\frac{r}{\Gamma(1-r)}\int_0^\infty
\bigl(1-e^{-t\sigma^\beta}\bigr)\sigma^{-1-r}\dd\sigma .
$$ Near $\sigma=0$ the integrand behaves like
$t\sigma^{\beta-1-r}$, which is integrable if and only if $r<\beta$; for $r<\beta$
the substitution $\tau=t\sigma^\beta$ gives the stated value. For $r\ge1$
the divergence follows from the case $r'\in[\beta,1)$ and the inequality
$s^r\ge s^{r'}-1$, $s>0$.
(iii) Since $1-e^{-s}\ge(1-e^{-1})\min\{1,s\}$ for $s\ge0$, the claim follows
from \eqref{p1} with $\sigma=1$ and $1-e^{-t}\le t$.
\end{proof}

\begin{Remark}\rm\label{rem:betarange}
The restriction $0<\beta\le1$ is used throughout and cannot be dispensed with
by the present methods. For $\beta>1$ the subordination formula \eqref{E7.4}
is unavailable, and the semigroup $e^{-t\Hr^{\beta}}$ is in general not
positivity preserving; already the kernel of the free biharmonic semigroup
$e^{-t\Delta^2}$ changes sign. Fixed-time $L^p$--$L^q$ estimates for
$e^{-t\Hr^\beta}$ with $\beta>1$ are available in a restricted range of
exponents \cite{BMMfrac}. Every argument below that uses the positivity of
the kernel would have to be rethought; this concerns the contraction and
comparison properties, the barrier construction in the fixed point argument,
and the whole of Section~\ref{sec:nonexistence}. We refer to
Section~\ref{sec:open} for further comments.
\end{Remark}

\subsection{Solution concepts}

With $S_\beta$ as above we set
\begin{equation}\label{eq:duhamel}
\mathcal{F}(u;u_0)(t):=S_\beta(t)u_0+\int_0^t S_\beta(t-s)f(u(s))\dd s,
\qquad t\in(0,T),
\end{equation}
and adopt the following terminology, which follows
\cite[Definition 2.7]{Laister} and \cite[Section 15]{QS}.

\begin{Definition}[Solution concepts]\label{def:solutions}
Let $T\in(0,\infty]$ and let $u_0$ be measurable.
\begin{enumerate}[label=\textup{(\roman*)}]
\item A measurable and almost everywhere finite $u:Q_T\to\R$ is an
\emph{integral solution} of \eqref{eq:main} in $Q_T$ if $S_\beta(t)|u_0|$ and
$\int_0^tS_\beta(t-s)|f(u(s))|\dd s$ are finite a.e.\ in $Q_T$ and
$\mathcal F(u;u_0)=u$ a.e.\ in $Q_T$.
\item An integral solution $u$ is called an \emph{$M_\Phi$-mild solution} on
$[0,T)$ if, in addition, $u\in C\bigl([0,T),M_\Phi\bigr)$ and $u(0)=u_0$.
\item An $M_\Phi$-mild solution is a \emph{bounded $M_\Phi$-mild solution} if
moreover $u\in L^\infty_{\mathrm{loc}}\bigl((0,T),L^\infty\bigr)$, that is,
$u\in L^\infty(\R^d\times(a,b))$ whenever $0<a<b<T$.
\end{enumerate}
\end{Definition}

In (i), we require the finiteness of $S_\beta(t)|u_0|$ and of
$\int_0^tS_\beta(t-s)|f(u(s))|\dd s$, and not merely that of $S_\beta(t)u_0$
and of the Duhamel integral. The reason is that $[S_\beta(t)u_0](x)$ is defined
by the integral $\int\Kb^\beta_t(x,y)u_0(y)\dd y$, which is a Lebesgue
integral only if it converges absolutely. For a sign-changing data, the
integrals of the positive and negative parts of the integrand could both be
infinite, and the integral would then be undefined. The finiteness of
$S_\beta(t)|u_0|(x)$ is therefore exactly what makes $[S_\beta(t)u_0](x)$
meaningful, and it then gives $|S_\beta(t)u_0|\le S_\beta(t)|u_0|$. The same
applies to the Duhamel term; absolute convergence is moreover needed there to
apply Fubini's theorem, for instance in Lemma~\ref{lem:restart}. For
nonnegative data and nonnegative nonlinearities, the two requirements
coincide.

In (ii) and (iii), the continuity is understood for the representative
$t\mapsto u(t)$ obtained from the Duhamel formula, which is defined for every
$t$. When $\beta=1$, bounded $M_\Phi$-mild solutions are precisely the
$M_\Phi$-classical solutions of \cite[Definition 2.7]{Laister}; see
Remark~\ref{rem:classical}.

\begin{Remark}\rm\label{rem:classical}
We work throughout with bounded mild solutions rather than with classical
ones: for $\beta<1$ the operator $\Hr^{\beta}$ is nonlocal and the class
$C^{2,1}$ is no longer the natural one.

For $\beta=1$, every bounded $M_\Phi$-mild solution $u$ on $[0,T)$ is a
classical solution in $Q_T$, as in \cite[Section 2.3]{Laister} for
$\varrho=0$. Since this fact is not used anywhere in the paper, we only sketch
the argument. Fix $0<a<b<T$ and let $M$ bound $|u|$ on $\R^d\times(a,b)$. By
Lemma~\ref{lem:restart}(iii), for $t\in(a,b)$ we have
$u(t)=e^{-(t-a)\Hr}u(a)+v(t)$, where
$v(t):=\int_a^te^{-(t-s)\Hr}g(s)\dd s$ and $g:=f(u)$ satisfies
$|g|\le M\ell(M)$ on $\R^d\times(a,b)$. The Mehler kernel \eqref{mc} is smooth
for $t>0$, solves $\partial_t\Kb_t=-\Hr\Kb_t$, and its derivatives are bounded,
locally uniformly in $(x,t)$, by polynomials times Gaussians in $y$. Hence
the first term is smooth in $\R^d\times(a,b)$ and solves the homogeneous
equation classically. The second term is a bounded distributional solution of
$\partial_tv-\Delta v+\varrho|x|^2v=g$ in $\R^d\times(a,b)$. Since the
coefficients are smooth and $g$ is bounded, interior $L^p$ estimates give
$v\in W^{2,1}_{p,\mathrm{loc}}$ for every $p<\infty$, and the parabolic Sobolev
embedding shows that $v$, and hence $u$, is locally H\"older continuous in
$(x,t)$ \cite{LSU,Lieberman}. As $f$ is locally Lipschitz, $g=f(u)$ is then
locally H\"older continuous as well, and the interior Schauder estimates
\cite{LSU,Lieberman} yield $v\in C^{2+\alpha,1+\alpha/2}_{\mathrm{loc}}$ for
some $\alpha\in(0,1)$. We conclude that $u\in C^{2,1}(Q_T)$ and that $u$
satisfies $\partial_tu-\Delta u+\varrho|x|^2u=f(u)$ pointwise in $Q_T$.
\end{Remark}

We shall repeatedly use the following form of Gronwall's lemma.

\begin{Lemma}\label{lem:gronwall}
Let $0<\tau<\infty$, let $k\in L^1(0,\tau)$ be nonnegative, let $a\ge0$ and let
$y:(0,\tau]\to[0,\infty)$ be bounded and measurable with
$y(t)\le a+\int_0^tk(s)y(s)\dd s$ for a.e.\ $t\in(0,\tau]$. Then
$y(t)\le a\exp\bigl(\int_0^tk(s)\dd s\bigr)$ for a.e.\ $t\in(0,\tau]$.
\end{Lemma}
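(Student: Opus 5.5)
The plan is to reduce to the classical Gronwall lemma for continuous functions by working with the integral majorant rather than with $y$ directly. Set
$$
z(t):=a+\int_0^t k(s)y(s)\dd s,\qquad t\in[0,\tau].
$$
Since $y$ is bounded, say $0\le y\le M$, and $k\in L^1(0,\tau)$ is nonnegative, the product $ky$ is integrable. Hence $z$ is absolutely continuous and increasing on $[0,\tau]$, with $z(0)=a$ and
$$
z'(t)=k(t)y(t)\quad\text{for a.e. } t.
$$
The hypothesis says $y\le z$ a.e. Because $k\ge0$, this gives $z'(t)=k(t)y(t)\le k(t)z(t)$ for a.e.\ $t$.

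Next, introduce the integrating factor $E(t):=\exp\bigl(-\int_0^tk(s)\dd s\bigr)$, which is absolutely continuous with $E'=-kE$ a.e. The product $zE$ of two absolutely continuous functions on a compact interval is absolutely continuous, and
$$
(zE)'=E\,(z'-kz)\le 0\quad\text{a.e.}
$$
So $zE$ is decreasing. This yields $z(t)E(t)\le z(0)E(0)=a$, that is,
$$
z(t)\le a\exp\Bigl(\int_0^tk(s)\dd s\Bigr)\quad\text{for every } t\in(0,\tau].
$$

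Finally, combine this with $y(t)\le z(t)$ for a.e.\ $t$ to get the claim on the same full-measure set where the hypothesis holds.

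The only point requiring care is that the hypothesis is assumed only almost everywhere and $y$ is merely measurable. This is why one passes to $z$, which is defined for every $t$ and is absolutely continuous. Changing $y$ on a null set does not alter $z$. So the inequality $z'\le kz$ holds almost everywhere, which is all that the monotonicity argument for absolutely continuous functions needs. Boundedness of $y$ is used only to guarantee $ky\in L^1$, so that $z$ is finite and absolutely continuous. I do not anticipate any genuine obstacle: the one step to verify is that the product rule applies to $zE$, which holds because both factors are absolutely continuous on $[0,\tau]$.
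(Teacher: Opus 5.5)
Your proposal is correct and follows essentially the same route as the paper: pass to the absolutely continuous majorant $a+\int_0^t ky$, use $Y'=ky\le kY$ a.e., multiply by the integrating factor $e^{-\int_0^t k}$ to get a decreasing function, and then transfer the bound back to $y$ on the full-measure set where $y\le Y$.
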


\begin{proof}
The function $Y(t):=a+\int_0^tky$ is absolutely continuous, and
$Y'=ky\le kY$ a.e. Hence $\bigl(Y(t)e^{-\int_0^tk}\bigr)'\le0$ a.e., so that
$Y(t)\le ae^{\int_0^tk}$, and $y\le Y$ a.e.
\end{proof}

\section{Statement of the main results}\label{sec:results}

Throughout this section, $0<\beta\le1$ and $\varrho>0$. We use the
constants
\begin{equation}\label{eq:constants}
\begin{gathered}
\kappa_{d,\beta}:=p_\beta(0,1)
=(4\pi)^{-\frac d2}\,\frac{\Gamma\bigl(1+\frac{d}{2\beta}\bigr)}
{\Gamma\bigl(1+\frac d2\bigr)},\qquad
\kappa:=\max\{1,\kappa_{d,\beta}\},\\
m_\beta(t):=\min\bigl\{1,\,2^{\frac d2}e^{-\lambda_\beta t}\bigr\}.
\end{gathered}
\end{equation}
The value of $\kappa_{d,\beta}$ is computed in Lemma~\ref{lem:kernel}.

\subsection{Smoothing estimates}

\begin{theorem}[Orlicz smoothing estimate]\label{thm:OrliczSmoothing}
Let $0<\beta\le1$ and $\varrho>0$.
\begin{enumerate}[label=\textup{(\alph*)}]
\item \emph{(Semigroup properties.)}
\begin{enumerate}[label=\textup{(\roman*)}]
\item For every $\Phi\in\Y$, every $t\ge0$ and every $\varphi\in L_\Phi$,
\begin{equation}\label{E5.2}
\|S_\beta(t)\varphi\|_{L_\Phi}\ \le\ m_\beta(t)\,\|\varphi\|_{L_\Phi},
\end{equation}
and $S_\beta(t)M_\Phi\subseteq M_\Phi$.
\item If $\Phi\in\Y$ is finite, then $\|S_\beta(t)\varphi-\varphi\|_{L_\Phi}\to0$
as $t\to0$ for every $\varphi\in L^0_\Phi$.
\item If $\Phi\in\Nf$, then $(S_\beta(t))_{t\ge0}$ is a $C_0$-semigroup of
contractions on $M_\Phi$.
\item The exponential rate in \eqref{E5.2} is sharp: for every $\Phi\in\Y$
and $t\ge0$ the operator norm of $S_\beta(t)$ on $L_\Phi$ is at least
$e^{-\lambda_\beta t}$.
\end{enumerate}
\item \emph{(Smoothing.)} Let $\Phi,\Psi\in\Y$ be compatible in the sense of
\eqref{E3.1}. Then
\begin{equation}\label{eq:OrliczOrlicz}
\|S_\beta(t)\varphi\|_{L_\Psi}
\ \le\ \frac{2\kappa}{b}\,\|\varphi\|_{L_\Phi}\,
\frac{\Phi^{-1}\bigl(t^{-\frac{d}{2\beta}}\bigr)}
     {\Psi^{-1}\bigl(t^{-\frac{d}{2\beta}}\bigr)},
\qquad t>0,\ \varphi\in L_\Phi .
\end{equation}
\item \emph{(Small times.)} If in addition $\Phi$ is finite and
$\Psi^{-1}(s)/\Phi^{-1}(s)\to0$ as $s\to\infty$, then
$$
\lim_{t\to0}\ \frac{\Psi^{-1}\bigl(t^{-\frac{d}{2\beta}}\bigr)}
{\Phi^{-1}\bigl(t^{-\frac{d}{2\beta}}\bigr)}\,\|S_\beta(t)\varphi\|_{L_\Psi}=0
$$
uniformly for $\varphi$ in compact subsets of $L^0_\Phi$.
\end{enumerate}
\end{theorem}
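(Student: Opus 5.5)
The plan is to derive everything from two pointwise facts about the kernel $\Kb^\beta_t$, and then run convexity and Jensen-type arguments at the level of the Luxemburg norm.

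\emph{Kernel bounds.} From Mehler's formula \eqref{mc} I will read off three facts. First, $\sinh(2\omega s)\ge 2\omega s$, $\coth(\omega s)\ge 1/(\omega s)$ and the $\tanh$-term is nonnegative, which gives $\Kb_s(x,y)\le G(x-y,s)$. Second, a Gaussian integration gives $\int\Kb_s(x,y)\dd y=(\cosh 2\omega s)^{-d/2}\exp(-\tfrac{\omega}{2}\tanh(2\omega s)|x|^2)\le\min\{1,2^{d/2}e^{-d\omega s}\}$. Third, the mass deficit satisfies $1-\int\Kb_s(x,y)\dd y\le C\min\{1,s\}(1+|x|^2)$. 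Subordinating via \eqref{E7.5} and using \eqref{p1} with $\sigma=d\omega$, these become:
\begin{itemize}
\item[] $\Kb^\beta_t(x,y)\le p_\beta(x-y,t)$;
\item[] the mass bound $\sup_x\int\Kb^\beta_t(x,y)\dd y\le m_\beta(t)$, which is the bound \eqref{eq:massbeta} referred to in the introduction;
\item[] a deficit bound of order $t(1+|x|^2)$, by Lemma~\ref{lem:stable}(iii).
\end{itemize}
The scaling \eqref{eq:scaling-eta} gives $\|p_\beta(\cdot,t)\|_{L^1}=1$ and $\|p_\beta(\cdot,t)\|_{L^\infty}=p_\beta(0,t)=\kappa_{d,\beta}t^{-d/(2\beta)}$, where the value of $\kappa_{d,\beta}$ follows from Lemma~\ref{lem:stable}(i) with $q=d/2$. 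Finally, $S_\beta(t)h_0=e^{-\lambda_\beta t}h_0$ follows from \eqref{E7.4}, since $e^{-s\Hr}h_0=e^{-d\omega s}h_0$.

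\emph{Part (a).} Let $M:=m_\beta(t)$ and take any $k>\|\varphi\|_{L_\Phi}$. The measure $\Kb^\beta_t(x,y)\dd y/M$ has mass at most $1$. Since $\Phi(0)=0$, Jensen's inequality for this sub-probability measure gives
$$
\Phi\Bigl(\frac{|S_\beta(t)\varphi(x)|}{Mk}\Bigr)\le\int\frac{\Kb^\beta_t(x,y)}{M}\,\Phi\Bigl(\frac{|\varphi(y)|}{k}\Bigr)\dd y .
$$
Integrating in $x$, and using the symmetry of the kernel together with the mass bound in $x$, I get $\int\Phi(|S_\beta(t)\varphi|/(Mk))\le\int\Phi(|\varphi|/k)\le1$. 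This proves \eqref{E5.2}. The same inequality for every $k>0$ yields $S_\beta(t)M_\Phi\subseteq M_\Phi$.

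For (ii), the bound \eqref{E5.2} and density reduce the claim to $\varphi\in C_0^\infty$. By Lemma~\ref{lem:orliczbasic}(iii) it then suffices to prove $S_\beta(t)\varphi\to\varphi$ in $L^1$ and in $L^\infty$. I split
$$
|S_\beta(t)\varphi(x)-\varphi(x)|\le\int p_\beta(x-y,t)\,|\varphi(y)-\varphi(x)|\dd y+|\varphi(x)|\Bigl(1-\int\Kb^\beta_t(x,y)\dd y\Bigr).
$$
The first term tends to $0$ because $p_\beta$ is an approximate identity. The second is controlled by the deficit bound, since $\varphi$ has compact support. Part (iii) follows from (i), (ii) and $L^0_\Phi=M_\Phi$ (Lemma~\ref{lem:orliczbasic}(iv)), together with the semigroup property.

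For (iv), I test on $h_0$. The norm $\|h_0\|_{L_\Phi}$ is finite because a small enough multiple of $h_0$ lies below $x^\Phi_\infty$ and is integrable. It is positive by Lemma~\ref{lem:orliczbasic}(ii).

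\emph{Part (b).} By lattice monotonicity, $|S_\beta(t)\varphi|\le p_\beta(\cdot,t)*|\varphi|$. I then apply O'Neil's convolution inequality \cite{ONeil} in the form used in \cite{Laister}: under the right-hand inequality of \eqref{E3.1} it gives
$$
\|g*h\|_{L_\Psi}\le 2\|g\|_{L_\Phi}\|h\|_{L_\Theta}.
$$
It remains to bound $\|p_\beta(\cdot,t)\|_{L_\Theta}$. This is the elementary Lemma~\ref{lem:orlicznorm}: if $h\ge0$ satisfies $\|h\|_1\le1$ and $\|h\|_\infty\le A$, then convexity gives $\Theta(h/k)\le (h/A)\Theta(A/k)$, hence
$$
\|h\|_{L_\Theta}\le \frac{A}{\Theta^{-1}(A)} .
$$
Some care with the generalised inverse is needed here, via \eqref{eq:22}. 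I apply this with $A=\kappa s$, where $s=t^{-d/(2\beta)}$. By \eqref{eq:dilate}, $\kappa s/\Theta^{-1}(\kappa s)\le\kappa\, s/\Theta^{-1}(s)$. The left-hand inequality in \eqref{E3.1} gives $s/\Theta^{-1}(s)\le\Phi^{-1}(s)/(b\,\Psi^{-1}(s))$. Combining these yields \eqref{eq:OrliczOrlicz} with constant $2\kappa/b$.

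The step I expect to be the main obstacle is the precise form of O'Neil's inequality with the factor $2$ for extended-valued Young's functions. If it is not available verbatim, I would instead prove the convolution bound by duality: use the generalised H\"older inequality with $\Psi^*$, the complementary relation \eqref{eq:24}, and Jensen's inequality again.

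\emph{Part (c).} This is an $\varepsilon/3$ argument. Write $r(t)$ for the ratio $\Psi^{-1}(t^{-d/(2\beta)})/\Phi^{-1}(t^{-d/(2\beta)})$. By part (b), $r(t)\|S_\beta(t)(\varphi-\psi)\|_{L_\Psi}\le(2\kappa/b)\|\varphi-\psi\|_{L_\Phi}$ uniformly in $t$, so it suffices to treat $\psi\in C_0^\infty$. For such $\psi$, $S_\beta(t)\psi$ is bounded in $L^1\cap L^\infty$ uniformly in $t$, by the kernel bounds. Hence $\|S_\beta(t)\psi\|_{L_\Psi}\le C_\psi$, using the analogue of Lemma~\ref{lem:orliczbasic}(iii) for arbitrary $\Psi\in\Y$; when $\Psi$ is not finite, one scales so that the sup-norm stays below $x_\infty^\Psi$. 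Since $r(t)\to0$ as $t\to0$ by hypothesis, the claim follows for each $\psi$. Uniformity on compact subsets of $L^0_\Phi$ then follows by covering the compact set with a finite $\varepsilon$-net of elements of $C_0^\infty$.
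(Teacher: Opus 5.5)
Your proposal is correct. For (a)(i), (a)(iii), (a)(iv) and (b) it is the paper's argument. Your Jensen step for the sub-probability measure $\Kb^\beta_t(x,y)\dd y/m_\beta(t)$ is Lemma~\ref{L5.1} applied to the kernel $\Kb^\beta_t/m_\beta(t)$. Part (b) is the same chain: domination by $p_\beta(\cdot,t)*|\varphi|$, O'Neil's inequality, Lemma~\ref{lem:orlicznorm} with $M=\kappa t^{-d/(2\beta)}$, and the left inequality in \eqref{E3.1}. One point is left implicit in (a)(i): the a.e.\ absolute convergence of $\int\Kb^\beta_t(x,y)\varphi(y)\dd y$. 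Running your inequality with $\int\Kb^\beta_t(x,y)|\varphi(y)|\dd y$ in place of $|S_\beta(t)\varphi(x)|$, together with Lemma~\ref{lem:orliczbasic}(ii), supplies it, as in the paper.

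The genuine differences are in (a)(ii) and (c).

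For (a)(ii), the paper's Lemma~\ref{lem:strongcont} first treats $\beta=1$ through the nonnegative difference $G(x-y,t)-\Kb_t(x,y)$, with a separate tail estimate for $|x|>2R$. It then reaches $\beta<1$ by subordination. Your splitting into $\int p_\beta(x-y,t)|\varphi(y)-\varphi(x)|\dd y$ and $|\varphi(x)|\bigl(1-\int\Kb^\beta_t(x,y)\dd y\bigr)$ handles all $\beta$ at once, because the deficit term carries the compactly supported factor $\varphi(x)$. The price is the pointwise bound $1-\int\Kb^\beta_t(x,y)\dd y\le Ct(1+|x|^2)$. This extends to general $x$ the estimate for the first term in the proof of \eqref{eq:massdefect}. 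Your bounds on the two factors in \eqref{eq:mass1}, combined with Lemma~\ref{lem:stable}(iii), do give it.

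For (c), the paper argues by contradiction with $S_\beta(s)\varphi$ as approximant. It therefore uses the semigroup property, (b) at a fixed time $s$, and the strong continuity (a)(ii). You approximate by $C_0^\infty$ functions and need only three things: (b), the $L^1$ and $L^\infty$ contractivity, and the embedding $L^1\cap L^\infty\subseteq L_\Psi$ for arbitrary $\Psi\in\Y$. Lemma~\ref{lem:orliczbasic}(iii) states that embedding only for finite $\Psi$. Your convexity argument, with some $z_0>0$ such that $\Psi(z_0)<\infty$, correctly extends it. Your version of (c) is thus slightly more elementary and independent of (a)(ii). The paper's version never needs an embedding into $L_\Psi$ when $\Psi$ is not finite.
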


The constant in \eqref{eq:OrliczOrlicz} depends only on $d$, $\beta$ and the
compatibility constant $b$; it is independent of $\Phi$, $\Psi$, $\Theta$ and
$\varrho$. Taking $\Psi=\Phi_2^\infty$ in part (b), that is, $L_\Psi=L^\infty$
(see Remark~\ref{rem:onLpLq}), we obtain the following result.

\begin{Corollary}[$L^\infty$-smoothing]\label{cor:Linfty}
Let $0<\beta\le1$, $\varrho>0$ and $\Phi\in\Y$. Then, with $C_\infty:=2\kappa$,
\begin{equation}\label{eq:MTtoLinfty}
\|S_\beta(t)\varphi\|_{L^\infty}
\ \le\ C_\infty\,\|\varphi\|_{L_\Phi}\,\Phi^{-1}\bigl(t^{-\frac{d}{2\beta}}\bigr),
\qquad t>0,\ \varphi\in L_\Phi .
\end{equation}
\end{Corollary}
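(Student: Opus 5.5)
The plan is to deduce Corollary~\ref{cor:Linfty} from Theorem~\ref{thm:OrliczSmoothing}(b), choosing for $\Psi$ the Young function that realises $L^\infty$ as an Orlicz space. Take $\Psi=\Phi_2^\infty$, defined by $\Psi(x)=0$ for $0\le x\le1$ and $\Psi(x)=\infty$ for $x>1$. This is a Young function of type (II) with $x_\infty^\Psi=1$. Its Luxemburg norm is exactly $\|\cdot\|_{L^\infty}$: we have $\int\Psi(|u|/k)\le1$ if and only if $|u|\le k$ a.e. By Definition~\ref{def:inverse}, its generalised inverse is $\Psi^{-1}(y)=\inf\{x>0:\Psi(x)>y\}=1$ for every $y\ge0$.

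The next step is to check that $\Phi$ and $\Psi$ are compatible in the sense of \eqref{E3.1}, with $b=1$. Then \eqref{eq:OrliczOrlicz} gives the constant $2\kappa/b=2\kappa=C_\infty$. Since $\Psi^{-1}\equiv1$, the condition \eqref{E3.1} reads
$$
b\,x\ \le\ \Phi^{-1}(x)\,\Theta^{-1}(x)\ \le\ x,\qquad x\ge0.
$$
For $\Theta$ I would try the Young complement $\Phi^*$. By \eqref{eq:24}, $x\le\Phi^{-1}(x)(\Phi^*)^{-1}(x)\le2x$, which is off by a factor $2$ on the wrong side. To fix this, set $\Theta(y):=\Phi^*(2y)$. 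This is still a Young function, since it is convex, increasing and vanishes at $0$, and the dilation preserves the type (I)--(III). Its inverse is $\Theta^{-1}(x)=\tfrac12(\Phi^*)^{-1}(x)$; this follows directly from the infimum definition by the substitution $y\mapsto 2y$. We then get $\tfrac12 x\le\Phi^{-1}\Theta^{-1}\le x$, which gives compatibility only with $b=1/2$. That would produce the constant $4\kappa$, not $2\kappa$.

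This is the main obstacle: obtaining $b=1$. I expect one of two things. Either the paper's Remark~\ref{rem:onLpLq} records that the pair $(\Phi,\Phi_2^\infty)$ is compatible with $b=1$ through a sharper choice of $\Theta$, or it bypasses compatibility for this pair. For the sharper choice I would take $\Theta^{-1}(x):=x/\Phi^{-1}(x)$. This function is increasing by Lemma~\ref{lem:concave}, and concavity of $\Phi^{-1}$ suggests it is the inverse of some Young function, in which case $b=1$ holds exactly. Verifying that $x/\Phi^{-1}(x)$ is the generalised inverse of a convex function is the delicate point. If this fails, I would instead prove \eqref{eq:MTtoLinfty} directly. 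By the pointwise bound $\Kb_t^\beta(x,\cdot)\le p_\beta(x-\cdot,t)$ and the generalised H\"older inequality,
$$
|S_\beta(t)\varphi(x)|\le 2\|\varphi\|_{L_\Phi}\,\|p_\beta(\cdot,t)\|_{L_{\Phi^*}}.
$$
The kernel Orlicz-norm lemma (Lemma~\ref{lem:orlicznorm}), using $\|p_\beta\|_{L^1}=1$ and $\|p_\beta\|_{L^\infty}=\kappa_{d,\beta}t^{-d/(2\beta)}$, should bound the last factor by $\kappa/(\Phi^*)^{-1}(t^{-d/(2\beta)})$. Then \eqref{eq:24} gives $1/(\Phi^*)^{-1}(s)\le\Phi^{-1}(s)/s$, which yields the bound $2\kappa\,\Phi^{-1}(t^{-d/(2\beta)})$ after accounting for the factor $s=t^{-d/(2\beta)}$ normalisation built into that lemma.
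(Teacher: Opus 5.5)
Your fallback argument is exactly the paper's proof. It uses the domination $\Kb^\beta_t(x,\cdot)\le p_\beta(x-\cdot,t)$, the generalised H\"older inequality, Lemma~\ref{lem:orlicznorm} with $\Theta=\Phi^*$ and $M=\kappa s$, $s=t^{-d/(2\beta)}$, and the left inequality in \eqref{eq:24}, which together yield $C_\infty=2\kappa$. Note that Lemma~\ref{lem:orlicznorm} gives $\|p_\beta(\cdot,t)\|_{L_{\Phi^*}}\le\kappa s/(\Phi^*)^{-1}(s)$, not $\kappa/(\Phi^*)^{-1}(s)$ as you first wrote.

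The compatibility route should be dropped. Since $\Psi^{-1}$ is constant, $b=1$ forces $\Theta^{-1}(x)=x/\Phi^{-1}(x)$, and this function need not be concave. For example, take $\Phi$ piecewise linear with slopes $0,1,M$ ($M>1$) on $[0,1]$, $[1,2]$, $[2,\infty)$. Then the derivative of $x/\Phi^{-1}(x)$ jumps up from $\tfrac14$ to $\tfrac12-\tfrac1{4M}$ at $x=1$. So by Lemma~\ref{lem:concave} no admissible $\Theta$ exists, and Theorem~\ref{thm:OrliczSmoothing}(b) cannot give $2\kappa$ for general $\Phi$; your choice $\Theta=\Phi^*(2\,\cdot)$ only gives $4\kappa$.
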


\begin{Corollary}\label{cor:Linftyzero}
Let $0<\beta\le1$, $\varrho>0$ and let $\Phi\in\Y$ be finite. Then
$$
\lim_{t\to0}\
\frac{\|S_\beta(t)\varphi\|_{L^\infty}}{\Phi^{-1}\bigl(t^{-\frac{d}{2\beta}}\bigr)}=0
$$
uniformly for $\varphi$ in compact subsets of $L_\Phi^0$.
\end{Corollary}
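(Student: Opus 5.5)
We deduce Corollary~\ref{cor:Linftyzero} from Theorem~\ref{thm:OrliczSmoothing}(c), applied with $\Psi=\Phi_2^\infty$ so that $L_\Psi=L^\infty$ (Remark~\ref{rem:onLpLq}). Two things must be checked: that $\Phi$ and $\Phi_2^\infty$ are compatible in the sense of \eqref{E3.1}, and that $\Psi^{-1}(s)/\Phi^{-1}(s)\to0$ as $s\to\infty$. For the first, the argument that yields Corollary~\ref{cor:Linfty} from part~(b) already provides this compatibility, with a constant $b$ independent of $\Phi$. We take $\Theta=\Phi^*$ and use \eqref{eq:24}, which holds because $\Psi^{-1}\equiv1$ up to normalisation. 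For the second, $\Phi$ is finite, so $\Phi^{-1}$ is unbounded (Section~2.1). Since $\Psi^{-1}$ is bounded, the ratio tends to $0$.

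With these hypotheses in place, Theorem~\ref{thm:OrliczSmoothing}(c) gives
$$
\lim_{t\to0}\frac{\Psi^{-1}(t^{-d/(2\beta)})}{\Phi^{-1}(t^{-d/(2\beta)})}\,\|S_\beta(t)\varphi\|_{L^\infty}=0
$$
uniformly on compact subsets of $L^0_\Phi$. Moreover $\Psi^{-1}(s)$ is a positive constant $c_\infty$ for every $s>0$; for $\Phi_2^\infty$ (equal to $0$ on $[0,1]$ and $\infty$ beyond) one has $\Psi^{-1}(s)=1$. Dividing by this constant gives the claim. Finally, $\|\cdot\|_{L_\Psi}$ coincides with $\|\cdot\|_{L^\infty}$, since $\int\Psi(|u|/k)\le1$ holds exactly when $|u|\le k$ a.e.

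If one prefers not to rely on the exact form of $\Phi_2^\infty$, we would argue directly instead. Given a compact $K\subset L^0_\Phi$ and $\varepsilon>0$, cover $K$ by finitely many $\varepsilon$-balls centred at $\psi_j\in C_0^\infty$. By Corollary~\ref{cor:Linfty} and linearity,
$$
\|S_\beta(t)\varphi\|_{L^\infty}\le C_\infty\varepsilon\,\Phi^{-1}(t^{-d/(2\beta)})+\|\psi_j\|_{L^\infty},
$$
using that $S_\beta(t)$ is an $L^\infty$ contraction, which is \eqref{E5.2} with $\Phi=\Phi_2^\infty$. Since $\Phi^{-1}(t^{-d/(2\beta)})\to\infty$ as $t\to0$, dividing gives $\limsup\le C_\infty\varepsilon$ uniformly over $K$.

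The only delicate point is the unboundedness of $\Phi^{-1}$, and this is exactly where finiteness of $\Phi$ is used. This direct route is self-contained and would be my primary choice.
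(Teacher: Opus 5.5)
Your proof is correct, but the route you call primary differs from the paper's.

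The paper argues by contradiction. It takes $t_j\to0$ and $\varphi_j\in\mathcal K$ violating the claim and extracts $\varphi_j\to\varphi$. As bounded approximant it uses the smoothed function $S_\beta(s)\varphi$, which lies in $L^\infty$ by Corollary~\ref{cor:Linfty}. Closing the argument then requires $\|S_\beta(s)\varphi-\varphi\|_{L_\Phi}\to0$ as $s\to0$. That is the strong continuity of Theorem~\ref{thm:OrliczSmoothing}(a)(ii), which in turn rests on Lemma~\ref{lem:strongcont}.

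You instead combine total boundedness of $\mathcal K$ with the definition of $L^0_\Phi$ as the closure of $C_0^\infty$. (Take $\varepsilon/2$-balls centred in $\mathcal K$ and replace each centre by a $C_0^\infty$ function within $\varepsilon/2$.) After that you need only Corollary~\ref{cor:Linfty} and $L^\infty$-contractivity. What your route buys:
\begin{itemize}
\item it bypasses strong continuity entirely and is direct rather than by contradiction;
\item it yields the explicit bound $\sup_{\varphi\in\mathcal K}\|S_\beta(t)\varphi\|_{L^\infty}/\Phi^{-1}(t^{-d/(2\beta)})\le C_\infty\varepsilon+\max_j\|\psi_j\|_{L^\infty}/\Phi^{-1}(t^{-d/(2\beta)})$;
\item it applies verbatim to compact subsets of the $L_\Phi$-closure of $L_\Phi\cap L^\infty$, which can be strictly larger than $L^0_\Phi$.
\end{itemize}
The paper's version has the merit of mirroring its proof of Theorem~\ref{thm:OrliczSmoothing}(c) and reusing machinery already in place.

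Your alternative route through Theorem~\ref{thm:OrliczSmoothing}(c) also works, but the normalisation is off. The function $\Phi_2^\infty(x)=\Phi^\infty(x/2)$ vanishes on $[0,2]$, not $[0,1]$, so $\Psi^{-1}\equiv2$. It is precisely this value that lets $\Theta=\Phi^*$ satisfy \eqref{E3.1} with $b=1/2$ via \eqref{eq:24}.

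The function you actually describe is $\Phi^\infty$, with $\Psi^{-1}\equiv1$. For it, \eqref{eq:24} only gives $\Phi^{-1}(x)(\Phi^*)^{-1}(x)\le 2x$, and the right-hand inequality of \eqref{E3.1} can fail; for $\Phi(x)=x^2/2$ the product equals $2x$. There you would need $\Theta(x)=\Phi^*(2x)$ instead. In either normalisation $\|\cdot\|_{L_\Psi}$ is a fixed multiple of $\|\cdot\|_{L^\infty}$, so the conclusion is unaffected.
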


\begin{Corollary}[Exponential improvement for large times]\label{cor:decay}
Let $0<\beta\le1$, $\varrho>0$, $\Phi,\Psi\in\Y$ compatible with constant $b$,
and let $\lambda_\beta$ be as in \eqref{eq:gap}. For every $\theta\in(0,1)$ and
every $\varphi\in L_\Phi$,
\begin{equation}\label{eq:largetime}
\|S_\beta(t)\varphi\|_{L^\infty}
\ \le\ C_\infty\,2^{\frac d2}\theta^{-\frac{d}{2\beta}}\,
e^{-(1-\theta)\lambda_\beta t}\,
\Phi^{-1}\bigl(t^{-\frac{d}{2\beta}}\bigr)\,\|\varphi\|_{L_\Phi},
\qquad t>0,
\end{equation}
\begin{equation}\label{eq:largetime-sharp}
\|S_\beta(t)\varphi\|_{L^\infty}
\ \le\ C_\infty\,2^{\frac d2}e^{\lambda_\beta}\,\Phi^{-1}(1)\,
e^{-\lambda_\beta t}\,\|\varphi\|_{L_\Phi},\qquad t\ge1,
\end{equation}
and the same estimates hold for $\|S_\beta(t)\varphi\|_{L_\Psi}$ with $C_\infty$
replaced by $2\kappa/b$ and $\Phi^{-1}$ replaced by $\Phi^{-1}/\Psi^{-1}$.
The rate $\lambda_\beta$ in \eqref{eq:largetime-sharp} cannot be improved.
\end{Corollary}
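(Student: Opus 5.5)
The plan is to split $S_\beta(t)=S_\beta(t-\tau)S_\beta(\tau)$ with $\tau:=\theta t$ and combine the two earlier estimates: the smoothing estimate of Corollary~\ref{cor:Linfty} on the short piece $S_\beta(\tau)$, and the exponential bound \eqref{E5.2} on the long piece $S_\beta(t-\tau)$, used on $L^\infty$.

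\textbf{Step 1: $L^\infty$ is covered by \eqref{E5.2}.} Realise $L^\infty=L_{\Phi_2^\infty}$ as in Remark~\ref{rem:onLpLq}; this is a Young function of type (II) or (III) with Luxemburg norm equal to the $L^\infty$ norm. Then Theorem~\ref{thm:OrliczSmoothing}(a)(i) gives
$$\|S_\beta(s)g\|_{L^\infty}\le m_\beta(s)\|g\|_{L^\infty}\le 2^{d/2}e^{-\lambda_\beta s}\|g\|_{L^\infty}.$$

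\textbf{Step 2: proof of \eqref{eq:largetime}.} Fix $t>0$ and $\theta\in(0,1)$, and set $\tau=\theta t$. The semigroup identity holds for $|\varphi|$ by Tonelli, and then for $\varphi$ itself. Combining Step~1 with Corollary~\ref{cor:Linfty} yields
$$\|S_\beta(t)\varphi\|_{L^\infty}\le 2^{d/2}e^{-(1-\theta)\lambda_\beta t}\,C_\infty\,\Phi^{-1}\bigl((\theta t)^{-d/(2\beta)}\bigr)\|\varphi\|_{L_\Phi}.$$
Lemma~\ref{lem:concave}, applied with $c=\theta^{-d/(2\beta)}\ge1$, gives $\Phi^{-1}(cx)\le c\,\Phi^{-1}(x)$. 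This produces the factor $\theta^{-d/(2\beta)}\Phi^{-1}(t^{-d/(2\beta)})$ and hence \eqref{eq:largetime}.

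\textbf{Step 3: proof of \eqref{eq:largetime-sharp}.} Here we do not use the $\theta$-splitting. For $t\ge1$, write $S_\beta(t)=S_\beta(t-1)S_\beta(1)$ and apply Corollary~\ref{cor:Linfty} at time $1$, which contributes $C_\infty\Phi^{-1}(1)$. Step~1 on the remaining piece contributes $2^{d/2}e^{-\lambda_\beta(t-1)}=2^{d/2}e^{\lambda_\beta}e^{-\lambda_\beta t}$.

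\textbf{Step 4: the $L_\Psi$ versions.} These follow identically: replace the $L^\infty$ bound by \eqref{E5.2} in $L_\Psi$, and Corollary~\ref{cor:Linfty} by \eqref{eq:OrliczOrlicz} with constant $2\kappa/b$. The only extra point is the dilation step for the ratio $\Phi^{-1}/\Psi^{-1}$. The ratio's monotonicity is unclear, so the cleanest route is to use compatibility \eqref{E3.1}:
$$\frac{\Phi^{-1}(x)}{\Psi^{-1}(x)}\le \frac{x}{\Theta^{-1}(x)}.$$
Lemma~\ref{lem:concave} applied to $\Theta$ shows that $x/\Theta^{-1}(x)$ is increasing, and $\Theta^{-1}(cx)\ge\Theta^{-1}(x)$, so $x/\Theta^{-1}(x)$ grows at most linearly under dilation by $c\ge1$. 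Converting back with the lower inequality in \eqref{E3.1} costs a factor $1/b$.

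\textbf{Main obstacle.} This bookkeeping of constants in the $L_\Psi$ case is the step I expect to need care: one must check whether the announced constant $2\kappa/b$ survives, or whether one should simply interpret the "same estimate" with $\Phi^{-1}/\Psi^{-1}$ evaluated at $\theta t$ before dilation.

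\textbf{Sharpness.} Suppose $\|S_\beta(t)\varphi\|_{L^\infty}\le Ce^{-\mu t}\|\varphi\|_{L_\Phi}$ held with $\mu>\lambda_\beta$. Take $\varphi=h_0$, which lies in $L_\Phi$ with positive norm (Theorem~\ref{thm:OrliczSmoothing}(a)(iv)). Since $S_\beta(t)h_0=e^{-\lambda_\beta t}h_0$ and $\|h_0\|_{L^\infty}=1$, we get $e^{-\lambda_\beta t}\le Ce^{-\mu t}\|h_0\|_{L_\Phi}$ for all $t\ge1$. Letting $t\to\infty$ gives a contradiction. The same argument works in $L_\Psi$, using $0<\|h_0\|_{L_\Psi}<\infty$.
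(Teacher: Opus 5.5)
Your Steps 1--3 and the sharpness argument follow the paper's proof. The paper uses the same splitting $S_\beta(t)=S_\beta((1-\theta)t)S_\beta(\theta t)$, or $t=(t-1)+1$ for \eqref{eq:largetime-sharp}. It bounds the long piece by \eqref{E5.2} on $L^\infty$, the short piece by Corollary~\ref{cor:Linfty}, handles the dilation with \eqref{eq:dilate}, and gets optimality from $h_0$. There is a harmless slip in Step~1: the Luxemburg norm of $\Phi_2^\infty$ is only proportional to $\|\cdot\|_{L^\infty}$, not equal to it. By homogeneity this does not matter, and using $\Phi^\infty$, whose Luxemburg norm is exactly $\|\cdot\|_{L^\infty}$, avoids it.

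The gap is in Step~4, at the point you flag. You start from \eqref{eq:OrliczOrlicz} at time $\theta t$, which already carries the factor $1/b$. You then pass to $x/\Theta^{-1}(x)$ by the right inequality in \eqref{E3.1}, dilate, and return by the left inequality, paying a second factor $1/b$. This proves the $L_\Psi$ version of \eqref{eq:largetime} only with $2\kappa/b^2$ in place of $2\kappa/b$, which is weaker since $b\le1$.

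The missing observation is that no monotonicity of the ratio is needed. For $c\ge1$, \eqref{eq:dilate} gives $\Phi^{-1}(cx)\le c\,\Phi^{-1}(x)$, and the monotonicity of $\Psi^{-1}$ gives $\Psi^{-1}(cx)\ge\Psi^{-1}(x)$. Hence
\[
\frac{\Phi^{-1}(cx)}{\Psi^{-1}(cx)}\ \le\ c\,\frac{\Phi^{-1}(x)}{\Psi^{-1}(x)},\qquad x>0 .
\]
This is the same argument you applied to $x/\Theta^{-1}(x)$, used directly on the ratio. With $c=\theta^{-d/(2\beta)}$ and $x=t^{-d/(2\beta)}$ it gives the stated constant $2\kappa/b$, and this is how the paper argues.

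Your $\Theta$-route can also be made lossless. Dilate the intermediate bound $2\kappa\,x/\Theta^{-1}(x)$ from the proof of Theorem~\ref{thm:OrliczSmoothing}(b) first, and only then convert with $1/b$, once. The $L_\Psi$ version of \eqref{eq:largetime-sharp} involves no dilation and is fine as you have it.
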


\begin{Remark}\rm\label{rem:onLpLq}
Take $\Phi(x)=x^q/q$ and $\Psi(x)=x^r/r$ with $1\le q\le r<\infty$. Then
$\Phi^{-1}(x)=(qx)^{1/q}$ and $\Psi^{-1}(x)=(rx)^{1/r}$, and the choice
\eqref{eq:Theta-def} below gives
$\Theta^{-1}(x)=r^{1/r}q^{-1/q}x^{1+\frac1r-\frac1q}$. This function is concave
since $0<1+\frac1r-\frac1q\le1$. Thus \eqref{E3.1} holds with $b=1$ and
$\Theta(x)=C(q,r)\,x^{\frac{qr}{qr+q-r}}$, and \eqref{eq:OrliczOrlicz} becomes
\begin{equation}\label{eq:LpLq}
\|S_\beta(t)\varphi\|_{L^r}\ \le\ C\,t^{-\frac{d}{2\beta}
\left(\frac1q-\frac1r\right)}\|\varphi\|_{L^q},\qquad t>0 .
\end{equation}
Choose instead $\Psi=\Phi_2^\infty$, where $\Phi^\infty(x)=0$ for
$x\in[0,1]$, $\Phi^\infty(x)=\infty$ for $x>1$, and
$\Phi^\infty_2(x)=\Phi^\infty(x/2)$. Then $L_\Psi=L^\infty$, with
$\|\cdot\|_{L^\infty}=\frac12\|\cdot\|_{L_\Psi}$ and $\Psi^{-1}\equiv2$
\cite[Example 2.1(a)]{Laister}. With $\Theta=\Phi^*$, condition \eqref{E3.1}
holds with $b=1/2$ by \eqref{eq:24}, and \eqref{eq:OrliczOrlicz} reduces to
\eqref{eq:MTtoLinfty} with the constant $2\kappa$. We shall nevertheless give
a direct proof of Corollary \ref{cor:Linfty}.
\end{Remark}

\begin{proposition}[$L^q$--$L^r$ bounds]\label{prop:LpLq}
Let $0<\beta\le1$, $\varrho>0$ and $1\le q\le r\le\infty$. Then
\eqref{eq:LpLq} holds for all $t>0$ with a constant $C=C(d,\beta,q,r)$, and
for every $\theta\in(0,1)$ its right-hand side may be multiplied by
$C_\theta e^{-(1-\theta)\lambda_\beta t}$. Moreover
$\|S_\beta(t)\varphi\|_{L^r}\le Ce^{-\lambda_\beta t}\|\varphi\|_{L^q}$ for
$t\ge1$.
\end{proposition}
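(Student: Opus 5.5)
The plan has three steps: the basic $L^q$--$L^r$ bound, the exponential factor for all times, and the sharp rate for $t\ge1$.

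\textbf{Step 1: the basic bound.} For $1\le q\le r<\infty$ this is Remark~\ref{rem:onLpLq}. Theorem~\ref{thm:OrliczSmoothing}(b) with $\Phi(x)=x^q/q$, $\Psi(x)=x^r/r$ and $b=1$ gives \eqref{eq:LpLq}. The constant is $2\kappa$ times the ratio $\Phi^{-1}/\Psi^{-1}$, which equals $q^{1/q}r^{-1/r}t^{-\frac{d}{2\beta}(\frac1q-\frac1r)}$. We also adjust for the normalisation: $\|\cdot\|_{L_\Phi}=q^{-1/q}\|\cdot\|_{L^q}$, and similarly for $\Psi$. This yields $C(d,\beta,q,r)$.

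For $r=\infty$ we use Corollary~\ref{cor:Linfty} with $\Phi(x)=x^q/q$. Then $\Phi^{-1}(t^{-d/(2\beta)})=q^{1/q}t^{-d/(2\beta q)}$, which is the claimed rate. Alternatively, for both endpoints one can argue directly. The pointwise domination $\Kb^\beta_t(x,y)\le p_\beta(x-y,t)$ (the lemma in Section~\ref{sec:kernel} alluded to in the introduction) combined with Young's convolution inequality gives the bound. The needed $L^s$ norms of $p_\beta(\cdot,t)$ follow by interpolating between $\|p_\beta(\cdot,t)\|_{L^1}=1$ and $\|p_\beta(\cdot,t)\|_{L^\infty}=\kappa_{d,\beta}t^{-d/(2\beta)}$, the latter from self-similarity.

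\textbf{Step 2: the exponential factor for all $t$.} Write $S_\beta(t)=S_\beta(\theta t)S_\beta((1-\theta)t)$. Apply the smoothing bound of Step 1 to $S_\beta(\theta t)$: this gives the factor $C\theta^{-\frac{d}{2\beta}(\frac1q-\frac1r)}t^{-\frac{d}{2\beta}(\frac1q-\frac1r)}$. Apply the contraction/decay bound \eqref{E5.2} on $L^q$ to $S_\beta((1-\theta)t)$: this gives $m_\beta((1-\theta)t)\le 2^{d/2}e^{-(1-\theta)\lambda_\beta t}$. The case $q=\infty$ is covered by \eqref{E5.2} with $\Phi=\Phi^\infty$. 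Composing the two bounds gives the claim with $C_\theta=2^{d/2}\theta^{-\frac{d}{2\beta}(\frac1q-\frac1r)}$.

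\textbf{Step 3: the sharp rate for $t\ge1$.} The obstacle is that the bound must carry the full rate $e^{-\lambda_\beta t}$, with no loss $\theta$. I would split $S_\beta(t)=S_\beta(1)S_\beta(t-1)$ in the opposite order. First, $\|S_\beta(t-1)\varphi\|_{L^q}\le 2^{d/2}e^{-\lambda_\beta(t-1)}\|\varphi\|_{L^q}$ by \eqref{E5.2}. Second, $\|S_\beta(1)\psi\|_{L^r}\le C\|\psi\|_{L^q}$ by Step 1 at $t=1$. Together these give $Ce^{\lambda_\beta}2^{d/2}e^{-\lambda_\beta t}$. This mirrors Corollary~\ref{cor:decay}.

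The only genuinely delicate input is that \eqref{E5.2} gives the sharp decay $2^{d/2}e^{-\lambda_\beta t}$ uniformly on every Orlicz space, including $L^1$ and $L^\infty$; this is assumed from Theorem~\ref{thm:OrliczSmoothing}(a)(i). Everything else is bookkeeping of the constants arising from normalising $x^q/q$.
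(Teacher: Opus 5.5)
Your proposal is correct and follows the paper's proof. The basic bound comes from Theorem~\ref{thm:OrliczSmoothing}(b) with the power Young's functions of Remark~\ref{rem:onLpLq}, from Corollary~\ref{cor:Linfty} for $r=\infty$, and from \eqref{E5.2} for $q=r$. The exponential factors come from the splittings $t=\theta t+(1-\theta)t$ and $t=1+(t-1)$, exactly as in the proof of Corollary~\ref{cor:decay}. The only, immaterial, difference is that you apply the decay \eqref{E5.2} on the source space $L^q$ before smoothing, whereas the paper applies it on the target space after smoothing; this is equivalent because the semigroup factors commute and \eqref{E5.2} holds on every Orlicz space.
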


Proposition~\ref{prop:LpLq} recovers, for $0<\beta\le1$, general $\varrho>0$
and $q\le r$, the fixed-time estimates of \cite[Theorem 1.1]{BMMfrac}, where
$\varrho=1$ and the large-time rate is $e^{-td^\beta}=e^{-\lambda_\beta t}$.

\begin{Remark}\rm\label{rem:thetachoice}
Suppose that the function
\begin{equation}\label{eq:Theta-def}
g(x):=\frac{x\,\Psi^{-1}(x)}{\Phi^{-1}(x)},\qquad x>0,\qquad g(0):=0,
\end{equation}
is a concave, increasing bijection of $[0,\infty)$. Then its inverse $\Theta$
is convex, increasing and finite, hence a Young's function with
$\Theta^{-1}=g$, and \eqref{E3.1} holds with $b=1$; compare
\cite[Remark 3.1(ii)]{Laister}.
Concavity is necessary, since the generalised inverse of every Young's
function is concave (Lemma~\ref{lem:concave}). Note that the right-hand
inequality in \eqref{E3.1} is the hypothesis
\begin{equation}\label{eq:compat}
\Theta^{-1}(x)\,\Phi^{-1}(x)\le x\,\Psi^{-1}(x),\qquad x\ge0 ,
\end{equation}
of O'Neil's convolution inequality (Lemma~\ref{lem:oneil}); the choice
$\Theta^{-1}=g$ turns it into an equality.
\end{Remark}

\begin{Example}\label{ex:smoothing}
The following estimates follow from Corollaries~\ref{cor:Linfty} and
\ref{cor:decay} and Theorem~\ref{thm:OrliczSmoothing}(b); they are the
fractional Hermite counterparts of \cite[Examples 3.2--3.5]{Laister}. We put
$a:=d/(2\beta)$.
\begin{enumerate}[label=\textup{(\alph*)}]
\item $\Phi(x)=\max\{x-1,0\}$, so that $L_\Phi=L^1+L^\infty$ and
$\Phi^{-1}(x)=1+x$ \cite[Example 2.1(c)]{Laister}:
$\|S_\beta(t)\varphi\|_{L^\infty}\le C(1+t^{-a})\|\varphi\|_{L^1+L^\infty}$,
and $\le Ce^{-\lambda_\beta t}\|\varphi\|_{L^1+L^\infty}$ for $t\ge1$.
\item $\Phi(x)=\exp\exp(x^q)-e$, $q\ge1$, so that
$\Phi^{-1}(y)=\bigl(\log\log(e+y)\bigr)^{1/q}$. Then, for $t>0$,
$\|S_\beta(t)\varphi\|_{L^\infty}\le C\bigl(\log\log(e+t^{-a})\bigr)^{1/q}
\|\varphi\|_{L_\Phi}$.
\item $\Phi(x)=e^{x^q}-1$ and $\Psi(x)=e^{x^r}-1$ with $1\le q\le r$: by
\cite[Example 3.4]{Laister}, the function
$\Theta^{-1}(x)=x(\log(1+x))^{\frac1r-\frac1q}$ is concave, increasing and
invertible, and \eqref{eq:OrliczOrlicz} gives
$\|S_\beta(t)\varphi\|_{\exp L^r}\le C\bigl(\log(1+t^{-a})\bigr)^{\frac1q-\frac1r}
\|\varphi\|_{\exp L^q}$.
\item $\Phi(x)=x^q/q$ and $\Psi(x)=e^{x^r}-1$ with $1\le q\le r$: by
\cite[Example 3.5]{Laister} the function
$\Theta^{-1}(x)=C(q)x^{1-\frac1q}(\log(1+x))^{\frac1r}$ is admissible, and
$\|S_\beta(t)\varphi\|_{\exp L^r}\le Ct^{-\frac{a}{q}}
\bigl(\log(1+t^{-a})\bigr)^{-\frac1r}\|\varphi\|_{L^q}$.
\end{enumerate}
In (b)--(d), the right-hand sides may be multiplied by
$C_\theta e^{-(1-\theta)\lambda_\beta t}$. The Young's functions of (c) and (d)
do not depend on the operator, so the compatibility conditions are exactly
those verified in \cite{Laister}. The operator enters only through the
constant $2\kappa/b$ and the exponent $a$.
\end{Example}

\begin{figure}[ht]
\centering
\begin{tikzpicture}
\begin{axis}[
  width=0.82\textwidth, height=6.2cm,
  xmin=0.02, xmax=6, ymin=1e-3, ymax=10,
  ymode=log, xlabel={$t$}, ylabel={upper bound},
  legend style={at={(0.03,0.03)}, anchor=south west, font=\small},
  samples=300, domain=0.02:6]
\addplot[black, thick, dashed] {x^(-1/2)};
\addlegendentry{free: $t^{-d/(2\beta q)}$}
\addplot[black, very thick] {min(x^(-1/2), sqrt(2)*exp(-(x-1)))};
\addlegendentry{confined: $\min\{t^{-d/(2\beta q)},\,2^{d/2}e^{-\lambda_\beta(t-1)}\}$}
\end{axis}
\end{tikzpicture}
\caption{The $L^q\to L^\infty$ bounds of Proposition~\ref{prop:LpLq} (up to
the multiplicative constant), for $d=1$, $q=1$, $\beta=1$, $\varrho=1$, so
that $d/(2\beta q)=1/2$ and $\lambda_\beta=1$. For short times the confined
bound follows the self-similar free bound; for large times it decays
exponentially at the sharp rate $\lambda_\beta$, because of the spectral gap
of $\Hr$.}
\label{fig:decay}
\end{figure}

\subsection{Local and global well-posedness}

Throughout this subsection, $f$ is assumed to satisfy
\begin{equation*}
(\mathrm{L})\qquad
f:\R\to\R\ \text{is locally Lipschitz continuous and } f(0)=0 ,
\end{equation*}
and $\ell:(0,\infty)\to[0,\infty)$ denotes its increasing Lipschitz modulus,
\begin{equation}\label{eq:lipschitzmodulus}
\ell(s):=\sup_{\substack{|u|,|v|\le s\\ u\ne v}}\frac{|f(u)-f(v)|}{|u-v|},
\end{equation}
so that
\begin{equation}\label{eq:lipbound}
|f(u)-f(v)|\le \ell(s)|u-v|\quad\text{for }|u|,|v|\le s,
\qquad |f(u)|\le\ell(s)|u|\quad\text{for }|u|\le s .
\end{equation}
We call $f$ \emph{positone} if $uf(u)\ge0$ for all $u\in\R$. We also use the
change of variables $x=s^{-d/(2\beta)}$, which gives, for $0<T\le\infty$,
\begin{equation}\label{eq:changevar}
\int_0^{T}\ell\bigl(\lambda\Phi^{-1}(s^{-\frac{d}{2\beta}})\bigr)\dd s
=\frac{2\beta}{d}\int_{T^{-\frac{d}{2\beta}}}^{\infty}x^{-p_\ast}
\ell\bigl(\lambda\Phi^{-1}(x)\bigr)\dd x .
\end{equation}

\begin{theorem}[Local well-posedness]\label{thm:local}
Let $0<\beta\le1$, $\varrho>0$, $\Phi\in\Nf$ and let $f$ satisfy
$(\mathrm{L})$. Suppose that \eqref{eq:IC-infty} holds for some $\lambda>0$,
where $p_\ast=1+2\beta/d$. Then the following hold.
\begin{enumerate}[label=\textup{(\alph*)}]
\item \emph{(Uniform existence.)} For every compact subset $\mathcal K$ of
$M_\Phi$ there is $T_{\mathcal K}>0$ such that for every $u_0\in\mathcal K$
problem \eqref{eq:main} has a bounded $M_\Phi$-mild solution $u(\cdot;u_0)$ on
$[0,T_{\mathcal K})$ satisfying $|u(t;u_0)|\le 2S_\beta(t)|u_0|$ and
\begin{equation}\label{eq:smallrate}
\lim_{t\to0}\
\frac{\|u(t;u_0)\|_{L^\infty}}{\Phi^{-1}\bigl(t^{-\frac{d}{2\beta}}\bigr)}=0
\end{equation}
uniformly for $u_0\in\mathcal K$. If $f$ is positone and $u_0\ge0$ (resp.
$u_0\le0$) then $u(t;u_0)\ge0$ (resp. $u(t;u_0)\le0$).
\item \emph{(Uniqueness.)} For every $T>0$, a bounded $M_\Phi$-mild solution of
\eqref{eq:main} on $[0,T)$ with data $u_0\in M_\Phi$ is unique. Consequently
$u(\cdot;u_0)$ extends to a maximal interval $[0,T_{u_0})$ as a bounded
$M_\Phi$-mild solution and, if $T_{u_0}<\infty$, then
$\|u(t)\|_{L^\infty}\to\infty$ as $t\uparrow T_{u_0}$.
\item \emph{(Continuous dependence.)} For every compact $\mathcal K\subseteq
M_\Phi$ there exist $C_{\mathcal K}>0$ and
$\tau_{\mathcal K}\in(0,T_{\mathcal K})$ such that for all
$\phi,\varphi\in\mathcal K$ and $t\in(0,\tau_{\mathcal K}]$,
\begin{equation}\label{eq:contdep}
\|u(t;\phi)-u(t;\varphi)\|_{L_\Phi}
+\frac{\|u(t;\phi)-u(t;\varphi)\|_{L^\infty}}
      {\Phi^{-1}\bigl(t^{-\frac{d}{2\beta}}\bigr)}
\ \le\ C_{\mathcal K}\,\|\phi-\varphi\|_{L_\Phi}.
\end{equation}
\item \emph{(Comparison principle.)} If $f$ is increasing and
$\phi\le\varphi$ in $M_\Phi$ then $u(t;\phi)\le u(t;\varphi)$ on
$[0,\min\{T_\phi,T_\varphi\})$.
\end{enumerate}
\end{theorem}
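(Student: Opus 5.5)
The plan is to adapt the Fujishima--Hisa--Laister fixed point scheme \cite{Laister}, substituting the smoothing estimates of Theorem~\ref{thm:OrliczSmoothing} and Corollaries~\ref{cor:Linfty}--\ref{cor:Linftyzero}. We write $\Lambda(t):=\Phi^{-1}(t^{-d/(2\beta)})$.

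\emph{Existence.} Fix a compact $\mathcal K\subseteq M_\Phi$ and $u_0\in\mathcal K$. Put $w(t):=2S_\beta(t)|u_0|$ and let $X_T$ be the set of measurable $u$ on $Q_T$ with $|u|\le w$, equipped with the metric $d(u,v)=\esssup_t\|u(t)-v(t)\|_{L_\Phi}$. Since $\|w(t)\|_{L_\Phi}\le2\|u_0\|_{L_\Phi}$ by \eqref{E5.2}, Lemma~\ref{lem:complete} shows that $(X_T,d)$ is complete.

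Corollary~\ref{cor:Linftyzero}, applied to $|u_0|$ (the map $u_0\mapsto|u_0|$ preserves compactness of $\mathcal K$), gives $\|w(t)\|_{L^\infty}\le \varepsilon(t)\Lambda(t)$ with $\varepsilon(t)\to0$ uniformly on $\mathcal K$. Take $T$ so small that $\varepsilon\le\lambda$ on $(0,T)$. For $u\in X_T$ we have $|u(s)|\le\lambda\Lambda(s)$, hence $|f(u(s))|\le \ell(\lambda\Lambda(s))\,|u(s)|\le \ell(\lambda\Lambda(s))\,w(s)$ by \eqref{eq:lipbound}. Positivity of the kernel and the semigroup property then give
$$
|\mathcal F(u)(t)|\le S_\beta(t)|u_0|+2S_\beta(t)|u_0|\int_0^t\ell(\lambda\Lambda(s))\dd s .
$$
By \eqref{eq:changevar} and \eqref{eq:IC-infty}, $\int_0^T\ell(\lambda\Lambda(s))\dd s\to0$ as $T\to0$. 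Choosing $T$ with this integral at most $1/2$ makes $\mathcal F$ map $X_T$ into itself. The same computation applied to $f(u)-f(v)$, combined with \eqref{E5.2} and Lemma~\ref{lem:minkowski}, yields $d(\mathcal F u,\mathcal F v)\le\frac12 d(u,v)$, so Banach's fixed point theorem gives a solution.

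Several properties then follow directly:
\begin{enumerate}[label=\textup{(\roman*)}]
\item Boundedness for $t>0$ and the rate \eqref{eq:smallrate} come from $|u|\le w$ and Corollary~\ref{cor:Linftyzero}.
\item Membership in $M_\Phi$ follows from the lattice property (Lemma~\ref{lem:orliczbasic}(i)) and $S_\beta(t)M_\Phi\subseteq M_\Phi$.
\item Continuity into $M_\Phi$ follows from the $C_0$ property (Theorem~\ref{thm:OrliczSmoothing}(a)(iii)), while the Duhamel term has $L_\Phi$-norm at most $\int_0^t\ell(\cdot)\,2\|u_0\|_{L_\Phi}\to0$. Continuity at positive times is handled by the semigroup property.
\item Sign preservation for positone $f$ is obtained by running the iteration in the closed subset $\{0\le u\le w\}$.
\end{enumerate}

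\emph{Uniqueness.} Let $u,v$ be bounded $M_\Phi$-mild solutions. On $[\tau,T')$ both are bounded by some $M$, so $\|u(t)-v(t)\|_{L_\Phi}\le \|u(\tau)-v(\tau)\|_{L_\Phi}+\ell(M)\int_\tau^t\|u-v\|_{L_\Phi}$, and Gronwall (Lemma~\ref{lem:gronwall}) would handle this part. The difficulty lies near $t=0$, where boundedness is lost. I expect this to be the main obstacle, since a bounded mild solution is not a priori known to satisfy $|v|\le 2S_\beta(t)|u_0|$ or the small-rate bound \eqref{eq:smallrate}.

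My first approach would be the following. For small $\tau$, continuity of $v$ in $M_\Phi$ keeps $\{v(\tau)\}$ in a compact set. Restarting the constructed solution at $v(\tau)$ via Lemma~\ref{lem:restart} and using the uniform existence time $T_{\mathcal K}$ shows that $v$ coincides on $[\tau,\tau+T_{\mathcal K})$ with the fixed point starting from $v(\tau)$; here one uses uniqueness among bounded solutions on intervals away from $0$, where Gronwall applies. The continuous dependence estimate (c), applied on this common interval, then gives $\|u(t)-v(t)\|_{L_\Phi}\le C\|u(\tau)-v(\tau)\|_{L_\Phi}\to0$ as $\tau\to0$, because both $u$ and $v$ are continuous at $0$ with the same value $u_0$. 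Maximal extension and the blow-up alternative follow in the standard way: if $\|u\|_{L^\infty}$ stayed bounded along a sequence of times, uniform local existence in $L^\infty\cap M_\Phi$ would permit continuation past $T_{u_0}$.

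\emph{Continuous dependence and comparison.} For (c), subtract the two fixed point equations. With $\Lambda$ controlling the $L^\infty$ norm, the Lipschitz bound gives
$$
\|u-v\|_{L_\Phi}\le\|\phi-\varphi\|_{L_\Phi}+\int_0^t\ell(\lambda\Lambda(s))\,\|u-v\|_{L_\Phi}\dd s,
$$
so Gronwall yields the $L_\Phi$ part of \eqref{eq:contdep}. The $L^\infty$ part comes from Corollary~\ref{cor:Linfty} applied to $S_\beta(t)(\phi-\varphi)$ together with the same absorption, using that $\Lambda$ is decreasing in $t$. For (d), when $f$ is increasing the map $\mathcal F$ is order preserving, because the kernel is positive. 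The iterates started from $\phi\le\varphi$ therefore stay ordered, the order passes to the limit, and it is propagated globally through uniqueness and restarting.
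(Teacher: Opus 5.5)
Your treatment of (a), (b), the blow-up alternative and (d) follows the paper's scheme. For existence you use the barrier $2S_\beta(t)|u_0|$ with the metric $\esssup_t\|\cdot\|_{L_\Phi}$. For uniqueness you restart $v$ at $v(\tau)\in L^\infty$, identify it with the fixed point by the $L^\infty$-Gronwall argument of Lemma~\ref{lem:Linfty}, and let $\tau\to0$ through (c). Your shifted-time comparison of $u(\tau)$ with $v(\tau)$ also works, provided two things hold:
\begin{itemize}
\item the compact set also contains $u([0,T_1])$;
\item $u(\tau+\cdot)$ is identified with $u(\cdot\,;u(\tau))$ in the same way.
\end{itemize}
The paper instead compares $u(t;v(s))$ with $u(t;u_0)$, which needs only $v([0,T_1])\ni u_0$. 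Ordered Picard iterates in (d) are the alternative the paper mentions after its own proof of (d).

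The step that does not go through as written is the $L^\infty$ half of (c). After dividing by $\Lambda(t)$, you must bound the Duhamel term in terms of $\Lambda(s)^{-1}\|u(s)-v(s)\|_{L^\infty}$, and monotonicity of $\Lambda$ points the wrong way. Using $L^\infty$-contractivity on all of $(0,t)$ produces the factor $\Lambda(s)/\Lambda(t)\ge1$. This leaves the kernel $\ell(\lambda\Lambda(s))\Lambda(s)$, which \eqref{eq:IC-infty} does not make integrable near $s=0$. Using \eqref{eq:MTtoLinfty} on all of $(0,t)$ produces $\Lambda(t-s)/\Lambda(t)$ instead, and $\int_0^t\Lambda(\sigma)\dd\sigma$ may diverge. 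For example, take $\Phi(x)=x^q/q$ and $f(u)=|u|^{p-1}u$ with $\max\{1,q_c\}<q\le d/(2\beta)$, where $q_c=d(p-1)/(2\beta)$; \eqref{eq:IC-infty} allows this when $p<2$ and $d>2\beta$.

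The missing ingredient is to split the Duhamel integral at $t/2$ and use the doubling bound $\Lambda(t/2)\le2^{d/(2\beta)}\Lambda(t)$ of \eqref{eq:alphadouble}. This bound comes from the concavity of $\Phi^{-1}$ via \eqref{eq:dilate}, not from monotonicity.
\begin{itemize}
\item On $(0,t/2)$, use \eqref{eq:MTtoLinfty} with $\Lambda(t-s)\le\Lambda(t/2)\le2^{d/(2\beta)}\Lambda(t)$ and the $L_\Phi$-norm of $u(s)-v(s)$.
\item On $(t/2,t)$, use $L^\infty$-contractivity together with $\Lambda(t)^{-1}\le2^{d/(2\beta)}\Lambda(s)^{-1}$.
\end{itemize}
This yields an integral inequality for the bounded function $y(t)=\|u(t)-v(t)\|_{L_\Phi}+\Lambda(t)^{-1}\|u(t)-v(t)\|_{L^\infty}$, with kernel a constant multiple of $\ell(\lambda\Lambda(s))$. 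Lemma~\ref{lem:gronwall} then gives \eqref{eq:contdep}. Since your uniqueness argument uses (c), prove (c) before (b). This causes no circularity, because (c) only involves the constructed solutions.
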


In (b), uniqueness is unconditional within the class of bounded
$M_\Phi$-mild solutions: no smallness condition such as \eqref{eq:smallrate}
is imposed on the competitor. This is the counterpart of
\cite[Theorem B(b)]{Laister}. As in \cite[Remark 3.3]{Laister}, parts (a)--(c)
define a local semiflow on $M_\Phi$. This semiflow is positone if $f$ is
positone, and order preserving if $f$ is increasing.

\begin{Corollary}[Global well-posedness for small data]\label{thm:global}
Let $0<\beta\le1$, $\varrho>0$, $\Phi\in\Nf$ and let $f$ satisfy $(\mathrm L)$.
Suppose that \eqref{eq:IC-zero} holds for some $\lambda>0$, that is,
$$
I_0:=\frac{2\beta}{d}\int_0^{\infty}x^{-p_\ast}
\ell\bigl(\lambda\Phi^{-1}(x)\bigr)\dd x<1,
\qquad\text{and set}\qquad A:=(1-I_0)^{-1}\in[1,\infty).
$$
Then for every $u_0\in M_\Phi$ with $\|u_0\|_{L_\Phi}\le\lambda/(AC_\infty)$,
problem \eqref{eq:main} has a unique global bounded $M_\Phi$-mild solution
$u$; it satisfies $|u(t)|\le A\,S_\beta(t)|u_0|$ for all $t>0$, and
\begin{align}
\|u(t)\|_{L^\infty}&\le\lambda\,\Phi^{-1}\bigl(t^{-\frac{d}{2\beta}}\bigr)
\xrightarrow[t\to\infty]{}0,\qquad t>0,\label{eq:globaldecay1}\\
\|u(t)\|_{L_\Phi}&\le A\,m_\beta(t)\,\|u_0\|_{L_\Phi},\qquad t\ge0,
\label{eq:globaldecay2}\\
\|u(t)\|_{L^\infty}&\le C_\infty 2^{\frac d2}e^{\lambda_\beta}\Phi^{-1}(1)\,A\,
e^{-\lambda_\beta t}\,\|u_0\|_{L_\Phi},\qquad t\ge1 .\label{eq:expdecay}
\end{align}
\end{Corollary}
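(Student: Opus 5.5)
The plan is a global fixed point argument in the barrier set
$$
X:=\bigl\{u:\ |u(t)|\le A\,S_\beta(t)|u_0|\ \text{a.e. in } Q_\infty\bigr\},\qquad
d(u,v):=\esssup_{t>0}\|u(t)-v(t)\|_{L_\Phi}.
$$
By Lemma~\ref{lem:complete} and Theorem~\ref{thm:OrliczSmoothing}(a)(i), which gives $\|A S_\beta(t)|u_0|\|_{L_\Phi}\le A\|u_0\|_{L_\Phi}$, the pair $(X,d)$ is complete. The key pointwise bound comes from Corollary~\ref{cor:Linfty} applied to $|u_0|$:
$$
A\,S_\beta(t)|u_0|\le AC_\infty\|u_0\|_{L_\Phi}\Phi^{-1}(t^{-d/(2\beta)})\le\lambda\Phi^{-1}(t^{-d/(2\beta)}).
$$
Hence every $u\in X$ satisfies $|u(s)|\le s_*:=\lambda\Phi^{-1}(s^{-d/(2\beta)})$, and by \eqref{eq:lipbound} we get $|f(u(s))|\le \ell(s_*)|u(s)|$.

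The first step is invariance of $X$. Positivity of the kernel and the Chapman--Kolmogorov identity give
$$
|\mathcal F(u;u_0)(t)|\le S_\beta(t)|u_0|+\int_0^t\ell\bigl(\lambda\Phi^{-1}(s^{-d/(2\beta)})\bigr)\,S_\beta(t-s)\,A\,S_\beta(s)|u_0|\dd s.
$$
The integrand is $A\,\ell(\cdots)\,S_\beta(t)|u_0|$, so the right-hand side is at most
$$
\Bigl(1+A\int_0^\infty\ell\bigl(\lambda\Phi^{-1}(s^{-d/(2\beta)})\bigr)\dd s\Bigr)S_\beta(t)|u_0|=(1+AI_0)\,S_\beta(t)|u_0|=A\,S_\beta(t)|u_0|,
$$
using \eqref{eq:changevar} with $T=\infty$ and $1+AI_0=A$. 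The second step is the contraction. For $u,v\in X$ we use $|f(u)-f(v)|\le\ell(s_*)|u-v|$, Minkowski (Lemma~\ref{lem:minkowski}) and the contraction property of $S_\beta$ on $L_\Phi$, which give
$$
d(\mathcal F u,\mathcal F v)\le I_0\,d(u,v),
$$
and $I_0<1$. Banach's theorem then yields a unique fixed point in $X$.

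It remains to check that this fixed point is a bounded $M_\Phi$-mild solution. Boundedness on $(a,\infty)$ follows from the $L^\infty$ bound above. For continuity into $M_\Phi$ I would argue as follows. Since $u_0\in M_\Phi$ and $S_\beta$ is a $C_0$-semigroup there (Theorem~\ref{thm:OrliczSmoothing}(a)(iii)), the linear part is continuous. The Duhamel term lies in $M_\Phi$ by the lattice property, since $|f(u)|\le\ell\,|u|$ is dominated by a multiple of $S_\beta(s)|u_0|\in M_\Phi$. Continuity in $t$ follows from Minkowski together with strong continuity and the integrability of $\ell(s_*)$, exactly as in the proof of Theorem~\ref{thm:local}. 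I expect this to be the most delicate point, although it should be the same argument as in the local theory. Uniqueness then follows from Theorem~\ref{thm:local}(b): the global solution restricted to any $[0,T)$ coincides with the maximal local solution, so $T_{u_0}=\infty$.

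Finally, the estimates. \eqref{eq:globaldecay1} is the barrier bound, and the limit is $0$ because $\Phi\in\Nf$ is positive, so $\Phi^{-1}(x)\to0$ as $x\to0$. \eqref{eq:globaldecay2} follows from $|u(t)|\le AS_\beta(t)|u_0|$, the lattice property and \eqref{E5.2}. \eqref{eq:expdecay} follows from the same domination combined with \eqref{eq:largetime-sharp} applied to $|u_0|$.
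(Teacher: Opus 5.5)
Your proposal is correct and follows the paper's proof essentially step for step: the barrier $w=AS_\beta(t)|u_0|$ with the $L^\infty$ bound from Corollary~\ref{cor:Linfty}, invariance via $S_\beta(t-s)w(s)=w(t)$ and $1+AI_0=A$, contraction constant $I_0$ for $d_\infty$, continuity into $M_\Phi$ as in Theorem~\ref{thm:local}(a) with an $L^1(0,\infty)$ majorant $g$, uniqueness from Theorem~\ref{thm:local}(b), and the three estimates from the pointwise domination. The only detail you leave implicit is taking the representative $u(t):=\mathcal F(u;u_0)(t)$, which is defined for every $t$, so that the barrier bound holds for all $t>0$ and not merely for a.e.\ $t$.
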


The confining potential allows a much more general statement, in which the
behaviour of $\Phi$ near the origin plays no role and $f$ is only required to
satisfy a Dini condition at $0$.

\begin{theorem}[Global well-posedness under confinement]\label{thm:confined}
Let $0<\beta\le1$, $\varrho>0$, $\Phi\in\Nf$, and let $f$ satisfy
$(\mathrm L)$ and \eqref{eq:IC-infty} for some $\lambda>0$. Assume moreover
the Dini condition
\begin{equation}\label{eq:dini}
\int_0^1\frac{\ell(r)}{r}\dd r<\infty .
\end{equation}
Then there exists $\delta>0$ such that for every $u_0\in M_\Phi$ with
$\|u_0\|_{L_\Phi}\le\delta$ problem \eqref{eq:main} has a unique global bounded
$M_\Phi$-mild solution $u$. It satisfies $|u(t)|\le2S_\beta(t)|u_0|$ for all
$t>0$, and
\begin{gather*}
\|u(t)\|_{L_\Phi}\le2m_\beta(t)\|u_0\|_{L_\Phi}\quad(t\ge0),\\
\|u(t)\|_{L^\infty}\le 2C_\infty2^{\frac d2}e^{\lambda_\beta}\Phi^{-1}(1)\,
e^{-\lambda_\beta t}\|u_0\|_{L_\Phi}\quad(t\ge1).
\end{gather*}
\end{theorem}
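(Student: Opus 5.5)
The plan is to run the barrier fixed-point argument of Theorem~\ref{thm:local} globally in time, on the set
$$
X:=\{u:\ |u|\le w\ \text{a.e. in }Q_\infty\},\qquad w(t):=2S_\beta(t)|u_0|,
$$
equipped with the metric $d(u,v)=\esssup_t\|u(t)-v(t)\|_{L_\Phi}$. By Lemma~\ref{lem:complete} and \eqref{E5.2}, $(X,d)$ is complete, since $\|w(t)\|_{L_\Phi}\le 2\|u_0\|_{L_\Phi}$.

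For $u\in X$ we have the pointwise bound $|f(u(s))|\le \ell(\|w(s)\|_\infty)\,w(s)$ by \eqref{eq:lipbound}. Combining the semigroup property with positivity of the kernel gives
$$
|\mathcal F(u;u_0)(t)|\le S_\beta(t)|u_0|+2\Bigl(\int_0^t\ell(\|w(s)\|_\infty)\dd s\Bigr)S_\beta(t)|u_0|.
$$
So invariance of $X$ reduces to the scalar condition $\int_0^\infty \ell(\|w(s)\|_\infty)\dd s\le 1/2$. The same quantity controls contraction: since $|u|,|v|\le w$, we get $|f(u)-f(v)|\le\ell(\|w(s)\|_\infty)|u-v|$, and the contraction property \eqref{E5.2} on $L_\Phi$ gives $d(\mathcal F u,\mathcal F v)\le \int_0^\infty\ell(\|w(s)\|_\infty)\dd s\; d(u,v)\le\frac12 d(u,v)$.

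The main step is to show that $J(\delta):=\int_0^\infty\ell(\|w(s)\|_\infty)\dd s\to0$ as $\delta=\|u_0\|_{L_\Phi}\to0$. I split the time integral at $s=1$.

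\emph{Short times, $s\in(0,1)$.} Corollary~\ref{cor:Linfty} gives $\|w(s)\|_\infty\le 2C_\infty\delta\,\Phi^{-1}(s^{-d/(2\beta)})$. Once $2C_\infty\delta\le\lambda$, the integrand is dominated by $\ell(\lambda\Phi^{-1}(s^{-d/(2\beta)}))$, which is integrable on $(0,1)$ by \eqref{eq:changevar} and \eqref{eq:IC-infty}. The integrand also tends to $0$ pointwise, because $\ell(r)\to0$ as $r\to0$; this follows from the Dini condition together with monotonicity of $\ell$. Dominated convergence then makes this piece small.

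\emph{Large times, $s\ge1$.} By \eqref{eq:largetime-sharp}, $\|w(s)\|_\infty\le c\,\delta e^{-\lambda_\beta s}$ with $c=2C_\infty2^{d/2}e^{\lambda_\beta}\Phi^{-1}(1)$. Substituting $r=c\delta e^{-\lambda_\beta s}$ gives
$$
\int_1^\infty\ell(c\delta e^{-\lambda_\beta s})\dd s=\lambda_\beta^{-1}\int_0^{c\delta e^{-\lambda_\beta}}\frac{\ell(r)}{r}\dd r,
$$
which tends to $0$ by \eqref{eq:dini}. This is exactly where the confinement (the exponential decay) and the Dini condition enter, with no Fujita-type restriction. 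I expect this step to be the only genuinely new point; everything else is routine.

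It remains to establish the regularity of the fixed point and uniqueness. Regularity means membership in $C([0,\infty),M_\Phi)$ and boundedness on $(a,b)$. Boundedness follows from the bound on $\|w\|_\infty$. Continuity can be obtained as in Theorem~\ref{thm:local}, or more simply by noting that the global fixed point coincides with the local solution $u(\cdot;u_0)$ on its interval and then using the uniqueness of Theorem~\ref{thm:local}(b) and the restart lemma. Uniqueness of the global bounded $M_\Phi$-mild solution follows from Theorem~\ref{thm:local}(b).

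The decay estimates are read off from $|u|\le 2S_\beta(t)|u_0|$. The lattice property and \eqref{E5.2} give the $L_\Phi$ bound $2m_\beta(t)\|u_0\|_{L_\Phi}$, and \eqref{eq:largetime-sharp} gives the $L^\infty$ bound for $t\ge1$. Finally, choose $\delta$ so small that $2C_\infty\delta\le\lambda$ and $J(\delta)\le1/2$.
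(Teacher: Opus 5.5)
Your proposal is correct and is essentially the paper's proof. It uses the same barrier $w=2S_\beta(t)|u_0|$ on $Q_\infty$, the same reduction of invariance and contraction to $\int_0^\infty\ell(\cdot)\dd s\le\frac12$, and the same split at $s=1$, with dominated convergence on $(0,1)$ and the substitution $r=c\,\delta e^{-\lambda_\beta s}$ turning the tail into the Dini integral.

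For continuity in $M_\Phi$, the paper takes your first route: the argument of Theorem~\ref{thm:local}(a) with the integrable majorant $g(s)=2\delta\,\ell(\|w(s)\|_{L^\infty})$. Your ``simpler'' alternative would need more care than you indicate. Invoking Theorem~\ref{thm:local}(b) or Lemma~\ref{lem:restart}(iii) before continuity is known is circular. The coincidence with $u(\cdot;u_0)$ near $t=0$ should come instead from uniqueness of the fixed point in the common barrier class.
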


\begin{Remark}\rm[No Fujita phenomenon under confinement]\label{rem:nofujita}
Let $f(u)=|u|^{m-1}u$ with $m>1$, so that $\ell(s)=ms^{m-1}$ and
\eqref{eq:dini} holds, and let $\Phi(x)=x^q/q$ with $q>\max\{1,q_c\}$, where
$q_c:=d(m-1)/(2\beta)$; then \eqref{eq:IC-infty} holds
(Section~\ref{subsec:power}). Theorem~\ref{thm:confined} thus gives global
solutions with exponential decay for small data in $L^q$, for \emph{every}
$m>1$. For the free equation, this fails when $1<m\le p_\ast$, since every
nonnegative nontrivial solution then blows up in finite time
\cite{Fujita1966,Hayakawa1973,KST1977,Sugitani1975}. The spectral gap of
$\Hr^\beta$ thus removes the Fujita phenomenon, as a bounded domain with
Dirichlet boundary conditions does. Moreover, the condition near $x=0$ in
\eqref{eq:IC-zero}, which encodes the free decay rate
$\Phi^{-1}(t^{-d/(2\beta)})$, becomes superfluous. This is why
Corollary~\ref{thm:global} is a genuinely ``free'' statement, while
Theorem~\ref{thm:confined} is specific to $\varrho>0$.
\end{Remark}

\begin{Remark}\rm[Why $M_\Phi$?]\label{rem:whyMT}
For a general Young's function $\Phi$, the space $C_0^\infty$ need not be
dense in $L_\Phi$, and $S_\beta(t)$ need not be strongly continuous on
$L_\Phi$ at $t=0$. The standard example is $\Phi(u)=e^{u^2}-1$, see
\cite[Section 2]{ioku2015MPAG} and \cite[Section 1.1]{Laister}. The
Morse--Transue subspace restores the $C_0$-semigroup property, and for
$N$-functions it coincides with $L^0_\Phi$. If $\Phi\in\Nf$ satisfies in
addition the $\Delta_2$-condition, then $M_\Phi=L_\Phi$ and all the results
above hold in $L_\Phi$ itself.
\end{Remark}

\begin{Remark}\rm[On the constants]\label{rem:constants}
The constant $2\beta/d$ in Corollary~\ref{thm:global} comes from
\eqref{eq:changevar}; it reduces to $2/n$ when $\beta=1$, $d=n$, in agreement
with \cite[(3.9)]{Laister}. The value $A=(1-I_0)^{-1}$ is the smallest number
for which the barrier $w=AS_\beta(t)|u_0|$ is invariant under the Duhamel
map, as the proof in Section~\ref{sec:proofB} shows.
\end{Remark}

\begin{Remark}\rm\label{rem:sharper}
Since $S_\beta(t)$ is a contraction on $L_\Phi$, rather than merely bounded
by $2$ as in \cite[Theorem A(a)(i)]{Laister}, the contraction constant of the
Duhamel map in the global argument is $I_0$, and not $2I_0$. This is why the
threshold in \eqref{eq:IC-zero} is $d/(2\beta)$, whereas the argument of
\cite[Section 5.5]{Laister} leads to $n/4$ for $\beta=1$. Since the Gaussian
semigroup is likewise a contraction on every Orlicz space
(Lemma~\ref{L5.1}), the same observation improves the threshold $n/4$ of
\cite[Corollary B]{Laister} to $n/2$ also for $\varrho=0$.
\end{Remark}

\subsection{Critical Young's functions}

We now restrict attention to positone convex nonlinearities. The hypotheses
below are those of \cite[Section 3.3]{Laister}.
\begin{equation*}
(\mathrm{C})\qquad
f\in C^1(\R)\ \text{is positone, positive on }(0,\infty)\ \text{and convex on }
[0,\infty).
\end{equation*}
\begin{equation*}
(\Lambda)\qquad
\text{there exist }\kappa_0,C>0\ \text{such that }\ell(u)\le Cf'(u)\
\text{for all } u>\kappa_0 ;
\end{equation*}
when the inequality in $(\Lambda)$ holds for all $u>0$, we write
$(\Lambda_0)$. Every $f$ satisfying $(\mathrm C)$ satisfies $(\mathrm L)$
\cite[Remark 3.5(a)]{Laister} and is increasing on $[0,\infty)$, since
$f(\theta u)\le\theta f(u)\le f(u)$ for $\theta\in[0,1]$, $u\ge0$. If $f$ is odd
and satisfies $(\mathrm C)$, then it satisfies $(\Lambda_0)$
\cite[Remark 3.5(b)]{Laister}. With
\begin{equation}\label{E 1.2}
F(u):=\int_u^{\infty}\frac{\dd s}{f(s)},\qquad u>0 ,
\end{equation}
we impose
\begin{equation*}
(\mathrm{G})\qquad
\begin{array}{l}
F(u)<\infty\ \text{for } u>0,\ (\log f)'\ \text{is regularly varying of index }
\rho-1\\[2pt]
\text{with }\rho\in(0,\infty],\ \text{and }
\lim_{u\to\infty}f'(u)F(u)\ \text{exists and is finite}.
\end{array}
\end{equation*}

\begin{Remark}\rm\label{rem:onG}
Hypothesis $(\mathrm G)$ encodes the class of nonlinearities of the form
\begin{equation}\label{E7.1}
f(u)=e^{u^{\rho}\mu(u)} ,
\end{equation}
with $\mu$ slowly varying, in which case $\log f$ is regularly varying of index
$\rho$; the value $\rho=\infty$ is admitted and corresponds to $\log f$ rapidly
varying, that is, to a nonlinearity of unrestricted growth rate
\cite[Lemma 6.1(i)]{Laister}. By \cite[Lemma 6.1(iv)]{Laister} the limit in
$(\mathrm G)$ is necessarily equal to $1$:
\begin{equation}\label{eq:limitone}
\lim_{u\to\infty}f'(u)F(u)=1 .
\end{equation}
Under $(\mathrm C)$ one has $f(s)\le sf(1)$ for $s\in[0,1]$, so that
$\int_0^1\dd s/f(s)=\infty$; hence $F$ is a decreasing $C^1$-bijection of
$(0,\infty)$ onto itself. Since $F'=-1/f$, its inverse satisfies
\begin{equation}\label{E7.3}
\bigl(F^{-1}\bigr)'(\sigma)=-f\bigl(F^{-1}(\sigma)\bigr),
\qquad \sigma>0 ,
\end{equation}
and \eqref{eq:limitone} may be rewritten as
\begin{equation}\label{E7.2}
\lim_{\sigma\to0^+}\sigma\,f'\bigl(F^{-1}(\sigma)\bigr)=1 .
\end{equation}
Identity \eqref{E7.2} is the form of $(\mathrm G)$ that we shall use in
Section~\ref{sec:nonexistence}. It is more precise than the two-sided bounds
$C^{-1}\sigma^{\varepsilon-1}\le f(F^{-1}(\sigma))\le C\sigma^{-\varepsilon-1}$
of \cite[Lemma 3.1]{Fujishima2024} (with $q_f=1$ in the notation there).
\end{Remark}

\begin{theorem}[Critical Young's functions]\label{thm:critical}
Let $0<\beta\le1$, $\varrho>0$, and let $f$ satisfy $(\mathrm C)$ and
$(\mathrm G)$.
\begin{enumerate}[label=\textup{(\alph*)}]
\item \emph{(Local well-posedness.)}
\begin{enumerate}[label=\textup{(\roman*)}]
\item If $f$ satisfies $(\Lambda)$, $\Phi\in\Nf$ and $\Phi\gtrsim f$ at
infinity, then \eqref{eq:IC-infty} holds for some $\lambda>0$, and hence the
conclusions of Theorem~\ref{thm:local} hold.
\item If, in addition to the hypotheses of \textup{(i)}, there is $C>0$ with
$|f(u)|\le Cf(|u|)$ for all $u\in\R$, and $\Phi\gtrsim f^r$ near $0$ for some
$r\ge1$ with $r>d/(2\beta)$, then every $M_\Phi$-mild solution of
\eqref{eq:main} on $[0,T)$ with data $u_0\in M_\Phi$ is a bounded
$M_\Phi$-mild solution; in particular it coincides with $u(\cdot;u_0)$, and
$T\le T_{u_0}$.
\end{enumerate}
\item \emph{(Nonexistence.)} If $\Phi\in\Y$ and $\Phi\lesssim f$ at infinity,
then there exists a nonnegative $u_0\in L_\Phi$ with compact support such that
\eqref{eq:main} has no nonnegative integral solution in $Q_T$ for any $T>0$.
\item \emph{(Global well-posedness.)} Suppose $f$ satisfies $(\Lambda_0)$ and
$f'(0)=0$, that $\Phi\in\Nf$ and $\Phi\gtrsim f$ at infinity, and that there
exists $\lambda_0>0$ with
\begin{equation}\label{eq:C-b}
\int_0^{1}x^{-p_\ast}f'\bigl(\lambda_0\Phi^{-1}(x)\bigr)\dd x<\infty .
\end{equation}
Then the conclusions of Theorem \ref{thm:local} and Corollary
\ref{thm:global} hold for some $\lambda\in(0,\lambda_0]$.
\end{enumerate}
\end{theorem}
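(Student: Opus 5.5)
The plan treats the three parts separately. Parts (a) and (c) reduce to verifying the integral conditions \eqref{eq:IC-infty} and \eqref{eq:IC-zero}, after which Theorem~\ref{thm:local} and Corollary~\ref{thm:global} apply. Part (b) needs a genuinely new argument.

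\emph{Part (a)(i).} Since $\Phi\gtrsim f$ at infinity, there are $K,x_0$ with $f(x)\le\Phi(Kx)$ for $x\ge x_0$. Hence $\Phi^{-1}(y)\le K f^{-1}(y)$ for large $y$. By $(\Lambda)$ we have $\ell\le Cf'$ on $(\kappa_0,\infty)$, and $f'$ is increasing by convexity. So for $\lambda\le 1/K$ the integrand of \eqref{eq:IC-infty} is bounded by $x^{-p_\ast}Cf'(f^{-1}(x))$ at large $x$. Substitute $x=f(u)$, so that $\dd x=f'(u)\dd u$. The integral becomes $\int^\infty f(u)^{-p_\ast}f'(u)^2\dd u$.

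By \eqref{eq:limitone}, $f'(u)\sim 1/F(u)$. Writing $F'=-1/f$, the integral is comparable to $\int F(u)^{-2}f(u)^{2-p_\ast}\,|F'(u)|\dd u$. Changing variables to $\sigma=F(u)\to0$ gives $\int_0 \sigma^{-2}f(F^{-1}(\sigma))^{1-p_\ast}\,\dd\sigma$. The bound $f(F^{-1}(\sigma))\ge C^{-1}\sigma^{\varepsilon-1}$ from (G), with $\varepsilon$ small, then yields an integrand $\le C\sigma^{-2+(1-\varepsilon)(p_\ast-1)}$. That exponent exceeds $-1$ only if $p_\ast-1>1$, which is false, so this crude route fails.

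The argument must instead exploit regular variation more finely. With $x=f(u)$, the integrand is $x^{-p_\ast}f'(f^{-1}(x))\approx x^{-p_\ast}\,x\,(\log f)'(f^{-1}(x))$. Since $(\log f)'$ is regularly varying and $f$ grows exponentially in $u^\rho$, the factor $(\log f)'(f^{-1}(x))$ is slowly varying in $x$ (in fact polylogarithmic). The integrand is therefore $x^{1-p_\ast}$ times a slowly varying function, with $1-p_\ast=-2\beta/d<-1$ only when $2\beta>d$. This still fails in general.

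The correct mechanism is the freedom to choose $\lambda$. Using $\ell(\lambda\Phi^{-1}(x))\le Cf'(\lambda Kf^{-1}(x))$ with $\lambda K<1$, I would show that $f'(\theta f^{-1}(x))\le x^{\delta}$ for any $\delta>0$ once $\theta<1$ is small. This is the content of \cite[Lemma 6.1]{Laister}: for $\rho<\infty$, $\log f(\theta u)\approx\theta^\rho\log f(u)$, and for $\rho=\infty$ it is $o(\log f(u))$. Taking $\delta<p_\ast-1$ then makes \eqref{eq:IC-infty} converge. This dilation step is the heart of (a)(i), and I would import it from \cite{Laister} exactly as there, since it does not involve the operator.

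\emph{Part (a)(ii).} I would follow \cite[Theorem 3.6]{Laister}. Let $u$ be an $M_\Phi$-mild solution. Then $|f(u)|\le Cf(|u|)$, and $\Phi\gtrsim f^r$ near $0$ together with $\Phi\gtrsim f$ at infinity puts $f(u(s))$ in $L^r+L^\infty$-type Orlicz control, uniformly on compact time intervals. Corollary~\ref{cor:Linfty} and Proposition~\ref{prop:LpLq}, with the singularity $s^{-d/(2\beta r)}$ integrable because $r>d/(2\beta)$, then bootstrap the Duhamel formula into a uniform $L^\infty$ bound on $(a,b)$. Uniqueness from Theorem~\ref{thm:local}(b) then identifies $u$ with $u(\cdot;u_0)$.

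\emph{Part (c).} By $(\Lambda_0)$, $\ell\le Cf'$ on all of $(0,\infty)$. Near $x=0$, \eqref{eq:C-b} controls $\int_0^1x^{-p_\ast}\ell(\lambda\Phi^{-1}(x))\dd x$ for $\lambda\le\lambda_0$, and (a)(i) gives the tail. Since $f'(0)=0$ and $\Phi^{-1}(x)\to0$, dominated convergence makes the total integral tend to $0$ as $\lambda\downarrow0$. This gives $I_0<1$, and Corollary~\ref{thm:global} applies.

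\emph{Part (b).} Take $u_0=F^{-1}(\gamma|x|^{2\beta})\chi_{B_R}$. First check that $u_0\in L_\Phi$. From $\Phi(y)\le f(Ky)$ for large $y$, the integral $\int\Phi(u_0/k)$ is bounded by $\int_{B_R} f(F^{-1}(\gamma|x|^{2\beta}))\dd x$ for $k\ge K$. By \eqref{E7.2} and the (G)-bounds this is $\lesssim\int|x|^{-2\beta(1+\varepsilon)}\dd x$, which is finite when $2\beta(1+\varepsilon)<d$. If $d\le 2\beta$, I would instead compare with $f(F^{-1}(\cdot)/K)$, a smaller quantity by the dilation lemma.

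Next derive the necessary condition. If a nonnegative integral solution exists on $(0,T)$, the standard supersolution/ODE comparison gives $F(S_\beta(t)u_0)\ge t$. This uses the positivity of $\Kb^\beta_t$, Jensen's inequality with the convex $f$, and the semigroup property.

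Finally prove the lower bound for $S_\beta(t)u_0$ at $x=0$. Use $\Kb^\beta_t(0,y)\ge c\,t^{-d/(2\beta)}$ for $|y|\le t^{1/(2\beta)}$ and small $t$, obtained from the Mehler kernel restricted to $s\in(t^{1/\beta},2t^{1/\beta})$. The set of such $s$ has $\eta^\beta_t$-mass bounded below by scaling \eqref{eq:scaling-eta}. Convexity of $F^{-1}$ then gives $S_\beta(t)u_0(0)\ge c\,F^{-1}(\gamma C t)$, and hence $F(S_\beta(t)u_0)<t$ for $\gamma<\gamma_0$. The main obstacle is this last comparison: I need $F(cF^{-1}(\sigma))\le \sigma/(\text{const})$, which I would extract from \eqref{E7.2} by integrating $F'=-1/f$ between $cF^{-1}(\sigma)$ and $F^{-1}(\sigma)$.
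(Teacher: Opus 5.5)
Your arguments for (a)(i), for (c), for the membership $u_0\in L_\Phi$, and for the necessary condition $F(S_\beta(t)u_0)\ge t$ are sound. In (a)(i) your pointwise bound $f'(\theta f^{-1}(x))\le x^{\delta}$ is in fact shorter than the paper's differential inequality for $h=[f^{p_\ast}F(y)F(\bar\lambda y)]^{-1}$. The bound follows from $f^{\delta}\approx f$ in Lemma~\ref{lem:RV}, together with $f'(v)\le f(2v)$ for $v\ge1$ by convexity.

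\textbf{The gap is the last step of (b).} A lower bound for $\Kb^\beta_t(0,\cdot)$ on the parabolic ball $B_{t^{1/(2\beta)}}$ captures at most $\int_{B_1}p_\beta(z,1)\dd z<1$ of its mass. So you obtain $[S_\beta(t)u_0](0)\ge c\,F^{-1}(C\gamma t)$ with a fixed constant $c<1$ \emph{in front of} $F^{-1}$, and you then need $F(cF^{-1}(\sigma))\lesssim\sigma$. This is false for every fixed $c\in(0,1)$. By Lemma~\ref{lem:Fprops}(i), $F$ is rapidly varying of index $-\infty$, so $F(cu)/F(u)\to\infty$, i.e.\ $F(cF^{-1}(\sigma))/\sigma\to\infty$ as $\sigma\to0^+$; were $F(u)=e^{-u}$, one would get $F(cF^{-1}(\sigma))=\sigma^{c}$. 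Integrating $F'=-1/f$, as you propose, only gives $F(cF^{-1}(\sigma))=\sigma+\int_{cF^{-1}(\sigma)}^{F^{-1}(\sigma)}\dd s/f(s)$, and the added integral dominates. Hence the necessary condition is never violated for small $t$, which is exactly the regime needed to rule out solutions for every $T>0$. A multiplicative loss in front of $F^{-1}$ is fatal; constants are harmless only inside its argument.

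\textbf{How the paper avoids this.} It keeps as prefactor the mass $m(t)$ of $\Kb^\beta_t(0,\cdot)$ on all of $B_R$, for which $1-m(t)\le C_Rt$. It writes $u_0=g_q(\gamma^{1/q}|x|^{\alpha})$ with $g_q(\sigma)=F^{-1}(\sigma^q)$ and $q=2\beta/\alpha>1$; $g_q$ is convex near $0$ by Lemma~\ref{lem:Fprops}(ii). Jensen's inequality with the normalized measure then puts only the finite moment $\int|y|^{\alpha}p_\beta(y,t)\dd y=c_{d,\alpha,\beta}t^{\alpha/(2\beta)}$, $\alpha<2\beta$, inside $F^{-1}$. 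The factor $m(t)$ is absorbed into the argument through $F^{-1}(c_1\sigma)-F^{-1}(c_2\sigma)\ge(c_2-c_1)\sigma f(F^{-1}(c_2\sigma))$ and $f(u)/u\to\infty$.

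\textbf{A simpler repair of your own argument.} Part (b) only asks for \emph{some} $u_0\in L_\Phi$, and $L_\Phi$ is a vector space. So use the data $Ku_0$ with $Kc\ge1$. Then $[S_\beta(t)(Ku_0)](0)\ge F^{-1}(C\gamma t)$, hence $F([S_\beta(t)(Ku_0)](0))\le C\gamma t<t$ once $\gamma<1/C$. This contradicts Lemma~\ref{L7.2} at $x=0$, which is legitimate since $u_0\in L^1$. This proves (b) as stated, but not the paper's Lemma~\ref{L7.1} for the unscaled profile with the explicit threshold $\gamma_0(d,\beta)$.

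\textbf{A smaller point in (a)(ii).} You should state that the key input is $f(u)^r\le C_1\Phi(\mu u)$ for all $u\ge0$, which at infinity uses $f^r\approx f$ (Lemma~\ref{lem:RV}). The hypothesis $\Phi\gtrsim f$ by itself only puts the large values of $f(v(s))$ in $L^1$, not in $L^\infty$. The $L^1\to L^\infty$ factor $t^{-d/(2\beta)}$ is not integrable when $d\ge2\beta$, so your ``$L^r+L^\infty$'' description does not close the estimate.
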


Set $\Phi_c(u):=f(u)$ for $u\ge0$; this function belongs to $\Y$ by
$(\mathrm C)$. Parts (a)(i) and (b) say that $\Phi\gtrsim\Phi_c$ at infinity
suffices for local well-posedness in $M_\Phi$, while $\Phi\lesssim\Phi_c$ at
infinity already forces nonexistence of nonnegative solutions for some data
in $L_\Phi$. In this sense, $\Phi_c$ is a critical Young's function for
\eqref{eq:main}. It is asymptotically unique modulo $\approx$. The behaviour of
$\Phi$ near the origin, however, may be chosen freely; it affects the space
$L_\Phi$ but not the local theory \cite[Section 3.3]{Laister}. For $f$
Lipschitz near $0$, the
condition $\Phi\gtrsim f^r$ near $0$ in (a)(ii) holds for every
$\Phi$ with $\liminf_{u\to0}\Phi(u)/u^D>0$ for some $D\ge1$, upon choosing
$r>\max\{D,1,d/(2\beta)\}$; compare \cite[Remark 3.7(b)]{Laister}.
Figure~\ref{fig:critical} illustrates the dichotomy for $f(u)\approx e^{u^2}$.

\begin{figure}[ht]
\centering
\begin{tikzpicture}
\begin{axis}[
  width=0.78\textwidth, height=6.4cm,
  xmin=-1, xmax=1.6, ymin=-3, ymax=4.5,
  xlabel={$\log u$}, ylabel={$\log\log\Phi(u)$},
  xtick={-1,-0.5,0,0.5,1,1.5},
  legend style={at={(0.97,0.03)}, anchor=south east, font=\small},
  samples=2, domain=-1:1.6]
\addplot[black, thick, dashed] {x};
\addlegendentry{$\Phi=e^{u}$: $\Phi\lesssim f$, nonexistence in $L_\Phi$}
\addplot[black, very thick] {2*x};
\addlegendentry{$\Phi=e^{u^2}\approx f$: critical}
\addplot[black, thick, dotted] {3*x};
\addlegendentry{$\Phi=e^{u^3}$: $\Phi\gtrsim f$, LWP in $M_\Phi$}
\end{axis}
\end{tikzpicture}
\caption{Schematic illustration of Theorem~\ref{thm:critical} for $f$ odd with
$f(u)=u^m e^{u^2}$, $u\ge0$ ($\rho=2$), and $\Phi=e^{u^p}$ at infinity (only the
behaviour at infinity matters). In the coordinates $(\log u,\log\log\Phi)$ the
three Young's functions are lines of slopes $p=1,2,3$; dilations $u\mapsto Ku$
translate them vertically by $p\log K$ and do not change the order relation
$\lesssim$. For $p<2$ nonexistence occurs for some nonnegative data in
$L_\Phi$; for $p\ge2$ the problem is locally well posed in $M_\Phi$; for $p=2$
both statements hold, and the initial data of part (b) then necessarily lies in
$L_\Phi\setminus M_\Phi$ (Remark~\ref{rem:notinM}).}
\label{fig:critical}
\end{figure}

\begin{Corollary}\label{cor:C}
Let $f$ satisfy $(\mathrm C)$, $(\Lambda_0)$ and $(\mathrm G)$, and suppose
that $f'$ is regularly varying at zero of index $m-1$ for some $m>p_\ast$. If
$\Phi\in\Nf$ satisfies $\Phi(u)\gtrsim u^{q}$ near $0$ for some
$0<q<\frac{d(m-1)}{2\beta}$ and $\Phi\gtrsim f$ at infinity, then the
conclusions of Theorem \ref{thm:local} and Corollary \ref{thm:global} hold for
some $\lambda>0$.
\end{Corollary}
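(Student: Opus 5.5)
We reduce Corollary~\ref{cor:C} to Theorem~\ref{thm:critical}(c). Its hypotheses are $(\mathrm C)$, $(\Lambda_0)$, $(\mathrm G)$, $\Phi\in\Nf$ and $\Phi\gtrsim f$ at infinity, which are assumed directly. Two things remain to be checked: that $f'(0)=0$, and that the integral condition \eqref{eq:C-b} holds for some $\lambda_0>0$. Once both are established, Theorem~\ref{thm:critical}(c) gives the conclusions of Theorem~\ref{thm:local} and Corollary~\ref{thm:global} for some $\lambda\in(0,\lambda_0]$.

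\emph{Step 1: $f'(0)=0$.} Since $f'$ is regularly varying at zero of index $m-1>0$ (as $m>p_\ast>1$), Karamata's representation gives $f'(u)=u^{m-1}\sigma(u)$ with $\sigma$ slowly varying at $0$. Slow variation implies $u^{k}\sigma(u)\to0$ as $u\to0$ for every $k>0$, so $f'(u)\to0$. Because $f\in C^1$, this yields $f'(0)=0$.

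\emph{Step 2: the integral \eqref{eq:C-b}.} From $\Phi(u)\gtrsim u^q$ near $0$ there are $K,u_0>0$ with $\Phi(Ku)\ge u^q$ for $0\le u\le u_0$. Inverting (Lemma~\ref{lem:concave} and \eqref{eq:22}, with $\Phi$ a bijection since $\Phi\in\Nf$) gives $\Phi^{-1}(x)\le Kx^{1/q}$ for $0<x\le x_0:=u_0^q$. Since $\Phi^{-1}$ and $f'$ are increasing (the latter by convexity), for $x\le x_0$ we get
$$f'\bigl(\lambda_0\Phi^{-1}(x)\bigr)\le f'\bigl(\lambda_0Kx^{1/q}\bigr)=(\lambda_0K)^{m-1}x^{(m-1)/q}\,\sigma\bigl(\lambda_0Kx^{1/q}\bigr).$$
On $[x_0,1]$ the integrand is bounded, so it suffices to control the integral near $0$. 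The exponent satisfies
$$-p_\ast+\frac{m-1}{q}>-p_\ast+\frac{2\beta}{d}=-1,$$
precisely because $q<d(m-1)/(2\beta)$. Choose $\varepsilon>0$ with $-p_\ast+(m-1)/q-\varepsilon/q>-1$. Then $\sigma(\lambda_0Kx^{1/q})\le C x^{-\varepsilon/q}$ for small $x$, by the Potter-type fact $u^{\varepsilon}\sigma(u)\to0$. This makes the integrand bounded by $Cx^{-1+\delta}$ with $\delta>0$, which is integrable near $0$. Hence \eqref{eq:C-b} holds for any $\lambda_0>0$, for instance $\lambda_0=1$.

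The only delicate point is the slowly varying factor in Step~2. It is absorbed by the strict inequality $q<d(m-1)/(2\beta)$, which leaves a margin $\varepsilon$ in the exponent. A secondary check is monotonicity of $f'$ on $[0,\infty)$, which follows from convexity in $(\mathrm C)$, and the direction of the inequality after inverting $\Phi$ near $0$. Both are routine.
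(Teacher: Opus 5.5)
Your proposal is correct and follows the paper's own proof: you reduce to Theorem~\ref{thm:critical}(c), obtain $f'(0)=0$ from the Karamata representation with $m>p_\ast>1$, and verify \eqref{eq:C-b} from $\Phi^{-1}(x)\le Kx^{1/q}$ near $0$ together with the strict inequality $q<d(m-1)/(2\beta)$. The one cosmetic difference is that you absorb the slowly varying factor into a small power $x^{-\varepsilon/q}$, whereas the paper substitutes $x=y^q$, $y=1/z$ and cites the integrability of $z^{-k}\mu(z)$ for $k>1$ from \cite{BGT}; the two devices are equivalent.
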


Since $\Phi\in\Nf$ and $\Phi(u)\gtrsim u^q$ near $0$ force $q>1$, the
hypothesis $m>p_\ast$ is exactly what makes the range of $q$ nonempty. Under
confinement, these restrictions disappear.

\begin{Corollary}\label{cor:Cconf}
Let $f$ satisfy $(\mathrm C)$, $(\Lambda_0)$ and $(\mathrm G)$, and assume that
$\int_0^1f'(r)\,\dd r/r<\infty$ (for instance, $f'$ regularly varying at zero
of some index $m-1>0$). If $\Phi\in\Nf$ and $\Phi\gtrsim f$ at infinity, then
the conclusions of Theorem~\ref{thm:local} and Theorem~\ref{thm:confined}
hold.
\end{Corollary}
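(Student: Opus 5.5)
The plan is to reduce Corollary~\ref{cor:Cconf} to Theorem~\ref{thm:local} and Theorem~\ref{thm:confined}. It suffices to check their hypotheses: $(\mathrm L)$, condition \eqref{eq:IC-infty} for some $\lambda>0$, and the Dini condition \eqref{eq:dini}.

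First, $(\mathrm L)$ holds for every $f$ satisfying $(\mathrm C)$; this is recalled after the statement of $(\Lambda)$ and follows from \cite[Remark 3.5(a)]{Laister}. Second, $(\Lambda_0)$ implies $(\Lambda)$. Together with $\Phi\in\Nf$ and $\Phi\gtrsim f$ at infinity, Theorem~\ref{thm:critical}(a)(i) then gives \eqref{eq:IC-infty} for some $\lambda>0$, and with it the conclusions of Theorem~\ref{thm:local}.

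Third, I would derive the Dini condition from $(\Lambda_0)$. Since $\ell(r)\le Cf'(r)$ for all $r>0$,
\[
\int_0^1\frac{\ell(r)}{r}\dd r\le C\int_0^1\frac{f'(r)}{r}\dd r<\infty
\]
by the assumption. One subtlety is that $(\Lambda_0)$ is stated for $u>0$, and $\ell$ is only defined on $(0,\infty)$, so there is no issue at $r=0$.

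With these three facts, Theorem~\ref{thm:confined} applies directly and yields the global statement with the decay rate $e^{-\lambda_\beta t}$.

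For the parenthetical example, suppose $f'(r)=r^{m-1}\sigma(r)$ with $m>1$ and $\sigma$ slowly varying at $0$. Then $f'(r)/r=r^{m-2}\sigma(r)$. The Potter-type bound $\sigma(r)\le r^{-(m-1)/2}$ for small $r$ (from \cite[Proposition 1.3.6]{BGT}, applied to $x\mapsto\sigma(1/x)$) gives an integrable majorant $r^{(m-1)/2-1}$ near $0$. On the rest of $(0,1]$, $f'$ is continuous because $f\in C^1$, so the integral is finite there as well.

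The main obstacle is mild. It is essentially only to make sure that (a)(i) of Theorem~\ref{thm:critical} indeed delivers \eqref{eq:IC-infty} with the same $\Phi$, which is exactly its statement. No condition of $\Phi$ near $0$ is needed, in contrast with Corollary~\ref{cor:C}.
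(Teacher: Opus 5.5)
Your proposal is correct and follows the paper's proof essentially verbatim: Theorem~\ref{thm:critical}(a)(i) supplies \eqref{eq:IC-infty} and the conclusions of Theorem~\ref{thm:local}, then $(\Lambda_0)$ transfers the Dini condition from $f'$ to $\ell$ so that Theorem~\ref{thm:confined} applies, and the parenthetical case uses the same slow-variation bound $f'(r)\le r^{(m-1)/2}$ for small $r$. Your explicit checks that $(\mathrm C)$ implies $(\mathrm L)$ and that $f'(r)/r$ is bounded on $[\varepsilon,1]$ are small details that the paper leaves implicit.
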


\section{The fractional Hermite kernel}\label{sec:kernel}

This section collects the properties of $\Kb^\beta_t$ on which everything
else rests. All of them follow from the explicit formula \eqref{mc} and from
subordination.

\begin{Lemma}[Kernel bounds]\label{lem:kernel}
Let $0<\beta\le1$, $\varrho>0$ and $\omega=\sqrt\varrho$. Then for all
$t>0$ and $x,y\in\R^d$:
\begin{enumerate}[label=\textup{(\roman*)}]
\item $\Kb^\beta_t(x,y)=\Kb^\beta_t(y,x)>0$, and for fixed $y$ the map
$(x,t)\mapsto\Kb^\beta_t(x,y)$ is continuous on $\R^d\times(0,\infty)$;
\item $\Kb^\beta_t(x,y)\le p_\beta(x-y,t)$, and consequently
\begin{equation}\label{eq:supkernel}
\sup_{x,y\in\R^d}\Kb_t^\beta(x,y)\ \le\ \kappa_{d,\beta}\,t^{-\frac{d}{2\beta}},
\qquad \kappa_{d,\beta}=p_\beta(0,1)
=(4\pi)^{-\frac d2}\frac{\Gamma(1+\frac{d}{2\beta})}{\Gamma(1+\frac d2)};
\end{equation}
\item one has the exact identity
\begin{equation}\label{eq:mass1}
\int_{\R^d}\Kb_t(x,y)\dd x
=\bigl(\cosh(2\omega t)\bigr)^{-d/2}
\exp\Bigl(-\frac{\omega\tanh(2\omega t)}{2}|y|^2\Bigr),
\end{equation}
and therefore
\begin{equation}\label{eq:massbeta}
\int_{\R^d}\Kb^\beta_t(x,y)\dd x=\int_{\R^d}\Kb^\beta_t(y,x)\dd x
\ \le\ m_\beta(t)=\min\bigl\{1,\ 2^{d/2}e^{-\lambda_\beta t}\bigr\};
\end{equation}
\item $S_\beta(t)h_0=e^{-\lambda_\beta t}h_0$ pointwise, where
$h_0(x)=e^{-\omega|x|^2/2}$;
\item if $\varphi\in L^1$, then $(x,t)\mapsto[S_\beta(t)\varphi](x)$ is
continuous on $\R^d\times(0,\infty)$.
\end{enumerate}
\end{Lemma}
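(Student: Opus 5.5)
The plan is to derive every item from the Mehler formula \eqref{mc} for $\beta=1$ and then pass to general $\beta$ by integrating against $\eta^\beta_t(s)\dd s$ in \eqref{E7.5}.

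\emph{Items (i) and (ii).} Positivity and symmetry of $\Kb_s$ pass to $\Kb^\beta_t$ at once, since $\eta^\beta_t\ge0$ is not identically zero. For the pointwise domination I will compare the Mehler kernel with the Gaussian for each fixed $s>0$. Since $\sinh(2\omega s)\ge2\omega s$, the prefactor is at most $(4\pi s)^{-d/2}$. In the exponent, $\tanh\ge0$ and $\omega\coth(\omega s)\ge 1/s$, so the exponential factor is at most $e^{-|x-y|^2/(4s)}$. Together these give $\Kb_s(x,y)\le G(x-y,s)$, and integrating against $\eta^\beta_t$ yields $\Kb^\beta_t\le p_\beta(x-y,t)$.

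For the supremum bound, $p_\beta(z,t)\le p_\beta(0,t)=\int(4\pi s)^{-d/2}\eta^\beta_t(s)\dd s$. Lemma~\ref{lem:stable}(i) with $q=d/2$ evaluates this as $(4\pi)^{-d/2}t^{-d/(2\beta)}\Gamma(1+\tfrac{d}{2\beta})/\Gamma(1+\tfrac d2)$. This gives \eqref{eq:supkernel} and identifies $\kappa_{d,\beta}$.

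\emph{Continuity in (i).} For $\beta=1$ continuity is clear from the formula. For $\beta<1$ I will use the scaling \eqref{eq:scaling-eta}, writing $\Kb^\beta_t(x,y)=\int_0^\infty\Kb_{t^{1/\beta}\sigma}(x,y)\eta^\beta_1(\sigma)\dd\sigma$, and then apply dominated convergence. For $t$ in a compact subset of $(0,\infty)$ the integrand is bounded by $C(t_0\sigma)^{-d/2}\eta^\beta_1(\sigma)$, using the Gaussian domination just proved. This bound is integrable at infinity by Lemma~\ref{lem:stable}(i). Near $\sigma=0$ it is integrable because $\eta^\beta_1$ vanishes faster than any power there, which also follows from (i) applied with large $q$.

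\emph{Items (iii) and (iv).} For the mass identity I will compute the Gaussian integral of \eqref{mc} in $x$ directly, completing the square. The quadratic form in $x$ has coefficient $\tfrac{\omega}{4}(\tanh+\coth)=\tfrac{\omega}{2}\coth(2\omega s)$, and the hyperbolic identities already quoted reduce the result to \eqref{eq:mass1}. This is a routine but careful computation.

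Then, using $\cosh(2\omega s)\ge e^{2\omega s}/2$, the mass is at most $\min\{1,2^{d/2}e^{-d\omega s}\}$. The bound $1$ also holds, since $\cosh\ge1$. Integrating against $\eta^\beta_t$ and using \eqref{p1} with $\sigma=d\omega$ gives $\int 2^{d/2}e^{-d\omega s}\eta^\beta_t(s)\dd s=2^{d/2}e^{-t(d\omega)^\beta}=2^{d/2}e^{-\lambda_\beta t}$. Taking the minimum of the two bounds gives \eqref{eq:massbeta}, and symmetry gives the equality of the two integrals.

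For (iv), first verify $e^{-s\Hr}h_0=e^{-d\omega s}h_0$ pointwise via the kernel. This is a Gaussian integral, or follows by noting that $h_0\in L^2$ is the eigenfunction and both sides are continuous. Then subordinate using \eqref{p1}.

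\emph{Item (v).} Given $\varphi\in L^1$, I will use continuity of $\Kb^\beta_t(x,y)$ in $(x,t)$ together with the uniform bound $\Kb^\beta_t\le\kappa_{d,\beta}t_0^{-d/(2\beta)}$ for $t\ge t_0$. Dominated convergence in $y$ then gives continuity of $(x,t)\mapsto[S_\beta(t)\varphi](x)$.

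The main obstacle is the exact mass computation in (iii). Obtaining the sharp rate $\lambda_\beta$ there hinges on getting the $\cosh(2\omega s)^{-d/2}$ factor exactly right; the remaining steps are comparatively routine.
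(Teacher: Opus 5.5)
Your proposal is correct and follows the paper's proof essentially step for step. The shared steps are: Gaussian domination via $\sinh z\ge z$ and $z\coth z\ge1$; Lemma~\ref{lem:stable}(i) both for $\kappa_{d,\beta}$ and for the dominating function in the continuity argument; $\cosh z\ge\max\{1,e^{z}/2\}$ together with \eqref{p1} at $\sigma=d\omega$; the $L^2$ eigenfunction identity upgraded to a pointwise one by continuity; and dominated convergence for (v). The only differences are cosmetic: you complete the square directly in $\R^d$ rather than reducing to $d=1$, $\omega=1$ by tensorisation and scaling. Your remark that $\eta^\beta_1$ vanishes faster than any power at $0$ is unnecessary, since Lemma~\ref{lem:stable}(i) with $q=d/2$ already gives integrability of $\sigma^{-d/2}\eta^\beta_1(\sigma)$ on all of $(0,\infty)$.
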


\begin{proof}
(i) Positivity, symmetry and continuity of $\Kb_t$ are clear from \eqref{mc}.
For $\beta<1$, positivity and symmetry of $\Kb^\beta_t$ follow from
\eqref{E7.5} and $\eta_t^\beta\ge0$. By \eqref{eq:scaling-eta},
$\Kb^\beta_t(x,y)=\int_0^\infty\Kb_{t^{1/\beta}\sigma}(x,y)\eta^\beta_1(\sigma)\dd\sigma$;
the integrand is continuous in $(x,t)$ and, by (ii) below for $\beta=1$,
bounded by $(4\pi t^{1/\beta}\sigma)^{-d/2}$, which is integrable against
$\eta^\beta_1$ uniformly for $t$ in compact subsets of $(0,\infty)$, by
Lemma~\ref{lem:stable}(i). The continuity then follows from dominated
convergence.

(ii) We first treat $\beta=1$. Comparing the prefactors in \eqref{mc} and
\eqref{eq:freekernel}, we have $\frac{\omega}{2\pi\sinh(2\omega t)}\le
\frac{1}{4\pi t}$ because $\sinh z\ge z$ for $z\ge0$; and, discarding the
nonnegative term $|x+y|^2\tanh(\omega t)$ in the exponent and using $z\coth
z\ge1$ for $z>0$ with $z=\omega t$, we obtain $\frac{\omega}{4}\coth(\omega
t)\ge\frac{1}{4t}$. Hence $\Kb_t(x,y)\le G(x-y,t)$ for every $t>0$. (This is
also a consequence of the Feynman--Kac formula, since the potential
$\varrho|x|^2$ is nonnegative \cite{Simon1982}.) Integrating against
$\eta^\beta_t$ and recalling \eqref{E7.5} and \eqref{eq:freekernel}, we obtain
$\Kb_t^\beta(x,y)\le p_\beta(x-y,t)$. By \eqref{eq:scaling-eta} and the
self-similarity of the Gaussian,
$p_\beta(z,t)=t^{-d/(2\beta)}p_\beta\bigl(t^{-1/(2\beta)}z,1\bigr)$, and since
$G(\cdot,s)$ is radially decreasing, $\sup_z p_\beta(z,t)=p_\beta(0,t)=
t^{-d/(2\beta)}p_\beta(0,1)$. Finally
$p_\beta(0,1)=(4\pi)^{-d/2}\int_0^\infty s^{-d/2}\eta^\beta_1(s)\dd s$, whose value
is given by Lemma~\ref{lem:stable}(i) with $q=d/2$.

(iii) Consider first $d=1$ and $\omega=1$, and write $a:=\tanh t$,
$c:=\coth t$ and $s:=\sinh 2t$. Since $a+c=2\cosh 2t/s$ and $c-a=2/s$, the
exponent in \eqref{mc} equals
$-\frac a4\bigl[(x+y)^2+(x-y)^2\bigr]-\frac{c-a}{4}(x-y)^2
=-\frac a2(x^2+y^2)-\frac{(x-y)^2}{2s}$, so that
$$
\Kb_t(x,y)=(2\pi s)^{-1/2}
\exp\Bigl(-\frac{a}{2}\bigl(x^2+y^2\bigr)-\frac{(x-y)^2}{2s}\Bigr).
$$
Integrating in $x$, the exponent is
$-\mathsf A x^2+\mathsf Bx-\tfrac{a}{2}y^2-\tfrac{y^2}{2s}$ with
$\mathsf A=\tfrac a2+\tfrac1{2s}=\tfrac{\cosh 2t}{2s}$ (because
$\tanh t=(\cosh 2t-1)/\sinh 2t$) and $\mathsf B=y/s$, whence
\begin{align*}
\int_\R\Kb_t(x,y)\dd x
&=(2\pi s)^{-1/2}\sqrt{\frac{\pi}{\mathsf A}}\,
\exp\Bigl(\frac{\mathsf B^2}{4\mathsf A}-\frac a2 y^2-\frac{y^2}{2s}\Bigr)\\
&=(\cosh 2t)^{-1/2}\exp\Bigl(-y^2\Bigl[\frac{\cosh 2t}{2s}
-\frac{1}{2s\cosh 2t}\Bigr]\Bigr).
\end{align*}
The bracket equals $\frac{\cosh^2 2t-1}{2s\cosh 2t}=\frac{\tanh 2t}{2}$, which
is \eqref{eq:mass1} for $d=1$, $\omega=1$. The kernel \eqref{mc} is a product
of one-dimensional kernels, which gives the case $d\ge1$, $\omega=1$. For
general $\omega$, the change of variables $x=\omega^{-1/2}z$ transforms
$\Hr$ into $\omega\mathcal H_1$, so that
$\Kb_t(x,y)=\omega^{d/2}\Kb^{(1)}_{\omega t}(\omega^{1/2}x,\omega^{1/2}y)$, where
$\Kb^{(1)}$ is the kernel for $\omega=1$ (this can also be read off
directly from \eqref{mc}); integrating in $x$ gives \eqref{eq:mass1}.

Since $\cosh z\ge\max\{1,e^{z}/2\}$, we infer from \eqref{eq:mass1} that
$\int\Kb_t(x,y)\dd x\le\min\{1,2^{d/2}e^{-d\omega t}\}$. Integrating against
$\eta_t^\beta$ and using \eqref{p1} with $\sigma=d\omega$, we get
$$
\int_{\R^d}\Kb^\beta_t(x,y)\dd x
\le\min\Bigl\{1,\ 2^{d/2}\int_0^\infty e^{-d\omega s}\eta^\beta_t(s)\dd s\Bigr\}
=\min\bigl\{1,\ 2^{d/2}e^{-t(d\omega)^{\beta}}\bigr\},
$$
which is \eqref{eq:massbeta}; the equality of the two integrals in
\eqref{eq:massbeta} is the symmetry from (i).

(iv) A direct computation gives $\Hr h_0=d\omega h_0$, so that
$e^{-s\Hr}h_0=e^{-sd\omega}h_0$ in $L^2$. Both sides are continuous functions
(the left-hand side by (v) with $\beta=1$), and hence the identity holds
pointwise. Integrating against $\eta^\beta_t$ and using \eqref{p1}, we obtain
the claim.

(v) By (i) and (ii), for $t\ge t_0>0$ the integrand
$\Kb^\beta_t(x,y)\varphi(y)$ is continuous in $(x,t)$ and bounded by
$\kappa_{d,\beta}t_0^{-d/(2\beta)}|\varphi(y)|$, and the conclusion follows
from dominated convergence.
\end{proof}

\begin{Remark}\rm\label{rem:sharpgap}
The rate $\lambda_\beta=(d\sqrt\varrho\,)^\beta$ in \eqref{eq:massbeta} is
sharp, since it is the ground state energy of $\Hr^\beta$; see \eqref{eq:gap}
and Lemma~\ref{lem:kernel}(iv). This is the only point at which the confining
potential is used quantitatively; every other estimate in this section is
inherited from the free kernel $p_\beta$ through Lemma~\ref{lem:kernel}(ii).
\end{Remark}

The next lemma replaces \cite[Proposition 4.1]{Laister}. It requires no
structure whatsoever on the kernel, and it applies to every Young's function,
finite or not.

\begin{Lemma}[Orlicz norm of a bounded integrable kernel]\label{lem:orlicznorm}
Let $\Theta\in\Y$ and let $k:\R^d\to[0,\infty)$ be measurable with
$\|k\|_{L^1}\le1$ and $\|k\|_{L^\infty}\le M$ for some $M\in(0,\infty)$. Then
\begin{equation}\label{eq:kernel-Theta-raw}
\|k\|_{L_\Theta}\ \le\ \frac{M}{\Theta^{-1}(M)} .
\end{equation}
\end{Lemma}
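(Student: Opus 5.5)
The plan is to test the Luxemburg norm with the candidate level $\lambda:=M/\Theta^{-1}(M)$ and show that $\int_{\R^d}\Theta(k(x)/\lambda)\dd x\le 1$. With this choice,
$$
\frac{k(x)}{\lambda}=\frac{k(x)}{M}\,\Theta^{-1}(M).
$$
Since $0\le k(x)/M\le1$ almost everywhere, the argument $k/\lambda$ always lies in $[0,\Theta^{-1}(M)]$. The key step is convexity of $\Theta$ together with $\Theta(0)=0$, applied on the segment from $0$ to $\Theta^{-1}(M)$. This gives
$$
\Theta\Bigl(\frac{k(x)}{M}\,\Theta^{-1}(M)\Bigr)\le \frac{k(x)}{M}\,\Theta\bigl(\Theta^{-1}(M)\bigr)\le \frac{k(x)}{M}\,M=k(x),
$$
where the last inequality uses \eqref{eq:22}, namely $\Theta(\Theta^{-1}(M))\le M$. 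Integrating and using $\|k\|_{L^1}\le1$ then yields $\int\Theta(k/\lambda)\le1$, hence $\|k\|_{L_\Theta}\le\lambda$, which is \eqref{eq:kernel-Theta-raw}.

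The only point needing care is the generality of $\Theta\in\Y$, which may take the value $\infty$ (types (II), (III)). I must check that $\Theta(\Theta^{-1}(M))$ is finite and that convexity applies at the endpoint.

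First, $\Theta^{-1}(M)\in(0,\infty)$ by the facts recalled after Definition~\ref{def:inverse}, so $\lambda$ is a well-defined positive number. Second, \eqref{eq:22} holds in the extended setting and gives $\Theta(\Theta^{-1}(M))\le M<\infty$. Third, a Young's function is convex as an extended-real-valued function on $[0,\infty]$, because it is convex and finite on an initial interval and equal to $\infty$ beyond it. Hence the chord inequality $\Theta(\theta z)\le\theta\Theta(z)$ for $\theta\in[0,1]$ holds with $z=\Theta^{-1}(M)$, and the right-hand side is finite.

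I would also handle the degenerate case $\Theta(\Theta^{-1}(M))=0$, which can occur when $\Theta$ vanishes on an interval. In that case the chord bound makes the integrand $0$, so the integral condition holds trivially.

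The main obstacle is therefore only the bookkeeping for non-finite Young's functions, in particular justifying \eqref{eq:22} at the point $M$ using right-continuity of $\Theta^{-1}$ and lower semicontinuity of $\Theta$. I would cite \cite[Remark 2.2]{Laister} for this rather than reprove it. No structure of $k$ beyond measurability and the two norm bounds is used.
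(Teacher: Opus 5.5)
Your proposal is correct and follows the paper's proof essentially verbatim: test the Luxemburg norm at $M/\Theta^{-1}(M)$, use convexity of $\Theta$ with $\Theta(0)=0$ on $[0,\Theta^{-1}(M)]$ together with $\Theta(\Theta^{-1}(M))\le M$ from \eqref{eq:22}, and integrate using $\|k\|_{L^1}\le1$. Your separate treatment of the case $\Theta(\Theta^{-1}(M))=0$ is harmless but unnecessary, since the chord inequality already covers it.
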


\begin{proof}
The value $\Theta^{-1}(M)$ is finite and strictly positive, so that
$\mu:=M/\Theta^{-1}(M)\in(0,\infty)$, and $\Theta(\Theta^{-1}(M))\le M<\infty$
by \eqref{eq:22}; in particular $\Theta$ is finite and convex on
$[0,\Theta^{-1}(M)]$. For almost every $x$ we have $k(x)/M\in[0,1]$ and
$$
\Theta\Bigl(\frac{k(x)}{\mu}\Bigr)
=\Theta\Bigl(\frac{k(x)}{M}\,\Theta^{-1}(M)\Bigr)
\le\frac{k(x)}{M}\,\Theta\bigl(\Theta^{-1}(M)\bigr)\le k(x),
$$
where the first inequality uses the convexity of $\Theta$ on
$[0,\Theta^{-1}(M)]$ together with $\Theta(0)=0$. Integrating over $\R^d$,
we get $\int_{\R^d}\Theta(k/\mu)\dd x\le\|k\|_{L^1}\le1$, and
\eqref{eq:kernel-Theta-raw} follows from the definition of the Luxemburg norm.
\end{proof}

For $\Theta(x)=x^r/r$ the bound \eqref{eq:kernel-Theta-raw} is the elementary
interpolation inequality $\|k\|_{L^r}\le\|k\|_{L^1}^{1/r}\|k\|_{L^\infty}^{1-1/r}$
(up to the normalisation of the Luxemburg norm).

\begin{Corollary}\label{cor:kernelTheta}
Let $0<\beta\le1$ and $\Theta\in\Y$. Then
\begin{equation}\label{eq:kernel-Theta}
\bigl\|p_\beta(\cdot,t)\bigr\|_{L_\Theta}
\ \le\ \kappa\,\frac{t^{-\frac{d}{2\beta}}}
{\Theta^{-1}\bigl(t^{-\frac{d}{2\beta}}\bigr)},
\qquad t>0 ,
\end{equation}
with $\kappa=\max\{1,\kappa_{d,\beta}\}$, and the same bound holds for
$\sup_{y}\|\Kb^\beta_t(\cdot,y)\|_{L_\Theta}$ and
$\sup_{x}\|\Kb^\beta_t(x,\cdot)\|_{L_\Theta}$. In particular,
\begin{equation}\label{eq:row-Lr}
\sup_{y\in\R^d}\bigl\|\Kb^\beta_t(\cdot,y)\bigr\|_{L^{r}}
\ \le\ \kappa_{d,\beta}^{1-\frac1r}\,t^{-\frac{d}{2\beta}\left(1-\frac1r\right)},
\qquad t>0,\ r\in[1,\infty].
\end{equation}
\end{Corollary}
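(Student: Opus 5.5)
The plan is to apply Lemma~\ref{lem:orlicznorm} three times, once to each kernel, with $M:=\kappa\,t^{-d/(2\beta)}$.

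\emph{Step 1: the free kernel.} The function $k=p_\beta(\cdot,t)$ is nonnegative. Its mass is $\|p_\beta(\cdot,t)\|_{L^1}=\int_0^\infty\|G(\cdot,s)\|_{L^1}\eta^\beta_t(s)\dd s=1$ by Tonelli, since the Gaussian has unit mass and $\eta^\beta_t$ is a probability density. The proof of Lemma~\ref{lem:kernel}(ii) gives $\|p_\beta(\cdot,t)\|_{L^\infty}=p_\beta(0,t)=\kappa_{d,\beta}t^{-d/(2\beta)}\le M$. Lemma~\ref{lem:orlicznorm} therefore yields $\|p_\beta(\cdot,t)\|_{L_\Theta}\le M/\Theta^{-1}(M)$.

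\emph{Step 2: the Hermite kernels.} Fix $y$ and take $k=\Kb^\beta_t(\cdot,y)$. By \eqref{eq:massbeta} its mass is at most $m_\beta(t)\le1$. By \eqref{eq:supkernel} its sup is at most $\kappa_{d,\beta}t^{-d/(2\beta)}\le M$. This gives the same bound, uniformly in $y$. By the symmetry in Lemma~\ref{lem:kernel}(i), the functions $\Kb^\beta_t(x,\cdot)$ are covered as well.

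\emph{Step 3: replacing $M$ by $t^{-d/(2\beta)}$.} Write $s:=t^{-d/(2\beta)}$, so that $M=\kappa s$ with $\kappa\ge1$. By \eqref{eq:dilate}, $\Theta^{-1}(\kappa s)\ge\Theta^{-1}(s)$. Hence
$$
\frac{\kappa s}{\Theta^{-1}(\kappa s)}\le \kappa\,\frac{s}{\Theta^{-1}(s)},
$$
which is exactly \eqref{eq:kernel-Theta}. This is why the constant $\kappa=\max\{1,\kappa_{d,\beta}\}$ appears rather than $\kappa_{d,\beta}$: the dilation inequality needs a factor $c\ge1$.

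\emph{Step 4: the $L^r$ bound \eqref{eq:row-Lr}.} I would not derive this from \eqref{eq:kernel-Theta} with $\Theta=x^r/r$, because the Luxemburg normalisation introduces extra constants. Instead I would use the interpolation $\|k\|_{L^r}\le\|k\|_{L^1}^{1/r}\|k\|_{L^\infty}^{1-1/r}$ directly, with $\|k\|_{L^1}\le1$ and $\|k\|_{L^\infty}\le\kappa_{d,\beta}t^{-d/(2\beta)}$. This gives $\kappa_{d,\beta}^{1-1/r}t^{-\frac d{2\beta}(1-\frac1r)}$. For $r=\infty$ it is just the sup bound, and for $r=1$ it is the mass bound.

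The only delicate point is Step 3, namely checking that the monotonicity of $\Theta^{-1}$ from \eqref{eq:dilate} is available for every Young function, including infinite-valued ones. Lemma~\ref{lem:concave} states this for all $\Phi\in\Y$, so no separate argument should be needed.
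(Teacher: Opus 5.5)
Your proposal is correct and follows the paper's proof. It applies Lemma~\ref{lem:orlicznorm} with $M=\kappa t^{-d/(2\beta)}$, uses the monotonicity of $\Theta^{-1}$ together with $\kappa\ge1$ to pass from $M/\Theta^{-1}(M)$ to $\kappa\,t^{-d/(2\beta)}/\Theta^{-1}(t^{-d/(2\beta)})$, and obtains \eqref{eq:row-Lr} by H\"older interpolation between the $L^1$ and $L^\infty$ bounds.

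The only deviation is in your Step 2. There you apply the lemma directly to $\Kb^\beta_t(\cdot,y)$ via \eqref{eq:massbeta} and \eqref{eq:supkernel}. The paper instead deduces the bound from the domination $0\le\Kb^\beta_t(\cdot,y)\le p_\beta(\cdot-y,t)$, together with the lattice property and translation invariance of the Luxemburg norm. Both routes are valid.
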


\begin{proof}
Put $M_t:=\kappa\,t^{-d/(2\beta)}$. Then $\|p_\beta(\cdot,t)\|_{L^1}=1$ and,
by Lemma \ref{lem:kernel}(ii), $\|p_\beta(\cdot,t)\|_{L^\infty}\le M_t$. From
Lemma \ref{lem:orlicznorm} and the monotonicity of $\Theta^{-1}$, we deduce
$$
\bigl\|p_\beta(\cdot,t)\bigr\|_{L_\Theta}\le\frac{M_t}{\Theta^{-1}(M_t)}
\le\kappa\,\frac{t^{-\frac{d}{2\beta}}}
{\Theta^{-1}\bigl(t^{-\frac{d}{2\beta}}\bigr)} ,
$$
where we used $\kappa\ge1$. The statement for $\Kb^\beta_t$ follows from
$0\le\Kb_t^\beta(\cdot,y)\le p_\beta(\cdot-y,t)$, the lattice property of
the Luxemburg norm and its translation invariance, together with the
symmetry of $\Kb^\beta_t$. Estimate \eqref{eq:row-Lr} is H\"older's
interpolation between $\|\Kb^\beta_t(\cdot,y)\|_{L^1}\le1$ and
$\|\Kb^\beta_t(\cdot,y)\|_{L^\infty}\le\kappa_{d,\beta}t^{-d/(2\beta)}$.
\end{proof}

\begin{Remark}\rm\label{rem:reproves}
Taking $\beta=1$ in Corollary \ref{cor:kernelTheta} gives
$$
\|G(\cdot,t)\|_{L_\Theta}\le \max\{1,(4\pi)^{-d/2}\}\,
\frac{t^{-d/2}}{\Theta^{-1}(t^{-d/2})},\qquad t>0,
$$
that is, \cite[Proposition 4.1]{Laister}, with a constant independent of
$\Theta$. The proof in \cite{Laister} proceeds through radial coordinates and
an induction on the dimension, carried out separately for even and odd
dimensions; its constant depends on $\Theta$ through $x_\infty^\Theta$.
Lemma \ref{lem:orlicznorm} shows that only the $L^1$ and $L^\infty$ norms of
the kernel enter. We do not know whether O'Neil's inequality itself extends to
general non-convolution kernels with uniform row and column bounds in
$L_\Theta$; the pointwise domination of Lemma~\ref{lem:kernel}(ii) makes the
question irrelevant here.
\end{Remark}

We close this section with the continuity statement needed for the
$C_0$-property.

\begin{Lemma}\label{lem:strongcont}
Let $0<\beta\le1$ and $\varrho>0$. For every $\varphi\in C_0^\infty(\R^d)$,
$$
\|S_\beta(t)\varphi-\varphi\|_{L^1}\to0
\qquad\text{and}\qquad
\|S_\beta(t)\varphi-\varphi\|_{L^\infty}\to0
\qquad\text{as }t\to0 .
$$
\end{Lemma}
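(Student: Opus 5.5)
We must show that $S_\beta(t)\varphi\to\varphi$ in $L^1$ and in $L^\infty$ for $\varphi\in C_0^\infty$. The plan is to treat the Hermite semigroup $\beta=1$ first and then pass to $0<\beta<1$ by subordination.

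\emph{The case $\beta=1$.} Write
$$
e^{-t\Hr}\varphi-\varphi=\int_0^t\frac{\dd}{\dd s}e^{-s\Hr}\varphi\dd s=-\int_0^te^{-s\Hr}\Hr\varphi\dd s .
$$
This is first justified in $L^2$ by the spectral theorem, since $\varphi\in\mathrm{Dom}(\Hr)$. Both sides are continuous functions by Lemma~\ref{lem:kernel}(v), so the identity holds pointwise. Since $\psi:=\Hr\varphi=-\Delta\varphi+\varrho|x|^2\varphi$ lies again in $C_0^\infty\subseteq L^1\cap L^\infty$, we can bound it directly. The mass bound \eqref{eq:massbeta} together with symmetry gives $\|e^{-s\Hr}\psi\|_{L^1}\le\|\psi\|_{L^1}$. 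Positivity together with the row bound $\int\Kb_s(x,y)\dd y\le1$ gives $\|e^{-s\Hr}\psi\|_{L^\infty}\le\|\psi\|_{L^\infty}$. Minkowski's inequality then yields
$$
\|e^{-t\Hr}\varphi-\varphi\|_{L^1\cap L^\infty}\le t\,\bigl(\|\psi\|_{L^1}+\|\psi\|_{L^\infty}\bigr).
$$
This is in fact a Lipschitz bound in $t$.

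\emph{The case $\beta<1$.} By \eqref{E7.4}–\eqref{E7.5} and $\int\eta^\beta_t=1$, we have pointwise
$$
S_\beta(t)\varphi-\varphi=\int_0^\infty\bigl(e^{-s\Hr}\varphi-\varphi\bigr)\eta^\beta_t(s)\dd s .
$$
Here Fubini is legitimate because $\int\!\!\int\Kb_s(x,y)|\varphi(y)|\eta_t^\beta(s)\dd y\dd s<\infty$. Applying Minkowski's integral inequality in $L^1$ and in $L^\infty$, we combine the bound $\le Cs$ from the first step with the trivial contraction bound $\le2\|\varphi\|$. This gives
$$
\|S_\beta(t)\varphi-\varphi\|_{X}\le C_\varphi\int_0^\infty\min\{1,s\}\,\eta^\beta_t(s)\dd s\le 2C_\varphi t,
$$
for $X=L^1$ and $X=L^\infty$, by Lemma~\ref{lem:stable}(iii). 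Letting $t\to0$ concludes the proof.

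\emph{Main obstacle.} The only delicate step is justifying the Duhamel identity for $\beta=1$ pointwise rather than only in $L^2$. The plan is to use $L^2$-continuity of $s\mapsto e^{-s\Hr}\psi$ to get the identity a.e., and then upgrade it to every $x$ by continuity via Lemma~\ref{lem:kernel}(v). If that upgrade becomes awkward, an alternative is to differentiate the Mehler kernel directly: use $\partial_s\Kb_s=-\Hr_x\Kb_s$ and the symmetry of $\Kb_s$ to integrate by parts against $\varphi$ in $y$. This is harmless because $\varphi$ has compact support.
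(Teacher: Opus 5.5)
Your proof is correct, but it follows a different route from the paper's in both steps.

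\emph{The case $\beta=1$.} The paper does not use the generator. It compares $e^{-t\Hr}$ with the Gaussian semigroup through the nonnegative defect $E_t=G(x-y,t)-\Kb_t(x,y)$. It then uses the explicit mass formula \eqref{eq:mass1}, which gives $m_t(y)\uparrow1$ uniformly on compact sets, together with the approximate-identity property of $G$. For the $L^\infty$ bound it splits into $|x|\le2R$ and $|x|>2R$.

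Your argument instead uses the identity $e^{-t\Hr}\varphi-\varphi=-\int_0^te^{-s\Hr}\Hr\varphi\,\dd s$, the fact that $\Hr\varphi\in C_0^\infty$, and contractivity on $L^1$ and $L^\infty$. This gives the Lipschitz bound $t\|\Hr\varphi\|_X$ directly.

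\emph{The case $\beta<1$.} The paper splits the subordination integral at a level $\delta$. This gives only a qualitative limit. You combine the bound $\min\{Cs,2\|\varphi\|_X\}$ with Lemma~\ref{lem:stable}(iii), and so obtain the rate $O(t)$ for every $\beta\le1$.

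\emph{What each route buys.} Your route is shorter and quantitative, and it needs nothing about the kernel beyond the mass bound $\le1$. Its price is the spectral/domain step: $\varphi\in\mathrm{Dom}(\Hr)$, and the $L^2$ Bochner integral must be identified with the pointwise integral. As you note, this identification follows by testing against $L^1\cap L^2$ functions with Fubini, using the bound $\|\Hr\varphi\|_\infty$.

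The pointwise upgrade is not needed for the $L^1$ and $L^\infty$ norms themselves. It does conveniently give the estimate at every $x$ before you integrate in $s$ in the subordination step. Both sides are continuous in $x$, the right-hand side by dominated convergence.

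The paper's route stays entirely within its kernel-domination framework and avoids any domain questions. It also applies verbatim to every $\varphi\in C_c(\R^d)$, since it uses only boundedness, compact support and uniform continuity. Your route needs $\Hr\varphi\in L^1\cap L^\infty$. Either class is dense enough for the use of the lemma in Theorem~\ref{thm:OrliczSmoothing}(a)(ii).
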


\begin{proof}
Consider first $\beta=1$. Write $E_t(x,y):=G(x-y,t)-\Kb_t(x,y)\ge0$ (Lemma
\ref{lem:kernel}(ii)) and recall that
$\|G(\cdot,t)*\varphi-\varphi\|_{L^1}\to0$ and
$\|G(\cdot,t)*\varphi-\varphi\|_{L^\infty}\to0$ as $t\to0$. Put
$m_t(y):=\int_{\R^d}\Kb_t(x,y)\dd x=\int_{\R^d}\Kb_t(y,x)\dd x$; by
\eqref{eq:mass1}, $m_t(y)\uparrow1$ as $t\downarrow0$, uniformly for $y$ in
compact sets. Since $\int_{\R^d}G(x-y,t)\dd x=1$, Tonelli's theorem yields
$$
\bigl\|G(\cdot,t)*\varphi-e^{-t\Hr}\varphi\bigr\|_{L^1}
\le\int_{\R^d}\int_{\R^d}E_t(x,y)|\varphi(y)|\dd x\dd y
=\int_{\R^d}\bigl(1-m_t(y)\bigr)|\varphi(y)|\dd y\longrightarrow0
$$
by dominated convergence, which proves the $L^1$ statement. For the
$L^\infty$ statement, let $R>0$ with $\supp\varphi\subseteq B_R$. For $|x|\le2R$,
\begin{align*}
\Bigl|\int_{\R^d}E_t(x,y)\varphi(y)\dd y\Bigr|
&\le\|\varphi\|_{L^\infty}\int_{\R^d}E_t(x,y)\dd y
=\|\varphi\|_{L^\infty}\bigl(1-m_t(x)\bigr)\\
&\le\|\varphi\|_{L^\infty}\sup_{|z|\le 2R}\bigl(1-m_t(z)\bigr)\longrightarrow0 .
\end{align*}
For $|x|>2R$ we have $\varphi(x)=0$, and since $|x-y|>R$ for $y\in B_R$,
$|e^{-t\Hr}\varphi(x)|\le\|\varphi\|_{L^\infty}\int_{|z|>R}G(z,t)\dd z\to0$
uniformly in $x$. Combining the two regimes with the uniform convergence of
$G(\cdot,t)*\varphi$ to $\varphi$ completes the proof for $\beta=1$.

For $0<\beta<1$, let $X$ be either $L^1$ or $L^\infty$. We use
\eqref{E7.4}, the identity $\int_0^\infty\eta_t^\beta=1$ (together with
Tonelli's theorem for $X=L^1$ and a pointwise estimate for $X=L^\infty$), and
the contractivity of $e^{-s\Hr}$ on $X$. For every $\delta>0$, we obtain
\begin{align*}
\|S_\beta(t)\varphi-\varphi\|_{X}
&\le\int_0^{\delta}\bigl\|e^{-s\Hr}\varphi-\varphi\bigr\|_{X}
\eta^\beta_t(s)\dd s
+2\|\varphi\|_{X}\int_{\delta}^{\infty}\eta^\beta_t(s)\dd s\\
&\le\sup_{0<s\le\delta}\bigl\|e^{-s\Hr}\varphi-\varphi\bigr\|_X
+2\|\varphi\|_X\int_{\delta t^{-1/\beta}}^{\infty}\eta_1^\beta(\sigma)
\dd\sigma ,
\end{align*}
where the last step uses \eqref{eq:scaling-eta}. Given $\varepsilon>0$, choose
$\delta$ so that the first term is at most $\varepsilon/2$ by the case
$\beta=1$, and then $t$ so small that the second is at most $\varepsilon/2$.
\end{proof}

\section{Proof of Theorem \ref{thm:OrliczSmoothing} and its corollaries}
\label{sec:proofA}

The contraction property on Orlicz spaces is a consequence of the
sub-Markovian character of $S_\beta(t)$ alone. It is classical, but we include
the short proof, since we use it in the following quantitative form.

\begin{Lemma}[Sub-Markovian kernels on Orlicz spaces]\label{L5.1}
Let $k:\R^d\times\R^d\to[0,\infty)$ be measurable, let $m>0$, and assume
$$
\sup_{x\in\R^d}\int_{\R^d}k(x,y)\dd y\le m
\qquad\text{and}\qquad
\sup_{y\in\R^d}\int_{\R^d}k(x,y)\dd x\le m .
$$
Let $T\varphi(x):=\int_{\R^d}k(x,y)\varphi(y)\dd y$. If $m\le1$, then for every
$\Phi\in\Y$ and every $\varphi\in L_\Phi$ the integral defining $T\varphi$
converges absolutely a.e., and
\begin{equation}\label{eq:Orlicz-op}
\int_{\R^d}\Phi\Bigl(\frac{|T\varphi(x)|}{\mu}\Bigr)\dd x
\ \le\ \int_{\R^d}\Phi\Bigl(\frac{|\varphi(y)|}{\mu}\Bigr)\dd y
\qquad\text{for every }\mu>0 .
\end{equation}
In particular $T(M_\Phi)\subseteq M_\Phi$. For every $m>0$ and $\Phi\in\Y$,
$\|T\varphi\|_{L_\Phi}\le m\,\|\varphi\|_{L_\Phi}$.
\end{Lemma}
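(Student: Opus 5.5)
The plan is to prove the modular inequality \eqref{eq:Orlicz-op} by applying Jensen's inequality pointwise in $x$ with respect to the sub-probability measure $k(x,y)\dd y$, and then to integrate in $x$ using the column bound. The norm bound for general $m$ will follow by rescaling the kernel.

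\emph{Absolute convergence.} Fix $\varphi\in L_\Phi$ and choose $\mu>\|\varphi\|_{L_\Phi}$, so that $\int\Phi(|\varphi|/\mu)\le1$. By Lemma~\ref{lem:orliczbasic}(ii) we have $|\varphi|/\mu\le c^{-1}(\Phi(|\varphi|/\mu)+C_0)$. Then
$$
\int\!\!\int k(x,y)|\varphi(y)|\dd y\dd x
\le m\,\mu\,c^{-1}\int\bigl(\Phi(|\varphi|/\mu)+C_0\bigr)\dd y
$$
after Tonelli and the column bound. The constant term gives an infinite integral over $\R^d$, so this crude argument must be localised. Instead I will integrate over $x\in B_R$ and split $|\varphi|=|\varphi|\chi_{\{|\varphi|\le A\mu\}}+|\varphi|\chi_{\{|\varphi|>A\mu\}}$. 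The bounded part satisfies $\int_{B_R}\int k|\varphi|\chi\le A\mu|B_R|$ by the row bound. For the large part, choose $A$ so that $\Phi(z)\ge cz/2$ for $z\ge A$, which is possible by (ii) once $C_0\le cz/2$. The large part is then bounded by $2\mu c^{-1}m\int\Phi(|\varphi|/\mu)<\infty$. Hence $T|\varphi|<\infty$ a.e.

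\emph{Case $m\le1$.} Fix $x$ with $T|\varphi|(x)<\infty$ and let $a(x):=\int k(x,y)\dd y\le1$. If $a(x)=0$ the left integrand vanishes. Otherwise, apply Jensen's inequality to the probability measure $k(x,y)\dd y/a(x)$. This is justified by the convexity and lower semicontinuity of $\Phi$ on $[0,\infty]$, together with its monotonicity, as in the proof of Lemma~\ref{lem:minkowski}. Combined with $\Phi(a z)\le a\Phi(z)$ for $a\in[0,1]$, which uses $\Phi(0)=0$, this gives
$$
\Phi\Bigl(\frac{|T\varphi(x)|}{\mu}\Bigr)\le\Phi\Bigl(a(x)\int\frac{|\varphi(y)|}{\mu}\frac{k(x,y)\dd y}{a(x)}\Bigr)\le\int k(x,y)\Phi\Bigl(\frac{|\varphi(y)|}{\mu}\Bigr)\dd y .
$$
Integrating in $x$, applying Tonelli, and using the column bound $\le1$ yields \eqref{eq:Orlicz-op}.

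\emph{Consequences.} Applying \eqref{eq:Orlicz-op} with every $\mu>0$ gives $T(M_\Phi)\subseteq M_\Phi$. Taking $\mu>\|\varphi\|_{L_\Phi}$ gives $\|T\varphi\|_{L_\Phi}\le\mu$, hence contraction. For general $m>0$, apply the case $m\le1$ to the kernel $k/m$, which gives $\|T\varphi\|_{L_\Phi}=m\|(T/m)\varphi\|_{L_\Phi}\le m\|\varphi\|_{L_\Phi}$.

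The main obstacle is handling extended-valued $\Phi$ of types (II) and (III) in the Jensen step. If $\Phi(|\varphi(y)|/\mu)=\infty$ on a set of positive $k(x,\cdot)$-measure, the right-hand side is $\infty$ and the inequality is trivial. Otherwise the argument lies in the finite convex region. In case (II) one must also check that the mean is at most $x_\infty^\Phi$, which holds because the average of values in a convex interval stays in that interval.
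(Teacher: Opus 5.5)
Your proposal is correct and follows the paper's proof. The core steps are the same: Jensen's inequality for the probability measure $k(x,y)\dd y/a(x)$ combined with $\Phi(az)\le a\Phi(z)$, then Tonelli with the column bound, then rescaling to $k/m$ for general $m$.

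The one difference is how you get absolute convergence. You prove $T|\varphi|<\infty$ a.e.\ beforehand by the localised splitting on balls. The paper instead runs the modular inequality directly for $\int k(x,y)|\varphi(y)|\dd y\in[0,\infty]$. It then reads off a.e.\ finiteness afterwards from $\int\Phi(|\varphi|/\mu)<\infty$ and Lemma~\ref{lem:orliczbasic}(ii), which makes your preliminary step unnecessary; what your ordering buys is only that Jensen is never applied with an infinite mean.
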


\begin{proof}
Let $m\le1$, fix $\mu>0$ and set $M(x):=\int_{\R^d}k(x,y)\dd y\in[0,1]$. If
$M(x)=0$ then $T\varphi(x)=0$, so assume $M(x)>0$. Since $\Phi$ is convex with
$\Phi(0)=0$ and $M(x)\le1$, we have $\Phi(M(x)z)\le M(x)\Phi(z)$ for $z\ge0$;
applying this with $z=M(x)^{-1}\int k(x,y)|\varphi(y)|\mu^{-1}\dd y$, and then
Jensen's inequality for the probability measure $k(x,y)\dd y/M(x)$ and the
lower semicontinuous convex function $\Phi$, yields
\begin{align*}
\Phi\Bigl(\frac{1}{\mu}\int_{\R^d}k(x,y)|\varphi(y)|\dd y\Bigr)
&\le M(x)\,\Phi\Bigl(\frac{1}{M(x)}\int_{\R^d}k(x,y)\frac{|\varphi(y)|}{\mu}\dd y\Bigr)\\
&\le\int_{\R^d}k(x,y)\,\Phi\Bigl(\frac{|\varphi(y)|}{\mu}\Bigr)\dd y .
\end{align*}
Integrating in $x$ and invoking Tonelli's theorem and the second hypothesis
on $k$, we obtain \eqref{eq:Orlicz-op} with $|T\varphi|$ replaced by
$\int k|\varphi|\dd y\ge|T\varphi|$. If we choose $\mu$ with
$\int\Phi(|\varphi|/\mu)<\infty$, the right-hand side is finite, so
$\Phi(\mu^{-1}\int k(x,y)|\varphi(y)|\dd y)<\infty$ for a.e.\ $x$, and
Lemma~\ref{lem:orliczbasic}(ii) shows that $\int k(x,y)|\varphi(y)|\dd y<\infty$
for a.e.\ $x$. Taking $\mu>\|\varphi\|_{L_\Phi}$ makes the right-hand side of
\eqref{eq:Orlicz-op} at most $1$, whence $\|T\varphi\|_{L_\Phi}\le\mu$. Letting
$\mu\downarrow\|\varphi\|_{L_\Phi}$, we get $\|T\varphi\|_{L_\Phi}\le
\|\varphi\|_{L_\Phi}$. The invariance of $M_\Phi$ is immediate from
\eqref{eq:Orlicz-op}. For general $m>0$, it suffices to apply the case $m=1$
to the kernel $k/m$ and to use the homogeneity of the norm.
\end{proof}

We also record the form of O'Neil's convolution theorem that we shall use.

\begin{Lemma}[{O'Neil \cite[Theorem 2.5]{ONeil}; see also
\cite[Lemma 4.3]{Laister}}]\label{lem:oneil}
Let $\Phi,\Psi,\Theta\in\Y$ satisfy \eqref{eq:compat}. If $\varphi\in L_\Phi$
and $g\in L_\Theta$ then $g*\varphi\in L_\Psi$ and
\begin{equation}\label{eq:Oneil}
\|g*\varphi\|_{L_\Psi}\ \le\ 2\,\|g\|_{L_\Theta}\,\|\varphi\|_{L_\Phi} .
\end{equation}
\end{Lemma}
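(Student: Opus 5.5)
The plan is a duality argument built on a three-term pointwise Young inequality, which avoids rearrangements entirely. By homogeneity and the lattice property (Lemma~\ref{lem:orliczbasic}(i)), it suffices to treat $g,\varphi\ge0$ with $\|g\|_{L_\Theta}<1$ and $\|\varphi\|_{L_\Phi}<1$, so that
\[
\int\Theta(g)\le1,\qquad \int\Phi(\varphi)\le1 .
\]
I will then show $\int (g*\varphi)\,h\le C$ for every $h\ge0$ with $\int\Psi^{*}(h)\le1$. By the norm-conjugate characterisation of the Luxemburg norm (Orlicz norm versus Luxemburg norm, which are equivalent within a factor $2$), this yields a bound $\|g*\varphi\|_{L_\Psi}\le C'$. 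I will try to track the constants so that the final factor is $2$. Since the statement is O'Neil's theorem, I would in any case first check whether the cited \cite[Lemma 4.3]{Laister} proof can be quoted. What follows is the self-contained route I would attempt.

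\emph{Pointwise step.} The key claim is that for all $a,b,c\ge0$,
\[
abc\ \le\ \Theta(a)\Phi(b)+\Phi(b)\Psi^{*}(c)+\Theta(a)\Psi^{*}(c),
\]
possibly up to a harmless absolute constant. For powers $\Theta=a^p$, $\Phi=b^q$, $\Psi^*=c^s$ with $1/p+1/q+1/s=2$, this is the weighted AM--GM inequality. For the general case, set $A=\Theta(a)$, $B=\Phi(b)$, $C=\Psi^*(c)$, and use $a\le\Theta^{-1}(A)$ and the analogous bounds from \eqref{eq:22}. Order the three values, say $A\le B\le C$ after a symmetric relabelling. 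Monotonicity of the inverses, \eqref{eq:dilate}, and the compatibility \eqref{eq:compat} combined with \eqref{eq:24} ($\Psi^{-1}(x)(\Psi^{*})^{-1}(x)\le2x$) then bound the product. The target is
\[
\Theta^{-1}(\cdot)\,\Phi^{-1}(\cdot)\,(\Psi^*)^{-1}(\cdot)\ \lesssim\ x\cdot 2x ,
\]
evaluated at a suitable common level, and this has to be compared with the sum of pairwise products $AB+BC+CA$. I expect this to be the main obstacle, because the three arguments are different and only concavity of the inverses is available to move between levels. The first thing I would try is to evaluate everything at the middle value $B$, using $\Theta^{-1}(A)\le\Theta^{-1}(B)$ and $(\Psi^*)^{-1}(C)\le (C/B)(\Psi^*)^{-1}(B)$ (from \eqref{eq:dilate}), which gives $abc\le 2B^2\cdot C/B=2BC$. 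The degenerate cases ($\Theta$ or $\Psi^*$ infinite beyond a threshold, or $A=0$) then need to be handled separately.

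\emph{Integration step.} Apply the pointwise inequality with $a=g(x-y)$, $b=\varphi(y)$, $c=h(x)$ and integrate over $\R^d\times\R^d$. Tonelli's theorem and translation invariance factor each term, for instance $\iint\Theta(g(x-y))\Phi(\varphi(y))\dd x\dd y=\int\Theta(g)\int\Phi(\varphi)\le1$. Hence $\int(g*\varphi)h$ is bounded by an absolute constant, which gives $g*\varphi\in L_\Psi$ with a bound. If the pointwise argument only yields the cruder form $abc\le 2BC$ in each ordering, the integrated sum is at most $2$. The generalised Hölder inequality then converts this to $\|g*\varphi\|_{L_\Psi}\le 2$ up to the Orlicz--Luxemburg comparison, and optimising the normalisation gives \eqref{eq:Oneil}.
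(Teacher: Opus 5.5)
The paper does not prove this lemma; it quotes O'Neil, so the target is the stated constant $2$. Your pointwise step is sound. By \eqref{eq:compat} and \eqref{eq:24}, $\Theta^{-1}(z)\Phi^{-1}(z)(\Psi^*)^{-1}(z)\le 2z^2$, a bound symmetric in the three functions. Evaluating all three inverses at the middle value and dilating the largest via \eqref{eq:dilate} gives $abc\le 2\max\{AB,BC,CA\}$. The degenerate cases follow from \eqref{eq:compat} near $0$ and concavity.

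\textbf{The gap: the integration step cannot deliver the factor $2$.}
\begin{enumerate}
\item Which pairwise product is largest depends on $(x,y)$, so you cannot integrate a single term. You must integrate $2(AB+BC+CA)$. Each of the three double integrals factorises to at most $1$, so you get $\int(g*\varphi)h\le 6$, not $2$.
\item To turn a pairing bound into a Luxemburg bound you need $\|u\|_{L_\Psi}\le\sup\{\int|u|h:\int\Psi^*(h)\le1\}$, i.e.\ Luxemburg norm below Orlicz norm. The generalised H\"older inequality points the other way.
\item So this route proves $\|g*\varphi\|_{L_\Psi}\le 6\|g\|_{L_\Theta}\|\varphi\|_{L_\Phi}$. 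No normalisation repairs it. For $s,t,u\le1$ the modulars of $(sg,t\varphi,uh)$ drop only linearly, which gives $K(1/s+1/t+1/u)\ge 3K$. Moreover, any inequality $abc\le K(AB+BC+CA)$ needs $K\ge1$: take $\Theta(z)=\Phi(z)=\Psi(z)=z$ and $c=1$, where $\Psi^*(1)=0$.
\end{enumerate}
Hence the factor $2$ in \eqref{eq:Oneil}, which produces $2\kappa/b$ in Theorem~\ref{thm:OrliczSmoothing}(b), is out of reach this way.

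\textbf{The missing idea: bound the modular of $g*\varphi$ directly, pointwise in $x$, instead of dualising.}
Normalise $\int\Theta(g)\le1$ and $\int\Phi(\varphi)\le1$, and fix $x$. Set $s(y):=\Theta(g(x-y))$, $t(y):=\Phi(\varphi(y))$ and $w:=\tfrac12(s+t)$, so that $\int w\dd y\le1$.

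Your own device, applied to the two-function condition \eqref{eq:compat}, gives
$g(x-y)\varphi(y)\le\max\{s,t\}\,\Psi^{-1}(\min\{s,t\})\le 2w\,\Psi^{-1}(st/w)$.
The second inequality holds because $2w\ge\max\{s,t\}$ and $st/w\ge\min\{s,t\}$. The case $\min\{s,t\}=0$ uses the right continuity of $\Psi^{-1}$ and \eqref{eq:compat} at $0$.

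Apply Jensen's inequality for the concave $\Psi^{-1}$ with respect to the probability measure $w\dd y/\int w$, and then \eqref{eq:dilate} with $c=1/\int w\ge1$. This gives
\[
(g*\varphi)(x)\ \le\ 2\,\Psi^{-1}\Bigl(\int_{\R^d}\Theta(g(x-y))\,\Phi(\varphi(y))\dd y\Bigr).
\]
By \eqref{eq:22}, $\Psi\bigl(\tfrac12(g*\varphi)(x)\bigr)\le\int\Theta(g(x-y))\Phi(\varphi(y))\dd y$. Integrating in $x$ with Tonelli's theorem yields $\int\Psi\bigl(\tfrac12\,g*\varphi\bigr)\le1$, which is exactly \eqref{eq:Oneil}.
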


\begin{proof}[Proof of Theorem \ref{thm:OrliczSmoothing}]
(a)(i) By Lemma \ref{lem:kernel}(iii), the kernel $\Kb^\beta_t$ satisfies the
hypotheses of Lemma \ref{L5.1} with $m=m_\beta(t)\le1$. We thus obtain
\eqref{E5.2} and the invariance of $M_\Phi$.

(a)(ii) Let $\Phi$ be finite, $\varphi\in L^0_\Phi$ and $\varepsilon>0$, and
choose $\psi\in C_0^\infty(\R^d)$ with $\|\varphi-\psi\|_{L_\Phi}<\varepsilon$. By
\eqref{E5.2} and Lemma~\ref{lem:orliczbasic}(iii), with
$c_\Phi:=\max\{1,\Phi(1)\}$,
\begin{align*}
\|S_\beta(t)\varphi-\varphi\|_{L_\Phi}
&\le\|S_\beta(t)(\varphi-\psi)\|_{L_\Phi}+\|S_\beta(t)\psi-\psi\|_{L_\Phi}
+\|\psi-\varphi\|_{L_\Phi}\\
&\le 2\varepsilon+c_\Phi\bigl(\|S_\beta(t)\psi-\psi\|_{L^1}
+\|S_\beta(t)\psi-\psi\|_{L^\infty}\bigr).
\end{align*}
By Lemma \ref{lem:strongcont}, the last bracket tends to $0$ as $t\to0$.
Since $\varepsilon>0$ was arbitrary, the claim follows.

(a)(iii) If $\Phi\in\Nf$, then $M_\Phi=L^0_\Phi$ by
Lemma~\ref{lem:orliczbasic}(iv). By (i), $S_\beta(t)$ is a contraction on
$M_\Phi$. The semigroup property holds by Section~\ref{subsec:operator}, and
strong continuity at $t=0$ is (ii). Strong continuity at $t_0>0$ then follows
from the contraction property: for $0<h\le t_0$,
$\|S_\beta(t_0+h)\varphi-S_\beta(t_0)\varphi\|_{L_\Phi}
+\|S_\beta(t_0-h)\varphi-S_\beta(t_0)\varphi\|_{L_\Phi}\le
2\|S_\beta(h)\varphi-\varphi\|_{L_\Phi}$.

(a)(iv) The Gaussian $h_0$ belongs to $L_\Phi$. Indeed, if $z_0>0$ satisfies
$\Phi(z_0)<\infty$ and $k\ge\|h_0\|_{L^\infty}/z_0=1/z_0$, then
$\Phi(h_0/k)\le(h_0/(kz_0))\Phi(z_0)$, which is integrable. Moreover
$\|h_0\|_{L_\Phi}>0$. By Lemma \ref{lem:kernel}(iv),
$\|S_\beta(t)h_0\|_{L_\Phi}=e^{-\lambda_\beta t}\|h_0\|_{L_\Phi}$.

(b) Let $\Theta\in\Y$ and $b\in(0,1]$ be as in \eqref{E3.1}. Since
$\Kb^\beta_t\ge0$, Lemma \ref{lem:kernel}(ii) gives the pointwise domination
$$
\bigl|S_\beta(t)\varphi(x)\bigr|
\le\int_{\R^d}\Kb^\beta_t(x,y)|\varphi(y)|\dd y
\le\bigl(p_\beta(\cdot,t)*|\varphi|\bigr)(x),\qquad x\in\R^d .
$$
The right-hand inequality in \eqref{E3.1} is \eqref{eq:compat}. Hence the
lattice property of the Luxemburg norm and Lemma \ref{lem:oneil} give
$$
\|S_\beta(t)\varphi\|_{L_\Psi}
\le\bigl\|p_\beta(\cdot,t)*|\varphi|\bigr\|_{L_\Psi}
\le 2\,\bigl\|p_\beta(\cdot,t)\bigr\|_{L_\Theta}\,\|\varphi\|_{L_\Phi} .
$$
Corollary \ref{cor:kernelTheta} bounds the middle factor by
$\kappa t^{-d/(2\beta)}/\Theta^{-1}(t^{-d/(2\beta)})$, while the left-hand
inequality in \eqref{E3.1} gives, for every $x>0$,
$$
\frac{x}{\Theta^{-1}(x)}\ \le\ \frac{1}{b}\,
\frac{\Phi^{-1}(x)}{\Psi^{-1}(x)} .
$$
Combining the two displays with $x=t^{-d/(2\beta)}$, we arrive at
\eqref{eq:OrliczOrlicz}.

(c) Put $\gamma_\ast(t):=\Psi^{-1}(t^{-d/(2\beta)})/\Phi^{-1}(t^{-d/(2\beta)})$,
so that $\gamma_\ast(t)\to0$ as $t\to0$, and let $\mathcal K\subseteq L^0_\Phi$
be compact. Suppose that there are $\varepsilon>0$, $t_j\to0$ and
$\varphi_j\in\mathcal K$ with
$\gamma_\ast(t_j)\|S_\beta(t_j)\varphi_j\|_{L_\Psi}\ge\varepsilon$. Passing to a
subsequence, $\varphi_j\to\varphi\in\mathcal K$ in $L_\Phi$. For $s>0$, by
(b) and (a)(i),
\begin{align*}
\gamma_\ast(t_j)\|S_\beta(t_j)\varphi_j\|_{L_\Psi}
&\le\gamma_\ast(t_j)\|S_\beta(t_j)(\varphi_j-S_\beta(s)\varphi)\|_{L_\Psi}
+\gamma_\ast(t_j)\|S_\beta(s)S_\beta(t_j)\varphi\|_{L_\Psi}\\
&\le\tfrac{2\kappa}{b}\|\varphi_j-S_\beta(s)\varphi\|_{L_\Phi}
+\tfrac{2\kappa}{b}\,\frac{\gamma_\ast(t_j)}{\gamma_\ast(s)}\,\|\varphi\|_{L_\Phi}.
\end{align*}
Letting $j\to\infty$ and then $s\to0$, and using (a)(ii), we obtain
$\varepsilon\le0$, a contradiction.
\end{proof}

\begin{proof}[Proof of Corollary \ref{cor:Linfty}]
By Lemma \ref{lem:kernel}(ii) and the generalised H\"older inequality,
\begin{align*}
\bigl|S_\beta(t)\varphi(x)\bigr|
&\le\int_{\R^d}p_\beta(x-y,t)|\varphi(y)|\dd y
\le 2\,\bigl\|p_\beta(x-\cdot,t)\bigr\|_{L_{\Phi^*}}\,\|\varphi\|_{L_\Phi}\\
&=2\,\bigl\|p_\beta(\cdot,t)\bigr\|_{L_{\Phi^*}}\,\|\varphi\|_{L_\Phi} ,
\end{align*}
where we used that $p_\beta(\cdot,t)$ is even. By Corollary
\ref{cor:kernelTheta} with $\Theta=\Phi^*$ and the left inequality in
\eqref{eq:24},
$$
\bigl\|p_\beta(\cdot,t)\bigr\|_{L_{\Phi^*}}
\le \kappa\,\frac{t^{-\frac{d}{2\beta}}}{(\Phi^*)^{-1}(t^{-\frac{d}{2\beta}})}
\le \kappa\,\Phi^{-1}\bigl(t^{-\tfrac{d}{2\beta}}\bigr),
$$
which is \eqref{eq:MTtoLinfty} with $C_\infty=2\kappa$.
\end{proof}

\begin{proof}[Proof of Corollary \ref{cor:Linftyzero}]
Write $\alpha(t):=\Phi^{-1}(t^{-d/(2\beta)})$ and $\gamma(t):=1/\alpha(t)$, and
let $\mathcal K\subseteq L^0_\Phi$ be compact. Since $\Phi$ is finite,
$\Phi^{-1}$ is unbounded, and therefore $\gamma(t)\to0$ as $t\to0$. Suppose that there
are $\varepsilon>0$, $t_j\to0$ and $\varphi_j\in\mathcal K$ with
$\gamma(t_j)\|S_\beta(t_j)\varphi_j\|_{L^\infty}\ge\varepsilon$, and, passing to
a subsequence, $\varphi_j\to\varphi\in\mathcal K$ in $L_\Phi$. For $s>0$,
by \eqref{eq:MTtoLinfty}, the $L^\infty$-contractivity of $S_\beta(t_j)$
(Lemma~\ref{L5.1} with $\Phi=\Phi^\infty$) and \eqref{E5.2},
\begin{align*}
\gamma(t_j)\|S_\beta(t_j)\varphi_j\|_{L^\infty}
&\le\gamma(t_j)\bigl\|S_\beta(t_j)\bigl(\varphi_j-S_\beta(s)\varphi\bigr)\bigr\|_{L^\infty}
+\gamma(t_j)\bigl\|S_\beta(t_j)S_\beta(s)\varphi\bigr\|_{L^\infty}\\
&\le C_\infty\|\varphi_j-\varphi\|_{L_\Phi}
+C_\infty\bigl\|\varphi-S_\beta(s)\varphi\bigr\|_{L_\Phi}
+\gamma(t_j)\bigl\|S_\beta(s)\varphi\bigr\|_{L^\infty} .
\end{align*}
The last term tends to $0$ as $j\to\infty$ because
$\|S_\beta(s)\varphi\|_{L^\infty}<\infty$ for fixed $s>0$. Letting $j\to\infty$
and then $s\to0$, and using Theorem \ref{thm:OrliczSmoothing}(a)(ii), we obtain
$\varepsilon\le0$, a contradiction.
\end{proof}

\begin{proof}[Proof of Corollary \ref{cor:decay} and of Proposition
\ref{prop:LpLq}]
Let $\theta\in(0,1)$ and $t>0$. We use the semigroup property,
Lemma~\ref{L5.1} applied with $\Phi^\infty$ (that is, on $L^\infty$) and with
$m=m_\beta((1-\theta)t)$, and Corollary \ref{cor:Linfty}. We obtain
$$
\|S_\beta(t)\varphi\|_{L^\infty}
\le m_\beta((1-\theta)t)\,\bigl\|S_\beta(\theta t)\varphi\bigr\|_{L^\infty}
\le 2^{\frac d2}e^{-(1-\theta)\lambda_\beta t}\,C_\infty\,
\Phi^{-1}\bigl((\theta t)^{-\frac{d}{2\beta}}\bigr)\|\varphi\|_{L_\Phi} ,
$$
and $\Phi^{-1}((\theta t)^{-d/(2\beta)})\le\theta^{-d/(2\beta)}
\Phi^{-1}(t^{-d/(2\beta)})$ by \eqref{eq:dilate}. This is \eqref{eq:largetime}.
For $t\ge1$, the same argument with the splitting $t=(t-1)+1$ gives
\eqref{eq:largetime-sharp}. For $L_\Psi$, one argues in the same way, using
Lemma~\ref{L5.1} on $L_\Psi$ and \eqref{eq:OrliczOrlicz}, together with
$\Phi^{-1}(\theta^{-a}x)/\Psi^{-1}(\theta^{-a}x)\le\theta^{-a}\Phi^{-1}(x)/\Psi^{-1}(x)$,
$a=d/(2\beta)$, which follows from \eqref{eq:dilate} and the monotonicity of
$\Psi^{-1}$. The optimality of the rate follows from
Lemma~\ref{lem:kernel}(iv), since $h_0\in L_\Phi\cap L^\infty$.

We turn to Proposition \ref{prop:LpLq}. For $q=r$, the estimate is
\eqref{E5.2}, since
$L^q=L_\Phi$ with $\Phi(x)=x^q/q$ and $\|\cdot\|_{L_\Phi}=q^{-1/q}\|\cdot\|_{L^q}$
(for $q=\infty$, use $\Phi^\infty$). For $q<r<\infty$, it is Theorem
\ref{thm:OrliczSmoothing}(b) with the choice of Remark \ref{rem:onLpLq}, and
for $r=\infty$, it is Corollary \ref{cor:Linfty} with $\Phi(x)=x^q/q$, for which
$\Phi^{-1}(x)=(qx)^{1/q}$. The exponential improvements follow from the first
part of the proof.
\end{proof}

\section{Proof of the well-posedness results}
\label{sec:proofB}

Throughout this section, $\Phi\in\Nf$, $f$ satisfies $(\mathrm L)$, and
$$
\alpha(t):=\Phi^{-1}\bigl(t^{-\frac{d}{2\beta}}\bigr),
\qquad
\gamma(t):=\frac{1}{\alpha(t)},\qquad t>0 ,
$$
so that $\alpha$ is decreasing, $\gamma$ is increasing, and by \eqref{eq:dilate}
\begin{equation}\label{eq:alphadouble}
\alpha(t/2)\le 2^{\frac{d}{2\beta}}\alpha(t),\qquad t>0 .
\end{equation}
Given $\lambda>0$ as in \eqref{eq:IC-infty}, we abbreviate
$$
I(\tau):=\int_0^{\tau}\ell\bigl(\lambda\alpha(s)\bigr)\dd s
=\frac{2\beta}{d}\int_{\tau^{-\frac{d}{2\beta}}}^{\infty}
x^{-p_\ast}\ell\bigl(\lambda\Phi^{-1}(x)\bigr)\dd x ,
$$
the equality being \eqref{eq:changevar}. Hypothesis \eqref{eq:IC-infty} says
precisely that $I(\tau)<\infty$ for every $\tau>0$; it follows that
$I(\tau)\downarrow0$ as $\tau\downarrow0$. We write
$\|\cdot\|_\infty:=\|\cdot\|_{L^\infty}$ and $\|\cdot\|_\Phi:=\|\cdot\|_{L_\Phi}$.
Since $\|\cdot\|_\infty$ is the Luxemburg norm associated with
$\Phi^\infty\in\Y$, Lemmas~\ref{lem:minkowski} and \ref{L5.1} apply to it.

\subsection{Auxiliary facts on integral solutions}

\begin{Lemma}\label{lem:restart}
Let $u$ be an integral solution of \eqref{eq:main} in $Q_T$.
\begin{enumerate}[label=\textup{(\roman*)}]
\item For a.e.\ $s\in(0,T)$, the function $u(s+\cdot)$ is an integral solution
in $Q_{T-s}$ with data $u(s)$.
\item If $s$ is as in \textup{(i)} and $v$ is an integral solution in $Q_\tau$
with data $u(s)$, then the function equal to $u$ on $\R^d\times(0,s]$ and to
$v(\cdot-s)$ on $\R^d\times(s,s+\tau)$ is an integral solution in $Q_{s+\tau}$
with data $u(0)$.
\item If $u$ is a bounded $M_\Phi$-mild solution on $[0,T)$, then
$u(t)=\mathcal F(u;u_0)(t)$ a.e.\ in $\R^d$ for \emph{every} $t\in(0,T)$, and
\textup{(i)} holds for every $s\in(0,T)$; the restarted function
$u(s+\cdot)$ is a bounded $M_\Phi$-mild solution on $[0,T-s)$.
\end{enumerate}
\end{Lemma}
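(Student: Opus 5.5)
The plan rests on the semigroup property of $S_\beta$ and on Tonelli/Fubini, applied to the absolutely convergent integrals guaranteed by Definition~\ref{def:solutions}(i).

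For (i), write, for $t\in(0,T-s)$,
$$
u(s+t)=S_\beta(s+t)u_0+\int_0^{s}S_\beta(s+t-\sigma)f(u(\sigma))\dd\sigma+\int_s^{s+t}S_\beta(s+t-\sigma)f(u(\sigma))\dd\sigma .
$$
By the semigroup property for nonnegative functions and Tonelli's theorem, the first two terms together equal $S_\beta(t)\bigl[S_\beta(s)u_0+\int_0^sS_\beta(s-\sigma)f(u(\sigma))\dd\sigma\bigr]=S_\beta(t)u(s)$. This step needs absolute convergence: we need $S_\beta(t)\bigl(S_\beta(s)|u_0|+\int_0^sS_\beta(s-\sigma)|f(u(\sigma))|\dd\sigma\bigr)$ to be finite a.e. That quantity equals $S_\beta(s+t)|u_0|+\int_0^sS_\beta(s+t-\sigma)|f(u(\sigma))|\dd\sigma$, which is finite a.e.\ in $Q_T$ by the definition. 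It also dominates $S_\beta(t)|u(s)|$, so the finiteness of $S_\beta(t)|u(s)|$ holds too.

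The main obstacle is the ``a.e.\ $s$'' bookkeeping. The identity $\mathcal F(u;u_0)=u$ and the finiteness conditions hold only a.e.\ in $Q_T$. By Fubini's theorem, for a.e.\ $s$ the slice $u(s)$ coincides a.e.\ in $\R^d$ with $\mathcal F(u;u_0)(s)$. For those same $s$, the finiteness statements hold for a.e.\ $(x,t)$ in the shifted cylinder, since the shift $(x,t)\mapsto(x,s+t)$ preserves null sets. I would therefore select the full-measure set of $s$ at the outset, using Fubini on the set where these conditions hold. The substitution $\sigma=s+\tau$ then turns the last integral into $\int_0^tS_\beta(t-\tau)f(u(s+\tau))\dd\tau$, which yields (i).

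For (ii), I run the same computation backwards. For $t\in(s,s+\tau)$,
$$
v(t-s)=S_\beta(t-s)u(s)+\int_0^{t-s}S_\beta(t-s-\tau)f(v(\tau))\dd\tau .
$$
I substitute $u(s)=\mathcal F(u;u_0)(s)$ and use the semigroup property to recover $\mathcal F(w;u_0)(t)$ for the concatenated function $w$. Finiteness of the absolute integrals follows from the same Tonelli identity together with the finiteness hypotheses on $v$.

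For (iii), the bounded $M_\Phi$-mild solution is continuous into $M_\Phi$ by definition, so the representative $t\mapsto\mathcal F(u;u_0)(t)$ is defined for every $t$. Boundedness on $(a,b)$ gives $|f(u)|\le M\ell(M)$ there. Combined with the contraction of Theorem~\ref{thm:OrliczSmoothing}(a)(i), this makes all integrals finite for every $t$ and every $s$. The identity from (i) then holds for every $s$: both sides are continuous in $s$ with values in $M_\Phi$, by Lemma~\ref{lem:minkowski} and the $C_0$-property of $S_\beta$ on $M_\Phi$, and they agree for a.e.\ $s$. Continuity of $u(s+\cdot)$ in $M_\Phi$ and local boundedness are inherited from $u$.
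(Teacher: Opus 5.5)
Your (i) and (ii) follow the paper's computation: the semigroup property and Fubini on absolutely convergent integrals. The only genuinely a.e.-$s$ input is that the slice $u(s)$ is given by the absolutely convergent Duhamel formula; your remark that time shifts preserve null sets correctly handles the other finiteness conditions.

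In (iii) the routes differ, and yours has a hole at the first assertion. The paper fixes $t$ and restarts once from a good time $t_0<t$. It then uses continuity in the \emph{terminal} time $\tau$ of $S_\beta(\tau-t_0)u(t_0)+\int_{t_0}^{\tau}S_\beta(\tau-r)f(u(r))\dd r$ (the $C_0$-property and the argument of Theorem~\ref{thm:local}(a)). This identifies the continuous curve $u(t)$ with $\mathcal F(u;u_0)(t)$ for every $t$, and the restart at every $s$ then follows from the computation of (i) with no further limit.

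You take that identification as definitional, leaning on the convention after Definition~\ref{def:solutions}, and spend the effort extending the restart from a.e.\ $s$ to every $s$ by continuity in the restart time. But the identification is what (iii) asserts first, and even that convention presupposes that $\mathcal F(u;u_0)(t)$ is defined at every $t$. Your justification gives neither:
\begin{itemize}
\item continuity in $M_\Phi$ does not make the Duhamel integrals converge;
\item a bound on $(a,b)$ with $a>0$ does not control $\int_0^tS_\beta(t-\sigma)|f(u(\sigma))|\dd\sigma$ near $\sigma=0$.
\end{itemize}

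Your idea of moving the restart time does supply the missing step, with less work than the paper's continuity of the Duhamel term. For good $s<t$, the computation of (i) gives $\mathcal F(u;u_0)(t)=S_\beta(t-s)u(s)+\int_s^tS_\beta(t-\sigma)f(u(\sigma))\dd\sigma$. As $s\uparrow t$ along good times, the right-hand side tends to $u(t)$ in $L_\Phi$. This uses continuity of $u$, contractivity, the $C_0$-property at $u(t)$, and the bound $(t-s)\,\ell(M)\sup_{\sigma}\|u(\sigma)\|_{L_\Phi}$ for the integral, where $M$ bounds $|u|$ near $t$. Absolute convergence of $\mathcal F(u;u_0)(t)$ at the fixed $t$ follows from that at a.e.\ later time $t'$, since $S_\beta(t'-t)$ has a strictly positive kernel. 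Once this is in place, the computation of (i) gives the restart at every $s$ directly, and the continuity-in-$s$ step you wrote is not needed.

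In (ii), your claim that the finiteness conditions follow from those for $v$ fails for $t\in[T,s+\tau)$ when $s+\tau>T$. Finiteness of $S_\beta(t-s)|u(s)|$ gives no control on $S_\beta(t)|u_0|$, because $u(s)$ may involve cancellation. For example, take $\beta=1$, $f\equiv0$ and $u_0(x)=e^{a|x|^2}\cos(c|x|^2)$ with $a>\sqrt\varrho/2$ and $c$ large. Then $S_\beta(\cdot)u_0$ is an integral solution in $Q_T$, with $T<\infty$ the time at which $S_\beta(t)e^{a|x|^2}$ ceases to be finite. For suitable $s<T$, $S_\beta(s)u_0$ decays like a Gaussian, so $v$ exists for all times, while $S_\beta(t)|u_0|\equiv\infty$ for $t\ge T$. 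The concatenation is therefore not an integral solution on $Q_{s+\tau}$.

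The paper's ``read backwards'' is equally silent on this point. You should either assume $s+\tau\le T$ or add finiteness beyond $T$ as a hypothesis. Where (ii) is actually used, in the blow-up alternative of Theorem~\ref{thm:local}(b), the data lie in $M_\Phi$ and the missing finiteness can be checked directly.
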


\begin{proof}
(i) Let $s$ be such that $u(s)=\mathcal F(u;u_0)(s)$ a.e.\ and the integrals
defining $\mathcal F(u;u_0)(s)$ converge absolutely a.e.; this holds for
a.e.\ $s$. By the semigroup property and Fubini's theorem (the integrals being
absolutely convergent for a.e.\ $(x,t)$),
$$
\mathcal F(u;u_0)(s+t)=S_\beta(t)\Bigl[S_\beta(s)u_0+\int_0^sS_\beta(s-r)f(u(r))
\dd r\Bigr]+\int_0^tS_\beta(t-r)f(u(s+r))\dd r ,
$$
where the bracket is $u(s)$, and $S_\beta(t)|u(s)|\le S_\beta(s+t)|u_0|+
\int_0^sS_\beta(s+t-r)|f(u(r))|\dd r<\infty$ a.e.

(ii) This is proved by the same computation, read backwards.

(iii) Let $t\in(0,T)$, choose $t_0\in(0,t)$ as in
(i), and let $M$ bound $|u|$ on $\R^d\times(t_0,t+\varepsilon)$. For $\tau$
near $t$, (i) gives $u(\tau)=S_\beta(\tau-t_0)u(t_0)+D(\tau)$ with
$D(\tau):=\int_{t_0}^\tau S_\beta(\tau-r)f(u(r))\dd r$ for a.e.\ such $\tau$.
Since $\|f(u(r))\|_\Phi\le\ell(M)\|u(r)\|_\Phi$ is bounded, the argument given
in the proof of Theorem~\ref{thm:local}(a) below shows that $\tau\mapsto D(\tau)$
is continuous into $L_\Phi$; so is $\tau\mapsto S_\beta(\tau-t_0)u(t_0)$.
As $u\in C([0,T),M_\Phi)$, the identity extends to $\tau=t$. By the computation
in (i), the right-hand side at $\tau=t$ coincides with $\mathcal F(u;u_0)(t)$.
The remaining assertions follow.
\end{proof}

\begin{Lemma}\label{lem:Linfty}
Let $\psi\in M_\Phi\cap L^\infty$ with $\|\psi\|_\infty\le M$, and let
$\tau_M>0$ satisfy $\tau_M\,\ell(2M)\le\frac12$. Then \eqref{eq:main} with data
$\psi$ has a bounded $M_\Phi$-mild solution on $[0,\tau_M]$ with
$\|u(t)\|_\infty\le2M$. Moreover, two integral solutions in $Q_\tau$ with the
same data which both belong to $L^\infty(Q_\tau)$ coincide.
\end{Lemma}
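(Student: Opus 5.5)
The plan is to run a Banach fixed point argument for the Duhamel map $\mathcal F(\cdot;\psi)$ of \eqref{eq:duhamel} in the set
$$
X:=\bigl\{u:Q_{\tau_M}\to\R \text{ measurable}:\ |u|\le 2M \text{ a.e.}\bigr\},
$$
equipped with the metric
$$
d(u,v):=\esssup_{t}\|u(t)-v(t)\|_\infty+\esssup_t\|u(t)-v(t)\|_\Phi .
$$
Completeness comes from Lemma~\ref{lem:complete}, applied with $w\equiv 2M$ both for $\Phi$ and for $\Phi^\infty$. The limit is the same in both metrics, because each convergence gives a.e.\ convergence along a subsequence. Finiteness of $d$ on $X$ is the one delicate point, since $2M\notin L_\Phi$. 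To avoid it, I restrict to the smaller set
$$
X':=\bigl\{u\in X:\ \esssup_t\|u(t)\|_\Phi\le 2\|\psi\|_\Phi\bigr\}.
$$
This set is closed under the limit by the Fatou property in Lemma~\ref{lem:orliczbasic}(i).

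\emph{Self-map.} For $u\in X'$, the bound \eqref{eq:lipbound} gives $|f(u(s))|\le \ell(2M)|u(s)|\le 2M\ell(2M)$. The $L^\infty$-contractivity of $S_\beta$ (Lemma~\ref{L5.1} with $\Phi^\infty$, $m\le1$) and Lemma~\ref{lem:minkowski} then give
$$
\|\mathcal F(u)(t)\|_\infty\le M+t\,\ell(2M)\,2M\le 2M
$$
for $t\le\tau_M$. Similarly \eqref{E5.2} gives
$$
\|\mathcal F(u)(t)\|_\Phi\le\|\psi\|_\Phi+\tau_M\ell(2M)\,2\|\psi\|_\Phi\le2\|\psi\|_\Phi .
$$
All the integrals converge absolutely, since the integrands are bounded and the kernel has mass at most $1$.

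\emph{Contraction.} By the same two estimates applied to $f(u)-f(v)$, bounded pointwise by $\ell(2M)|u-v|$, we get $d(\mathcal F u,\mathcal F v)\le \tau_M\ell(2M)\,d(u,v)\le\frac12 d(u,v)$. The fixed point is then an integral solution with $\|u(t)\|_\infty\le2M$.

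\emph{Continuity.} It remains to show $u\in C([0,\tau_M],M_\Phi)$ and $u(0)=\psi$. For the linear part, $t\mapsto S_\beta(t)\psi$ is continuous into $M_\Phi$ by Theorem~\ref{thm:OrliczSmoothing}(a)(iii), and it stays in $M_\Phi$ by (a)(i). For the Duhamel term $D(t)=\int_0^tS_\beta(t-s)g(s)\dd s$ with $g=f(u)$, note first that $|g(s)|\le\ell(2M)|u(s)|$. The values $u(s)$ lie in $M_\Phi$: the iterates lie in $M_\Phi$ because that space is invariant under $S_\beta$ and is a lattice, and convergence in $\|\cdot\|_\Phi$ preserves this since $M_\Phi$ is closed. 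Hence $g(s)\in M_\Phi$, with $\|g(s)\|_\Phi$ bounded. Write
$$
D(t+h)-D(t)=\int_t^{t+h}S_\beta(t+h-s)g(s)\dd s+\bigl(S_\beta(h)-I\bigr)D(t).
$$
The first piece is $O(h)$ by Lemma~\ref{lem:minkowski} and \eqref{E5.2}. The second tends to $0$ by strong continuity, since $D(t)\in M_\Phi$. Left continuity is handled analogously, using dominated convergence in $s$ of $\|(S_\beta(h)-I)S_\beta(t-h-s)g(s)\|_\Phi$; the pointwise convergence for each $s$ follows from strong continuity. I expect this measurability and continuity bookkeeping of the Duhamel term to be the main (mostly technical) obstacle. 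Boundedness of the solution is immediate from the sup bound.

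\emph{Uniqueness.} Let $u,v$ be integral solutions in $L^\infty(Q_\tau)$, bounded by $K$, with the same data. Put $y(t):=\|u(t)-v(t)\|_\infty$. Then
$$
y(t)\le\ell(K)\int_0^ty(s)\dd s ,
$$
by $L^\infty$-contractivity and Lemma~\ref{lem:minkowski}; the measurability of $y$ follows from Lemma~\ref{lem:minkowski}. Lemma~\ref{lem:gronwall} with $a=0$ then gives $y=0$ a.e., so $u=v$.
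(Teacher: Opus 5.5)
Your proposal is correct and is essentially the paper's proof. Both run the same Banach fixed point for $\mathcal F(\cdot;\psi)$ on $\{|u|\le 2M,\ \esssup_t\|u(t)\|_\Phi\le 2\|\psi\|_\Phi\}$, obtain continuity in $M_\Phi$ as in Theorem~\ref{thm:local}(a), and use the same Gronwall argument for uniqueness. The paper uses only the $L^\infty(Q_{\tau_M})$ metric, with the $L_\Phi$ bound a closed constraint by Fatou, so your extra $L_\Phi$ term in $d$ is harmless but not needed.

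The one step to tighten is $u(s)\in M_\Phi$. Invariance of $M_\Phi$ under each $S_\beta(t-s)$ does not by itself place the Duhamel time integral in $M_\Phi$. The fix is that the bound $|u_k(t)|\le 2S_\beta(t)|\psi|$ propagates along the iteration from $u_0=0$ (because $1+2t\,\ell(2M)\le 2$) and passes to the fixed point; the lattice property then gives $u(t)\in M_\Phi$ for every $t$.
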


\begin{proof}
Let $X$ be the set of measurable $u$ on $Q_{\tau_M}$ with $|u|\le2M$ and
$\esssup_t\|u(t)\|_\Phi\le2\|\psi\|_\Phi$, with the metric of
$L^\infty(Q_{\tau_M})$; it is complete by Lemma~\ref{lem:orliczbasic}(i). For
$u\in X$, we infer from \eqref{E5.2}, its $L^\infty$ analogue, and
Lemma~\ref{lem:minkowski} that
\begin{gather*}
\|\mathcal F(u;\psi)(t)\|_\infty\le M+\tau_M\ell(2M)2M\le2M,\\
\|\mathcal F(u;\psi)(t)\|_\Phi\le\|\psi\|_\Phi+\tau_M\ell(2M)2\|\psi\|_\Phi
\le2\|\psi\|_\Phi,\\
\|\mathcal F(u;\psi)-\mathcal F(v;\psi)\|_{L^\infty(Q_{\tau_M})}\le\tfrac12
\|u-v\|_{L^\infty(Q_{\tau_M})}.
\end{gather*} The fixed point is a bounded integral
solution, and its continuity with values in $M_\Phi$ is proved as in
Theorem~\ref{thm:local}(a) below. Finally, if $v_1,v_2$ are two integral
solutions in $Q_\tau$ bounded by $M'$, then
$\|v_1(t)-v_2(t)\|_\infty\le\ell(M')\int_0^t\|v_1(s)-v_2(s)\|_\infty\dd s$, and
we conclude from Lemma~\ref{lem:gronwall} that $v_1=v_2$.
\end{proof}

\subsection{Uniform local existence}

\begin{proof}[Proof of Theorem \ref{thm:local}\textup{(a)}]
Let $\mathcal K\subseteq M_\Phi$ be compact. By the lattice property,
$\||\phi|-|\psi|\|_\Phi\le\|\phi-\psi\|_\Phi$, so that
$|\mathcal K|:=\{|\psi|:\psi\in\mathcal K\}$ is compact in $M_\Phi=L^0_\Phi$.
By Corollary \ref{cor:Linftyzero} there exists $\tau_{\mathcal K}>0$ such that
\begin{equation}\label{eq:barrierinfty}
\bigl\|2S_\beta(s)|\psi|\bigr\|_{\infty}\le\lambda\,\alpha(s),
\qquad s\in(0,\tau_{\mathcal K}),\ \psi\in\mathcal K .
\end{equation}
Choose $T=T_{\mathcal K}\in(0,\tau_{\mathcal K})$ with $I(T)\le\frac12$; this
choice does not depend on $u_0\in\mathcal K$. Fix $u_0\in\mathcal K$, put
\begin{equation}\label{eq:barrier}
w(t):=2S_\beta(t)|u_0|,\qquad t\ge0 ,
\end{equation}
and let
\begin{equation}\label{eq:space}
\begin{gathered}
X_{u_0}:=\Bigl\{u:Q_T\to\R\ \text{measurable}:\
|u|\le w\ \text{a.e.\ in } Q_T\Bigr\},\\
d(u,v):=\esssup_{t\in(0,T)}\|u(t)-v(t)\|_{\Phi}.
\end{gathered}
\end{equation}
Since $\|w(t)\|_\Phi\le2\|u_0\|_\Phi$ by \eqref{E5.2}, Lemma~\ref{lem:complete}
shows that $(X_{u_0},d)$ is a nonempty complete metric space. Note that
$u(t)\in M_\Phi$ for every $u\in X_{u_0}$ and a.e.\ $t$, since $w(t)\in
M_\Phi$ by Theorem~\ref{thm:OrliczSmoothing}(a)(i).

We first show that $\mathcal F=\mathcal F(\cdot\,;u_0)$ maps $X_{u_0}$ into
itself. Let $u\in X_{u_0}$. By \eqref{eq:barrierinfty}, $|u(x,s)|\le
w(x,s)\le\lambda\alpha(s)$ for a.e.\ $(x,s)\in Q_T$, and \eqref{eq:lipbound}
yields
\begin{equation}\label{eq:fbound}
|f(u(s))|\le\ell\bigl(\lambda\alpha(s)\bigr)\,|u(s)|
\le\ell\bigl(\lambda\alpha(s)\bigr)\,w(s)\qquad\text{a.e.\ in } Q_T .
\end{equation}
Since $S_\beta(t-s)$ has a nonnegative kernel and $S_\beta(t-s)w(s)
=2S_\beta(t-s)S_\beta(s)|u_0|=w(t)$, we obtain, for every $t\in(0,T)$,
\begin{equation}\label{eq:selfmap}
\bigl|\mathcal F(u)(t)\bigr|
\le S_\beta(t)|u_0|+\int_0^t\ell\bigl(\lambda\alpha(s)\bigr)S_\beta(t-s)w(s)\dd s
=\frac{w(t)}{2}+w(t)\,I(t)\ \le\ w(t) .
\end{equation}
In particular all integrals in $\mathcal F(u)$ converge absolutely a.e., and
$\mathcal F(u)$ is measurable by Tonelli's theorem. Hence $\mathcal
F(X_{u_0})\subseteq X_{u_0}$.

Next, we show that $\mathcal F$ is a contraction. Let $u,v\in X_{u_0}$ and
$t\in(0,T)$. By Lemma~\ref{lem:minkowski}, \eqref{E5.2}, \eqref{eq:lipbound}
and \eqref{eq:barrierinfty}, we get
\begin{equation}\label{eq:contraction}
\begin{aligned}
\bigl\|\mathcal F(u)(t)-\mathcal F(v)(t)\bigr\|_{\Phi}
&\le\int_0^t\bigl\|S_\beta(t-s)\bigl|f(u(s))-f(v(s))\bigr|\bigr\|_{\Phi}\dd s\\
&\le\int_0^t\bigl\|f(u(s))-f(v(s))\bigr\|_{\Phi}\dd s\\
&\le\int_0^t\ell\bigl(\lambda\alpha(s)\bigr)\|u(s)-v(s)\|_{\Phi}\dd s
\ \le\ I(T)\,d(u,v)\le\tfrac12\,d(u,v).
\end{aligned}
\end{equation}
Banach's fixed point theorem provides a unique $u=u(\cdot\,;u_0)\in X_{u_0}$
with $\mathcal F(u)=u$ a.e. We use the representative
$u(t):=\mathcal F(u)(t)$, which is defined for every $t\in(0,T)$ and satisfies
$|u(t)|\le w(t)$ by \eqref{eq:selfmap}. Thus $u$ is an integral solution in
$Q_T$, with $T=T_{\mathcal K}$ independent of $u_0\in\mathcal K$.

\emph{Boundedness and \eqref{eq:smallrate}.} By \eqref{eq:barrierinfty}, we
have $\|u(t)\|_\infty\le\|w(t)\|_\infty\le\lambda\alpha(t)$. Hence $u\in
L^\infty_{\rm loc}((0,T),L^\infty)$, and \eqref{eq:smallrate} follows from
$|u(t)|\le2S_\beta(t)|u_0|$ and Corollary \ref{cor:Linftyzero} applied to the
compact set $|\mathcal K|$.

\emph{Continuity in $M_\Phi$.} By Theorem~\ref{thm:OrliczSmoothing}(a)(iii),
the map $t\mapsto S_\beta(t)u_0$ belongs to $C([0,\infty),M_\Phi)$. It
therefore suffices to consider $D(t):=\int_0^tS_\beta(t-s)f(u(s))\dd s$. By \eqref{eq:fbound},
$f(u(s))\in M_\Phi$ with
$\|f(u(s))\|_\Phi\le\ell(\lambda\alpha(s))\|w(s)\|_\Phi\le2\|u_0\|_\Phi\,
\ell(\lambda\alpha(s))=:g(s)$, and $g\in L^1(0,T)$. By
Lemma~\ref{lem:minkowski} and \eqref{E5.2}, $\|D(t)\|_\Phi\le\int_0^tg\to0$ as
$t\to0$. For $0<t_0<t_1<T$, the semigroup property gives
$$
D(t_1)-D(t_0)=\int_0^{t_0}\bigl(S_\beta(t_1-t_0)-I\bigr)S_\beta(t_0-s)f(u(s))\dd s
+\int_{t_0}^{t_1}S_\beta(t_1-s)f(u(s))\dd s ,
$$
whence, by Lemma~\ref{lem:minkowski} and \eqref{E5.2},
$$
\|D(t_1)-D(t_0)\|_\Phi\le\int_0^{t_0}\bigl\|\bigl(S_\beta(t_1-t_0)-I\bigr)
f(u(s))\bigr\|_\Phi\dd s+\int_{t_0}^{t_1}g(s)\dd s ,
$$
where we used that $S_\beta(t_0-s)$ commutes with $S_\beta(t_1-t_0)-I$ and is
a contraction. Let $t_1-t_0\to0$, with either $t_0$ or $t_1$ fixed. By the
strong continuity of $S_\beta$ on $M_\Phi$, the integrand of the first
integral tends to $0$ for a.e.\ $s$, and it is dominated by $2g(s)$. By
dominated convergence, both integrals tend to $0$. Hence $D\in
C([0,T),M_\Phi)$, and $u$ is a bounded $M_\Phi$-mild solution.

\emph{Sign.} Let $f$ be positone and $u_0\ge0$. The set
$X^+_{u_0}:=\{u\in X_{u_0}:u\ge0\}$ is nonempty and closed in $(X_{u_0},d)$,
since the limit in Lemma~\ref{lem:complete} is an a.e.\ limit. It is also
invariant under $\mathcal F$, because $f\ge0$ on $[0,\infty)$. Consequently,
the fixed point lies in $X^+_{u_0}$. The case $u_0\le0$ is symmetric.
\end{proof}

\subsection{Continuous dependence and uniqueness}

\begin{proof}[Proof of Theorem \ref{thm:local}\textup{(c)}]
Let $\mathcal K\subseteq M_\Phi$ be compact, let $\phi,\varphi\in\mathcal K$ and
write $u:=u(\cdot\,;\phi)$, $v:=u(\cdot\,;\varphi)$ for the solutions of part
(a), and $\tau_{\mathcal K}':=\min\{\tau_{\mathcal K},T_{\mathcal K}\}/2$. By
construction and \eqref{eq:barrierinfty},
$\|u(t)\|_\infty,\|v(t)\|_\infty\le\lambda\alpha(t)$ for $t\in(0,T_{\mathcal
K})$. Put
$$
y(t):=\|u(t)-v(t)\|_{\Phi}+\gamma(t)\,\|u(t)-v(t)\|_{\infty},
\qquad t\in(0,\tau'_{\mathcal K}] ;
$$
$y$ is measurable by Lemma~\ref{lem:minkowski}. It is also bounded, since
$\|u(t)-v(t)\|_\Phi\le2(\|\phi\|_\Phi+\|\varphi\|_\Phi)$ and
$\gamma(t)\|u(t)-v(t)\|_\infty\le2\lambda$. Arguing as in
\eqref{eq:contraction}, but keeping the initial data, and using \eqref{E5.2}, we get
\begin{equation}\label{eq:cd1}
\|u(t)-v(t)\|_{\Phi}\le\|\phi-\varphi\|_{\Phi}
+\int_0^t\ell\bigl(\lambda\alpha(s)\bigr)\|u(s)-v(s)\|_{\Phi}\dd s .
\end{equation}
For the $L^\infty$ part, we split the Duhamel integral at $t/2$. On $(t/2,t)$,
we use the $L^\infty$-contractivity of $S_\beta$; on $(0,t/2)$, we use
\eqref{eq:MTtoLinfty} together with $\alpha(t-s)\le\alpha(t/2)\le
2^{d/(2\beta)}\alpha(t)$ for $s\le t/2$:
\begin{align*}
\|u(t)-v(t)\|_{\infty}
&\le C_\infty\alpha(t)\|\phi-\varphi\|_{\Phi}
+\int_{t/2}^{t}\ell\bigl(\lambda\alpha(s)\bigr)\|u(s)-v(s)\|_{\infty}\dd s\\
&\qquad
+C_\infty2^{\frac{d}{2\beta}}\alpha(t)\int_0^{t/2}\ell\bigl(\lambda\alpha(s)\bigr)
\|u(s)-v(s)\|_{\Phi}\dd s .
\end{align*}
Multiplying by $\gamma(t)$ and noting that $\gamma(t)\le2^{d/(2\beta)}\gamma(s)$
for $s\in[t/2,t]$, again by \eqref{eq:alphadouble}, we arrive at
\begin{equation}\label{eq:cd2}
\gamma(t)\|u(t)-v(t)\|_{\infty}
\le C_\infty\|\phi-\varphi\|_{\Phi}
+C'\int_0^{t}\ell\bigl(\lambda\alpha(s)\bigr)\,y(s)\dd s,
\qquad C':=\max\{1,C_\infty\}2^{\frac{d}{2\beta}} .
\end{equation}
We add \eqref{eq:cd1} and \eqref{eq:cd2}, and apply Lemma~\ref{lem:gronwall}
on $(0,\tau'_{\mathcal K}]$ with $k(s)=(1+C')\ell(\lambda\alpha(s))$. We obtain
\eqref{eq:contdep} with $\tau_{\mathcal K}$ replaced by $\tau'_{\mathcal K}$ and
$C_{\mathcal K}:=(1+C_\infty)\exp\bigl((1+C')I(\tau'_{\mathcal K})\bigr)$. (The
inequality holds for a.e.\ $t$, and then for every $t$ by the continuity of
$t\mapsto u(t)-v(t)$ in $L_\Phi$ and the lower semicontinuity of
$\|\cdot\|_\infty$ under a.e.\ convergence.) Renaming $\tau'_{\mathcal K}$ as
$\tau_{\mathcal K}$ proves (c).
\end{proof}

\begin{proof}[Proof of Theorem \ref{thm:local}\textup{(b)}]
We follow \cite[Section 5.3]{Laister}, which in turn adapts
\cite{brezis1996nonlinear,LaisterSierzega2020}. Let $v$ be a bounded
$M_\Phi$-mild solution on $[0,T)$ with $v(0)=u_0$, and fix $T_1\in(0,T)$. The set
$\mathcal K':=v([0,T_1])$ is compact in $M_\Phi$; let $T_{\mathcal K'}$,
$\tau_{\mathcal K'}$ and $C_{\mathcal K'}$ be given by (a) and (c). Let
$s\in(0,T_1)$. By Lemma~\ref{lem:restart}(iii), $v(s+\cdot)$ is a bounded
$M_\Phi$-mild solution with data $v(s)\in\mathcal K'\cap L^\infty$, while
$u(\cdot\,;v(s))$ is a solution with the same data satisfying
$|u(t;v(s))|\le2S_\beta(t)|v(s)|\le2\|v(s)\|_\infty$. Both are bounded on
$\R^d\times(0,\tau')$ for every $\tau'<\min\{T_{\mathcal K'},T_1-s\}$, and
Lemma~\ref{lem:Linfty} gives
$$
v(s+t)=u(t;v(s)),\qquad 0\le t<\min\{T_{\mathcal K'},T_1-s\}.
$$
Fix $t\in(0,\min\{\tau_{\mathcal K'},T_1\})$ and let $s\downarrow0$. Then
$v(s+t)\to v(t)$ in $M_\Phi$ by continuity of $v$, while
$\|u(t;v(s))-u(t;u_0)\|_\Phi\le C_{\mathcal K'}\|v(s)-u_0\|_\Phi\to0$ by (c).
Hence $v(t)=u(t;u_0)$ for $0\le t\le t_1:=\frac12\min\{\tau_{\mathcal
K'},T_1\}$. On the remaining interval, both functions are integral solutions
starting from $v(t_1)$ (Lemma~\ref{lem:restart}(iii)), and they are bounded on
compact subintervals. Lemma~\ref{lem:Linfty} then yields $v=u(\cdot;u_0)$ on
their common interval of existence.

Uniqueness allows us to glue solutions defined on overlapping intervals. We
let $T_{u_0}\in(0,\infty]$ be the supremum of all $T$ for which a bounded
$M_\Phi$-mild solution exists on $[0,T)$. Suppose that $T_{u_0}<\infty$ and that
there are $M<\infty$ and $t_j\uparrow T_{u_0}$ with $\|u(t_j)\|_\infty\le M$. By
Lemma~\ref{lem:Linfty} there is a bounded $M_\Phi$-mild solution with data
$u(t_j)$ on $[0,\tau_M]$, with $\tau_M$ independent of $j$. By
Lemma~\ref{lem:restart}(ii),(iii), its concatenation with $u$ is a bounded
$M_\Phi$-mild solution on $[0,t_j+\tau_M)$. Since $t_j+\tau_M>T_{u_0}$ for $j$
large, this is a contradiction. Hence $\|u(t)\|_\infty\to\infty$ as $t\uparrow
T_{u_0}$.
\end{proof}

\subsection{Comparison principle}

\begin{proof}[Proof of Theorem \ref{thm:local}\textup{(d)}]
Let $f$ be increasing, $\phi\le\varphi$ in $M_\Phi$, $u:=u(\cdot\,;\phi)$,
$v:=u(\cdot\,;\varphi)$ and $z:=u-v$. Since $f$ is increasing, for $|a|,|b|\le
s$ we have $f(a)-f(b)\le\ell(s)(a-b)^+$. Apply (a) to the compact set
$\{\phi,\varphi\}$: for $0<t<T_0:=T_{\{\phi,\varphi\}}$, both solutions are
bounded by $\lambda\alpha(t)$. Since $S_\beta(t)(\phi-\varphi)\le0$, we get
$$
z(t)\le\int_0^tS_\beta(t-s)\bigl(f(u(s))-f(v(s))\bigr)\dd s
\le\int_0^t\ell\bigl(\lambda\alpha(s)\bigr)S_\beta(t-s)z^+(s)\dd s .
$$
The right-hand side is nonnegative, so $z^+(t)$ obeys the same bound. Then
Lemma~\ref{lem:minkowski} and \eqref{E5.2} give
$\|z^+(t)\|_\Phi\le\int_0^t\ell(\lambda\alpha(s))\|z^+(s)\|_\Phi\dd s$. As
$\|z^+(t)\|_\Phi$ is bounded, Lemma~\ref{lem:gronwall} yields $z^+=0$ on
$(0,T_0)$. Now let $t_0\in(0,T_0)$ and $T'<\min\{T_\phi,T_\varphi\}$, and let
$M$ bound $|u|$ and $|v|$ on $\R^d\times(t_0,T')$. By
Lemma~\ref{lem:restart}(iii), for $t\in(t_0,T')$,
$z(t)\le S_\beta(t-t_0)z(t_0)+\ell(M)\int_{t_0}^tS_\beta(t-s)z^+(s)\dd s$ with
$z(t_0)\le0$. Hence $\|z^+(t)\|_\infty\le\ell(M)\int_{t_0}^t\|z^+(s)\|_\infty\dd s$,
and we conclude from Lemma~\ref{lem:gronwall} that $z^+=0$ on $(t_0,T')$.
\end{proof}

\begin{Remark}\rm
The comparison principle can also be obtained, as in
\cite[Section 5.4]{Laister}, by monotone iteration from the super- and
subsolutions $\pm2S_\beta(t)\phi^\pm$. The direct argument above avoids the
passage to the limit in the iteration.
\end{Remark}

\subsection{Global well-posedness for small data}

\begin{proof}[Proof of Corollary \ref{thm:global}]
Let $u_0\in M_\Phi$ with $\|u_0\|_\Phi\le\lambda/(AC_\infty)$ and put
$w(t):=A\,S_\beta(t)|u_0|$. By \eqref{eq:MTtoLinfty},
\begin{equation}\label{eq:globalbarrier}
\|w(s)\|_{\infty}\le AC_\infty\,\alpha(s)\,\|u_0\|_{\Phi}\le\lambda\,\alpha(s),
\qquad s>0 .
\end{equation}
Let $X^\infty_{u_0}$ be the set of measurable $u$ on $Q_\infty$ with $|u|\le w$,
equipped with the metric $d_\infty(u,v):=\esssup_{t>0}\|u(t)-v(t)\|_\Phi$.
Since $\|w(t)\|_\Phi\le A\|u_0\|_\Phi$, it is a nonempty complete metric space
by Lemma~\ref{lem:complete}. For $u\in X^\infty_{u_0}$ we have
$|f(u(s))|\le\ell(\lambda\alpha(s))w(s)$ by \eqref{eq:lipbound} and
\eqref{eq:globalbarrier}, whence, using $S_\beta(t-s)w(s)=w(t)$ and
\eqref{eq:changevar} with $T=\infty$,
$$
\bigl|\mathcal F(u)(t)\bigr|
\le S_\beta(t)|u_0|+\int_0^{t}\ell\bigl(\lambda\alpha(s)\bigr)
S_\beta(t-s)w(s)\dd s
\le\frac{w(t)}{A}+w(t)\,I_0=w(t),\qquad t>0 ,
$$
by the definition of $A$. Thus $\mathcal F$ maps $X^\infty_{u_0}$ into itself.
The computation \eqref{eq:contraction}, performed on $(0,\infty)$, gives
$d_\infty(\mathcal F(u),\mathcal F(v))\le I_0\,d_\infty(u,v)$ with $I_0<1$.
Hence $\mathcal F$ has a unique fixed point $u\in X^\infty_{u_0}$. Exactly as in the
proof of Theorem \ref{thm:local}(a) (with $g(s)=A\|u_0\|_\Phi\ell(\lambda\alpha(s))
\in L^1(0,\infty)$), $u$ is a bounded $M_\Phi$-mild solution on $[0,\infty)$,
and it is unique by Theorem \ref{thm:local}(b).

Estimate \eqref{eq:globaldecay1} follows from $|u(t)|\le w(t)$ and
\eqref{eq:globalbarrier}. Since $\Phi$ is positive, $\Phi^{-1}(x)\to0$ as
$x\downarrow0$, and therefore $\alpha(t)\to0$ as $t\to\infty$. Estimates
\eqref{eq:globaldecay2} and \eqref{eq:expdecay} follow from $|u(t)|\le
AS_\beta(t)|u_0|$, \eqref{E5.2} and \eqref{eq:largetime-sharp}.
\end{proof}

\begin{proof}[Proof of Theorem \ref{thm:confined}]
Put $C_\ast:=C_\infty2^{d/2}e^{\lambda_\beta}\Phi^{-1}(1)$ and, for $\delta>0$,
$$
\omega_\delta(s):=
\begin{cases}
2C_\infty\delta\,\alpha(s), & 0<s<1,\\
2C_\ast\delta\,e^{-\lambda_\beta s}, & s\ge1,
\end{cases}
\qquad
J(\delta):=\int_0^\infty\ell\bigl(\omega_\delta(s)\bigr)\dd s .
$$
The Dini condition \eqref{eq:dini} forces $\ell(0^+)=0$; otherwise, we would
have $\int_0^1\ell(r)\,\dd r/r\ge\ell(0^+)\int_0^1\dd r/r=\infty$. The substitution
$r=2C_\ast\delta e^{-\lambda_\beta s}$ gives
$$
J(\delta)=\int_0^1\ell\bigl(2C_\infty\delta\,\alpha(s)\bigr)\dd s
+\frac{1}{\lambda_\beta}\int_0^{2C_\ast\delta e^{-\lambda_\beta}}
\frac{\ell(r)}{r}\dd r .
$$
For $2C_\infty\delta\le\lambda$ the first integrand is bounded by
$\ell(\lambda\alpha(s))$, which is integrable on $(0,1)$ by \eqref{eq:IC-infty}
and \eqref{eq:changevar}, and it tends to $\ell(0^+)=0$ as $\delta\to0$. The
second term tends to $0$ by \eqref{eq:dini}. Hence we may fix $\delta>0$ with
$2C_\infty\delta\le\lambda$ and $J(\delta)\le\frac12$.

Let $\|u_0\|_\Phi\le\delta$ and $w(t):=2S_\beta(t)|u_0|$. By
Corollary~\ref{cor:Linfty} for $s<1$ and \eqref{eq:largetime-sharp} for
$s\ge1$, we have $\|w(s)\|_\infty\le\omega_\delta(s)$ for all $s>0$. Consider
the complete metric space of measurable $u$ on $Q_\infty$ with $|u|\le w$,
equipped with $d_\infty$. The argument of the preceding proof gives
$|\mathcal F(u)(t)|\le\frac{w(t)}2+w(t)J(\delta)\le w(t)$ and
$d_\infty(\mathcal F(u),\mathcal F(v))\le J(\delta)d_\infty(u,v)\le\frac12
d_\infty(u,v)$. The fixed point is a bounded $M_\Phi$-mild solution on
$[0,\infty)$ (with $g(s)=2\delta\,\ell(\omega_\delta(s))\in L^1(0,\infty)$ in the
continuity argument), and it is unique by Theorem~\ref{thm:local}(b). The
estimates follow from $|u(t)|\le2S_\beta(t)|u_0|$, \eqref{E5.2} and
\eqref{eq:largetime-sharp}.
\end{proof}

\section{Critical Young's functions and nonexistence}\label{sec:nonexistence}

In this section, $f$ satisfies $(\mathrm C)$ and $(\mathrm G)$, and $F$ is
defined by \eqref{E 1.2}. By Remark~\ref{rem:onG}, $F$ is a strictly
decreasing $C^1$-bijection of $(0,\infty)$ onto itself, and
\begin{equation}\label{eq:Finv}
F\bigl(F^{-1}(\sigma)\bigr)=\sigma\ \ \text{for }\sigma>0,
\qquad
F^{-1}(\sigma)\to\infty\ \text{ as }\sigma\to0^{+}.
\end{equation}
Recall also that $f$ is increasing on $[0,\infty)$ and that $f'(u)\ge
f(u)/u>0$ for $u>0$, by convexity.

\subsection{Monotone iteration and comparison of data}

\begin{Lemma}\label{lem:supersol}
Let $f$ satisfy $(\mathrm C)$, let $\mu\ge0$ be measurable and let
$U:Q_T\to[0,\infty)$ be measurable with
$S_\beta(t)\mu+\int_0^tS_\beta(t-s)f(U(s))\dd s\le U(t)$ a.e.\ in $Q_T$. Then
\eqref{eq:main} with data $\mu$ has a nonnegative integral solution $u\le U$
in $Q_T$.
\end{Lemma}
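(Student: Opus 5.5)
The plan is monotone iteration from below, using the positivity of the kernel and the monotonicity of $f$ on $[0,\infty)$. Set $u_0:=S_\beta(t)\mu$ (with values in $[0,\infty]$, meaningful since $\mu\ge0$), and define recursively
$$
u_{k+1}(t):=S_\beta(t)\mu+\int_0^tS_\beta(t-s)f(u_k(s))\dd s,\qquad k\ge0 .
$$
By hypothesis and $f(0)=0$ we have $0\le u_0\le U$ a.e. Since $f$ is increasing and nonnegative on $[0,\infty)$ and $\Kb^\beta_t\ge0$, induction gives $0\le u_k\le u_{k+1}\le U$ a.e. in $Q_T$. For the upper bound, $u_k\le U$ implies $f(u_k)\le f(U)$, and then $u_{k+1}\le S_\beta(t)\mu+\int_0^tS_\beta(t-s)f(U(s))\dd s\le U$. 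For monotonicity, $u_0\le u_1$ holds because the Duhamel term is nonnegative, and $u_{k-1}\le u_k$ propagates to $u_k\le u_{k+1}$ in the same way. Each $u_k$ is measurable and finite a.e., since it is bounded by $U$, and measurability of the Duhamel term follows from Tonelli's theorem applied to nonnegative integrands.

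Next I pass to the limit. Put $u:=\lim_k u_k=\sup_k u_k$, which exists pointwise, is measurable, and satisfies $0\le u\le U<\infty$ a.e. Since $f$ is continuous (it is $C^1$ by $(\mathrm C)$) and increasing on $[0,\infty)$, $f(u_k(s))\uparrow f(u(s))$ a.e. The monotone convergence theorem, applied to the double integral $\int_0^t\int\Kb^\beta_{t-s}(x,y)f(u_k(y,s))\dd y\dd s$, then gives $u=\mathcal F(u;\mu)$ a.e. in $Q_T$.

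It remains to check the finiteness requirements of Definition~\ref{def:solutions}(i). We have $S_\beta(t)|\mu|=S_\beta(t)\mu\le U<\infty$ a.e., and $\int_0^tS_\beta(t-s)|f(u(s))|\dd s=\int_0^tS_\beta(t-s)f(u(s))\dd s\le U(t)<\infty$ a.e., because $f\ge0$ on $[0,\infty)$ and $f(u)\le f(U)$. Hence $u$ is a nonnegative integral solution with $u\le U$.

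The only delicate point is measure-theoretic. The inequalities hold only a.e. in $Q_T$, so each $u_k$ should be defined through the Duhamel formula on the whole of $Q_T$ (with values in $[0,\infty]$), and countably many null sets should be discarded, so that the induction and the monotone limit are carried out off a single null set. I do not expect a genuine obstacle here. Monotonicity of $f$ on $[0,\infty)$ is exactly what the paper noted follows from $(\mathrm C)$.
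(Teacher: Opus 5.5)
Your proposal is correct and is essentially the paper's proof: monotone iteration $v_{k+1}=\mathcal F(v_k;\mu)$, kept below $U$ by the positivity of the kernel and the monotonicity of $f$ on $[0,\infty)$, followed by the monotone convergence theorem. The only difference is cosmetic: you start from $S_\beta(t)\mu=\mathcal F(0;\mu)$ rather than from $v_0=0$, which merely shifts the index by one.
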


\begin{proof}
Let $v_0:=0$ and $v_{k+1}:=\mathcal F(v_k;\mu)$. Since $f\ge0$ is increasing on
$[0,\infty)$ and the kernel is nonnegative, we get $0\le v_k\le v_{k+1}\le U$
a.e.\ by induction. The limit $u:=\lim_kv_k\le U$ is finite a.e., and the
monotone convergence theorem and the continuity of $f$ give $\mathcal
F(u;\mu)=\lim_k\mathcal F(v_k;\mu)=u$, with absolutely convergent integrals.
\end{proof}

\begin{Corollary}\label{cor:datacomp}
Let $f$ satisfy $(\mathrm C)$ and $0\le\mu_2\le\mu_1$. If \eqref{eq:main} has a
nonnegative integral solution in $Q_T$ with data $\mu_1$, then it has one with
data $\mu_2$.
\end{Corollary}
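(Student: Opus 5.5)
The natural route is to build a supersolution for the smaller data out of the solution for the larger data, and then apply Lemma~\ref{lem:supersol}.

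Let $u_1$ be a nonnegative integral solution in $Q_T$ with data $\mu_1$, and set $U:=u_1$. By Definition~\ref{def:solutions}(i), $u_1$ is measurable and finite a.e. Moreover, a.e.\ in $Q_T$,
$$
U(t)=S_\beta(t)\mu_1+\int_0^tS_\beta(t-s)f(U(s))\dd s ,
$$
and all integrals converge absolutely, since $U\ge0$ and $f\ge0$ on $[0,\infty)$ by $(\mathrm C)$. The kernel $\Kb^\beta_t$ is nonnegative by Lemma~\ref{lem:kernel}(i), so $0\le\mu_2\le\mu_1$ gives $S_\beta(t)\mu_2\le S_\beta(t)\mu_1$ pointwise, with values in $[0,\infty]$; the right-hand side is finite a.e. It follows that
$$
S_\beta(t)\mu_2+\int_0^tS_\beta(t-s)f(U(s))\dd s\le U(t)\qquad\text{a.e.\ in }Q_T .
$$
This is precisely the supersolution inequality in the hypothesis of Lemma~\ref{lem:supersol}, with $\mu=\mu_2$.

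Lemma~\ref{lem:supersol} then yields a nonnegative integral solution $u\le u_1$ with data $\mu_2$. Since $\mu_2\ge0$ and $f\ge0$ on the range of $u$, the finiteness requirements on $S_\beta(t)|\mu_2|$ and on the Duhamel term in Definition~\ref{def:solutions}(i) are inherited from $U$. No real obstacle is expected. The only point needing care is that $U$ must take values in $[0,\infty)$ everywhere, as the lemma requires; this is arranged by redefining $U$ on a null set, which does not change any of the integrals involved.
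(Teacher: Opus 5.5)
Your proposal is correct and follows essentially the same route as the paper: the paper likewise takes $U$ to be the nonnegative solution with data $\mu_1$, notes that it is a supersolution for the smaller data $\mu_2$ because the kernel is nonnegative, and concludes with Lemma~\ref{lem:supersol}. Your extra checks (a.e.\ finiteness, and modifying $U$ on a null set so that it is finite everywhere) are harmless refinements of the same argument.
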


\begin{proof}
It suffices to apply Lemma~\ref{lem:supersol} with $\mu=\mu_2$ and with $U$
the solution for $\mu_1$.
\end{proof}

\begin{Lemma}[Comparison with the free problem]\label{lem:freecomp}
Let $f$ satisfy $(\mathrm C)$ and $\mu\ge0$. If the free problem
$\partial_tu+(-\Delta)^\beta u=f(u)$ has a nonnegative integral solution $U$ in
$Q_T$ with data $\mu$ (defined with the kernel $p_\beta$), then
\eqref{eq:main} has a nonnegative integral solution $u\le U$ in $Q_T$ with
data $\mu$.
\end{Lemma}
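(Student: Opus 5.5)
The plan is to apply Lemma~\ref{lem:supersol} with $U$ itself as the supersolution. The free solution $U$ satisfies
$$
U(t)=\int_{\R^d}p_\beta(\cdot-y,t)\mu(y)\dd y+\int_0^t\int_{\R^d}p_\beta(\cdot-y,t-s)f(U(y,s))\dd y\dd s
$$
a.e.\ in $Q_T$. It is nonnegative and measurable, and both integrals are finite a.e.\ by the definition of an integral solution.

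The key step is the pointwise domination $0<\Kb^\beta_t(x,y)\le p_\beta(x-y,t)$ from Lemma~\ref{lem:kernel}(ii). For nonnegative measurable $g$ it gives $S_\beta(t)g\le p_\beta(\cdot,t)*g$ pointwise, with values in $[0,\infty]$. We apply this to $g=\mu$ and to $g=f(U(s))$; the latter is nonnegative because $f\ge0$ on $[0,\infty)$ under $(\mathrm C)$. Integrating in $s$, which Tonelli's theorem permits since all integrands are nonnegative, yields
$$
S_\beta(t)\mu+\int_0^tS_\beta(t-s)f(U(s))\dd s\le U(t)
$$
a.e.\ in $Q_T$. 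This is exactly the hypothesis of Lemma~\ref{lem:supersol}.

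Lemma~\ref{lem:supersol} then produces a nonnegative integral solution $u\le U$ of \eqref{eq:main} with data $\mu$. The finiteness requirements in Definition~\ref{def:solutions}(i) are inherited from $U$ through the same comparison. We have $S_\beta(t)|\mu|\le p_\beta*\mu<\infty$ a.e. Since $f$ is increasing on $[0,\infty)$ and $0\le u\le U$, we also have $f(u)\le f(U)$, so the Duhamel term for $u$ is bounded by that of $U$ and is finite a.e.

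I expect no real obstacle here; the argument is a direct combination of the kernel domination and the monotone iteration. The only points needing care are that $U$ is merely measurable and a.e.\ finite, and that everything is handled with nonnegative integrands. Both are already covered by Lemma~\ref{lem:supersol} and Tonelli's theorem.
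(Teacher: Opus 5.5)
Your proposal is correct and follows essentially the same route as the paper: the pointwise domination $0\le\Kb^\beta_t(x,y)\le p_\beta(x-y,t)$ with nonnegative integrands makes the free solution $U$ a supersolution, and Lemma~\ref{lem:supersol} does the rest. Your extra check of the finiteness requirements is redundant, since Lemma~\ref{lem:supersol} already delivers an integral solution, but it is harmless.
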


\begin{proof}
Since $0\le\Kb^\beta_t(x,y)\le p_\beta(x-y,t)$ and all integrands are
nonnegative, we have $S_\beta(t)\mu+\int_0^tS_\beta(t-s)f(U(s))\dd s\le U(t)$.
The claim follows from Lemma~\ref{lem:supersol}.
\end{proof}

\subsection{A necessary condition for local solvability}

\begin{Lemma}\label{L7.2}
Let $0<\beta\le1$, let $f$ satisfy $(\mathrm C)$ with $F(u)<\infty$ for
$u>0$, and let $u_0\ge0$ be measurable and not a.e.\ zero. If \eqref{eq:main}
possesses a nonnegative integral solution in $Q_T$, then for a.e.\
$T'\in(0,T)$,
\begin{equation}\label{eq:necessary}
F\bigl([S_\beta(T')u_0](x)\bigr)\ \ge\ T'
\qquad\text{for a.e. }x\in\R^d .
\end{equation}
If moreover $u_0\in L^1$, then $F([S_\beta(t)u_0](x))\ge t$ for every
$x\in\R^d$ and every $t\in(0,T)$.
\end{Lemma}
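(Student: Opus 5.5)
The plan is to adapt the classical Weissler/Fujishima--Ioku argument. It combines Jensen's inequality for the sub-Markovian kernel with the convexity of $f$. The cleanest route goes through the concave function $F$, applied along the backward semigroup trajectory. Fix $T'\in(0,T)$ such that $u(T')=\mathcal F(u;u_0)(T')$ a.e. and the integrals converge a.e.; this holds for a.e. $T'$. For $0\le t\le T'$ put
$$
h(t):=S_\beta(T'-t)u(t).
$$
By the Duhamel formula and the semigroup property (Tonelli, all terms nonnegative),
$$
h(t)\ \ge\ S_\beta(T')u_0+\int_0^tS_\beta(T'-s)f(u(s))\dd s,
$$
with equality for a.e. $t$. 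Define $g(x,t):=[S_\beta(T')u_0](x)+\int_0^t[S_\beta(T'-s)f(u(s))](x)\dd s$. Then $g$ is absolutely continuous and increasing in $t$ for a.e. $x$, with $g(0)=S_\beta(T')u_0$ and $g(T')=u(T')<\infty$ a.e.

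The key inequality I will aim for is
$$
S_\beta(T'-s)f(u(s))\ \ge\ f\bigl(S_\beta(T'-s)u(s)\bigr)\qquad\text{pointwise.}
$$
This is Jensen for the probability measure $\Kb^\beta_{T'-s}(x,y)\dd y/M(x)$ with mass $M(x)\le1$, combined with $f(Mz)\le Mf(z)$; the latter follows from convexity and $f(0)=0$, exactly as in the proof of Lemma~\ref{L5.1}. Together with $S_\beta(T'-s)u(s)=g(s)$ a.e., this gives $\partial_tg\ge f(g)$ for a.e. $t$. Since $F'=-1/f$ and $F$ is decreasing,
$$
\frac{d}{dt}F(g(t))\le-1,
$$
and therefore $F(g(0))\ge F(g(T'))+T'\ge T'$. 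Here $g(T')$ is finite a.e. and positive wherever $g(0)>0$, so $F(g(T'))>0$. Points where $g(0)=0$ are harmless under the convention $F(0)=\infty$. In fact $S_\beta(T')u_0>0$ everywhere, because the kernel is strictly positive and $u_0$ is not a.e. zero.

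The main technical obstacle is justifying the chain rule for $F\circ g$ when $g$ is only absolutely continuous in $t$ for a.e. $x$. I will handle it by integrating instead: for $t_1<t_2$,
$$
F(g(t_1))-F(g(t_2))=\int_{g(t_1)}^{g(t_2)}\frac{\dd r}{f(r)}.
$$
Since $g$ is monotone and absolutely continuous, the change of variables in this integral is valid. A second point is the a.e.-in-$t$ identification of $h$ with $g$, which requires Fubini on $Q_T$. Finiteness of $g(T')$ uses the choice of $T'$.

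For $u_0\in L^1$ the claim holds for every $x$ and every $t$. The map $x\mapsto S_\beta(t)u_0(x)$ is continuous by Lemma~\ref{lem:kernel}(v), and so is $F$. Hence the a.e. inequality upgrades to every $x$ at each good $T'$. Good times are dense in $(0,T)$, and $(x,t)\mapsto S_\beta(t)u_0(x)$ is jointly continuous, so letting good $T'_j\to t$ yields $F(S_\beta(t)u_0(x))\ge t$ for all $t\in(0,T)$.
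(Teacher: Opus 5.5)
Your proposal is correct and follows essentially the same route as the paper. Your $g$ is the paper's $U(s)$, identified with $S_\beta(T'-s)u(s)$ for a.e.\ $s$ via the semigroup property and Fubini, and combined with the same Jensen inequality $f(S_\beta(\tau)v)\le S_\beta(\tau)f(v)$, integration of $F'=-1/f$, and the same continuity upgrade for $u_0\in L^1$. The only difference is that you differentiate $g$ directly, whereas the paper compares $U$ with the auxiliary function $G(t)=U(s)+\int_s^tf(U(\tau))\dd\tau$ and then lets $t\uparrow T'$ and $s\downarrow0$; your version shows these limits are unnecessary.

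One small correction: only the a.e.-in-$t$ identity $h(t)=g(t)$ is available. The inequality $h(t)\ge g(t)$ for every $t$ need not hold, since $u$ is only defined up to null sets of $Q_T$, but your argument uses only the a.e.\ identity.
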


\begin{proof}
\emph{Jensen's inequality for $S_\beta$.} Let $v\ge0$ be measurable. Since
$f$ is convex with $f(0)=0$ and
$M^\beta_\tau(x):=\int\Kb^\beta_\tau(x,y)\dd y\le1$ by Lemma
\ref{lem:kernel}(iii), we have $f(z)\le M^\beta_\tau(x)f(z/M_\tau^\beta(x))$ for
$z\ge0$. Moreover, Jensen's inequality for the probability measure
$\Kb^\beta_\tau(x,y)\dd y/M^\beta_\tau(x)$ gives
$f\bigl(M_\tau^\beta(x)^{-1}[S_\beta(\tau)v](x)\bigr)\le
M^\beta_\tau(x)^{-1}[S_\beta(\tau)f(v)](x)$. Hence
\begin{equation}\label{eq:jensen}
f\bigl([S_\beta(\tau)v](x)\bigr)\ \le\ [S_\beta(\tau)f(v)](x),
\qquad \tau>0,\ x\in\R^d ,
\end{equation}
with the convention $f(\infty)=\infty$.

\emph{A differential inequality.} Let $u\ge0$ be an integral solution in
$Q_T$. By Fubini's theorem, for a.e.\ $T'\in(0,T)$, the function
$\mathcal F(u;u_0)(T')$ coincides a.e.\ with $u(T')$ and is therefore finite
a.e. Fix such a $T'$ and let $\Omega:=\{x:\mathcal F(u;u_0)(T')(x)<\infty\}$,
a set of full measure. For $s\in(0,T')$ put
$$
U(s):=S_\beta(T')u_0+\int_0^sS_\beta(T'-\tau)f(u(\tau))\dd\tau\ \in[0,\infty].
$$
By the semigroup property and Tonelli's theorem,
$U(s)=S_\beta(T'-s)\mathcal F(u;u_0)(s)$. Since $\mathcal F(u;u_0)(s)=u(s)$
a.e.\ for a.e.\ $s$, we deduce that $U(s)=S_\beta(T'-s)u(s)$ everywhere in $\R^d$ for
a.e.\ $s\in(0,T')$. Moreover $s\mapsto U(s)(x)$ is nondecreasing, and
$U(s)\le\mathcal F(u;u_0)(T')$, so that $U(s)(x)<\infty$ for $x\in\Omega$. For
$x\in\Omega$ and $0<s<t<T'$, \eqref{eq:jensen} with $v=u(\tau)$ gives
\begin{equation}\label{eq:semigroupid}
U(t)(x)=U(s)(x)+\int_s^t[S_\beta(T'-\tau)f(u(\tau))](x)\dd\tau
\ \ge\ U(s)(x)+\int_s^tf\bigl(U(\tau)(x)\bigr)\dd\tau .
\end{equation}
Since $u_0\ge0$ is not a.e.\ zero and $\Kb^\beta_{T'}>0$, we also have
$U(s)(x)\ge[S_\beta(T')u_0](x)>0$.

\emph{Integration.} Fix $x\in\Omega$ and $s\in(0,T')$, and set
$G(t):=U(s)(x)+\int_s^tf(U(\tau)(x))\dd\tau$ for $t\in[s,T')$. Since
$0\le f(U(\tau)(x))\le f(\mathcal F(u;u_0)(T')(x))<\infty$, $G$ is finite,
nondecreasing and absolutely continuous, with $0<G\le U(\cdot)(x)$ by
\eqref{eq:semigroupid}. Moreover, $G'=f(U(\cdot)(x))\ge f(G)$ a.e., since $f$
is increasing. Consequently
$$
-\frac{\dd}{\dd t}F\bigl(G(t)\bigr)=\frac{G'(t)}{f(G(t))}\ \ge\ 1
\qquad\text{for a.e. }t\in(s,T'),
$$
Integrating from $s$ to $t$ and using $F>0$, we obtain
$F(U(s)(x))=F(G(s))\ge t-s+F(G(t))>t-s$. Letting $t\uparrow T'$, we get
$F(U(s)(x))\ge T'-s$. Since $U(s)\ge S_\beta(T')u_0$ and $F$ is decreasing, it
follows that $F([S_\beta(T')u_0](x))\ge T'-s$. Letting $s\downarrow0$ proves
\eqref{eq:necessary} for $x\in\Omega$.

If $u_0\in L^1$, then $(x,t)\mapsto[S_\beta(t)u_0](x)$ is continuous and
positive on $\R^d\times(0,\infty)$ by Lemma~\ref{lem:kernel}(v), and $F$ is
continuous on $(0,\infty)$. Since \eqref{eq:necessary} holds on a dense subset
of $\R^d\times(0,T)$, we conclude that it holds everywhere.
\end{proof}

\subsection{Two consequences of hypothesis \texorpdfstring{$(\mathrm G)$}{(G)}}

\begin{Lemma}\label{lem:Fprops}
Let $f$ satisfy $(\mathrm C)$ and $(\mathrm G)$. Then
\begin{enumerate}[label=\textup{(\roman*)}]
\item $F$ is rapidly varying of index $-\infty$ at infinity, and $F^{-1}$ is
slowly varying at $0^{+}$;
\item for every $q>1$ the function $g_q(\sigma):=F^{-1}(\sigma^{q})$ is
decreasing on $(0,\infty)$ and there is $\sigma_*=\sigma_*(q)>0$ such that
$g_q$ is convex on $(0,\sigma_*)$;
\item $f(u)/u\to\infty$ as $u\to\infty$.
\end{enumerate}
\end{Lemma}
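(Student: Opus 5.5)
The three assertions are elementary consequences of the identity \eqref{eq:limitone}, $f'(u)F(u)\to1$, which is available under $(\mathrm G)$ (Remark~\ref{rem:onG}). I would prove (iii) first, then (i), and finally (ii).

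For (iii), convexity gives $f(u)/u$ increasing on $(0,\infty)$. If it had a finite limit $L$, then $f(s)\le Ls$ for all $s>0$, so $F(u)=\int_u^\infty\dd s/f(s)\ge\int_u^\infty\dd s/(Ls)=\infty$, contradicting $F(u)<\infty$ in $(\mathrm G)$.

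For (i), set $h(u):=-\log F(u)$. Then $h'(u)=1/(f(u)F(u))$. Since $f(u)/u\le f'(u)$, \eqref{eq:limitone} yields $f(u)F(u)\le f'(u)F(u)u\sim u$, hence $uh'(u)\ge(1+o(1))$. This only gives regular variation of index at most $-1$, so it is not enough. Instead I will use the finer bound $f(u)F(u)=o(u)$. Because $f'$ is increasing, for $s\ge u$ we have $f(s)\ge f(u)+f'(u)(s-u)$. Therefore
\[
F(u)\le\int_u^\infty\frac{\dd s}{f(u)+f'(u)(s-u)},
\]
which diverges logarithmically, so this route is not useful either. The better tool is the representation
\[
\frac{F(\lambda u)}{F(u)}=\exp\Bigl(-\int_u^{\lambda u}\frac{\dd s}{f(s)F(s)}\Bigr).
\]
It suffices to show that $s/(f(s)F(s))\to\infty$. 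From $f'F\to1$ one gets $(f F)'=f'F-1\to0$. Combined with $(\log f)'=f'/f\sim1/(fF)$ and the regular variation of $(\log f)'$ of index $\rho-1>-1$, this gives $s(\log f)'(s)\to\infty$ (Karamata, since $\rho>0$). Hence $s/(fF)\to\infty$, and the exponent above diverges for $\lambda>1$. This shows that $F$ is rapidly varying of index $-\infty$. Slow variation of $F^{-1}$ at $0^+$ then follows by the standard inversion argument. Suppose $F^{-1}(c\sigma)\ge\lambda F^{-1}(\sigma)$ along a sequence, for some fixed $c<1<\lambda$. Writing $u=F^{-1}(\sigma)$, this gives $c\sigma=F(F^{-1}(c\sigma))\le F(\lambda u)=o(F(u))=o(\sigma)$, a contradiction; the reverse inequality is handled symmetrically.

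For (ii), $g_q$ is decreasing as the composition of $F^{-1}$, which is decreasing, with $\sigma^q$, which is increasing. Using \eqref{E7.3}, $g_q'(\sigma)=-q\sigma^{q-1}f(F^{-1}(\sigma^q))$, and
\[
g_q''(\sigma)=q\sigma^{q-2}f\bigl(F^{-1}(\sigma^q)\bigr)\Bigl[q\sigma^q f'\bigl(F^{-1}(\sigma^q)\bigr)-(q-1)\Bigr].
\]
By \eqref{E7.2}, the bracket tends to $q-(q-1)=1>0$ as $\sigma\to0^+$, so $g_q''>0$ on some interval $(0,\sigma_*)$. The main obstacle is (i), specifically deriving $u/(fF)\to\infty$ from $(\mathrm G)$. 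If the Karamata step is delicate, I would instead cite \cite[Lemma 6.1]{Laister}.
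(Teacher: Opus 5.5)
Your proof is correct. The inversion step for $F^{-1}$ and the computation of $g_q''$ in (ii) are essentially the paper's, but (iii) and the rapid variation of $F$ in (i) follow a different route.

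For (iii), the paper quotes $u^{-k}f(u)\to\infty$ from \cite[Lemma 6.1(ii)]{Laister}. Your argument (monotonicity of $f(u)/u$ from convexity and $f(0)=0$, then divergence of $\int^\infty\dd s/(Ls)$) uses only $(\mathrm C)$ and the finiteness of $F$. It yields only superlinear growth, which is exactly what (iii) asserts.

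For the rapid variation of $F$, the paper argues through $f$:
\begin{itemize}
\item regular variation of $\log f$ gives $f(\lambda u)/f(u)\to\infty$;
\item regular variation of $(\log f)'$ upgrades this to $f'(\lambda u)/f'(u)\to\infty$;
\item $F\sim1/f'$ transfers this to $F$.
\end{itemize}
You work with $F$ directly, via $\log F(\lambda u)-\log F(u)=-\int_u^{\lambda u}\dd s/(f(s)F(s))$. This reduces everything to $s/(f(s)F(s))\to\infty$.

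That fact already follows from the identity $(fF)'=f'F-1\to0$ you wrote down. Integrating from a large $s_0$ gives $f(s)F(s)\le f(s_0)F(s_0)+\varepsilon(s-s_0)$, that is, $f(s)F(s)=o(s)$. So your Karamata step is superfluous. It is better dropped, because as written it covers only $\rho<\infty$. For $\rho=\infty$ you would need a separate, easy argument that a positive continuous $h$ with $h(2s)/h(s)\to\infty$ tends to infinity.

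With this simplification, your proof of (i) uses nothing beyond \eqref{eq:limitone}. The paper's route also uses the regular variation of $\log f$ and $(\log f)'$, but in return shows that $f$ and $f'$ are themselves rapidly varying.

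The same identity even recovers \eqref{eq:limitone} from $(\mathrm G)$, which would make the lemma independent of \cite{Laister}. Let $q=\lim f'F$. Positivity of $fF$ excludes $q<1$. If $q>1$, then $s(\log f)'(s)=f'F\cdot s/(fF)\to q/(q-1)\in(0,\infty)$, contradicting $\rho>0$.
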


\begin{proof}
By \cite[Lemma 6.1(i)--(ii)]{Laister}, $\log f$ is regularly varying of index
$\rho\in(0,\infty]$, and $u^{-k}f(u)\to\infty$ for every $k>0$. The latter
property gives (iii). For $\lambda>1$, $\log f(\lambda u)-\log f(u)=\log
f(u)\bigl(\frac{\log f(\lambda u)}{\log f(u)}-1\bigr)\to\infty$, since $\log
f(u)\to\infty$ and the ratio tends to $\lambda^\rho>1$ (or to $\infty$). Thus
$f(\lambda u)/f(u)\to\infty$. Moreover $(\log f)'=f'/f$ is regularly varying
of index $\rho-1$, so $(f'/f)(\lambda u)/(f'/f)(u)$ tends to
$\lambda^{\rho-1}>0$ (or to $\infty$ if $\rho=\infty$). Hence
$f'(\lambda u)/f'(u)\to\infty$ for $\lambda>1$, and consequently
$f'(\lambda u)/f'(u)\to0$ for $\lambda<1$. By \eqref{eq:limitone}, $F\sim1/f'$
at infinity, whence $F(\lambda u)/F(u)\to0$ for $\lambda>1$ and $\to\infty$ for
$\lambda<1$. In other words, $F$ is rapidly varying of index $-\infty$.

For the second half of (i), fix $\lambda>1$ and let $\sigma\to0^{+}$. Writing
$u:=F^{-1}(\sigma)\to\infty$ and $c_\sigma:=F^{-1}(\lambda\sigma)/F^{-1}(\sigma)
\in(0,1]$, \eqref{eq:Finv} gives $F(c_\sigma u)=\lambda F(u)$. If
$\liminf_{\sigma\to0}c_\sigma=c<1$, pick $c'\in(c,1)$; along a sequence
$\sigma_j\to0$ with $c_{\sigma_j}\le c'$, we get $F(c_{\sigma_j}u)\ge F(c'u)$,
and therefore $\lambda\ge F(c'u)/F(u)\to\infty$, a contradiction. Hence
$c_\sigma\to1$. The case $\lambda<1$ follows by replacing $\sigma$ with
$\lambda^{-1}\sigma$. Hence $F^{-1}$ is slowly varying at $0^{+}$.

(ii) $g_q$ is decreasing because $F^{-1}$ is. By \eqref{E7.3},
$g_q'(\sigma)=-q\sigma^{q-1}f\bigl(F^{-1}(\sigma^{q})\bigr)$, which is $C^1$ since
$f\in C^1$. A further differentiation gives
$$
g_q''(\sigma)=q\,\sigma^{q-2}\,f\bigl(F^{-1}(\sigma^{q})\bigr)\,
\Bigl[q\,\sigma^{q}f'\bigl(F^{-1}(\sigma^{q})\bigr)-(q-1)\Bigr].
$$
By \eqref{E7.2}, the quantity $\sigma^{q}f'(F^{-1}(\sigma^{q}))$ tends to $1$
as $\sigma\to0^{+}$. Hence the bracket tends to $1$, and $g_q''>0$ on some
interval $(0,\sigma_*)$.
\end{proof}

\subsection{The singular data}

For $0<\alpha<2\beta$, put
\begin{equation}\label{eq:calpha}
c_{d,\alpha,\beta}:=\int_{\R^d}|z|^{\alpha}p_\beta(z,1)\dd z
=2^{\alpha}\,\frac{\Gamma\bigl(\frac{d+\alpha}{2}\bigr)}{\Gamma\bigl(\frac d2\bigr)}
\,\frac{\Gamma\bigl(1-\frac{\alpha}{2\beta}\bigr)}{\Gamma\bigl(1-\frac\alpha2\bigr)},
\qquad
\gamma_0(d,\beta):=\sup_{0<\alpha<2\beta}c_{d,\alpha,\beta}^{-2\beta/\alpha}.
\end{equation}
The value of $c_{d,\alpha,\beta}$ follows from \eqref{eq:freekernel},
Lemma~\ref{lem:stable}(ii) with $r=\alpha/2<\beta$, and the moment of the
Gaussian
$$
\int_{\R^d}|z|^\alpha G(z,s)\dd z=(2s)^{\alpha/2}\,2^{\alpha/2}\,
\frac{\Gamma\bigl(\frac{d+\alpha}{2}\bigr)}{\Gamma\bigl(\frac d2\bigr)} .
$$ In particular,
$c_{d,\alpha,\beta}<\infty$ precisely because $\alpha<2\beta$, and
$\gamma_0(d,\beta)\in(0,\infty)$ depends only on $d$ and $\beta$.

\begin{Lemma}\label{L7.1}
Let $0<\beta\le1$, $\varrho>0$, and let $f$ satisfy $(\mathrm C)$ and
$(\mathrm G)$. For every $\gamma\in(0,\gamma_0(d,\beta))$ and every $R>0$,
problem \eqref{eq:main} possesses no nonnegative integral solution in $Q_T$,
for any $T>0$, with initial data
\begin{equation}\label{eq:data}
u_0(x)=F^{-1}\bigl(\gamma|x|^{2\beta}\bigr)\chi_{B_R}(x),
\qquad x\in\R^d\setminus\{0\} .
\end{equation}
Moreover $u_0\in L^1$.
\end{Lemma}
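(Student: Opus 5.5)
The plan is to argue by contradiction using the necessary condition of Lemma~\ref{L7.2}. I first check that $u_0\in L^1$. By Lemma~\ref{lem:Fprops}(i), $F^{-1}$ is slowly varying at $0^+$. Hence for every $\varepsilon>0$ there is $C_\varepsilon$ with $F^{-1}(\sigma)\le C_\varepsilon\sigma^{-\varepsilon}$ for small $\sigma$. It follows that $u_0(x)\le C|x|^{-2\beta\varepsilon}$ on $B_R$, which is integrable once $2\beta\varepsilon<d$. Since $u_0\in L^1$, the pointwise version of Lemma~\ref{L7.2} is available. If a nonnegative integral solution existed in $Q_T$, we would have $F([S_\beta(t)u_0](0))\ge t$ for every $t\in(0,T)$. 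So it suffices to show that $F([S_\beta(t)u_0](0))<t$ for all sufficiently small $t$.

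\textbf{Lower bound at $x=0$ via Jensen.} Since $\gamma<\gamma_0(d,\beta)$, I fix $\alpha\in(0,2\beta)$ with $\gamma':=\gamma\,c_{d,\alpha,\beta}^{2\beta/\alpha}<1$, and put $q:=2\beta/\alpha>1$. Then $u_0(y)=g_q(\gamma^{1/q}|y|^\alpha)\chi_{B_R}(y)$, with $g_q$ as in Lemma~\ref{lem:Fprops}(ii), which is decreasing and convex on $(0,\sigma_*)$. To apply Jensen on all of $(0,\infty)$, I replace $g_q$ by a convex decreasing minorant $\tilde g$ that agrees with $g_q$ near $0$, for instance by continuing it linearly and then by $0$; this does not affect the small-$t$ asymptotics.

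Next I let $\nu_t$ be the probability measure $\Kb^\beta_t(0,y)\chi_{B_R}(y)\dd y/m_t$, where $m_t:=\int_{B_R}\Kb^\beta_t(0,y)\dd y$. Jensen's inequality, followed by the monotonicity of $\tilde g$, gives
$$[S_\beta(t)u_0](0)\ \ge\ m_t\,\tilde g\Bigl(\gamma^{1/q}m_t^{-1}\!\int|y|^\alpha\Kb^\beta_t(0,y)\dd y\Bigr)\ \ge\ m_t\,F^{-1}\bigl(\gamma\,(c_{d,\alpha,\beta}/m_t)^{q}\,t\bigr).$$
For the second inequality I bound the moment using $\Kb^\beta_t(0,y)\le p_\beta(y,t)$ from Lemma~\ref{lem:kernel}(ii) together with the scaling $\int|y|^\alpha p_\beta(y,t)\dd y=c_{d,\alpha,\beta}t^{\alpha/(2\beta)}$. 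For small $t$ the argument lies in $(0,\sigma_*)$, so $\tilde g=g_q$ there.

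\textbf{Mass defect.} I need $1-m_t=O(t)$. This follows from three facts. First, by \eqref{eq:mass1}, $1-\int\Kb_s(0,y)\dd y=1-\cosh(2\omega s)^{-d/2}\le C\min\{1,s\}$. Second, the tail satisfies $\int_{|y|>R}\Kb_s(0,y)\dd y\le\int_{|y|>R}G(y,s)\dd y\le C\min\{1,s\}$. Third, after integrating against $\eta_t^\beta$, Lemma~\ref{lem:stable}(iii) gives a bound $Ct$. Consequently $m_t\to1$, and $\gamma(c_{d,\alpha,\beta}/m_t)^q\le\gamma''<1$ for small $t$, with $\gamma'<\gamma''<1$ fixed. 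Writing $u:=F^{-1}(\gamma''t)\to\infty$, we get $[S_\beta(t)u_0](0)\ge m_tu$.

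\textbf{Conclusion; the main obstacle.} The delicate step is converting this into $F(m_tu)<t$. Slow variation of $F^{-1}$ means the gain $\gamma''<1$ only appears inside $F$, and it must not be destroyed by the factor $m_t$. I would handle this directly:
$$F(m_tu)=F(u)+\int_{m_tu}^u\frac{\dd s}{f(s)}\ \le\ \gamma''t+\frac{(1-m_t)u}{f(m_tu)}\ \le\ \gamma''t+Ct\cdot\frac{u}{f(u/2)}.$$
By Lemma~\ref{lem:Fprops}(iii), $u/f(u/2)\to0$ as $u\to\infty$. Hence $F(m_tu)\le\gamma''t+o(t)<t$ for small $t$. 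Since $F$ is decreasing, $F([S_\beta(t)u_0](0))\le F(m_tu)<t$, which contradicts Lemma~\ref{L7.2}. I would expect the remaining care to lie in the Jensen step, namely making the convex minorant precise and checking that the restriction to $B_R$ is consistent with the moment bound.
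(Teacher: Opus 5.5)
Your argument is correct in outline and follows the same route as the paper. You argue by contradiction with the pointwise form of Lemma~\ref{L7.2} at $x=0$, apply Jensen's inequality to the profile $g_q$ with $q=2\beta/\alpha$, bound the moment via $\Kb^\beta_t(0,\cdot)\le p_\beta(\cdot,t)$ and self-similarity, and control the mass defect $1-m_t=O(t)$ through Lemma~\ref{lem:stable}(iii).

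\textbf{The step that needs repair is the convex minorant.} Your $\tilde g$ (``continue $g_q$ linearly, then by $0$'') is convex and decreasing. However, it is only guaranteed to lie below $g_q$ on $(0,\sigma_*)$, where the tangent line at $\sigma_1<\sigma_*$ sits below $g_q$ by convexity. Lemma~\ref{lem:Fprops}(ii) says nothing about $g_q$ on $[\sigma_*,\gamma^{1/q}R^{\alpha})$. There $g_q$ may drop below that tangent line before the line reaches $0$. So the inequality $u_0\ge(\tilde g\circ h)\chi_{B_R}$ is not justified when $\gamma^{1/q}R^\alpha>\sigma_*$.

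The fix is to drop the minorant altogether. Since $\Kb^\beta_t\ge0$, you have $[S_\beta(t)u_0](0)\ge[S_\beta(t)(u_0\chi_{B_{R'}})](0)$ for every $R'\le R$. Choose $R'$ with $\gamma^{1/q}(R')^{\alpha}<\sigma_*$. Then $h$ maps $B_{R'}\setminus\{0\}$ into $(0,\sigma_*)$, and Jensen applies to $g_q$ itself, with $m_t$ now defined using $B_{R'}$. You keep applying the necessary condition to $u_0$; since $F$ is decreasing, the lower bound for the truncated data suffices. The paper makes the equivalent reduction to small $R$ through Corollary~\ref{cor:datacomp}.

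\textbf{Your final step differs from the paper's and is slightly more economical.} The paper proves $m(t)F^{-1}(c_1\gamma t)\ge F^{-1}(c_2\gamma t)$, which uses both the slow variation of $F^{-1}$ ($W_2/W_1\to1$) and $f(W)/W\to\infty$. You instead estimate directly
$F(m_tu)\le F(u)+(1-m_t)u/f(m_tu)\le\gamma''t+Ct\,u/f(u/2)$.
That step needs only the monotonicity of $f$ on $[0,\infty)$ and Lemma~\ref{lem:Fprops}(iii). Both routes give $F([S_\beta(t)u_0](0))<t$ for small $t$, and hence the contradiction.
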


\begin{proof}
\emph{Integrability.} By Lemma~\ref{lem:Fprops}(i), $r\mapsto
F^{-1}(\gamma r^{2\beta})$ is slowly varying at $0^+$, hence $o(r^{-1/2})$ as
$r\to0$ \cite[Proposition 1.3.6]{BGT}. We conclude that $u_0\in L^1$, because
$d-1-\frac12>-1$.

\emph{Reductions.} Fix $\gamma<\gamma_0(d,\beta)$ and choose $\alpha\in(0,2\beta)$
with $\gamma\,c_{d,\alpha,\beta}^{2\beta/\alpha}<1$. Set $q:=2\beta/\alpha>1$,
$c:=c_{d,\alpha,\beta}$ and $g:=g_q=F^{-1}(\cdot^{\,q})$, so that
$u_0(x)=g\bigl(\gamma^{1/q}|x|^{\alpha}\bigr)\chi_{B_R}(x)$. Let $\sigma_*$ be as
in Lemma \ref{lem:Fprops}(ii). By Corollary~\ref{cor:datacomp}, it suffices to
prove the claim for small $R$; we may thus assume $\gamma^{1/q}R^{\alpha}<\sigma_*$.
Suppose, for a contradiction, that a nonnegative integral solution exists in
$Q_T$. Since $u_0\in L^1$, Lemma \ref{L7.2} gives
\begin{equation}\label{eq:nec0}
F\bigl([S_\beta(t)u_0](0)\bigr)\ge t,\qquad t\in(0,T).
\end{equation}

\emph{The mass defect.} Put $m(t):=\int_{B_R}\Kb^\beta_t(0,y)\dd y\in(0,1]$. We
claim that
\begin{equation}\label{eq:massdefect}
1-m(t)\le C_R\,t,\qquad t>0,\qquad
C_R:=2\max\{1,d\varrho\}+2\max\{1,2d/R^2\}.
\end{equation}
Indeed, $1-m(t)=\bigl(1-\int_{\R^d}\Kb^\beta_t(0,y)\dd y\bigr)+\int_{|y|\ge
R}\Kb^\beta_t(0,y)\dd y$. By \eqref{eq:mass1} with $y=0$, symmetry and
\eqref{E7.5}, the first term equals
$\int_0^\infty\bigl(1-(\cosh2\omega s)^{-d/2}\bigr)\eta^\beta_t(s)\dd s$. Since
$\cosh z\le e^{z^2/2}$, we have $1-(\cosh z)^{-d/2}\le\min\{1,dz^2/4\}$. With
$z=2\omega s$, this is at most $\min\{1,d\varrho s^2\}\le\max\{1,d\varrho\}
\min\{1,s\}$, and Lemma~\ref{lem:stable}(iii) bounds the first term by
$2\max\{1,d\varrho\}t$. For the second term, Lemma \ref{lem:kernel}(ii) and
Chebyshev's inequality for the Gaussian $G(\cdot,s)$, whose second moment is
$2ds$, give
\begin{align*}
\int_{|y|\ge R}\Kb^\beta_t(0,y)\dd y&\le\int_0^\infty\Bigl(\int_{|y|\ge R}G(y,s)
\dd y\Bigr)\eta^\beta_t(s)\dd s\\
&\le\int_0^\infty\min\Bigl\{1,\frac{2ds}{R^2}\Bigr\}\eta^\beta_t(s)\dd s
\le2\max\Bigl\{1,\frac{2d}{R^2}\Bigr\}t ,
\end{align*}
again by Lemma~\ref{lem:stable}(iii). This proves \eqref{eq:massdefect}.

\emph{A lower bound at the origin.} Let $h(y):=\gamma^{1/q}|y|^{\alpha}$, and
let $\nu_t$ be the measure with density $\Kb^\beta_t(0,\cdot)\chi_{B_R}$,
whose total mass is $m(t)$. Since $h$ maps $B_R\setminus\{0\}$ into
$(0,\sigma_*)$, where $g$ is convex, Jensen's inequality for the probability
measure $\nu_t/m(t)$ yields
$$
[S_\beta(t)u_0](0)=\int g\circ h\,\dd\nu_t
\ \ge\ m(t)\,g\Bigl(\frac{1}{m(t)}\int h\,\dd\nu_t\Bigr).
$$
By Lemma \ref{lem:kernel}(ii) and the self-similarity of $p_\beta$,
$$
\int h\,\dd\nu_t\le\gamma^{1/q}\int_{\R^d}p_\beta(y,t)|y|^{\alpha}\dd y
=c\,\gamma^{1/q}\,t^{\frac{\alpha}{2\beta}} .
$$
Let $\eta\in(0,1)$. For $0<t\le\eta/C_R$, we have $m(t)\ge1-\eta$ by
\eqref{eq:massdefect}. Since $g$ is decreasing and $\alpha q/(2\beta)=1$, we
obtain
$$
[S_\beta(t)u_0](0)\ \ge\ m(t)\,
g\Bigl(\frac{c}{1-\eta}\,\gamma^{1/q}t^{\frac{\alpha}{2\beta}}\Bigr)
=m(t)\,F^{-1}\bigl(c_1\gamma t\bigr),
\qquad c_1:=\Bigl(\frac{c}{1-\eta}\Bigr)^{q} .
$$

\emph{Absorbing the mass defect.} Let $c_2>c_1$. We claim that
\begin{equation}\label{eq:absorb}
m(t)\,F^{-1}(c_1\gamma t)\ \ge\ F^{-1}(c_2\gamma t)
\qquad\text{for all small }t>0 .
\end{equation}
Writing $\sigma:=\gamma t$ and using \eqref{E7.3} together with the
monotonicity of $\tau\mapsto f(F^{-1}(\tau))$,
$$
F^{-1}(c_1\sigma)-F^{-1}(c_2\sigma)=\int_{c_1\sigma}^{c_2\sigma}
f\bigl(F^{-1}(\tau)\bigr)\dd\tau\ \ge\ (c_2-c_1)\,\sigma\,
f\bigl(F^{-1}(c_2\sigma)\bigr) ,
$$
whence, with $W_i:=F^{-1}(c_i\sigma)$,
$$
1-\frac{W_2}{W_1}
\ \ge\ (c_2-c_1)\,\sigma\,\frac{f(W_2)}{W_2}\,\frac{W_2}{W_1} .
$$
By Lemma \ref{lem:Fprops}(i), $W_2/W_1\to1$ as $\sigma\to0^{+}$, and by
Lemma~\ref{lem:Fprops}(iii), $f(W_2)/W_2\to\infty$ since $W_2\to\infty$.
Hence, for $t$ small, the right-hand side exceeds $C_R\sigma/\gamma=C_Rt\ge
1-m(t)$, and \eqref{eq:absorb} follows.

\emph{Conclusion.} By \eqref{eq:nec0}, the two lower bounds and the
monotonicity of $F$, we get, for all small $t>0$,
$$
t\ \le\ F\bigl([S_\beta(t)u_0](0)\bigr)
\ \le\ F\bigl(F^{-1}(c_2\gamma t)\bigr)=c_2\gamma t ,
$$
so that $c_2\gamma\ge1$. However, $\gamma c^{q}<1$. We may therefore choose
$\eta$ so small, and then $c_2>c_1=(c/(1-\eta))^q$ so close to $c_1$, that
$c_2\gamma<1$, a contradiction.
\end{proof}

\begin{Lemma}\label{L7.3}
Under the hypotheses of Lemma \ref{L7.1}, for every $\gamma\in(0,\gamma_0)$
problem \eqref{eq:main} possesses no nonnegative integral solution with the
untruncated data $u_c(x)=F^{-1}(\gamma|x|^{2\beta})$, $x\ne0$.
\end{Lemma}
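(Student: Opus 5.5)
The plan is to reduce Lemma~\ref{L7.3} to Lemma~\ref{L7.1} by comparison of data. Fix $\gamma\in(0,\gamma_0)$ and $R>0$. Since $F^{-1}>0$, the truncated profile $u_0=F^{-1}(\gamma|x|^{2\beta})\chi_{B_R}$ of \eqref{eq:data} satisfies $0\le u_0\le u_c$ pointwise on $\R^d\setminus\{0\}$.

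Suppose, for contradiction, that \eqref{eq:main} has a nonnegative integral solution $U$ in $Q_T$ with data $u_c$ for some $T>0$. I will apply Corollary~\ref{cor:datacomp} with $\mu_1=u_c$ and $\mu_2=u_0$. Its proof is Lemma~\ref{lem:supersol} with $U$ as supersolution: because $0\le u_0\le u_c$ and the kernel is nonnegative, $S_\beta(t)u_0+\int_0^tS_\beta(t-s)f(U(s))\dd s\le\mathcal F(U;u_c)(t)=U(t)$ a.e. This produces a nonnegative integral solution in $Q_T$ with data $u_0$, contradicting Lemma~\ref{L7.1}.

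The one point to check is that the corollary needs no integrability of $\mu_1$. Lemma~\ref{lem:supersol} only uses that $U$ is measurable, nonnegative and a.e.\ finite and satisfies the supersolution inequality. The finiteness requirements in Definition~\ref{def:solutions}(i) for the data $u_0$ are inherited from those for $u_c$, since $S_\beta(t)u_0\le S_\beta(t)u_c<\infty$ a.e. Hence it does not matter that $u_c\notin L^1$: it grows slowly but is not integrable at infinity. Apart from this verification, which is the only potential obstacle, the argument is a two-line comparison.
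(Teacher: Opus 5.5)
Your argument is correct and is exactly the paper's proof: Corollary~\ref{cor:datacomp} with $\mu_1=u_c$ and $\mu_2=u_c\chi_{B_R}$ turns a hypothetical solution for $u_c$ into one for the truncated data, which contradicts Lemma~\ref{L7.1}, and you are right that no integrability of $\mu_1$ is needed. One aside in your proposal is inaccurate, though harmless. $u_c$ actually decays at infinity, since $F^{-1}(\sigma)\to0$ as $\sigma\to\infty$, at a rate governed by $f$ near $0$, so it may well be integrable (it decays exponentially when $f'(0)>0$); your argument does not depend on this either way.
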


\begin{proof}
Assume that such a solution exists. Then Corollary~\ref{cor:datacomp}, with
$\mu_1=u_c$ and $\mu_2=u_c\chi_{B_R}$, contradicts Lemma~\ref{L7.1}.
\end{proof}

\begin{Remark}\rm\label{rem:onwhatisproved}
Lemmas \ref{L7.2}, \ref{lem:Fprops}, \ref{L7.1} and \ref{L7.3} are proved here
in full, for all $0<\beta\le1$ and all $\varrho>0$. The only external inputs
are \cite[Lemma 6.1(i)--(ii),(iv)]{Laister}. These are statements about
regularly varying functions, and they involve neither the equation nor the
operator.
In particular we do not appeal to \cite{Fujishima2024}, on which the
corresponding step of \cite[Section 6.3]{Laister} relies.

The threshold $\gamma_0(d,\beta)$ depends neither on $\varrho$ nor on $f$. By
Lemma~\ref{lem:freecomp}, a nonnegative solution of the \emph{free} problem
$\partial_tu+(-\Delta)^\beta u=f(u)$ with data \eqref{eq:data} would produce
one of \eqref{eq:main}. Hence Lemma~\ref{L7.1} also yields nonexistence for
the free fractional heat equation with data \eqref{eq:data}, for every
$\gamma<\gamma_0(d,\beta)$. This gives an explicit value of the constant in the
necessity part of \cite[Theorem 1.1(a)]{Fujishima2024} for this profile (with
$\theta=2\beta$).

The exponent $2\beta$ in \eqref{eq:data} is the one for which the profile is
invariant under the parabolic scaling of the free fractional operator at small
scales: for $f(u)=e^u$, for instance, $F^{-1}(\gamma|x|^{2\beta})=
-\log\gamma-2\beta\log|x|$, and the equation
$\partial_tu+(-\Delta)^\beta u=e^u$ is invariant under
$u\mapsto u(\mu x,\mu^{2\beta}t)+2\beta\log\mu$. In the Jensen step of the
proof of Lemma~\ref{L7.1} it enters through the condition $\alpha q=2\beta$
with $\alpha<2\beta$, which reflects the fact that the $\beta$-stable
subordinator has finite moments exactly of order less than $\beta$. For
$\beta<1$, the profile $F^{-1}(\gamma|x|^2)$ is larger than
$F^{-1}(\gamma'|x|^{2\beta})$ near the origin for every $\gamma'>0$. Nonexistence
for this profile therefore also follows from Corollary~\ref{cor:datacomp}, but
it is not the dilation-critical profile.
\end{Remark}

\begin{Remark}\rm[Dilation-critical singularity]\label{rem:DCS}
The nonexistence result of Lemma~\ref{L7.3} has a sharp counterpart. Let
$u_c(x)=F^{-1}(\gamma|x|^{2\beta})$ be the untruncated profile. Then there
exists $\gamma^\ast=\gamma^\ast(d,\beta,\varrho,f)$ with
$\gamma_0(d,\beta)\le\gamma^\ast<\infty$ such that \eqref{eq:main} with data
$u_c$ has no nonnegative integral solution for $\gamma<\gamma^\ast$, and has
one, belonging to $L^\infty_{\mathrm{loc}}((0,T),L^\infty)$, for
$\gamma>\gamma^\ast$. Since $u_c(x/\lambda)=F^{-1}(\gamma\lambda^{-2\beta}
|x|^{2\beta})$, this says that $F^{-1}(\gamma^\ast|x|^{2\beta})$ is a
\emph{dilation-critical singularity} for \eqref{eq:main} in the sense of
\cite[Definition 1.2]{Fujishima2024}. The proof combines the existence part of
\cite{Fujishima2024} with Lemma~\ref{lem:freecomp}, as follows.

We first check that $f$, restricted to $[0,\infty)$, satisfies the hypotheses
of \cite{Fujishima2024} with $\theta=2\beta\in(0,2]$, which is admissible
since \cite{Fujishima2024} covers $0<\theta\le2$. By $(\mathrm C)$, $f$ is
continuous, positive on $(0,\infty)$ and increasing on $[0,\infty)$, which is
condition (M) there. By $(\mathrm C)$ and $(\mathrm G)$, $f\in C^1$ and
$F(u)<\infty$ for $u>0$, which gives the superlinearity condition (S), and
$f$ is convex on $[0,\infty)$, which gives (C). Finally, $(\mathrm G)$ and
\eqref{eq:limitone} show that $q_f:=\lim_{u\to\infty}f'(u)F(u)$ exists and
equals $1$, which is (L). The relation $1/p_f+1/q_f=1$ then gives
$p_f=\infty>p_\theta=1+\theta/d$, so $f$ is supercritical in the sense of
\cite{Fujishima2024}.

Next, \cite[Theorem 1.1(b)]{Fujishima2024} asserts the following. If $\mu$ is
nonnegative and bounded away from the origin, and if
$1/F(\mu(x))\le\varepsilon|x|^{-\theta}$ near the origin for a suitable
$\varepsilon=\varepsilon(d,\theta,f)>0$, then the free problem
$\partial_tu+(-\Delta)^{\theta/2}u=f(u)$ possesses, for some $T>0$, a solution
$U\in L^\infty_{\mathrm{loc}}((0,T),L^\infty)$. Here a solution is a
nonnegative, measurable, almost everywhere finite function satisfying the
Duhamel formula with the kernel of $e^{-t(-\Delta)^{\theta/2}}$, which is
$p_\beta$ for $\theta=2\beta$; this is exactly a nonnegative integral solution
of the free problem in the sense of Definition~\ref{def:solutions}(i). The
data $u_c$ is bounded away from the origin and satisfies
$1/F(u_c(x))=\gamma^{-1}|x|^{-2\beta}$. Hence, for $\gamma\ge1/\varepsilon$,
the free problem has such a solution $U$, and Lemma~\ref{lem:freecomp}
provides a nonnegative integral solution $u\le U$ of \eqref{eq:main} with data
$u_c$; in particular $u\in L^\infty_{\mathrm{loc}}((0,T),L^\infty)$.

Finally, let $\gamma^\ast$ be the infimum of all $\gamma>0$ for which
\eqref{eq:main} with data $u_c$ has a nonnegative integral solution. Since
$F^{-1}$ is decreasing, $u_c$ decreases as $\gamma$ increases, and
Corollary~\ref{cor:datacomp} shows that solvability for some $\gamma$ implies
solvability for every larger $\gamma$. By the previous step,
$\gamma^\ast\le1/\varepsilon<\infty$, and by Lemma~\ref{L7.3},
$\gamma^\ast\ge\gamma_0(d,\beta)$. This proves the claim. The same
monotonicity argument applies to the truncated data \eqref{eq:data}.
\end{Remark}

\subsection{Proof of Theorem \ref{thm:critical}}

We use the following consequences of $(\mathrm G)$, which are
\cite[Lemma 6.1 and Remark 6.1]{Laister} and concern only the asymptotics of
$f$. We write $fF$ for the function $u\mapsto f(u)F(u)$.

\begin{Lemma}[{\cite[Lemma 6.1, Remark 6.1]{Laister}}]\label{lem:RV}
Let $f$ satisfy $(\mathrm C)$ and $(\mathrm G)$. Then $\log f$ is regularly
varying of index $\rho$, $f^{k}\approx f$ for every $k>0$, $u^{-k}f(u)\to\infty$
for every $k>0$, $\lim_{u\to\infty}f'(u)F(u)=1$, and $fF$ is regularly varying
of index $1-\rho\in[-\infty,1)$. Consequently, for $\varepsilon,\lambda\in(0,1)$
and $M>0$ there exist $y_1,y_2,y_3>0$ such that
\begin{equation}\label{eq:RV1}
1-\varepsilon\le f'(y)F(y)\le1+\varepsilon,\qquad y\ge y_1,
\end{equation}
\begin{equation}\label{eq:RV2}
\frac{f(\lambda y)F(\lambda y)}{\lambda f(y)F(y)}\ \ge\
\begin{cases}
\lambda^{-\rho}-\varepsilon, & \rho<\infty,\ y\ge y_2,\\[3pt]
M, & \rho=\infty,\ y\ge y_3 .
\end{cases}
\end{equation}
\end{Lemma}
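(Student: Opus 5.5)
The plan is to derive every assertion from hypothesis $(\mathrm G)$ using Karamata theory, and then to read off \eqref{eq:RV1} and \eqref{eq:RV2} as quantitative forms of two limits.

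\emph{Step 1: regular variation of $\log f$.} Write $\log f(u)=\log f(u_0)+\int_{u_0}^u(\log f)'(s)\dd s$. Since $(\log f)'$ is regularly varying of index $\rho-1>-1$, Karamata's theorem \cite{BGT} gives $\int_{u_0}^u(\log f)'\sim u(\log f)'(u)/\rho$, so $\log f$ is regularly varying of index $\rho$. When $\rho=\infty$, rapid variation of $(\log f)'$ passes to its integral in the same way. Consequences:
\begin{itemize}
\item[$\circ$] $\log f(u)/\log u\to\infty$, hence $u^{-k}f(u)\to\infty$ for every $k>0$.
\item[$\circ$] $\log f(Ku)\ge k\log f(u)$ for large $u$ once $K^\rho>k$ (any $K>1$ if $\rho=\infty$), which gives $f^k\lesssim f$; the reverse relation for $k<1$ is immediate by monotonicity. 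Hence $f^k\approx f$.
\end{itemize}

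\emph{Step 2: the limit $f'F\to1$.} I would write
$$
\frac{1}{f'(u)F(u)}=\frac{1/f'(u)}{\int_u^\infty \dd s/f(s)}
$$
and observe that $1/f=\exp(-\log f)$. Substituting $s=u+\tau/(\log f)'(u)$, the integral $\int_u^\infty \dd s/f(s)$ is asymptotic to $1/\bigl(f(u)(\log f)'(u)\bigr)=1/f'(u)$. This uses that $(\log f)'$ is essentially constant on scales $1/(\log f)'(u)=o(u)$, which follows from regular or rapid variation together with $u(\log f)'(u)\to\infty$. Since $(\mathrm G)$ already asserts that the limit exists, it even suffices to identify its value, which this substitution does.

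\emph{Step 3: regular variation of $fF$.} By Step 2, $f(u)F(u)\sim f(u)/f'(u)=1/(\log f)'(u)$, which is regularly varying of index $1-\rho$, or of index $-\infty$ when $\rho=\infty$.

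\emph{Step 4: the two estimates.}
\begin{itemize}
\item[$\circ$] \eqref{eq:RV1} is the definition of the limit $f'F\to1$, with $y_1$ chosen so that $|f'(y)F(y)-1|\le\varepsilon$ for $y\ge y_1$.
\item[$\circ$] For \eqref{eq:RV2} with $\lambda\in(0,1)$ and $\rho<\infty$, regular variation gives $fF(\lambda y)/fF(y)\to\lambda^{1-\rho}$. Dividing by $\lambda$, the left side of \eqref{eq:RV2} tends to $\lambda^{-\rho}$, so it exceeds $\lambda^{-\rho}-\varepsilon$ for $y\ge y_2$.
\item[$\circ$] If $\rho=\infty$, index $-\infty$ means $fF(\lambda y)/fF(y)\to\infty$ for $\lambda<1$, so the ratio exceeds $\lambda M$ for $y\ge y_3$, which is the stated bound $M$ after dividing by $\lambda$.
\end{itemize}

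The main obstacle is Step 2 in the rapidly varying case $\rho=\infty$, where Karamata's theorem is not available directly. There I would use the existence of the limit, granted by $(\mathrm G)$, together with the L'Hôpital-type identity
$$
\frac{\dd}{\dd u}\Bigl(\frac{1}{f'}\Bigr)\Big/\frac{\dd}{\dd u}F=\frac{f\,f''}{(f')^2}.
$$
If this is awkward because $f''$ need not exist, I would fall back on the local-constancy substitution of Step 2, bounding the tail integral above and below by geometric-type comparisons.
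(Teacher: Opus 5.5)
The paper gives no proof of this lemma; it imports it from \cite{Laister}, so I am judging your argument on its own. Steps 1, 3 and 4 are essentially fine. Step 2, however, has a genuine gap when $\rho=\infty$, and Steps 3--4 rest on it.

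Write $g:=(\log f)'$. For $\rho<\infty$, the near-constancy of $g$ on the scale $1/g(u)=o(u)$ does follow from the uniform convergence theorem. Rapid variation, by contrast, only controls $g(\lambda u)/g(u)$ for $\lambda$ bounded away from $1$. It says nothing about $g(\lambda_u u)/g(u)$ with $\lambda_u=1+\sigma/(ug(u))\to1$.

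For instance, one can make a convex $C^1$ function $f$ affine on suitably chosen short intervals $[u_n,u_n+H_n]$, with $H_ng(u_n)\to\infty$ slowly. This keeps $g$ rapidly varying, yet $g(u_n+\sigma/g(u_n))/g(u_n)=1/(1+\sigma)$ for $0\le\sigma\le H_ng(u_n)$. Such an $f$ violates $(\mathrm G)$ only through the existence of $\lim f'F$, since $f'(u_n)F(u_n)\ge\log(1+H_ng(u_n))\to\infty$. So that hypothesis must enter your argument in an essential way.

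Under $(\mathrm G)$ your claim is in fact circular. Let $L:=\lim f'F$. Then $1/g=fF/(f'F)$ and $(fF)'=f'F-1\to L-1$ give
\[
\frac{g\bigl(u+\sigma/g(u)\bigr)}{g(u)}\longrightarrow\frac{L}{L+(L-1)\sigma}
\]
locally uniformly in $\sigma\ge0$. Near-constancy is therefore equivalent to $L=1$, which is exactly what you want to prove.

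The L'Hôpital fallback does not help. The rule transfers a limit from the quotient of derivatives to the quotient of functions, not conversely. It would need the existence of $\lim ff''/(f')^2$, and of $f''$ itself (whereas $f$ is only $C^1$), and nothing in $(\mathrm C)$ or $(\mathrm G)$ provides this. Even for $\rho<\infty$, the substitution argument also needs a domination of the integrand for large $\tau$ (via Potter bounds), which you do not supply.

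The missing idea is to use the existence of $L$ through the identity $(fF)'=f'F-1$. This settles every $\rho\in(0,\infty]$ at once and makes the Laplace-type computation unnecessary.
\begin{itemize}
\item If $L<1$, then $(fF)'\le-(1-L)/2$ for large $u$, so $fF$ eventually becomes negative, which is impossible.
\item If $L>1$, then $fF(u)\sim(L-1)u$, hence $(\log f)'=f'F/(fF)\sim L/((L-1)u)$ is regularly varying of index $-1$. This contradicts $(\mathrm G)$, which requires index $\rho-1>-1$ or rapid variation.
\end{itemize}
Hence $L=1$, and your Steps 3 and 4 then go through as written.

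Two smaller points:
\begin{itemize}
\item In Step 1 for $\rho=\infty$, Karamata's theorem is unavailable, so ``in the same way'' is not a justification. You need the uniform convergence theorem for rapidly varying measurable functions: for every $K$ and every $\mu>1$, $g(t)\ge Kg(s)$ whenever $t\ge\mu s$ and $s$ is large. This yields $\int_{\mu x}^{\lambda x}g\ge(\lambda-\mu)K\int_{x_0}^{x}g$, and hence that $\log f$ is rapidly varying.
\item In $f^k\approx f$, the direction that is immediate from $f\ge1$ is $f\lesssim f^k$ for $k>1$. For $k<1$, the nontrivial direction $f\lesssim f^k$ is $f^{1/k}\lesssim f$, that is, your dilation bound applied with $1/k$ in place of $k$.
\end{itemize}
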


\begin{Lemma}\label{lem:ICinf}
Let $f$ satisfy $(\mathrm C)$ and $(\mathrm G)$, and let $\Phi\in\Nf$ with
$\Phi\gtrsim f$ at infinity. Then there is $\lambda_1>0$ such that
$\int_1^\infty x^{-p_\ast}f'\bigl(\lambda_1\Phi^{-1}(x)\bigr)\dd x<\infty$.
\end{Lemma}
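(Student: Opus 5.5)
Since $\Phi\gtrsim f$ at infinity, there are $K,x_0>0$ with $\Phi(y)\ge f(y/K)$ for $y\ge x_0$ (after rescaling the constant in the definition of $\gtrsim$). Inverting, for large $x$ we get $\Phi^{-1}(x)\le K f^{-1}(x)$, where $f^{-1}$ is the inverse of $f$ on $[0,\infty)$; $f$ is an increasing bijection there by $(\mathrm C)$ and Lemma~\ref{lem:Fprops}(iii). Taking $\lambda_1\le 1/K$ and using that $f'$ is increasing (convexity), it suffices to show
$$
\int_{x_1}^\infty x^{-p_\ast}f'\bigl(\lambda_1K\,f^{-1}(x)\bigr)\dd x<\infty
$$
for some $\lambda_1K=:\mu\in(0,1)$. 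The bounded interval $[1,x_1]$ is harmless, since $f'$ is continuous and $\Phi^{-1}$ is finite there.

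We change variables $x=f(y)$, so that $\dd x=f'(y)\dd y$, and the integral becomes $\int_{y_0}^\infty f(y)^{-p_\ast}f'(\mu y)f'(y)\dd y$. By \eqref{eq:RV1}, for large $y$ we have $f'(y)\le(1+\varepsilon)/F(y)$ and $f'(\mu y)\le(1+\varepsilon)/F(\mu y)$. The integrand is therefore bounded by a constant times $f(y)^{-p_\ast}/(F(y)F(\mu y))$. Next we use \eqref{eq:RV2} with $\lambda=\mu$, which gives $f(\mu y)F(\mu y)\ge c\,\mu f(y)F(y)$ with $c>0$ (choose $\varepsilon<\mu^{-\rho}$, or take $M=1$). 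Hence
$$
\frac1{F(\mu y)}\le \frac{f(\mu y)}{c\mu f(y)F(y)}\le \frac{1}{c\mu F(y)} ,
$$
where the last step uses $f(\mu y)\le f(y)$. The integrand is thus bounded by $C f(y)^{-p_\ast}F(y)^{-2}$.

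It remains to show $\int^\infty f^{-p_\ast}F^{-2}\dd y<\infty$. We write $f^{-p_\ast}F^{-2}=f^{-(p_\ast-1)}\cdot(fF)^{-2}\cdot f$. Since $p_\ast>1$, a cleaner route uses $F'=-1/f$. Because $fF$ is regularly varying of index $1-\rho<1$ by Lemma~\ref{lem:RV}, and $f$ grows faster than any power, one has $F(y)\ge f(y)^{-\delta}$ for large $y$, for any $\delta>0$: indeed $F(y)\ge(1+\varepsilon)^{-1}/f'(y)$, and $f'(y)\le f(y)^{1+\delta'}$ eventually since $\log f'\sim\log f$. Choosing $2\delta<p_\ast-1$, we get $f^{-p_\ast}F^{-2}\le f^{-(p_\ast-2\delta)}$. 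This is integrable because $f(y)\ge y^k$ for every $k$, so $f^{-(p_\ast-2\delta)}\le y^{-2}$ eventually.

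The main obstacle is the asymptotic comparison $f'\le f^{1+\delta}$, equivalently $1/F\le f^{1+\delta}$, which I would derive from the regular variation of $(\log f)'=f'/f$. Since $(\log f)'$ is regularly varying of finite index or rapidly varying, and $\log f(y)=\int^y(\log f)'$, we get $(\log f)'(y)\le \log f(y)$ eventually when $\rho<\infty$. For $\rho=\infty$ this may fail, and I would then instead use \eqref{eq:RV1} together with $F(y)\ge\int_y^{2y}\dd s/f(s)\ge y/f(2y)$ and the relation $f^k\approx f$ from Lemma~\ref{lem:RV} to absorb the loss, possibly at the cost of shrinking $\lambda_1$ further.
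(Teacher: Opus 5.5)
\textbf{There is a genuine gap: your reduced integral diverges, and the step that produces it discards exactly the factor that makes the integral converge.}

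\textbf{The reduced integral is divergent.} The argument breaks at the reduction to $\int^\infty f^{-p_\ast}F^{-2}\dd y<\infty$. That integral diverges for \emph{every} $f$ satisfying $(\mathrm C)$ and $(\mathrm G)$ as soon as $d>2\beta$, i.e.\ $p_\ast<2$ (for instance for all $d\ge3$). Indeed, \eqref{eq:limitone} together with the convexity bound $f'(y)\ge f(y)/y$ gives $1/F(y)\ge f'(y)/2\ge f(y)/(2y)$ for large $y$. Hence $f^{-p_\ast}F^{-2}\ge \frac14 y^{-2}f(y)^{2-p_\ast}\to\infty$, since $f$ outgrows every power.

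The same chain gives $F(y)\le 2y/f(y)$, so your claim $F\ge f^{-\delta}$ is false for every $\delta<1$. What your reasoning actually yields is $F\ge c\,f^{-1-\delta'}$; the exponent $-1$ was dropped. That only gives $f^{-p_\ast}F^{-2}\le Cf^{2+2\delta'-p_\ast}$, which is useless unless $p_\ast>2+2\delta'$. Your fallback for $\rho=\infty$ via $F(y)\ge y/f(2y)$ is weaker still.

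\textbf{Where the loss occurs.} It happens one line earlier, when you bound $f(\mu y)/f(y)$ by $1$. After \eqref{eq:RV2} you had $1/F(\mu y)\le f(\mu y)/(c\mu f(y)F(y))$, and the small ratio $f(\mu y)/f(y)$ is the only source of integrability. Once it is discarded, your majorant $Cf^{-p_\ast}F^{-2}$ is, by \eqref{eq:limitone}, comparable to the integrand with $\mu=1$. It no longer improves as $\mu\downarrow0$.

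Smallness of $\lambda_1$ is genuinely needed. Take $f(u)=u^me^{u^p}$ with $m>1$, $p\ge1$, and $\Phi=f$. Up to powers of $y$, the integral is $\int^\infty e^{(\lambda_1^p-2\beta/d)y^p}\dd y$, which diverges for $\lambda_1^p>2\beta/d$.

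\textbf{How the paper proceeds.} The paper keeps this factor. It differentiates $h=[f(y)^{p_\ast}F(y)F(\bar\lambda y)]^{-1}$ and uses \eqref{eq:RV1}--\eqref{eq:RV2} with $\bar\lambda$ so small that $(\frac{2\beta}{d}-\varepsilon p_\ast)(\bar\lambda^{-\rho}-\varepsilon)>1$. For $\rho=\infty$ it takes $M>d/(2\beta)$ in \eqref{eq:RV2}, $\varepsilon$ small and any $\bar\lambda\in(0,1)$. This gives $h'\le-A_1(\log f(\bar\lambda y))'h$ for some $A_1>0$, hence $h\le Cf(\bar\lambda y)^{-A_1}$, which is integrable.

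\textbf{A direct repair of your route.} Take $\mu<\mu'<1$.
\begin{enumerate}
\item Convexity gives $f'(\mu y)\le f(\mu' y)/((\mu'-\mu)y)$.
\item Since $\log f$ is regularly varying of index $\rho$ (Lemma~\ref{lem:RV}), $f(\mu' f^{-1}(x))\le x^{\mu'^{\rho}+\varepsilon}$ for large $x$, with $\mu'^\rho$ read as $0$ if $\rho=\infty$.
\item The integrand is therefore $O\bigl(x^{-p_\ast+\mu'^{\rho}+\varepsilon}\bigr)$, which is integrable once $\mu'^\rho+\varepsilon<2\beta/d$.
\end{enumerate}
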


\begin{proof}
The function $f$ is a strictly increasing bijection of $[0,\infty)$, and
$\Phi\gtrsim f$ gives $K,y_0>0$ with $f(y)\le\Phi(Ky)$ for $y\ge y_0$. Since
$\Phi$ is a bijection, we infer that $\Phi^{-1}(x)\le Kf^{-1}(x)$ for
$x\ge f(y_0)$. Let
$\bar\lambda\in(0,1)$ and $\varepsilon\in(0,1)$ be fixed below and put
$\lambda_1:=\bar\lambda/K$. For $a$ large, substituting $x=f(y)$ and using the
right inequality in \eqref{eq:RV1} at $y$ and at $\bar\lambda y$,
\begin{equation}\label{eq:C1}
\int_a^{\infty}x^{-p_\ast}f'\bigl(\lambda_1\Phi^{-1}(x)\bigr)\dd x
\le\int_b^{\infty}f(y)^{-p_\ast}f'(\bar\lambda y)f'(y)\dd y
\le(1+\varepsilon)^2\int_b^{\infty}h(y)\dd y ,
\end{equation}
where $b=f^{-1}(a)$ and $h(y):=\bigl[f(y)^{p_\ast}F(y)F(\bar\lambda y)\bigr]^{-1}$.
Using $F'=-1/f$ and the left inequality in \eqref{eq:RV1}, we compute
\begin{align*}
h'(y)&=-h(y)\Bigl(p_\ast\frac{f'(y)}{f(y)}-\frac{1}{f(y)F(y)}
-\frac{\bar\lambda}{f(\bar\lambda y)F(\bar\lambda y)}\Bigr)\\
&\le-\frac{\bar\lambda h(y)}{f(\bar\lambda y)F(\bar\lambda y)}
\Bigl(\bigl(\tfrac{2\beta}{d}-\varepsilon p_\ast\bigr)
\frac{f(\bar\lambda y)F(\bar\lambda y)}{\bar\lambda f(y)F(y)}-1\Bigr),
\end{align*}
since $p_\ast(1-\varepsilon)-1=\tfrac{2\beta}{d}-\varepsilon p_\ast$. Fix
$M>d/(2\beta)$, let $0<\varepsilon<2\beta/(dp_\ast)$, and set
$$
\alpha_1(\rho):=
\begin{cases}
\bigl(\tfrac{2\beta}{d}-\varepsilon p_\ast\bigr)
(\bar\lambda^{-\rho}-\varepsilon)-1, & \rho\in(0,\infty),\\[4pt]
\bigl(\tfrac{2\beta}{d}-\varepsilon p_\ast\bigr)M-1, & \rho=\infty .
\end{cases}
$$
For $\rho<\infty$, we choose $\bar\lambda\in(0,1)$ so small that
$\alpha_1(\rho)>0$. For $\rho=\infty$, we choose $\varepsilon$ so small that
$\alpha_1(\rho)>0$, and $\bar\lambda\in(0,1)$ arbitrary. Then \eqref{eq:RV2}
and the right inequality in \eqref{eq:RV1} give, for $y$ large,
\begin{align*}
h'(y)&\le-\frac{\alpha_1(\rho)\bar\lambda h(y)}{f(\bar\lambda y)F(\bar\lambda y)}
\le-\frac{\alpha_1(\rho)}{1+\varepsilon}\,
\bar\lambda\frac{f'(\bar\lambda y)}{f(\bar\lambda y)}\,h(y)\\
&=-A_1\bigl(\log f(\bar\lambda y)\bigr)'h(y),
\qquad A_1:=\frac{\alpha_1(\rho)}{1+\varepsilon}>0 ,
\end{align*}
and integration yields $h(y)\le C[f(\bar\lambda y)]^{-A_1}$ for $y$ large. Since
$f$ grows faster than any polynomial, $h$ is integrable at infinity, and we
conclude that \eqref{eq:C1} is finite. On $[1,a]$, the integrand is bounded.
\end{proof}

\begin{proof}[Proof of Theorem \ref{thm:critical}\textup{(a)(i)}]
Let $\lambda_1$ be as in Lemma~\ref{lem:ICinf}. By $(\Lambda)$,
$$
\ell\bigl(\lambda_1\Phi^{-1}(x)\bigr)\le Cf'\bigl(\lambda_1\Phi^{-1}(x)\bigr)
\qquad\text{whenever }\lambda_1\Phi^{-1}(x)>\kappa_0,
$$
that is, for $x$ large. On the remaining bounded interval, the integrand in
\eqref{eq:IC-infty} is bounded. Hence \eqref{eq:IC-infty} holds with
$\lambda=\lambda_1$, and Theorem~\ref{thm:local} applies.
\end{proof}

\begin{proof}[Proof of Theorem \ref{thm:critical}\textup{(a)(ii)}]
We follow \cite[Section 6.2]{Laister}. First, there are $C_1,\mu>0$ with
$f(u)^r\le C_1\Phi(\mu u)$ for all $u\ge0$. Near $0$, this is the hypothesis
$\Phi\gtrsim f^r$. At infinity, it follows from $f^r\approx f$
(Lemma~\ref{lem:RV}) and $\Phi\gtrsim f$, by transitivity of $\lesssim$. On
the remaining compact interval $[u_1,u_2]\subset(0,\infty)$, the quotient
$f(u)^r/\Phi(\mu u)$ is bounded, because $\Phi$ is positive and increasing.

Let $v$ be an $M_\Phi$-mild solution on $[0,T)$ with data $u_0$, and fix
$T_1<T$. The set $\mathcal V:=v([0,T_1])$ is compact in $M_\Phi$, so there is
$A_{\mathcal V}<\infty$ with $\int\Phi(4\mu|\psi|)\le A_{\mathcal V}$ for
$\psi\in\mathcal V$ (cover $\mathcal V$ by finitely many balls of radius
$1/(8\mu)$ and use convexity). By uniform continuity, there is $\tau_0>0$ with
$\|v(s')-v(s)\|_\Phi\le1/(4\mu)$ whenever $s,s'\in[0,T_1]$, $|s-s'|\le\tau_0$.
For such $s\le s'$, the convexity of $\Phi$ gives
\begin{align*}
\|f(v(s'))\|_{L^r}^r&\le C^r\int f(|v(s')|)^r
\le C^rC_1\int\Phi(\mu|v(s')|)\\
&\le\frac{C^rC_1}{2}\int\bigl[\Phi(2\mu|v(s')-v(s)|)+\Phi(2\mu|v(s)|)\bigr]
\le C^rC_1\bigl(1+A_{\mathcal V}\bigr).
\end{align*}
Let $s\in[0,T_1)$ be such that Lemma~\ref{lem:restart}(i) applies (or $s=0$).
For $0<t\le\min\{\tau_0,T_1-s\}$,
$v(s+t)=S_\beta(t)v(s)+\int_0^tS_\beta(t-\sigma)f(v(s+\sigma))\dd\sigma$, and by
Lemma~\ref{lem:minkowski} and Proposition~\ref{prop:LpLq} (with the pair of
exponents $(r,\infty)$),
$$
\Bigl\|\int_0^tS_\beta(t-\sigma)f(v(s+\sigma))\dd\sigma\Bigr\|_\infty
\le C\int_0^t(t-\sigma)^{-\frac{d}{2\beta r}}\dd\sigma\
\sup_{s\le s'\le s+\tau_0}\|f(v(s'))\|_{L^r}
\le C\,\tau_0^{\,1-\frac{d}{2\beta r}} ,
$$
since $r>d/(2\beta)$, while $\|S_\beta(t)v(s)\|_\infty\le
C_\infty\alpha(t)\|v(s)\|_\Phi$. Hence $v$ is bounded on $\R^d\times[s+\epsilon,
s+\tau_0]$ for every $\epsilon>0$. Since the admissible $s$ are dense, we infer
that $v\in L^\infty_{\rm loc}((0,T_1),L^\infty)$, and $T_1<T$ was arbitrary.
Thus $v$ is a bounded $M_\Phi$-mild solution, and Theorem~\ref{thm:local}(b)
shows that $v=u(\cdot;u_0)$ and $T\le T_{u_0}$.
\end{proof}

\begin{proof}[Proof of Theorem \ref{thm:critical}\textup{(b)}]
Let $\gamma\in(0,\gamma_0(d,\beta))$ and let $u_0$ be the initial data
\eqref{eq:data}, with $R>0$ to be fixed. By Lemma \ref{L7.1}, problem
\eqref{eq:main} has no nonnegative integral solution with this data. It only
remains to check that $u_0\in L_\Phi$ for a suitable choice of $R$.

Since $\Phi\lesssim f$, there are $K,u_1>0$ with $\Phi(u)\le f(Ku)$ for
$u\ge u_1$. Let $\lambda\in(0,1)$ be fixed below, put $\lambda_0:=\lambda/K$,
and choose $R>0$ so small that $\lambda_0F^{-1}(\gamma R^{2\beta})\ge u_1$,
which is possible by \eqref{eq:Finv}. As $u_0$ is radially decreasing, we have
$\lambda_0u_0(x)\ge u_1$ on $B_R\setminus\{0\}$, and therefore
\begin{align*}
\int_{\R^d}\Phi\bigl(\lambda_0u_0\bigr)\dd x
&\le\int_{B_R}f\bigl(\lambda F^{-1}(\gamma|x|^{2\beta})\bigr)\dd x
=|\mathbb S^{d-1}|\int_0^{R}f\bigl(\lambda F^{-1}(\gamma r^{2\beta})\bigr)r^{d-1}\dd r\\
&=\frac{|\mathbb S^{d-1}|}{2\beta\gamma^{d/(2\beta)}}
\int_b^{\infty}\frac{f(\lambda s)}{f(s)}\bigl[F(s)\bigr]^{\nu}\dd s ,
\end{align*}
with $\nu:=\frac{d}{2\beta}-1$ and $b:=F^{-1}(\gamma R^{2\beta})$, where we
substituted $\gamma r^{2\beta}=F(s)$ and used $F'=-1/f$. Writing
$g(s):=f(\lambda s)f(s)^{-1}[F(s)]^{\nu}$ and using \eqref{eq:RV1}, we compute
\begin{align*}
g'(s)&=-g(s)\Bigl(\frac{f'(s)}{f(s)}-\lambda\frac{f'(\lambda s)}{f(\lambda s)}
+\frac{\nu}{f(s)F(s)}\Bigr)\\
&\le-\frac{\lambda g(s)}{f(\lambda s)F(\lambda s)}
\Bigl(\bigl(\tfrac{d}{2\beta}-\varepsilon\bigr)
\frac{f(\lambda s)F(\lambda s)}{\lambda f(s)F(s)}-(1+\varepsilon)\Bigr),
\end{align*}
the constant $\tfrac{d}{2\beta}-\varepsilon$ arising as $1-\varepsilon+\nu$.
Let $M>0$ and $\varepsilon\in(0,\min\{1,d/(2\beta)\})$, and set
$$
\alpha_2(\rho):=
\begin{cases}
\bigl(\tfrac{d}{2\beta}-\varepsilon\bigr)(\lambda^{-\rho}-\varepsilon)
-(1+\varepsilon), & \rho\in(0,\infty),\\[4pt]
\bigl(\tfrac{d}{2\beta}-\varepsilon\bigr)M-(1+\varepsilon), & \rho=\infty .
\end{cases}
$$
For $\rho<\infty$, we choose $\lambda\in(0,1)$ small; for $\rho=\infty$, we
choose $M$ large and $\lambda\in(0,1)$ arbitrary. In both cases
$\alpha_2(\rho)>0$. Then, exactly as in the proof of Lemma~\ref{lem:ICinf}, we
get $g(s)\le C[f(\lambda s)]^{-A_2}$ for $s$ large, with
$A_2=\alpha_2(\rho)/(1+\varepsilon)>0$. Since $f$ grows faster than any
polynomial, $g$ is integrable at infinity. We conclude that
$\int\Phi(\lambda_0u_0)\dd x<\infty$, that is, $u_0\in L_\Phi$.
\end{proof}

\begin{Remark}\rm\label{rem:nu}
The computation above is valid for every $d\ge1$ and $\beta\in(0,1]$, since only
the positivity of $1-\varepsilon+\nu=\frac{d}{2\beta}-\varepsilon$ is used. In
particular, no separate treatment is needed when $\nu<0$, which happens only
for $d=1$ and $\beta\in(\frac12,1]$. The local integrability of $u_0$ required
by Lemma~\ref{L7.2} was established in Lemma~\ref{L7.1}.
\end{Remark}

\begin{Remark}\rm\label{rem:notinM}
Assume that $f$ also satisfies $(\Lambda)$ and that $\Phi\in\Nf$ satisfies
$\Phi\approx f$ at infinity. Then the initial data $u_0$ constructed above lies in
$L_\Phi\setminus M_\Phi$. Otherwise, Theorem~\ref{thm:critical}(a)(i) and
Theorem~\ref{thm:local}(a) would provide a nonnegative (since $f$ is
positone) bounded $M_\Phi$-mild, hence integral, solution. In particular,
$M_\Phi\neq L_\Phi$, which is consistent with the fact that $\Phi$ cannot satisfy the
$\Delta_2$-condition when $\Phi\approx f$ is of faster than polynomial growth.
\end{Remark}

\begin{proof}[Proof of Theorem \ref{thm:critical}\textup{(c)} and Corollary
\ref{cor:C}]
Since $(\Lambda_0)$ implies $(\Lambda)$, part (a) gives \eqref{eq:IC-infty}
and the conclusions of Theorem~\ref{thm:local}. For the global statement, we
must produce $\lambda>0$ with
$$I_0=\frac{2\beta}{d}\int_0^\infty x^{-p_\ast}\ell(\lambda\Phi^{-1}(x))\dd x<1.$$
By $(\Lambda_0)$, it suffices to show that $\int_0^{\infty}H(\lambda,x)\dd x\to0$
as $\lambda\to0$, where $H(\lambda,x):=x^{-p_\ast}f'\bigl(\lambda\Phi^{-1}(x)\bigr)$.
Let $\lambda_1$ be as in Lemma~\ref{lem:ICinf} and
$\lambda':=\min\{\lambda_0,\lambda_1\}$. Since $f'$ is increasing, $0\le
H(\lambda,x)\le H(\lambda',x)$ for $\lambda\le\lambda'$, and $H(\lambda',\cdot)$
is integrable on $(0,1)$ by \eqref{eq:C-b} and on $(1,\infty)$ by
Lemma~\ref{lem:ICinf}. Since $f'$ is continuous with $f'(0)=0$, we have
$H(\lambda,x)\to0$ pointwise as $\lambda\to0$, and dominated convergence gives
$\int_0^\infty H(\lambda,x)\dd x\to0$. Hence \eqref{eq:IC-zero} holds for all
sufficiently small $\lambda\in(0,\lambda']$, and Corollary \ref{thm:global}
applies.

For Corollary \ref{cor:C}, it remains to verify \eqref{eq:C-b}. As $f'$ is
regularly varying at zero of index $m-1$, we may write
$f'(x)=x^{m-1}\mu(1/x)$ for $x\in(0,1)$ with $\mu$ slowly varying at infinity;
in particular $f'(0)=0$, because $m>p_\ast>1$. From $\Phi(u)\gtrsim u^{q}$ near
zero and the bijectivity of $\Phi$, we get $\Phi^{-1}(x)\le Kx^{1/q}$ for
$x\in(0,a)$. Hence, with $\lambda=\lambda_0K$ and $b=a^{1/q}$,
\begin{align*}
\int_0^{a}x^{-p_\ast}f'\bigl(\lambda_0\Phi^{-1}(x)\bigr)\dd x
&\le\int_0^{a}x^{-p_\ast}f'\bigl(\lambda x^{1/q}\bigr)\dd x
=q\int_0^{b}y^{\,-1-\frac{2\beta q}{d}}f'(\lambda y)\dd y\\
&=q\lambda^{m-1}\int_0^{b}y^{\,m-2-\frac{2\beta q}{d}}\mu\Bigl(\frac{1}{\lambda y}\Bigr)
\dd y\\
&=q\lambda^{m-1}\int_{1/b}^{\infty}z^{-m+\frac{2\beta q}{d}}\,
\mu\Bigl(\frac z\lambda\Bigr)\dd z ,
\end{align*}
after the substitutions $x=y^{q}$ and $y=1/z$ (we assume $\lambda b<1$, which
holds for $\lambda_0$ small). The last integral converges because
$z\mapsto\mu(z/\lambda)$ is slowly varying and $-m+2\beta q/d<-1$, which is
exactly the hypothesis $q<\frac{d(m-1)}{2\beta}$; see
\cite[Proposition 1.5.10]{BGT}. On $(a,1)$, the integrand in \eqref{eq:C-b} is
bounded.
\end{proof}

\begin{proof}[Proof of Corollary \ref{cor:Cconf}]
Theorem~\ref{thm:critical}(a)(i) gives \eqref{eq:IC-infty} and the conclusions
of Theorem~\ref{thm:local}. By $(\Lambda_0)$, $\ell(r)\le Cf'(r)$ for $r>0$.
Hence \eqref{eq:dini} holds, and Theorem~\ref{thm:confined} applies. If $f'$ is
regularly varying at zero of index $m-1>0$, then
$f'(r)=r^{m-1}\mu(1/r)\le r^{(m-1)/2}$ for small $r$, and the Dini condition
holds.
\end{proof}

\section{Examples}\label{sec:examples}

We illustrate the theory on a canonical family and on concrete
nonlinearities. Throughout, $p_\ast=1+2\beta/d$ and $\Phi_c(u)=f(u)$. For
$\Phi\in\Nf$, the change of variables $x=\Phi(y)$ shows that
\eqref{eq:IC-infty} is equivalent to
\begin{equation}\label{eq:IC-y}
\int_{\Phi^{-1}(1)}^\infty\ell(\lambda y)\,[\Phi(y)]^{-p_\ast}\,\Phi'(y)\dd y<\infty ,
\end{equation}
compare \cite[Remark 3.3(d)]{Laister}. If $f$ is odd and convex on
$[0,\infty)$, then $\ell(s)=f'(s)$ for $s>0$ \cite[Remark 3.3(c)]{Laister}.
Table~\ref{tab:examples} summarises the results of this section.

\begin{table}[ht]
\centering
\small
\renewcommand{\arraystretch}{1.3}
\begin{tabular}{>{\raggedright}p{3.2cm} >{\centering}p{1.2cm}
>{\raggedright}p{3.0cm} >{\raggedright}p{3.3cm}
>{\raggedright\arraybackslash}p{3.8cm}}
\toprule
$f(u)$ ($u\ge0$, odd extension) & $\rho$ & LWP in $M_\Phi$ &
small-data GWP (Cor.~\ref{thm:global}, Cor.~\ref{cor:C}) & small-data GWP under confinement
(Cor.~\ref{cor:Cconf}, Thm.~\ref{thm:confined})\\
\midrule
$u^mJ(u)$, $J$ as in \S\ref{subsec:canonical} & $(0,\infty]$ &
$\Phi\gtrsim J$ & $m>p_\ast$, $\Phi(u)\gtrsim u^q$ near $0$,
$1<q<\frac{d(m-1)}{2\beta}$ & $m>1$, no condition near $0$\\
$u^me^{u^p}$, $p\ge1$ & $p$ & $\Phi\gtrsim e^{u^p}$ & as above & $m>1$\\
$u^m\exp(e^{u^p})$, $p\ge1$ & $\infty$ & $\Phi\gtrsim\exp(e^{u^p})$ & as
above & $m>1$\\
$u^p$, $p>1$ & -- & $L^q$, $q>\max\{1,q_c\}$ & $L^{q}\cap L^{r}$,
$1<q<q_c<r$ & $L^q$, $q>\max\{1,q_c\}$, every $p>1$\\
$u^p\log^m(1+u)$, $p\ge p_\ast$, $m\ge0$ & -- &
$L^{q_c}\log^rL$, $r\ge1$, $r>\frac{d(m+1)}{2\beta}$ & -- & same space\\
\bottomrule
\end{tabular}
\caption{Summary of the examples; $q_c=d(p-1)/(2\beta)$. Nonexistence of
nonnegative solutions for some data in $L_\Phi$ holds whenever
$\Phi\lesssim\Phi_c$ in the first three rows (Theorem~\ref{thm:critical}(b)).}
\label{tab:examples}
\end{table}

\subsection{A canonical family}\label{subsec:canonical}

Let $J:[0,\infty)\to[0,\infty)$ satisfy $J(0)>0$, $J\in C^1$ increasing and
convex, eventually $C^2$, and suppose that $(\log J)'$ is regularly varying of
index $\rho-1$ with $\rho\in(0,\infty]$ and
$\lim_{u\to\infty}(J')^2/(J''J)=1$. For $m\ge1$ let $f$ be the odd extension of
$u\mapsto u^{m}J(u)$. By \cite[Theorem 7.1 and its proof]{Laister}, whose
arguments concern only $f$ and $J$, the function $f$ satisfies $(\mathrm C)$,
$(\Lambda_0)$ and $(\mathrm G)$, and $J\approx f$ at infinity. We deduce the
following statements.
\begin{enumerate}[label=\textup{(\roman*)}]
\item if $\Phi\in\Nf$ and $\Phi\gtrsim J$, then \eqref{eq:main} is locally
well posed in $M_\Phi$ (Theorem~\ref{thm:critical}(a)(i)); if moreover
$\liminf_{u\to0}\Phi(u)/u^D>0$ for some $D\ge1$, then uniqueness holds among
all $M_\Phi$-mild solutions (Theorem~\ref{thm:critical}(a)(ii), since $f$ is
odd and Lipschitz near $0$);
\item if $\Phi\in\Y$ and $\Phi\lesssim J$, then nonexistence of nonnegative
solutions occurs for some nonnegative data in $L_\Phi$
(Theorem~\ref{thm:critical}(b));
\item if $m>p_\ast$, $\Phi\in\Nf$, $\Phi\gtrsim J$ and $\Phi(u)\gtrsim u^{q}$
near zero with $1<q<\frac{d(m-1)}{2\beta}$, then small data in $M_\Phi$ give rise
to global solutions (Corollary~\ref{cor:C});
\item if $m>1$, $\Phi\in\Nf$ and $\Phi\gtrsim J$, then small data in $M_\Phi$
give rise to global solutions decaying like $e^{-\lambda_\beta t}$
(Corollary~\ref{cor:Cconf}); indeed $f'(u)=mu^{m-1}J(u)+u^mJ'(u)\sim mJ(0)u^{m-1}$
as $u\to0$, so that the Dini condition holds.
\end{enumerate}
The significance of this family is that $J$, and hence $f$, may be prescribed
to grow arbitrarily fast. Indeed, by \cite[Lemma 7.2]{Laister}, for every $r\in
C^2([0,\infty))$ with $r''>0$ on $(0,\infty)$, $r(0)=0$, $r'(0)=1$ and
$r'(\infty)=\infty$, there is an admissible $J$ with $J(u)\ge\exp(r(u))$ for
all $u\ge0$.

\subsection{Exponential nonlinearity}

For $m\ge 1$ and $p\ge1$ consider
\begin{equation}\label{eq:expl}
\partial_tu+\Hr^{\beta}u=|u|^{m-1}u\,e^{|u|^{p}} .
\end{equation}
Here $J(u)=e^{u^{p}}$ and $\rho=p<\infty$. Local well-posedness holds in
$M_\Phi$ for every $\Phi\in\Nf$ with $\Phi\gtrsim e^{u^p}$. Nonexistence of
nonnegative solutions occurs for some data in $L_\Phi$ whenever $\Phi\in\Y$
satisfies $\Phi\lesssim e^{u^p}$. Let $m>p_\ast$ and
$1<q<\frac{d(m-1)}{2\beta}$. Then $\Phi(u)=u^{q}e^{u^{p}}$ is an $N$-function
(a product of nonnegative increasing convex functions), and we obtain global
well-posedness for small data with
$$
\|u(t)\|_{L^\infty}\le\lambda\Phi^{-1}\bigl(t^{-\frac{d}{2\beta}}\bigr)
\quad(t>0),\qquad
\|u(t)\|_{L^\infty}\le Ce^{-\lambda_\beta t}\|u_0\|_{L_\Phi}\quad(t\ge1),
$$
The second bound is much stronger than the free decay
$\lambda\Phi^{-1}(t^{-d/(2\beta)})\sim\lambda t^{-d/(2\beta q)}$. Under
confinement, small data give global solutions for every $m>1$ and every
$\Phi\in\Nf$ with $\Phi\gtrsim e^{u^p}$. The case $p=1$ is allowed here,
whereas the theory in $\exp L^{p}$ requires $p>1$, so that $e^{u^p}-1$ is an
$N$-function \cite[Section 7.3]{Laister}.

Let us compare these results with \cite{BMMexp}, where $0<\beta\le1$ and
$p>1$. There, local well-posedness in $\exp L^p_0$ is proved under a
Lipschitz condition of exponential type on $f$, and global weak-mild solutions in
$L^\infty((0,\infty),\exp L^p)$ are obtained for small data under the
conditions $m\ge p$ and $m\ge1+2p\beta/d$. Take
$\Phi(u)=e^{u^p}-1$, which is an $N$-function for $p>1$, so that
$M_\Phi=\exp L^p_0$ and $\Phi(u)\sim u^p$ near $0$. Corollary~\ref{cor:C},
with $q=p$, then gives global bounded $M_\Phi$-mild solutions for small data
in $\exp L^p_0$ whenever $m>1+2p\beta/d$, without the condition $m\ge p$.
These solutions are unique, continuous with values in $M_\Phi$, and depend
continuously on the initial data. Under confinement,
Corollary~\ref{cor:Cconf} removes the restriction on $m$ altogether: small
data in $\exp L^p_0$ give global solutions decaying like
$e^{-\lambda_\beta t}$ for every $m>1$. On the other hand, \cite{BMMexp}
covers the limiting case $m=1+2p\beta/d$ (when $m\ge p$) for weak-mild
solutions, which is not reached by Corollary~\ref{cor:C}; it also applies to
nonlinearities which are not required to satisfy the structural assumptions
$(\mathrm C)$ and $(\mathrm G)$.

\subsection{Doubly exponential nonlinearity}

For $m\ge1$, $p\ge1$ consider
$$
\partial_tu+\Hr^{\beta}u=|u|^{m-1}u\,e^{e^{|u|^{p}}} ,
$$
so that $J(u)=\exp\exp(u^{p})$ and $\rho=\infty$ (the conditions of
\S\ref{subsec:canonical} are verified in \cite[Section 7.4]{Laister}). Local
well-posedness holds for $\Phi\in\Nf$ with $\Phi\gtrsim\exp\exp(u^{p})$, and
nonexistence occurs for $\Phi\lesssim\exp\exp(u^p)$. Global well-posedness for
small data holds with $\Phi(u)=u^{q}\exp\exp(u^{p})$, provided $m>p_\ast$ and
$1<q<\frac{d(m-1)}{2\beta}$. Under confinement, it holds for every $m>1$ and
every $\Phi\in\Nf$ with $\Phi\gtrsim\exp\exp(u^p)$. For the Laplacian, such
results were first obtained in \cite[Section 7.4]{Laister}; for the
(fractional) harmonic oscillator, they appear to be new. The series expansion
underlying the classical smoothing estimates between $L^{q}$ and $\exp L^{p}$
is unavailable once $\rho=\infty$. The same applies to any finite composition of
exponentials, and to the functions $J$ of Example~7.1 in \cite{Laister}, e.g.
$J(u)=e^{e^u-1}$.

\subsection{Power nonlinearity}\label{subsec:power}

For $p>1$, let $f(u)=|u|^{p-1}u$, so that $\ell(s)=ps^{p-1}$. Take
$\Phi(u)=u^q/q$ with $q\ge1$, so that $\Phi^{-1}(x)=(qx)^{1/q}$. Then condition
\eqref{eq:IC-infty} reads
$$
\int_1^{\infty}x^{-p_\ast}x^{\frac{p-1}{q}}\dd x<\infty
\iff
q>\frac{d(p-1)}{2\beta}=:q_c .
$$
Thus Theorem \ref{thm:local} gives local well-posedness in $L^q$ for
$q>\max\{1,q_c\}$ (here $M_\Phi=L_\Phi=L^q$), with $p_\ast=1+2\beta/d$ and
$q_c=d(p-1)/(2\beta)$ in place of the classical $1+2/n$ and $n(p-1)/2$. As in
\cite[Section 7.5]{Laister}, the endpoint $q=q_c$ is not covered by the
integral condition, which is not sharp there. 
In Lebesgue spaces, however, the endpoint can be reached by the
auxiliary-norm method of Weissler \cite{weissler1980local} and
Brezis--Cazenave \cite{brezis1996nonlinear}. This method uses only $L^q$--$L^r$
smoothing estimates for small times, which hold for $S_\beta$ by
Proposition~\ref{prop:LpLq}. Let us sketch the argument for
$f(u)=|u|^{p-1}u$ with $q_c>1$, that is, $p>p_\ast$. Since
$\frac{d}{2\beta q_c}=\frac1{p-1}$, we may fix $r$ with
$\max\{p,q_c\}<r<pq_c$ and put
$\sigma:=\frac{d}{2\beta}\bigl(\frac1{q_c}-\frac1r\bigr)>0$. We work in the
space
$$
X_T:=\Bigl\{u\in L^\infty\bigl((0,T),L^{q_c}\bigr):\
\|u\|_{X_T}:=\sup_{0<t<T}\|u(t)\|_{L^{q_c}}
+\sup_{0<t<T}t^{\sigma}\|u(t)\|_{L^r}<\infty\Bigr\}.
$$
By H\"older's inequality, $\|f(u(s))\|_{L^{r/p}}\le\|u(s)\|_{L^r}^p\le
s^{-p\sigma}\|u\|_{X_T}^p$. Proposition~\ref{prop:LpLq}, applied from
$L^{r/p}$ to $L^r$ and from $L^{r/p}$ to $L^{q_c}$ (note that
$1<r/p<q_c<r$), then gives
\begin{align*}
t^{\sigma}\Bigl\|\int_0^tS_\beta(t-s)f(u(s))\dd s\Bigr\|_{L^r}
&+\Bigl\|\int_0^tS_\beta(t-s)f(u(s))\dd s\Bigr\|_{L^{q_c}}\\
&\le C\|u\|_{X_T}^p\,t^{\,1-\frac{d}{2\beta}\frac{p-1}{q_c}}
=C\|u\|_{X_T}^p .
\end{align*}
Here the time integrals converge because the conditions $r>q_c$ and
$r<pq_c$ are equivalent to $\frac{d}{2\beta}\frac{p-1}{r}<1$ and $p\sigma<1$,
respectively. At the critical exponent $q_c$, the positive power of $T$ in the
nonlinear estimate disappears, exactly as in
\cite{weissler1980local,brezis1996nonlinear}. The required smallness comes
instead from the linear term: by density of $L^{q_c}\cap L^r$ in $L^{q_c}$ and
Proposition~\ref{prop:LpLq},
$$
\sup_{0<t<T}t^{\sigma}\|S_\beta(t)u_0\|_{L^r}\longrightarrow0
\qquad\text{as } T\to0 ,
$$
for every $u_0\in L^{q_c}$. The same estimates, combined with
$|f(u)-f(v)|\le p(|u|^{p-1}+|v|^{p-1})|u-v|$, show that the Duhamel map is a
contraction on a suitable closed subset of $X_T$ for $T$ small. We obtain a
unique local solution in that subset, with $u\in C([0,T],L^{q_c})$. Thus the
endpoint $q=q_c$ is attained for power nonlinearities. Since
\eqref{eq:IC-infty} fails for $q=q_c$, this shows that the integral condition
is sufficient, but not necessary, for local solvability in $L_\Phi$.

For global solutions, let $1<q<r$ and $\Phi(u)=\max\{u^q,u^r\}$. Then
$\Phi\in\Nf$ satisfies the $\Delta_2$-condition, and
$M_\Phi=L_\Phi=L^q\cap L^r$ \cite[Section 7.5]{Laister}. Since
$\Phi^{-1}(x)=x^{1/q}$ for $x\le1$ and $\Phi^{-1}(x)=x^{1/r}$ for $x\ge1$,
condition \eqref{eq:IC-zero} holds for small $\lambda$ (the integral being
proportional to $\lambda^{p-1}$) if and only if $q<q_c<r$. This requires
$p>p_\ast$. Under confinement, no such restriction is needed: by Theorem~\ref{thm:confined}, for every $p>1$ and every
$q>\max\{1,q_c\}$, small data in $L^q$ give global solutions decaying
exponentially, whereas for $\varrho=0$ and $1<p\le p_\ast$ all nonnegative
nontrivial solutions blow up (Remark~\ref{rem:nofujita}).
Figure~\ref{fig:power} illustrates these regimes.

\begin{figure}[ht]
\centering
\begin{tikzpicture}
\begin{axis}[
  width=0.78\textwidth, height=6.4cm,
  xmin=1, xmax=5, ymin=0, ymax=1.15,
  xlabel={$p$}, ylabel={$1/q$},
  xtick={1,2,3,4,5}, ytick={0,0.5,1},
  samples=200, domain=1.001:5]
\addplot[name path=top, black, thick, domain=1:5] {min(1, 1/(x-1))};
\addplot[name path=bot, draw=none, domain=1:5] {0};
\addplot[black!15] fill between[of=top and bot];
\addplot[black, thick, dashed] coordinates {(2,0) (2,1.15)};
\node[font=\small, align=center] at (axis cs:3.4,0.18) {LWP in $L^q$;\\
small-data GWP if $\varrho>0$};
\node[font=\small, align=left, anchor=west] at (axis cs:2.05,1.07)
{$p=p_\ast$};
\node[font=\small] at (axis cs:3.9,0.55) {$1/q=1/q_c$};
\end{axis}
\end{tikzpicture}
\caption{The power nonlinearity $f(u)=|u|^{p-1}u$ for $d=1$, $\beta=\frac12$,
so that $p_\ast=2$ and $q_c=p-1$. In the shaded region
$1/q<\min\{1,1/q_c\}$, Theorem~\ref{thm:local} gives local well-posedness in
$L^q$, and Theorem~\ref{thm:confined} gives global solutions with exponential
decay for small data, for every $p>1$. For the free equation ($\varrho=0$) all
nonnegative nontrivial solutions blow up when $p\le p_\ast$ (to the left of
the dashed line) \cite{Sugitani1975}.}
\label{fig:power}
\end{figure}

\subsection{Logarithmically corrected power nonlinearity}

For $p\ge p_\ast$ and $m\ge0$, consider $f(u)=|u|^{p-1}u\log^{m}(1+|u|)$, and let
$\Phi(u)=u^{q_c}\log^{r}(1+u)$ with $q_c=d(p-1)/(2\beta)\ge1$ and $r\ge1$. This
$\Phi$ is an $N$-function. Indeed, $\Phi(u)/u=u^{q_c-1}\log^r(1+u)$ tends to $0$
at $0$ and to $\infty$ at $\infty$, and, with $L:=\log(1+u)$,
$$
\Phi''(u)=q_c(q_c-1)u^{q_c-2}L^r+\frac{2q_cru^{q_c-1}L^{r-1}}{1+u}
+\frac{r(r-1)u^{q_c}L^{r-2}}{(1+u)^2}-\frac{ru^{q_c}L^{r-1}}{(1+u)^2}>0 ,
$$
since the last (negative) term is smaller than half of the second one. For
$y\ge1$, we have $\ell(\lambda y)=f'(\lambda y)\le C\lambda^{p-1}y^{p-1}\log^m(1+y)$
for $\lambda\le1$, $\Phi'(y)\le Cy^{q_c-1}\log^r(1+y)$ and
$\Phi(y)^{-p_\ast}=y^{-q_cp_\ast}\log^{-rp_\ast}(1+y)$; since
$p-2-q_c(p_\ast-1)=-1$, the integrand in \eqref{eq:IC-y} is bounded by
$Cy^{-1}\log^{m-2\beta r/d}(1+y)$. This function is integrable at infinity if
$r>d(m+1)/(2\beta)$. Hence \eqref{eq:main} is well posed in $M_\Phi$ and, since
$\Phi$ satisfies the $\Delta_2$-condition, in $L_\Phi=L^{q_c}\log^rL$ itself;
compare \cite[Section 7.6]{Laister}. The case $m=0$, $p=p_\ast$ gives
well-posedness in $L\log^rL$ for $r\ge1$, $r>d/(2\beta)$, at the Fujita
exponent of the free fractional heat equation. By contrast, majorising $f$ by
$|u|^{p'-1}u$ with $p'>p$ and using Lebesgue space theory only gives
well-posedness in $L^{q}$ with $q\ge d(p'-1)/(2\beta)>q_c$. The space
$L^{q_c}\log^rL$, however, contains functions with local singularities that
are not locally $q$-integrable for any $q>q_c$. Under confinement, since
$\ell(s)\le Cs^{p-1}\log^m(1+s)$ satisfies \eqref{eq:dini},
Theorem~\ref{thm:confined} gives global solutions for small data in the same
space.

\section{Comments and open problems}\label{sec:open}

We close with some questions left open by the arguments above. They are stated
as problems, not as results.

\emph{The range of $\beta$.} Everything in this paper rests on the positivity
of the kernel $\Kb^\beta_t$, which fails in general for $\beta>1$; see Remark
\ref{rem:betarange}. For the free biharmonic heat equation, the local theory in
exponential Orlicz spaces was developed in \cite{majdoub2018ade} by exploiting
the explicit oscillatory kernel. Fixed-time estimates for
$e^{-t\Hr^\beta}$, $\beta>1$, are available in part of the range of exponents
\cite{BMMfrac}. Whether the smoothing estimate \eqref{eq:OrliczOrlicz}
survives for $\Hr^{\beta}$ with $\beta>1$, possibly with a constant larger than
$1$ in \eqref{E5.2}, and what replaces the comparison principle, is open.

\emph{Sharpness of the integral condition.} Theorem \ref{thm:critical} shows
that the order relation $\Phi\gtrsim f$ is sharp at the level of the critical
Young's function, but it does not show that \eqref{eq:IC-infty} itself is
necessary. {In fact, it is not necessary in general: the endpoint $q=q_c$
for power nonlinearities is reached by a different method in
Section~\ref{subsec:power}.} In the Lebesgue setting, a complete
characterisation of local existence is known \cite{AIHP2016}, and it would be
interesting to know what replaces it here.

\emph{Large data and the role of $\varrho$.} Theorem~\ref{thm:confined}
exploits the spectral gap only through the linear decay of $S_\beta(t)$. For
large data, one expects the confinement to compete with the nonlinearity. In
particular, one expects a threshold between global existence and finite-time
blow-up, depending on $\varrho$ and on the size of the projection of $u_0$
onto the ground state $h_0$.

\emph{Linear terms.} If $\ell(0^+)=a>0$, as for $f(u)=u\,e^{|u|^p}$ ($m=1$),
the Dini condition \eqref{eq:dini} fails. Since the linearised
equation $\partial_tv+\Hr^\beta v=av$ decays at the rate $\lambda_\beta-a$, one
expects global existence for small data whenever $a<\lambda_\beta$.

\emph{Dilation-critical singularities.} {In Remark~\ref{rem:DCS}, the
existence part of \cite{Fujishima2024} is transferred to $\Hr^\beta$ by
comparison with the free problem.} A direct proof, and the determination of the
critical value $\gamma^\ast$ of $\gamma$ for $\varrho>0$, remain open.

\emph{Uniqueness of mild solutions.} Theorem~\ref{thm:critical}(a)(ii)
requires a condition on $\Phi$ near the origin. Whether uniqueness holds in
$C([0,T),M_\Phi)$ for every $\Phi\in\Nf$ with $\Phi\gtrsim f$ is open, even for
$\beta=1$ and $\varrho=0$. Non-uniqueness results for weaker solution concepts
are known \cite{haraux1982non,IRT2019,FIRT2026}.

\emph{The case $\rho=0$.} The case $\rho=0$ in $(\mathrm G)$, which includes
both polynomially dominated nonlinearities and nonlinearities of
quasi-polynomial growth, is excluded here as it is in \cite{Laister}.

\subsection*{Acknowledgements}
M.~Majdoub
is supported by the Basic and Applied Scientific Research Center at Imam
Abdulrahman Bin Faisal University. 
R.~M. acknowledges the partial support provided by the research grants  (ANRF/ARGM/2025/000750/MTR). D.~G.~B. is grateful to an ANRF-MATRICS Grant ANRF/ARGM/2025/000580/MTR.



\end{document}